\documentclass{article}

\usepackage{amsfonts, amsmath, amssymb, amsthm, amscd}
\usepackage{ascmac}
\usepackage{verbatim}
\usepackage[dvips]{graphicx}
\usepackage{graphics}
\usepackage[all,2cell]{xypic}
\objectmargin+{1mm}
\labelmargin+{0.8mm}
\SelectTips{cm}{12}

\setlength{\topmargin}{0in}
\setlength{\oddsidemargin}{0.35in}
\setlength{\evensidemargin}{0.35in}
\setlength{\textwidth}{5.7in}
\setlength{\textheight}{8.7in}
\setlength{\parskip}{3mm}

\UseAllTwocells


\theoremstyle{plain}  
\newtheorem{thm}{Theorem}[section]

\newtheorem{cor}[thm]{Corollary}
\newtheorem{lem}[thm]{Lemma}
\newtheorem{prop}[thm]{Proposition}

\theoremstyle{definition}

\newtheorem{df}[thm]{Definition}
\newtheorem{ex}[thm]{Example}

\newtheorem{nt}[thm]{Notations}

\newtheorem{rem}[thm]{Remark}

\newtheorem*{acknow}{Acknowledgements}
\newtheorem{para}[thm]{}
\newtheorem{lemdf}[thm]{Lemma-Definition}

\theoremstyle{remark}


\DeclareMathOperator{\Homo}{H}

\DeclareMathOperator{\bA}{\mathbf{A}}
\DeclareMathOperator{\bB}{\mathbf{B}}
\DeclareMathOperator{\bC}{\mathbf{C}}

\DeclareMathOperator{\bE}{\mathbf{E}}
\DeclareMathOperator{\bF}{\mathbf{F}}
\DeclareMathOperator{\bG}{\mathbf{G}}
\DeclareMathOperator{\bH}{\mathbf{H}}
\DeclareMathOperator{\bI}{\mathbf{I}}

\DeclareMathOperator{\bX}{\mathbf{X}}


\DeclareMathOperator{\bbA}{\mathbb{A}}

\DeclareMathOperator{\bbK}{\mathbb{K}}

\DeclareMathOperator{\bbP}{\mathbb{P}}

\DeclareMathOperator{\bbZ}{\mathbb{Z}}


\DeclareMathOperator{\ff}{\mathfrak{f}}
\DeclareMathOperator{\fF}{\mathfrak{F}}

\DeclareMathOperator{\fS}{\mathfrak{S}}


\DeclareMathOperator{\cA}{\mathcal{A}}
\DeclareMathOperator{\cB}{\mathcal{B}}
\DeclareMathOperator{\cC}{\mathcal{C}}
\DeclareMathOperator{\calD}{\mathcal{D}}
\DeclareMathOperator{\cE}{\mathcal{E}}
\DeclareMathOperator{\cF}{\mathcal{F}}
\DeclareMathOperator{\cG}{\mathcal{G}}
\DeclareMathOperator{\calH}{\mathcal{H}}
\DeclareMathOperator{\cI}{\mathcal{I}}

\DeclareMathOperator{\calL}{\mathcal{L}}
\DeclareMathOperator{\cM}{\mathcal{M}}
\DeclareMathOperator{\cN}{\mathcal{N}}
\DeclareMathOperator{\cO}{\mathcal{O}}
\DeclareMathOperator{\cP}{\mathcal{P}}

\DeclareMathOperator{\calR}{\mathcal{R}}
\DeclareMathOperator{\cS}{\mathcal{S}}
\DeclareMathOperator{\cT}{\mathcal{T}}

\DeclareMathOperator{\cX}{\mathcal{X}}
\DeclareMathOperator{\cY}{\mathcal{Y}}
\DeclareMathOperator{\cZ}{\mathcal{Z}}

\newcommand{\Acy}{\operatorname{Acy}}

\newcommand{\Ar}{\operatorname{Ar}}

\newcommand{\biComp}{\operatorname{\bf BiComp}}
\newcommand{\biCompPair}{\operatorname{\bf BiCompPair}}

\newcommand{\Ch}{\operatorname{\bf Ch}}
\newcommand{\CLat}{\operatorname{\bf CLat}}

\newcommand{\coker}{\operatorname{Coker}}

\newcommand{\comp}{\operatorname{comp}}
\newcommand{\Cone}{\operatorname{Cone}}
\newcommand{\consist}{\operatorname{consist}}

\renewcommand{\coprod}{\sqcup}
\newcommand{\Cub}{\operatorname{\bf Cub}}
\newcommand{\CW}{\operatorname{\bf CW}}
\newcommand{\Cyl}{\operatorname{Cyl}}

\newcommand{\deq}{\operatorname{deq}}

\newcommand{\DM}{\operatorname{\bf DM}}
\newcommand{\dom}{\operatorname{dom}}

\newcommand{\eff}{\operatorname{eff}}
\newcommand{\Ex}{\operatorname{\bf Ex}}
\newcommand{\ExCat}{\operatorname{\bf ExCat}}

\newcommand{\ext}{\operatorname{ext}}

\newcommand{\frob}{\operatorname{frob}}

\newcommand{\HOM}{\mathcal{HOM}}
\newcommand{\Hom}{\operatorname{Hom}}

\newcommand{\id}{\operatorname{id}}
\newcommand{\In}{\operatorname{In}}

\newcommand{\im}{\operatorname{Im}}

\newcommand{\isom}{\operatorname{isom}}
\newcommand{\isoto}{\overset{\scriptstyle{\sim}}{\to}}

\newcommand{\Ker}{\operatorname{Ker}}
\newcommand{\Kos}{\operatorname{\bf Kos}}

\newcommand{\length}{\operatorname{length}}

\newcommand{\linc}{\hookleftarrow}
\newcommand{\linf}{\leftarrowtail}

\newcommand{\Mor}{\operatorname{Mor}}

\newcommand{\NC}{\operatorname{\bf NC}}
\newcommand{\Nis}{\operatorname{Nis}}

\newcommand{\nul}{\operatorname{nul}}

\newcommand{\Ob}{\operatorname{Ob}}
\newcommand{\onto}[1]{\stackrel{#1}{\to}}
\newcommand{\op}{\operatorname{op}}

\newcommand{\Perf}{\operatorname{\bf Perf}}

\newcommand{\qis}{\operatorname{qis}}

\newcommand{\ran}{\operatorname{ran}}

\newcommand{\rdef}{\twoheadrightarrow}

\newcommand{\RelCat}{\operatorname{\bf RelCat}}
\newcommand{\RelEx}{\operatorname{\bf RelEx}}
\newcommand{\res}{\operatorname{res}}
\newcommand{\rinc}{\hookrightarrow}
\newcommand{\rinf}{\rightarrowtail}

\newcommand{\Sh}{\operatorname{\bf Sh}}
\newcommand{\SmCor}{\operatorname{\bf SmCor}}
\newcommand{\SOL}{\operatorname{solid}}

\newcommand{\Spec}{\operatorname{Spec}}
\newcommand{\ssm}{\smallsetminus}
\newcommand{\strict}{\operatorname{strict}}

\newcommand{\thi}{\operatorname{thi}}

\newcommand{\Tot}{\operatorname{Tot}}
\newcommand{\tq}{\operatorname{tq}}

\newcommand{\Tri}{\operatorname{Tri}}
\newcommand{\tri}{\operatorname{tri}}
\newcommand{\TriCat}{\operatorname{\bf TriCat}}

\newcommand{\WalEx}{\operatorname{\bf WalEx}}
\newcommand{\weak}{\operatorname{weak}}
\newcommand{\wide}{\operatorname{wide}}

\def\sn{\smallskip\noindent}
\def\mn{\medskip\noindent}

\newcommand{\cf}{\textrm{cf.}\;}

\title{Non-connective $K$-theory of 
relative exact categories}
\author{Satoshi Mochizuki}
\date{}

\begin{document}

\maketitle

\begin{abstract}
The main objective of this paper is to 
propose a definition of non-connective $K$-theory for 
a wide class of relative exact categories 
which, in general, do not satisfy the factorization axiom 
and confirm that it agrees with the non-connective $K$-theory 
for exact categories and complicial exact categories with weak equivalences. 
The main application is to study 
the topological filtrations of non-connective $K$-theory 
of a noetherian commutative ring with unit 
in terms of Koszul cubes. 
\end{abstract}

\section*{Introduction}

As in \cite{Sch04}, \cite{Sch06} and \cite{Sch11}, 
Schlichting developed the non-connective $K$-theory for 
the wide class 
of Waldhausen exact categories, 
and the non-connective $K$-theory 
for differential graded categories and 
for stable infinity categories 
are characterized by D.-C.~Cisikinsi and G.~Tabuada, 
A.J.~Blumberg and D.~Gepner and G.~Tabuada 
in \cite{CT11} and \cite{BGT10} respectively. 
This generalizes 
the definition of Bass \cite{Bas68}, 
Karoubi \cite{Kar70}, 
Pedersen-Weibel \cite{Ped84}, \cite{PW89}, 
Thomason \cite{TT90}, 
Carter \cite{Car80} and Yao \cite{Yao92}. 
My motivational theme is 
to study the topological filtrations 
of non-connective $K$-theory of 
a noetherian commutative ring with unit 
in terms of Koszul cubes in \cite{Moc11}. 
As precisely mentioned in Remark~\ref{rem:cubeisnotfibrational}, 
the biWaldhausen category of Koszul cubes does 
not satisfy the factorization axiom in \cite{Sch06} 
and the first purpose of this paper is to establish a general theory about 
non-connective $K$-theory for a certain wide class of Waldhausen 
exact categories which, 
in general, 
need not satisfy the factorization axiom.

\sn
Let $\bE=(\cE,w)$ be a {\bf relative exact category}, 
that is, 
a pair of an exact category $\cE$ 
with a specific zero object $0$ 
and a class of morphisms $w$ in $\cE$ which is 
closed under finite compositions. (See Definition~\ref{df:rel exact cat}). 
We let $\cE^w$ denote the full subcategory of 
$\cE$ consisting of those objects $x$ such that 
the canonical morphism from the zero object $0 \to x$ is in $w$. 
We say that $\bE$ is {\bf strict} if 
$\cE^w$ is an exact category such that 
the inclusion functor $\cE^w\rinc \cE$ is exact and reflects exactness. 
(See Ibid). 
For example, $\bE$ is strict if either
$w$ satisfies the extensional axiom or 
$\bE$ is a Waldhausen exact category. 
(See Proposition~\ref{prop:strict exact categories}). 
We denote 
the bounded derived category of 
an exact category $\cF$ 
by $\calD_b(\cF)$. 
We shall define the bounded derived category of 
a strict relative exact category 
$\bE=(\cE,w)$ by the formula 
$\calD_b(\bE):=\coker(\calD_b(\cE^w)\to \calD_b(\cE))$. 
(See Definition~\ref{df:qw}). 
Let $j_{\cE}:\cE \to \Ch_b(\cE)$ denote 
the exact functor which sends an object $x$ to 
a complex $j_{\cE}(x)$ such that $j_{\cE}(x)_k$ 
is $x$ if $k=0$ and is $0$ if $k\neq 0$ and 
we write $P_{\bE}:\Ch_b(\cE) \to \calD_b(\bE)$ 
for the canonical projection functor. 
We say that a morphism $f:x\to y$ in 
$\Ch_b(\cE)$ is a {\bf quasi-weak equivalence} 
if $P_{\cE}(f)$ is an isomorphism in $\calD_b(\bE)$. 
We write $qw$ for the class of quasi-weak equivalences in $\Ch_b(\cE)$ and 
we put $\Ch_b(\bE):=(\Ch_b(\cE),qw)$. 
For example, 
if $w$ is the class of all isomorphisms in $\cE$, 
then $qw$ is just the class of all quasi-isomorphisms in $\Ch_b(\cE)$. 
$\Ch_b(\bE)$ is a complicial exact category 
with weak equivalences 
in the sense of \cite{Sch11}. 
(See Proposition~\ref{prop:compness}). 
We say that a strict relative exact category 
$\bE=(\cE,w)$ is a {\bf consistent} relative exact category if 
$j_{\cE}(w)\subset qw$. (See Lemma-Definition~\ref{lemdf:adm}). 
We will build the universal property of $\Ch_b(\bE)$ 
for any consisitent relative exact category $\bE$ in 
Corollary~\ref{cor:univ} 
which vouches for the pedigree of the operation $\Ch_b(-)$. 
The first main theorem below also warrants 
$\Ch_b(\bE)$ to be a natural object.

\begin{thm}[\bf Derived Gillet-Waldhausen theorem]
\label{thm:intro derived GW}
{\rm (A part of Corollary~\ref{cor:comp of derived cat}).} 
For any consistent relative exact category $\bE=(\cE,w)$, 
the canonical functor $j_{\cE}:\bE \to \Ch_b(\bE)$ induces 
an equivalence of triangulated categories 
$\calD_b(\bE)\isoto\calD_b(\Ch_b(\bE))$. 
\end{thm}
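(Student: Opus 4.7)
The approach is to express both sides as Verdier quotients of two equivalent ambient triangulated categories, then to compare the thick subcategories by which we are quotienting. The only non-formal input is the classical Gillet--Waldhausen theorem for exact categories.

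By construction, $\calD_b(\bE)$ is the Verdier quotient $\calD_b(\cE)/\cT_1$, where $\cT_1$ is the thick subcategory of $\calD_b(\cE)$ generated by the essential image of $\calD_b(\cE^w)$. Since $\Ch_b(\bE)$ is a complicial exact category with weak equivalences by Proposition~\ref{prop:compness}, and in particular a strict relative exact category, the same cokernel formula gives $\calD_b(\Ch_b(\bE))=\calD_b(\Ch_b(\cE))/\cT_2$, where $\cT_2$ is the thick closure of the essential image of $\calD_b(\Ch_b(\cE)^{qw})$. The classical Gillet--Waldhausen theorem applied to the exact category $\cE$ provides an equivalence $\Phi:\calD_b(\cE)\isoto\calD_b(\Ch_b(\cE))$ induced by $j_{\cE}$. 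It suffices to verify $\Phi(\cT_1)=\cT_2$, for then $\Phi$ descends to the asserted equivalence of Verdier quotients.

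For this comparison, I would use that a bounded derived category is generated as a thick subcategory by its heart via stupid filtrations. Hence $\cT_1$ is the thick closure in $\calD_b(\cE)$ of the objects of $\cE^w$, and $\Phi^{-1}(\cT_2)$ is the thick closure in $\calD_b(\cE)$ of the objects of $\Ch_b(\cE)^{qw}$ (regarded directly as complexes in $\calD_b(\cE)$, since $\Phi^{-1}$ is computed by totalisation and is the identity on complexes concentrated in outer degree zero). The inclusion $\cT_1\subseteq\Phi^{-1}(\cT_2)$ is immediate from the consistency hypothesis: for $x\in\cE^w$ the morphism $0\to x$ lies in $w$, so $0\to j_{\cE}(x)$ lies in $qw$, giving $j_{\cE}(\cE^w)\subseteq\Ch_b(\cE)^{qw}$. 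The reverse inclusion $\Phi^{-1}(\cT_2)\subseteq\cT_1$ is tautological from the definition of $qw$: a complex $x\in\Ch_b(\cE)$ belongs to $\Ch_b(\cE)^{qw}$ exactly when $P_{\cE}(x)=0$ in $\calD_b(\bE)=\calD_b(\cE)/\cT_1$, equivalently when the class of $x$ in $\calD_b(\cE)$ already lies in $\cT_1$.

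The main obstacle is not located in this argument itself, which is essentially a bookkeeping exercise, but in the surrounding infrastructure that legitimises it: one needs the classical Gillet--Waldhausen theorem for the plain exact categories $\cE$ and $\cE^w$, and one needs $\Ch_b(\bE)$ to be a strict (even complicial) relative exact category so that the cokernel formula defining $\calD_b(\Ch_b(\bE))$ applies. Both are provided by the paper's earlier setup, after which the equivalence follows from the two-sided inclusion above.
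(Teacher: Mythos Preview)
Your argument is correct, and it takes a genuinely different route from the paper's.

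The paper does not reduce to the trivial-weak-equivalence case and then compare Verdier quotients. Instead it first establishes, for any thick bicomplicial pair $(\cE,w)$, that $j_{\cE}$ induces an equivalence $\cT(\cE,w)\isoto\cT(\Ch_b(\cE),qw)$ (Corollary~\ref{cor:comp of derived cat}~(1)). The proof of this runs through the total functor $\Tot:\Ch_b(\cE)\to\cE$ and the key structural result Theorem~\ref{thm:tw=qis+lw}, which identifies $qw$ as the thick closure of $tw=\Tot^{-1}w$ and shows $tw=lw\vee\qis$ in the lattice of complicial weak equivalences. Part~(2) is then obtained by specialising part~(1) to the bicomplicial pair $\Ch_b(\bF)$. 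Thus the paper's argument is a single uniform proof via $\Tot$ and the lattice analysis of Sections~\ref{sec:trisubcat}--\ref{sec:tw}, with the classical case and the general case treated simultaneously.

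Your approach, by contrast, isolates the classical case $\calD_b(\cE)\isoto\calD_b(\Ch_b(\cE))$ as external input and reduces the general statement to a bookkeeping verification that the thick kernels match. This is more economical for the bare equivalence $\calD_b(\bE)\isoto\calD_b(\Ch_b(\bE))$, and your identification $\Phi^{-1}(\cT_2)=\cT_1$ is exactly right: the inclusion $\Phi^{-1}(\cT_2)\subseteq\cT_1$ is literally the definition of $qw$, and the reverse inclusion holds because $\Ch_b(\cE^w)\subset\Ch_b(\cE)^{qw}$ by construction (your appeal to consistency here is harmless but in fact unnecessary). What your route does not recover is the finer information the paper extracts along the way---the description $qw=tw_{\thi}$, the universal property of $\Ch_b(\bE)$ in Corollary~\ref{cor:univ}, and the explicit quasi-inverse via $\Tot$---all of which are used elsewhere. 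Within the paper's own development the ``classical'' case you invoke is itself proved as the special instance $w=i_{\cE}$ of Corollary~\ref{cor:comp of derived cat}, so your argument is not self-contained relative to the paper's logical order, though it is perfectly valid if one grants that case as standard.
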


\sn
The theorem 
roughly says that the process of taking $\Ch_b(-)$ 
does not change the mattters up to derived equivalences 
(See also Corollary~\ref{cor:homotopy inv of homotopy theories})
and encourages us to define the non-connective $K$-theory 
of a consistent relative exact category $\bE=(\cE,w)$ by the formula 
$\bbK(\bE)=\bbK^S(\Ch_b(\bE))$. 
(See Definition~\ref{df:non-connective K-theory}). 
Here $\bbK^S$ means the Schlichting non-connective $K$-theory 
in \cite{Sch06} or \cite{Sch11}. 
Theorem~\ref{thm:intro derived GW} and 
Schlichting theory in \cite{Sch11} imply 
that 
the non-connective $K$-theory of 
consistent relative exact categories is a {\it localizing theory} as in 
Corollary~\ref{cor:localization} below. 
We say that a sequence of triangulated categories 
$\cT \onto{i} \cT' \onto{p} \cT''$ is {\bf weakly exact} 
if $pi$ is isomorphic to the zero functor, 
$i$ is fully faithful and the induced functor 
$\cT'/\cT \to \cT''$ is {\bf cofinal}. 
The last condition means that it is fully faithful, 
and every object in $\cT''$ is a direct summand of an object of $\cT'/\cT$.

\begin{cor}
\label{cor:localization}
For a sequence of 
consistent relative exact categories 
 $\bE \to \bF \to \bG$, 
if the induced sequence of triangulated categories 
$\calD_b(\bE) \to \calD_b(\bF) \to \calD_b(\bG)$ 
is weaky exact, then 
the sequence induces a fibration sequence of spectra
$$\bbK(\bE) \to \bbK(\bF) \to \bbK(\bG).$$
In particular if  
the induced morphism 
$\calD_b(\bE) \to \calD_b(\bF)$ 
is an equivalence of triangulated categories up to factor, then 
the induced morphism 
$\bbK(\bE) \to \bbK(\bF)$ is a homotopy equivalence of spectra. 
\end{cor}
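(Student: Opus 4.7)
The plan is to reduce the first assertion to Schlichting's localization theorem for the non-connective $K$-theory of complicial exact categories with weak equivalences, applied to the induced sequence $\Ch_b(\bE) \to \Ch_b(\bF) \to \Ch_b(\bG)$. By Proposition~\ref{prop:compness} each of these terms is a complicial exact category with weak equivalences, and by the very definition of $\bbK$ the spectra $\bbK(\bE), \bbK(\bF), \bbK(\bG)$ are, tautologically, $\bbK^S$ of the corresponding $\Ch_b$-categories. Schlichting's theorem requires that the induced sequence
$$\calD_b(\Ch_b(\bE)) \to \calD_b(\Ch_b(\bF)) \to \calD_b(\Ch_b(\bG))$$
be weakly exact. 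But the Derived Gillet-Waldhausen theorem (Theorem~\ref{thm:intro derived GW}) supplies a natural ladder of equivalences of triangulated categories identifying this sequence with $\calD_b(\bE) \to \calD_b(\bF) \to \calD_b(\bG)$, which is weakly exact by hypothesis. Schlichting's fibration theorem then yields the desired fibration sequence of spectra.

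For the second assertion I would apply Schlichting's cofinality theorem in the same $\Ch_b(-)$-picture. Under the Derived Gillet-Waldhausen identification, the induced functor $\calD_b(\Ch_b(\bE)) \to \calD_b(\Ch_b(\bF))$ inherits the property of being fully faithful with every object a direct summand of an object in the image; Schlichting's cofinality theorem then delivers a homotopy equivalence $\bbK^S(\Ch_b(\bE)) \isoto \bbK^S(\Ch_b(\bF))$, that is, $\bbK(\bE) \isoto \bbK(\bF)$. Alternatively one may deduce this from the first part by viewing the hypothesis as weak exactness of $\calD_b(\bE) \to \calD_b(\bF) \to 0$, but the cofinality route seems more direct.

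The main obstacle I anticipate is of bookkeeping rather than conceptual depth: I must verify that the Derived Gillet-Waldhausen equivalences are natural enough in the consistent relative exact category $\bE$ to assemble into a genuinely (pseudo-)commutative ladder with the horizontal arrows $\calD_b(\bE) \to \calD_b(\bF) \to \calD_b(\bG)$, so that both full faithfulness of the first arrow and cofinality of the functor from the Verdier quotient $\calD_b(\bF)/\calD_b(\bE) \to \calD_b(\bG)$ transport between the two rows. This naturality should follow from the functoriality of $\Ch_b(-)$ and of the projection $P_{\bE}$ in consistent relative exact categories, which can be extracted cleanly from the universal property recorded in Corollary~\ref{cor:univ}. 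Once this naturality is in place, the rest is a purely formal invocation of Schlichting's theorems.
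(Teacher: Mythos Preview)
Your proposal is correct and follows essentially the same route as the paper: the paper packages your Derived Gillet-Waldhausen naturality step into Corollary~\ref{cor:weakly exact seq}~$\mathrm{(1)}$ (which asserts that $\Ch_b$ preserves weakly exact sequences, proved precisely via the commutative ladder with the $j_{\cE}$'s that you describe), and then invokes Schlichting's localization theorem \cite[3.2.27]{Sch11} exactly as you do. The ``in particular'' clause is then immediate from the first part, so your separate cofinality argument, while fine, is not needed.
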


\sn
A proof of Corollary~\ref{cor:localization} 
will be given at \ref{proof:locinv}. 
Next, 
if $\bE=(\cE,w)$ is a Waldhausen exact category, 
we can also define $K^W(\bE)=K^W(\cE;w)$ 
the Waldhausen $K$-theory of $\bE$. 
There is a question of what is a sufficient conditions 
that $j_{\cE}:\bE \to \Ch_b(\bE)$ induces an isomorphism 
$K_n^W(\bE)\to K_n^W(\Ch_b(\bE))=\bbK_n^S(\Ch_b(\bE))=\bbK_n(\bE)$ 
for any positive integer $n$. 
We will assay this problem by axiomatic approach in 
section \ref{sec:Localizing theory} and 
carve out the agreement result Theorem~\ref{thm:agreement} below. 
To state the theorem, 
we prepare or recall the notations. 
A strict relative exact category $\bE=(\cE,w)$ is {\bf solid} if 
for any morphism $f:x\to y$ in $\cE$, 
there is a zig-zag sequence of quasi-isomorphisms 
connecting the mapping cone $\Cone f=[x \onto{f}y]$ 
with a bounded complex in $\cE^w$. 
(See Lemma-Definition~\ref{lemdf:solid axiom}). 
We say that a strict relative exact category $\bE$ is {\bf very strict} 
if the inclusion functor $\cE^w \rinc \cE$ induces a fully faithful functor 
on the bounded derived categories $\calD_b(\cE^w)\rinc \calD_b(\cE)$. 
(See Definition~\ref{df:rel exact cat}). 
For example, if either $w$ is the class of all isomorphisms in $\cE$ or $\bE$ 
is a complicial exact category with weak equivalences, 
then $\bE$ is very strict and solid. 
(See Proposition~\ref{prop:bicompair is very strict} and 
Corollary~\ref{prop:bicomp is adm}). 
Let $\bC=(\cC,v)$ be a category with cofibrations and weak equivalences. 
We denote the Waldhausen $K$-theory of $\bC$ by $K^W(\bC)=K^W(\cC;v)$. 
If $v$ is the class of all isomorphisms in $\cC$, 
we shortly write $K^W(\cC)$ for $K^W(\cC;v)$. 
We say that $\bC=(\cC,v)$ satisfies the {\bf $K^W$-fibration axiom} 
if $\cC^v\rinc \cC$ is a subcategory with cofibrations and 
if the inculsion functor $\cC^v\rinc \cC$ and 
the identity functor of $\cC$ induce 
a fibration sequence of spectra 
$K^W(\cC^v) \to  K^W(\cC) \to K^W(\cC;v)$. 
(See Lemma-Definition~\ref{lemdf:excellent}). 
It is well-known that if $v$ satisfies the extensional, 
saturated and factorization axioms, 
then $\bC$ satisfies the $K^W$-fibration axiom. 
(See \cite[Theorem 11]{Sch06}). 
We have the following agreement results. 

\begin{thm}[\bf Agreement]
\label{thm:agreement}
Let $\bE=(\cE,w)$ is a consistent relative exact category. 
Then\\
$\mathrm{(1)}$ 
{\bf (Agreement with Grothendieck groups).} 
$\bbK_0(\bE)$ is isomorphic to the Grothendieck group of 
the idempotent completion of 
the triangulated category $\calD_b(\bE)$.\\
$\mathrm{(2)}$ 
{\bf (Agreement with Schlichting $K$-theory).} 
If either $w$ is the class of all isomorphisms in $\cE$ 
or $\bE$ is a complicial exact category with weak equivalences, 
then the exact functor $j_{\cE}:\bE \to \Ch_b(\bE)$ induces 
a homotopy equivalences of spectra 
$\bbK^S(\bE)\isoto\bbK(\bE)$.\\
$\mathrm{(3)}$ 
{\bf (Agreement with Waldhausen $K$-theory).} 
If $\bE$ is a very strict solid Waldhausen exact category 
which satisfies the $K^W$-fibration axiom, 
then for any positive integer $n$, 
the $n$-th homotopy group of $\bbK(\bE)$ 
is isomorphic to the $n$-th Waldhausen $K$-theory $K^W_n(\bE)$ of $\bE$. 
\end{thm}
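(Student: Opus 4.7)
Parts (1) and (2) reduce, via the derived Gillet--Waldhausen theorem (Theorem~\ref{thm:intro derived GW}) and the localization corollary (Corollary~\ref{cor:localization}), to already-known properties of Schlichting's non-connective $K$-theory of complicial exact categories with weak equivalences. Part (3) is the substantive step, and is where the axioms \emph{very strict}, \emph{solid}, and the $K^W$-fibration axiom all enter.

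For (1), since $\bbK(\bE)=\bbK^S(\Ch_b(\bE))$ by definition and $\Ch_b(\bE)$ is complicial by Proposition~\ref{prop:compness}, the standard identification of $\bbK^S_0$ of a complicial exact category with weak equivalences as the Grothendieck group of the idempotent completion of its bounded derived category applies, and Theorem~\ref{thm:intro derived GW} rewrites $\calD_b(\Ch_b(\bE))$ as $\calD_b(\bE)$. For (2) there are two cases: when $w$ is the class of all isomorphisms, $\bbK^S(\bE)$ is Schlichting's $K$-theory of the exact category $\cE$ and $\bbK^S(\cE)\isoto\bbK^S(\Ch_b(\cE))$ is the classical Gillet--Waldhausen theorem for $\bbK^S$; when $\bE$ is complicial, $j_\cE:\bE\to\Ch_b(\bE)$ is a morphism of complicial categories which, by Theorem~\ref{thm:intro derived GW}, induces an equivalence on bounded derived categories, and the invariance of $\bbK^S$ under such derived equivalences (the mechanism underlying Corollary~\ref{cor:localization}) yields the asserted homotopy equivalence.

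For (3), the plan is to compare $\bE$ and $\Ch_b(\bE)$ at the level of Waldhausen $K$-theory. Since $\Ch_b(\bE)$ is complicial, the positive-degree agreement between $\bbK^S$ and $K^W$ on complicial categories gives $\pi_n\bbK(\bE)=\bbK^S_n(\Ch_b(\bE))\cong K^W_n(\Ch_b(\bE))$ for $n\geq 1$, so it suffices to show that $j_\cE$ induces a positive-degree isomorphism $K^W_n(\bE)\isoto K^W_n(\Ch_b(\bE))$. I would compare the two $K^W$-fibration sequences
\[
K^W(\cE^w)\to K^W(\cE)\to K^W(\bE),\qquad K^W(\Ch_b(\cE)^{qw})\to K^W(\Ch_b(\cE))\to K^W(\Ch_b(\bE)),
\]
the first from the $K^W$-fibration hypothesis on $\bE$ and the second because the complicial $\Ch_b(\bE)$ satisfies that axiom via \cite[Theorem 11]{Sch06}. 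The middle vertical $j_\cE:K^W(\cE)\to K^W(\Ch_b(\cE))$ is the classical Gillet--Waldhausen equivalence for the exact category $\cE$.

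The main obstacle is the left-hand vertical $K^W(\cE^w)\to K^W(\Ch_b(\cE)^{qw})$, and this is precisely where \emph{very strict} and \emph{solid} are engineered to be used. Very strictness makes $\calD_b(\cE^w)\rinc\calD_b(\cE)$ fully faithful, and the solid axiom furnishes, for each morphism in $\cE$, a zig-zag of quasi-isomorphisms between its mapping cone and a bounded complex in $\cE^w$; together these force the derived category of $\Ch_b(\cE)^{qw}$ to coincide, up to direct factors, with the image of $\calD_b(\Ch_b(\cE^w))$. Applying classical Gillet--Waldhausen to $\cE^w$ identifies $K^W(\cE^w)$ with $K^W(\Ch_b(\cE^w))$, and a Waldhausen cofinality argument then passes between $K^W(\Ch_b(\cE^w))$ and $K^W(\Ch_b(\cE)^{qw})$ in positive degrees. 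The delicate point is verifying that the solid-axiom zig-zags are coherent enough to realize this cofinality at the spectrum level rather than only on Grothendieck groups; once that is secured, the five-lemma applied to the two fibration sequences completes part (3).
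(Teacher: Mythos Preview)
Your approach to parts (1)--(3) is essentially the paper's. The only point of divergence is in part (3), where you underestimate what the solid and very strict axioms actually deliver for the left-hand vertical map $K^W(\cE^w)\to K^W(\Ch_b(\cE)^{qw})$.

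You hedge that the derived category of $\Ch_b(\cE)^{qw}$ agrees with the image of $\calD_b(\Ch_b(\cE^w))$ only ``up to direct factors'' and then appeal to a cofinality argument in positive degrees. In fact the situation is cleaner: the solid axiom yields the equality $\Ch_b(\cE)^{qw}=\Ch_b(\cE^w)_{\qis}$ (every $qw$-trivial complex is quasi-isomorphic to a complex in $\cE^w$, not merely a retract of one), so the inclusion $\Ch_b(\cE^w)\rinc\Ch_b(\cE)^{qw}$ is essentially surjective on derived categories. Very strictness (together with the thickness of $\cT(\Ch_b(\cE)^{qw},\qis)$ inside $\cT(\Ch_b(\cE),\qis)$, which is Proposition~\ref{prop:compness} (2)) gives full faithfulness. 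Hence the inclusion induces a genuine triangulated equivalence $\calD_b(\cE^w)\isoto\cT(\Ch_b(\cE)^{qw},\qis)$; this is Corollary~\ref{cor:weakly exact seq} (2)(i). The paper then packages the comparison of the two fibration sequences into the notion of ``$K^W$-excellent'' (Lemma-Definition~\ref{lemdf:excellent} and Proposition~\ref{prop:excellent}): the $K^W$-weak fibration axiom holds because $(\Ch_b(\cE^w),\qis)\to(\Ch_b(\cE),\qis)\to\Ch_b(\bE)$ is an exact sequence of complicial Waldhausen categories, and combined with the $K^W$-fibration hypothesis and classical Gillet--Waldhausen, the $K^W$-Gillet--Waldhausen axiom $K^W(\bE)\isoto K^W(\Ch_b(\bE))$ follows. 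So your ``delicate point'' dissolves: no cofinality argument is needed, and the left vertical is a $K^W$-equivalence in all degrees, not just positive ones.
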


\sn
A proof of Theorem~\ref{thm:agreement} will be given in \ref{proof:agreement}. 
The second purpose of this paper is 
to generalize the results in \cite{Moc11} from 
the connective $K$-theory to the non-connective $K$-theory.

\sn
Let us fix a commutative noetherian ring with unit $A$, 
a finite set $S$ and 
a family of elements $\ff_S=\{f_s\}_{s\in S}$ in $A$ which is an 
$A$-regular sequence in any order. 
Let us denote the power set of $S$ by $\cP(S)$, the set of all 
subsets in $S$ with usual inclusion order. 
A {\bf Koszul cube} $x$ associated with a sequence 
$\ff_S=\{f_s\}_{s\in S}$ 
is a contravariant functor from 
$\cP(S)$ to the category of finitely generated 
projective 
$A$-modules 
such that 
for any subset $T$ of $S$ and any element $k$ in $T$, 
$d^k_T:=x(T\ssm\{k\}\rinc T)$ is an injection and 
$f_k^{m_k}\coker d^k_T=0$ for some $m_k$. 
We denote the category of Koszul cubes 
associated with $\ff_S$ by $\Kos_A^{\ff_S}$ 
where morphisms between Koszul cubes are natural transformations. 
(See Definition~\ref{df:Koszul cube df}). 
We let $\Perf_{\Spec A}^{V(\ff_S)}$ denote 
the category of perfect complexes on $\Spec A$ 
whose homological support is in $V(\ff_S)$. 
We denote the class of all quasi-isomorphisms in $\Perf_{\Spec A}^{V(\ff_S)}$ by $\qis$. 
There exists the exact functor 
$\Tot:\Kos_A^{\ff_S} \to \Perf_{\Spec_A}^{V(\ff_S)}$. 
We define the {\bf class of total quasi-isomorphisms} 
by pull-back of $\qis$ in $\Perf_{\Spec A}^{V(\ff_S)}$ 
by $\Tot$ 
and denote it by $\tq$. 
(See Definition~\ref{df:weak equivalences on ltimesfF} 
and \ref{para:genkoscube}). 
Theorem~\ref{thm:intro Kos is device} below 
together with Theorem~\ref{thm:agreement} $\mathrm{(3)}$ 
convince us that 
the non-connective $K$-theory of 
the relative exact category $(\Kos_A^{\ff_S},\tq)$ accords with 
the Waldhausen $K$-theory of it. 

\begin{thm}[\bf A part of Corollary~\ref{cor:Kos is devices}] 
\label{thm:intro Kos is device}
$(\Kos_A^{\ff_S},\tq)$ is a very strict 
solid Waldhausen exact category which satisfies the $K^W$-fibration axiom. 
\end{thm}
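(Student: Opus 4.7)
The plan is to verify in turn the four assertions---that $(\Kos_A^{\ff_S},\tq)$ carries a Waldhausen exact structure, is very strict, is solid, and satisfies the $K^W$-fibration axiom---by exploiting the total complex functor $\Tot:\Kos_A^{\ff_S}\to \Perf_{\Spec A}^{V(\ff_S)}$ and the derived equivalence established in \cite{Moc11}. The natural exact structure on $\Kos_A^{\ff_S}$ is pointwise: a conflation is a $\cP(S)$-diagram of short exact sequences of finitely generated projective $A$-modules whose face maps respect the cubical structure. Since $\Tot$ is exact and $\tq$ is the pullback of $\qis$ along $\Tot$, the Waldhausen axioms for $\tq$ (containing isomorphisms, closure under composition, and the gluing/cube axiom) transport directly from $\qis$ on $\Perf_{\Spec A}^{V(\ff_S)}$.

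For strictness and very strictness, the full subcategory $(\Kos_A^{\ff_S})^{\tq}$ is precisely the cubes whose totalization is acyclic; exactness and exactness-reflection of the inclusion follow from a pointwise two-out-of-three analysis. The fully faithful claim $\calD_b((\Kos_A^{\ff_S})^{\tq})\rinc \calD_b(\Kos_A^{\ff_S})$ is more delicate: my strategy would be to resolve a general $\tq$-acyclic cube by cubes of Koszul-complex type---these being canonical $\tq$-acyclic generators---and then invoke a d\'evissage/resolution argument parallel to the one used in \cite{Moc11} to identify $\calD_b(\Kos_A^{\ff_S})$ with the derived category of perfect complexes supported on $V(\ff_S)$. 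For solidity, given $f:x\to y$ the mapping cone $\Cone f=[x\onto{f} y]$ is a two-term complex of Koszul cubes whose totalization is perfect with support in $V(\ff_S)$; pushing through the above derived equivalence, $\Cone f$ becomes quasi-isomorphic to a bounded complex of Koszul-type cubes, which lie in $(\Kos_A^{\ff_S})^{\tq}$, and I would then lift this identification to an explicit zig-zag in $\Ch_b(\Kos_A^{\ff_S})$.

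The main obstacle is the $K^W$-fibration axiom, because $\tq$ manifestly fails the factorization axiom, as recorded in Remark~\ref{rem:cubeisnotfibrational}, so Waldhausen's and Schlichting's fibration theorems do not apply directly to $(\Kos_A^{\ff_S},\tq)$. My plan is to interpose the category of bounded chain complexes $\Ch_b(\Kos_A^{\ff_S})$, which by Proposition~\ref{prop:compness} is a complicial exact category with weak equivalences, so Schlichting's fibration theorem for complicial categories applies; the derived Gillet-Waldhausen Theorem~\ref{thm:intro derived GW} together with the very strict and solid properties already established should allow the resulting fibration sequence of spectra to descend from $\Ch_b(\Kos_A^{\ff_S})$ back to $\Kos_A^{\ff_S}$. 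Guaranteeing that this descent preserves the $K^W$-level fibration, rather than only the non-connective version, and in particular checking that $(\Kos_A^{\ff_S})^{\tq}\rinc \Kos_A^{\ff_S}$ is a subcategory with cofibrations fitting compatibly into the comparison diagram, is where I expect the bulk of the real technical work to lie.
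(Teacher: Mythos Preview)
Your plan for the $K^W$-fibration axiom is circular. You propose to establish the fibration sequence
\[
K^W((\Kos_A^{\ff_S})^{\tq}) \to K^W(\Kos_A^{\ff_S}) \to K^W(\Kos_A^{\ff_S};\tq)
\]
by first passing to $\Ch_b$, where Schlichting's fibration theorem applies, and then descending via the comparison map $j_{\cE}:K^W(\bE)\to K^W(\Ch_b(\bE))$. But the assertion that this comparison map is a homotopy equivalence is precisely the $K^W$-Gillet--Waldhausen axiom, and Lemma-Definition~\ref{lemdf:excellent} shows that, once the weak versions hold (which they do, by Cisinski and by Schlichting's result for complicial pairs), the $K^W$-Gillet--Waldhausen axiom and the $K^W$-fibration axiom are \emph{equivalent}. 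So you cannot use one to prove the other: you need an independent argument for one of them. The derived Gillet--Waldhausen theorem (Theorem~\ref{thm:intro derived GW}) is a statement about derived categories, not about connective $K^W$-spectra, and does not by itself give the required $K^W$-equivalence.

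The paper's route is entirely different and avoids this trap. It does not work through $\Tot$ and $\Perf$ at all for these structural properties; instead it exploits the description $\Kos_A^{\ff_S}=\underset{T\in\cP(S)}{\ltimes}\cM_A^{\ff_T}(\#T)$ as a multi semi-direct product and the attendant flag structure. The key input is that the relevant triples
\[
(\cM_A(\ff_U;\ff_{V\ssm\{v\}})(p),\ \cM_A(\ff_U;\ff_{V\ssm\{v\}})(p+1),\ \cM_A(\ff_{U\cup\{v\}};\ff_{V\ssm\{v\}})(p+1))
\]
are strongly adroit systems (Theorem~\ref{thm:Koszul resol thm}), so Theorem~\ref{thm:flag} applies. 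Solidity then propagates inductively through the semi-direct product (Corollary~\ref{cor:solid denpan}), and the $K^W$-fibration axiom is proved \emph{directly} in Theorem~\ref{thm:flag}~$\mathrm{(3)}\mathrm{(ii)}$: additivity for $K^W$ on the quasi-split pieces gives a product decomposition $K^W(\ltimes\fF)\simeq\bigoplus_T K^W(\cF_T)$ and likewise for $(\ltimes\fF)^{tw}$, reducing the fibration statement to the trivial case $(\cF_S,i_{\cF_S})$. Very strictness is obtained not by resolution or d\'evissage but by exhibiting an explicit derived right quasi-splitting of the sequence $((\Kos_A^{\ff_S})^{\tq},i)\to(\Kos_A^{\ff_S},i)\to(\Kos_A^{\ff_S},\tq)$, using the functors $r=\ker(I\to s\Homo_0^V)$ and $s$ together with the resolution conditions of Proposition~\ref{prop:tworesol}; see Corollary~\ref{cor:Kos is devices}~$\mathrm{(4)}$.
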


\sn
The key ingredient to figure out 
the structure of $(\Kos_A^{\ff_S},\tq)$ 
is the existence of the flag structure 
(See Definition~\ref{df:flag}) 
on $(\Kos_A^{\ff_S},\tq)$ 
and this fact is verified by polishing up results in \cite{Moc11}. 
The final main theorem below 
is the comparison theorem referred in the Abstract. 

\begin{thm}[\bf Weak geometric presentation theorem]
\label{thm:weak geom present}
The exact functor 
$\Tot:(\Kos_A^{\ff_S},\tq) \to (\Perf_{\Spec A}^{V(\ff_S)},\qis)$ 
induces an equivalence of 
the bounded derived categories 
$\calD_b(\Kos_A^{\ff_S},\tq) \to \calD_b(\Perf_{\Spec A}^{V(\ff_S)},\qis)$. 
In particular it also induces a homotopy equivalence of 
spectra $\bbK(\Kos_A^{\ff_S},\tq) \to \bbK(\Perf_{\Spec A}^{V(\ff_S)},\qis)$. 
\end{thm}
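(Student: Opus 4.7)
The plan is to concentrate all the work in the first claim — the equivalence of bounded derived categories — since the ``In particular'' clause then follows immediately from the second half of Corollary~\ref{cor:localization}. Indeed, once $\Tot$ is known to induce an equivalence on $\calD_b$, that corollary applies because both sides are consistent relative exact categories: $(\Kos_A^{\ff_S},\tq)$ is very strict by Theorem~\ref{thm:intro Kos is device}, and $(\Perf_{\Spec A}^{V(\ff_S)},\qis)$ is complicial so it is very strict and solid by the general facts cited in the introduction. It is also immediate that the exact functor $\Tot$ sends $\tq$ into $\qis$ by the very definition of $\tq$ as the pullback of $\qis$, so $\Tot$ descends to a triangulated functor on bounded derived categories, and the question is only whether it is an equivalence.

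For essential surjectivity I would use the flag structure (Definition~\ref{df:flag}) on $\Kos_A^{\ff_S}$ developed in \cite{Moc11}. The core ingredient is a resolution lemma: to every perfect complex $P$ on $\Spec A$ with cohomology supported on $V(\ff_S)$, attach, by induction on $|S|$, a Koszul cube $x$ together with a quasi-isomorphism $\Tot(x)\to P$. The base case $|S|=1$ is the classical fact that a perfect complex with support in $V(f)$ for a non-zero-divisor $f$ is quasi-isomorphic to a two-term complex $[Q_1\to Q_0]$ of finitely generated projectives whose cokernel is $f^n$-torsion. The inductive step proceeds by restricting along a single $f_s$ and splicing the resulting flag filtration together; here the flag structure is precisely what guarantees that the result assembles into a genuine Koszul cube rather than merely a bounded resolution by projectives. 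Fully faithfulness is then formal from the resolution: morphisms in the target $\calD_b$ are zigzags of quasi-isomorphisms in $\Perf_{\Spec A}^{V(\ff_S)}$, and each term of such a zigzag can be lifted to a Koszul cube via the resolution lemma; the kernel of the induced map on Hom-groups is controlled by the cokernel description $\calD_b(\bE):=\coker(\calD_b(\cE^w)\to\calD_b(\cE))$ of Definition~\ref{df:qw}, together with the fact that a Koszul cube whose total complex is acyclic lies in $(\Kos_A^{\ff_S})^{\tq}$.

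The main obstacle, as is typical for a ``presentation'' statement of this kind, is the resolution step underlying essential surjectivity: producing a Koszul cube representative demands more than a bounded projective resolution, since every face map of the cube has to be an injection with cokernel killed by a power of the corresponding $f_s$, and these conditions have to be arranged compatibly across all subsets $T\subset S$ simultaneously. The calculus of Koszul cubes worked out in \cite{Moc11} is built expressly to support such a construction, but verifying that the induction actually produces a cube satisfying all these constraints — and not merely an object of the larger category of iterated cones — is the genuinely combinatorial part of the proof. Once this resolution is in place, both fully faithfulness and the $K$-theoretic consequence are essentially formal from the framework already established in the earlier sections.
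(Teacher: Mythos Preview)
Your resolution lemma is false as stated. By the identity $\mathrm{(\ref{equ:Totisom})}$ in \ref{para:admcube}, the total complex $\Tot(x)$ of any admissible (in particular, any Koszul) cube has homology concentrated in degree~$0$. Hence a perfect complex $P$ with nonzero homology in two different degrees can never be quasi-isomorphic to $\Tot(x)$ for a single Koszul cube $x$, no matter how clever the induction on $|S|$. What \emph{is} true is that objects of the form $\Tot(x)$ generate the target as a triangulated category, but that by itself yields only essential surjectivity of $\calD_b(\Tot)$, not fully faithfulness; and your zigzag-lifting sketch for fully faithfulness does not survive the failure of the resolution lemma, since you can no longer replace an arbitrary intermediate term of a roof by a single cube.

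The paper avoids this entirely by \emph{factoring} rather than resolving. There is a relative natural weak equivalence $\Tot \to j'\circ \Homo_0^S$, where $\Homo_0^S:(\Kos_A^{\ff_S},\tq)\to(\cM_A^{\ff_S}(\#S),i)$ is the top homology functor and $j'$ is the canonical inclusion of $\cM_A^{\ff_S}(\#S)$ into $(\Perf_{\Spec A}^{V(\ff_S)},\qis)$; the natural transformation is just the projection $(\Tot x)_0\rdef \Homo_0^S(x)$, which is a quasi-isomorphism precisely because $\Tot(x)$ resolves its $\Homo_0$. So on derived categories $\calD_b(\Tot)=\calD_b(j')\circ\calD_b(\Homo_0^S)$. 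Now $\calD_b(\Homo_0^S)$ is an equivalence by Corollary~\ref{cor:Kos is devices}~$\mathrm{(2)}$ (this is where the flag and adroit-system machinery is actually cashed in), and $\calD_b(j')$ is an equivalence by the derived Gillet--Waldhausen theorem \ref{cor:comp of derived cat} together with \cite[3.3]{HM10}. No direct resolution of perfect complexes by Koszul cubes is needed.
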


\sn
A proof of Theorem~\ref{thm:weak geom present} 
will be given at \ref{para:proof of WGP}. 
In my subsequent paper, 
we will sophisticate Theorem~\ref{thm:weak geom present} 
as a comparison of two different kinds of weights 
immanent in a scheme 
from the viewpoint of non-commutative motive theory. 
It is the reason where the term ``geometric presentation" 
comes from. 
Now we give a guide for the structure of this paper. 
To demonstrate Theorem~\ref{thm:intro derived GW}, 
full techniques evolved from section~\ref{sec:trisubcat} to 
section~\ref{sec:tw} are required. 
Under the influence of supprot varieties theory, 
like as \cite{Bal07} and \cite{BKS07}, 
in this paper we accentuate 
lattice structures of subcategories in appropriate classes of categories. 
We will probe the lattice structures of 
(thick) triangulated subcategories of 
triangulated categories in section~\ref{sec:trisubcat} 
and of null classes of 
bicomplicial categories in section~\ref{sec:widely exact func}. 
These observations leads us to 
characterization of quasi-weak equivalences 
in terms of total weak equivalences in section~\ref{sec:tw}. 
Section~\ref{sec:exact cat with we eq} and 
section~\ref{sec:quasi-split} are 
devoted to glossaries of relative exact categories and 
quasi-split exact sequences respectively. 
The contents in section~\ref{sec:quasi-split} is 
related with \cite{FL12}. 
As pointed out in \cite[A.2.8]{Sch11}, 
quasi-split exact sequences of triangulated categories 
are a variations of Bousfield-Neeman localization theory 
unfolded in \cite[\S 9]{Nee01}. 
In section~\ref{sec:resol thm}, we will sum up 
resolution theorems for relative exact categories 
from \cite{Sch11} and \cite{Moc11}. 
As a sequel to \cite{Moc11}, 
we will inquire into derived categories 
and derived flags 
of multi semi-direct products 
of exact categories in section~\ref{sec:Multi semi-direct} 
and of Koszul cubes in section~\ref{sec:Kos cube}. 
As a matter of my principle, 
the establishment of motive theory 
for a wide class of relative exact categories 
including the relative exact categories of Koszul cubes 
is a gateway to 
decipher motivic interpretations of 
Gersten's conjecture in \cite{Ger73} and 
Weibel's $K$-dimensional conjecture in \cite{Wei80}. 
(They are theorems in many situations). 
Section~\ref{sec:Localizing theory} covers 
general additive and localizing theories over 
relative exact categories 
like as companion theories over DG-categories or infinity-categories 
in \cite{Tab08} or \cite{BGT10}. 
(See also Corollary~\ref{cor:homotopy inv of homotopy theories}). 
These results allow us to 
apply knowledge of Koszul cubes in section~\ref{sec:Kos cube} to 
not only the non-connective $K$-theory 
but also any other localizing theories. 
We conclude with the remark 
that there is a strong resemblance 
between the flags of the relative exact categories 
of perfect complexes on projective spaces in Example~\ref{ex:flag} 
and of Koszul cubes in Corollary~\ref{cor:Kos is devices}. 
In a sequel work, 
I hope to tie up Grayson-Walker weight complexes in 
\cite{Gra95} and \cite{Wal00} 
with Koszul cubes by utilizing the blow up formula in 
\cite{Tho93} or \cite{CHSW08}.

\begin{acknow}
\label{acknow} 
The author wishes to express his deep gratitude to 
Seidai Yasuda for stimulating discussions. 
\end{acknow}

\sn
\textbf{Conventions.}

\mn
$\mathrm{(1)}$ 
{\bf General assumptions}\\
Throughout the paper, 
we use the letters $A$, $\cA$ and $S$ 
to denote a commutative ring with $1$, 
an essentially small abelian category and 
a set respectively.

\sn
$\mathrm{(2)}$ 
{\bf Set theory}\\
$\mathrm{(i)}$ 
We denote the cardinality of a set $S$ by $\# S$.\\
$\mathrm{(ii)}$ 
For any pair of sets $S$ and $T$, 
we put $S\ssm T:=\{x\in S;x\notin T\}$. 
If $S$ and $T$ are disjoint, then we write $S\coprod T$ for 
the union $S\cup T$ of $S$ and $T$.\\
$\mathrm{(iii)}$ 
For any set $S$, 
we write $\cP(S)$ for its power set. 
Namely $\cP(S)$ is the set of all subsets of $S$. 
We consider $\cP(S)$ to be a partially ordered set under inclusion.

\sn
$\mathrm{(3)}$ 
{\bf Partially ordered sets}\\
$\mathrm{(i)}$ 
For any elements $a$ and $b$ in a partially ordered set $P$, 
we write $[a,b]$ for the set of all elements $u$ in $P$ 
such that $a\leq u\leq b$. 
We consider $[a,b]$ as a partially ordered subset of $P$ if $a\leq b$ 
and $[a,b]=\emptyset$ if otherwise. 
We often use this notation to any integers $a$ and $b$.\\
$\mathrm{(ii)}$ 
For any non-negative integer $n$ and any positive integer $m$, 
we denote $[0,n]$ and $[1,m]$ by $[n]$ and $(m]$ respectively.\\
$\mathrm{(iii)}$ 
The {\bf trivial ordering} $\leq$ on a set $S$ is defined by 
$x\leq y$ if and only if $x=y$.\\
$\mathrm{(iv)}$ 
An element $x$ in a partially ordered set $P$ is 
{\bf maximal} (resp. {\bf minimal}) if 
for any element $a$ in $P$, 
the inequality $x\leq a$ (resp. $a\leq x$) implies 
the equality $x=a$. 
An element $x$ in a partially ordered set $P$ is 
{\bf maximum} (resp. {\bf minimum}) if 
the inequality $a\leq x$ (resp. $x\leq a$) 
holds for any element $a$ in $P$.\\
$\mathrm{(v)}$ 
We say that a partially ordered set 
$L$ is a {\bf lattice} 
(resp. {\bf $\vee$-complete}, resp. {\bf $\wedge$-complete}, resp. a {\bf complete lattice}) 
if for any elements $x$ and $y$ (resp. non-empty subset $S$) in $L$, 
there exists 
both $\sup\{x,y\}$ and $\inf\{x,y\}$ 
(resp. $\sup S$, resp. $\inf S$, resp. both $\sup S$ and $\inf S$) in $L$.
We write $x \vee y$ and $x\wedge y$ for 
$\sup\{x,y\}$ and $\inf\{x,y\}$ respectively 
and call them the {\bf join} and {\bf meet} 
of $x$ and $y$ respectively. 
We also denote $\sup S$ (resp. $\inf S$) by 
$\underset{u\in S}{\vee} u$ or $\vee S$ 
(resp. $\underset{u\in S}{\wedge} u$ or $\wedge S$). 
In particular, 
if $L$ is $\vee$-complete (resp. $\wedge$-complete) 
the maximum (resp. minimum) element $1_L=\sup L$ 
(resp. $0_L=\inf L$) exists. 
We write $\CLat$ for the 
full subcategory of complete lattices 
in the category of partially ordered sets.

\sn
$\mathrm{(4)}$ 
{\bf Commutative algebra}\\
$\mathrm{(i)}$ 
We write $A^{\times}$ for 
the group of units in $A$.\\
$\mathrm{(ii)}$ 
If $\ff_S={\{f_s\}}_{s\in S}$ is a subset of $A$, 
we write the same symbole $\ff_S$ for the ideal they generate. 
By convention, we set $\ff_{\emptyset}=(0)$.\\ 
$\mathrm{(iii)}$ 
For any ideal $I$ of $A$ and a letter $p$ which is a natural number or $\infty$, 
we let $\cM_A^I(p)$ denote the category of finitely generated 
$A$-modules $M$ of projective dimension $\leq p$ and 
whose supprt are in $V(I)$.
We also put $\cP_A=\cM_A^{(0)}(0)$ and $\cM_A=\cM_A^{(0)}(\infty)$.

\sn
$\mathrm{(5)}$ 
{\bf Category theory}\\
$\mathrm{(i)}$ 
For any category $\cX$, 
we denote the calss of objects in $\cX$ by $\Ob\cX$ and 
for any objects $x$ and $y$ in $\cX$, 
we write $\Hom_{\cX}(x,y)$ (or shortly $\Hom(x,y)$) 
for the class of all morphisms from $x$ to $y$. 
We say a category $\cX$ is {\bf locally small} (resp. {\bf small}) if 
for any objects $x$ and $y$, $\Hom_{\cX}(x,y)$ forms a set 
(resp. if $\cX$ is locall small and $\Ob\cX$ forms a set).\\
$\mathrm{(ii)}$ 
For any categories $\cX$ and $\cY$, 
we denote the (large) category of functors from $\cX$ to $\cY$ 
by $\cY^{\cX}$. 
Here the morphisms between functors from 
$\cX$ to $\cY$ are just natural transformations. 
In particular, we write $\Ar\cX$ for $\cX^{[1]}$ 
the category of morphisms in $\cX$. 
There are canonical two functors 
$\dom_{\cX}$ and $\ran_{\cX}$ from $\Ar\cX$ 
to $\cX$ which send a morphism $f:x\to y$ in $\cX$ to 
$x$ and $y$ respectively. 
There is also a canonical natural transformation $\epsilon_{\cX}:\dom \to \ran$ 
defined by $\epsilon(f):=f$ for any morphism $f:x\to y$ in $\cX$.\\
$\mathrm{(iii)}$ 
For any partially ordered set $P$, 
we regard it as a category $P$ in the natural way. 
Namely, $P$ is a category whose 
class of objects is $P$ and for any elements $x$ and $y$ in $P$, 
$\Hom_P(x,y)$ is the singleton $\{(x,y)\}$ if $x\leq y$ and is 
the emptyset $\emptyset$ if otherwise. 
In particular, 
we regard any set $S$ as a category by the trivial ordering on $S$.\\
$\mathrm{(iv)}$ 
For any category $\cX$, 
we denote the {\bf dual category} of $\cX$ by $\cX^{\op}$. 
Namely $\Ob\cX^{\op}=\Ob\cX$ and for any objects $x$ and $y$ in $\cX$, 
$\Hom_{\cX^{\op}}(x,y):=\Hom_{\cX}(y,x)$. 
Compositions of morphisms in $\cX^{\op}$ is just same as in $\cX$. 
In particular, for any partially ordered set $P$, 
$P^{\op}$ is said to be the {\bf opposite partially ordered set} of $P$.\\
$\mathrm{(v)}$ 
For any category $\cX$, 
we write 
$i_{\cX}$ or shortly $i$ for the class of all isomorphisms in $\cX$.\\
$\mathrm{(vi)}$ 
Let $\cC$ be a category and $\cS$ a subclass of $\Ob\cC$. 
We write the same letter $\cS$ for the full subcategory 
$\cX$ in $\cC$ such that $\Ob\cX=\cS$ and call it 
the {\bf full subcategory} ({\bf in $\cC$}) {\bf spanned by $\cS$}.\\
$\mathrm{(vii)}$ 
Let $\cC \to \cC'$ be a functor and $\cS$ a full subcategory of $\cC'$. 
We write $f^{-1}\cS$ for the full subcategory of $\cC$ 
spanned by $f^{-1}(\Ob\cS)$ 
and call it 
the {\bf pull-back of $S$} ({\bf by $f$}).\\
$\mathrm{(viii)}$ 
Let $\cC$ be a category and $\calD$ 
a full subcategory of $\cC$ and 
$w$ a class of morphisms in $\cC$. 
We write $\calD_{w,\cC}$ or simply $\calD_w$ for 
the full subcategory of $\cC$ consisting of 
those objects $x$ such that 
there is a zig-zag sequence of morphisms in $w$ connecting 
it to an object in $\calD$. 
In particular, 
we write $\calD_{\isom,\cC}$ or simply $\calD_{\isom}$ 
for $\calD_{i_{\cC},\cC}$ and 
call it the {\bf isomorphisms closure of $\calD$} ({\bf in $\cC$}).
We say that a full subcategory $\calD'$ of $\cC$ 
is {\bf tight} if $\calD'_{\isom,\cC}=\calD'$.\\
$\mathrm{(xi)}$ 
We say that 
a morphism $p:y \to x$ in a category $\cX$ is a {\bf retraction} or 
$x$ is a {\bf retraction} of $y$ 
if there exists a morphism 
$i:x \to y$ such that $pi=\id_x$. 
We say that a full subcategory $\calD$ of $\cX$ is 
{\bf closed under retractions} if for 
any objects $x$ and $y$ in $\cX$, 
if $y$ is in $\calD$ and $x$ is a retraction of $y$, 
then $x$ is also in $\calD$. 
We say that a class of morphisms $w$ of $\cX$ 
satisfies 
the {\bf retraction axiom} if 
$w$ is closed under retractions in the morphism category of $\cX$.\\
$\mathrm{(xii)}$ 
A class of morphisms $w$ in a category $\cC$ 
is a {\bf multiplicative system} (resp. satisfies the {\bf saturated axiom}) 
if $w$ 
is closed under finite compositions 
and closed under isomorphisms 
(resp. for a pair of composable morphisms 
$\bullet \onto{f} \bullet \onto{g} \bullet$ in $\cC$, 
if two of $gf$, $g$ and $f$ are in $w$, 
then the other one is also in $w$).\\
$\mathrm{(xiii)}$ 
We mean that 
a {\bf $2$-category} is a category of enriched 
by the category of (small) categories. 
For any objects $x$ and $y$ in a $2$-category $\cX$, 
we write $\HOM_{\cX}(x,y)$ for 
the Hom category from $x$ to $y$.\\
$\mathrm{(xiv)}$ 
A functor $f:\cX \to \cY$ is {\bf conservative} 
if it reflects isomorphisms, namely for any morphism $a:x\to y$ in $\cX$, 
if $fa$ is an isomorphism in $\cY$, then $a$ is an isomorphism in $\cX$.

\sn
$\mathrm{(6)}$ 
{\bf Relative categories}\\
$\mathrm{(i)}$ 
We use the notations of relative categories theroy from \cite{BK12}. 
A {\bf relative category} $\bC=(\cC,w)$ is a pair of 
a category $\cC$ and a class of morphisms $w$ in $\cC$ such that 
$w$ is closed under finite compositions. 
Namely if $\bullet \onto{f} \bullet \onto{g} \bullet$ are composable morphisms in $w$, 
then $gf$ is also in $w$ and $\id_x$ is in $w$ for any object $x$ in $\cC$. 
A {\bf relative functor} between relative categories 
$f:\bC=(\cC,w) \to \bC'=(\cC',w')$ is a functor $f:\cC \to \cC'$ 
such that $f(w)\subset w'$. 
A {\bf relative natural equivalence} $\theta:f \to f'$ 
between relative functors $f$, $f':\bC=(\cC,w) \to \bC'=(\cC',w')$ 
is a natural transformation $\theta:f \to f'$ such that $\theta(x)$ is in $w'$ 
for any object $x$ in $\cC$. 
We let $\RelCat$ (resp. $\RelCat^{+}$) denote the $2$-category of 
essentially small 
relative categories and relative functors and relative natural equivalencs 
(resp. relative categories and relative functors and natural transformations).\\
$\mathrm{(ii)}$ 
Relative functors $f$, $f':\bC \to \bC'$ are {\bf weakly homotopic} 
if there is a zig-zag sequence of ralative natural transformations connecting $f$ to $f'$. 
A relative functor $f:\bC \to \bC'$ is a {\bf homotopy equivalence} 
if there is a relative functor $g:\bC' \to \bC$ 
such that $gf$ and $fg$ are weakly homotopic to identity functors respectively.\\
$\mathrm{(iii)}$ 
Let $\calR$ be a subcategory of $\RelEx$. 
A functor $F$ from $\calR$ to a category $\cX$ is 
({\bf categorical}) {\bf homotopy invariant} if 
for any relative functors $f$, $g:\bC \to \bC'$ such that 
$f$ and $g$ are weakly homotopic, 
we have the equality $F(f)=F(g)$.\\
$\mathrm{(iv)}$ 
Let $\bC=(\cC,w)$ be a relative category 
with a specfic zero object $0$ in $\cC$. 
We say that an object $x$ in $\cC$ is {\bf $w$-trivial} 
if the canonical morphism $0\to x$ is in $w$. 
We write $\cC^w$ for the full subcategory of $w$-trivial objects in $\cC$.

\sn
$\mathrm{(7)}$ 
{\bf Exact categories, Waldhausen categories and algebraic $K$-theory}\\ 
$\mathrm{(i)}$ 
Basically, 
for exact categories, 
we follow the notations in \cite{Qui73} and 
for connective $K$-theory of categories 
with cofibrations and weak equivalences, 
we follow the notations in \cite{Wal85} 
and for non-connective $K$-theory of 
exact categories, Frobenius pairs and complicial exact categories with weak equivalences, 
we follows the notations in \cite{Sch04}, \cite{Sch06} and \cite{Sch11}. 
As a comprehensive reference, 
please see also \cite{Wei12}.\\ 
$\mathrm{(ii)}$ 
We denote 
a cofibration and 
an admissible monomorphism (resp. an admissible epimoprphism) by the arrow 
``$\rinf$" (resp. ``$\rdef$").  
We sometimes denote 
 a cofibration sequence 
$x\overset{i}{\rinf} y \overset{p}{\rdef} z$ 
by $(i,p)$. 
We write the same letter $0$ for a specific zero object 
of a category with cofibrations. 
We assume that an exact functor between 
categories with cofibrations (or exact categories) 
preserves a specfic zero object. 
We denote the $2$-category of essentially small exact categories 
by $\ExCat$.\\
$\mathrm{(iii)}$ 
We call a category with 
cofibrations and weak equivalences 
a {\bf Waldhausen category} and a 
complicial exact category with weak equivalences 
a {\bf complicial Waldhausen category}.\\ 
$\mathrm{(iv)}$ 
For a Waldhausen category $\bX=(\cX,w)$, 
we write $K^W(\bX)$ or $K^W(\cX;w)$ for 
the Waldhausen $K$-theory of  $\bX$. 
We also write $K(\cX)$ for $K(\cX;i)$. 
For any exact category $\cE$ and any complicial Waldhausen category 
$\bC=(\cC,v)$, we write $\bbK^S(\cE)$ and $\bbK^S(\bC)=\bbK^S(\cC;v)$ 
for Schlichting non-connective $K$-theory of $\cE$ and $\bC$ respectively.\\
$\mathrm{(v)}$ 
We say that 
a functor between exact categories 
(resp. categories with cofibrations) 
$f:\cX \to \cY$ {\bf reflects exactness} 
if for a sequence $x \to y \to z$ in $\cX$ 
such that $fx \to fy \to fz$ is an admissible exact sequence 
(resp. a cofibration sequence) in $\cY$, 
$x\to y \to z$ is an admissible exact sequence 
(resp. a cofibration sequence) in $\cX$.\\ 
$\mathrm{(vi)}$ 
For an exact category $\cE$, 
we say that its full subcategory $\cF$ is 
an {\bf exact subcategory} 
(resp. a {\bf strict exact subcategory}) 
if it is an exact category and 
the inclusion functor is exact (and reflects exactness).\\ 
$\mathrm{(vii)}$ 
Notice that as in \cite[p.321, p.327]{Wal85}, 
the concept of subcategories with cofibrations 
(resp. Waldhausen subcategories) is 
stronger than that of exact subcategories. 
Namely we say that $\cC'$ is a subcategory with cofibrations 
of a category with cofibration $\cC$ if 
a morphism in $\cC'$ is a cofibration in $\cC'$ if 
and only if it is a cofibration in $\cC$ and 
the quotient is in $\cC'$ (up to isomorphism). 
That is, the inclusion functor $\cC' \rinc \cC$ is 
exact and reflects exactness. 
For example, 
let $\cE$ be a non-semisimple exact category. 
Then $\cE$ with semi-simple exact structure 
is not 
a subcategory with cofibrations of $\cE$, 
but a exact subcategory of $\cE$.\\ 
$\mathrm{(viii)}$ 
Let $\cE$ be an exact category and $\cF$ a full subcategory of $\cE$. 
We say that $\cF$ is {\bf closed under kernels} 
({\bf of admissible epimorphisms}) 
if for any admissible exact sequence $x \rinf y \rdef z$ in $\cE$ 
if $y$ is 
isomorphic to object in $\cF$, 
then $x$ is also 
isomorphic to an object in $\cF$. (See \cite[II.7.0]{Wei12}).\\ 
$\mathrm{(ix)}$ 
We say that the class of morphisms $w$ 
in an exact category $\cE$ 
satisfies the {\bf cogluing axiom} 
if $(\cE^{\op},w^{\op})$ 
satisfies the gluing axiom.\\ 
$\mathrm{(x)}$ 
A pair of an exact category $\cE$ and a class of morphisms $w$ in $\cE$ is 
said to be a {\bf Waldhausen exact category} 
if $(\cE,w)$ and $(\cE^{\op},w^{\op})$ are Waldhausen categories. 
We let $\WalEx^{\#}$ denote the $2$-subcategory of 
essentially small 
Waldhausen exact categories and exact functors in $\RelCat^{\#}$ 
for $\#\in\{+,\text{nothing}\}$.\\ 
$\mathrm{(xi)}$ 
For a Waldhausen category $(\cC,w)$, 
we write $w(\cC)$ 
if we wish to emphasis that $w$ 
is the class of weak equivalences in $\cC$. 
We sometimes write $\cC$ for $(\cC,w)$ 
when $w$ is the class of all isomorphisms in $\cC$.\\
$\mathrm{(xii)}$ 
(cf. \cite[p.129 Definition 11]{Sch06}). 
Let $\cC$ be a category with cofibrations and $w$ a class of morphisms in $w$. 
We say that $w$ or $(\cC,w)$ satisfies the {\bf factorization axiom} 
(resp. {\bf extensional axiom}) 
if for any morphism $f:x\to y$ in $\cC$, 
there is a cofibration $i:x \to z$ and a morphism 
$a:z \to y$ in $w$ such that $f=ai$ (resp. $w$ is closed under extensions). 
In this case, moreover if $(\cC,w)$ is a relative category or a Waldhausen category, 
then we say that $(\cC,w)$ is a factorization relative category or 
an extensional Waldhausen category and so on respectively.\\
$\mathrm{(xiii)}$ 
A morphism of Waldhausen categories $f:(\cC,w) \to (\cC',w')$ 
is a {\bf $K^W$-equivalence} if 
it induces a homotopy equivalence on Waldhausen $K$-theory.\\
$\mathrm{(xiv)}$ 
Let $\cZ$ be a category with cofibrations and $\cX$, $\cY$ subcategories with cofibrations. 
We write $E(\cX,\cZ,\cY)$ for the category with cofibrations of cofibration sequences 
$x \rinf z \rdef z$ such that $x$ is in $\cX$ and $y$ is in $\cY$. 
There are three exact functors $s_{E(\cX,\cZ,\cY)}$, $m_{E(\cX,\cZ,\cY)}$ and $q_{E(\cX,\cZ,\cY)}$ 
or shortly $s$, $m$ and $q$ 
from $E(\cX,\cZ,\cY)$ to $\cX$, $\cZ$ and $\cY$ 
which send a cofibration sequence $x\rinf z \rdef y$ to $x$, $z$ and $y$ respectively. 
We let $\In_{\cX}^l$ (resp. $\In_{\cY}^r$) denote an exact functor 
from $\cX$ (resp. $\cY$) to $E(\cX,\cZ,\cY)$ 
which sends an object $a$ in $\cX$ (resp. $\cY$) to 
a cofibration sequence $a \overset{\id_a}{\rinf} a \overset{0}{\rdef} 0$ 
(resp. $0\overset{0}{\rinf} a \overset{\id_a}{\rdef} a$). 
If there is a class of morphisms $w$ in $\cZ$, 
then we define the class of morphisms 
$E(w)$ in $E(\cX,\cZ,\cY)$ by $s^{-1}(w)\cap m^{-1}(w)\cap q^{-1}(w)$. 
We write $E(\cZ)$ for $E(\cZ,\cZ,\cZ)$.\\
$\mathrm{(xv)}$ 
A sequence of exact functors $f \onto{A} g\onto{B} h$ 
between exact categories $\cE \to \cE'$ is 
admissible exact if for any object $x$ in $\cE$, 
the sequence $f(x) \onto{A(x)} g(x) \onto{B(x)} h(x)$ 
is an admissible exact sequence in $\cE'$.

\sn
$\mathrm{(8)}$ 
{\bf Triangulated categories}\\
$\mathrm{(i)}$ 
We follows the notations about triangulated category theory for \cite{Kel96} and \cite{Nee01}. 
We denote a triangulated category by $(\cT,\Sigma,\Delta)$ or simply $\cT$ 
where $\cT$ is an additive category, $\Sigma$ is an additive self category equivalence on $\cT$ 
which is said to be the {\bf suspension functor} and 
$\Delta$ is a class of sequences in $\cT$ of the form 
\begin{equation}
\label{equ:sigma triangle}
x \onto{u} y \onto{v} z \onto{w} \Sigma x
\end{equation}
such that $vu=0$ and $wv=0$ which we denote by $(u,v,w)$ and call it 
a ({\bf $\Sigma$}){\bf -exact triangle} and they satisfies the usual Verdier axioms 
from {\bf (TR $1$)} to {\bf (TR $4$)}. 
In the sequence $\mathrm{(\ref{equ:sigma triangle})}$, 
we sometimes write $\Cone u$ for the object $z$. 
A {\bf triangle functor} between triangulated categories 
from $(\cT,\sigma)$ 
to $(\cT',\sigma')$ 
is a pair $(f,\alpha)$ consisting of an additive functor 
$f:\cT \to \cT'$ and 
a natural equivalence 
$\alpha:f\Sigma \to \Sigma'f$ such that 
they preserves exaxt triangles. 
A {\bf triangle natural transformation} 
$\theta:(f,\alpha) \to (g,\beta)$ between 
triangulated functors $(f,\alpha)$, $(g,\beta):(\cT,\Sigma) \to (\cT',\Sigma')$ 
is a natural transformation 
$\theta:f\to g$ such that $(\Sigma'(\theta)) 
\alpha=\beta (\theta(\Sigma))$. 
Let us denote the $2$-category of essentially small triangulated categories 
by $\TriCat$.\\
$\mathrm{(ii)}$ 
Let $(\cT,\Sigma)$ be a triangulated category. 
We say that 
a full subcategory $\calD$ of $\cT$ is 
a {\bf quasi-triangulated subcategory} ({\bf of $\cT$}) 
if $(\calD,\Sigma)$ is a triangulated category and the inclusion functor 
$(\iota,\id_{\Sigma}):\calD \to \cT$ is a triangulated functor. 
We say that a quasi-triangulated subcategory $\calD$ of $\cT$ is 
a {\bf triangulated subcategory} ({\bf of $\cT$}) if 
it is tight. 
Namely an additive full subcategory $\calD$ of $\cT$ is 
a triangulated subcategory if 
$\Sigma^{\pm}(\calD)\subset \calD$ and if 
for any $\Sigma$-exact triangle $x \to y \to z \to \Sigma x$ 
in $\cT$, 
if $x$ and $y$ are in $\calD$, then $z$ is also in $\calD$. 
Assuming the condition $\Sigma^{\pm}(\calD)\subset \calD$, 
the last condition is equivalent to the condition that 
if two of $x$, $y$ and $z$ are in $\calD$, 
then the other one is also in $\calD$. 
We call this condition the 
{\bf two out of three} ({\bf for $\Sigma$-exact triangles}) {\bf axiom}.\\
$\mathrm{(iii)}$ 
We say that 
a triangulated subcategory $\calD$ of $\cT$ is {\bf thick} 
if $\calD$ is closed under direct summand. 
Namely for any objects $x$ and $y$ in $\cT$, 
if $x\oplus y$ is in $\calD$, 
then both $x$ and $y$ are also in $\calD$.\\
$\mathrm{(iv)}$ 
Let $\cT$ be a triangulated category and $\calD$ a full subcategory of $\cT$. 
We write $\calD_{\tri,\cT}$ or shortly $\calD_{\tri}$ 
(resp. $\calD_{\thi,\cT}$ or shortly $\calD_{\thi}$) 
for the smallest 
triangulated subcategory (resp. thick subcategory) contains $\calD$ in $\cT$ 
and call it  {\bf triangulated} (resp. {\bf thick}) {\bf closure of $\calD$} 
({\bf in $\cT$}).\\
$\mathrm{(v)}$ 
Let $f:\cT \to \cT'$ be a triangulated functor 
between triangulated categories. 
The ({\bf triangulated}) {\bf image of $f$} 
$(\im f)_{\tri}$  or shortly $\im f$ 
is the smallest triangulated category 
which contains the full subcategory 
spanned by $f(\Ob\cT)$. 
The ({\bf triangulated}) {\bf cokernel of $f$} 
$(\coker f)_{\tri}$ or shortly $\coker f$ 
is defined by the Verdier quotient $\coker f:=\cT'/\im f$.\\
$\mathrm{(vi)}$ 
(cf. \cite[Definition 1]{Sch06}). 
A sequence of triangulated categories 
$\cT \onto{i} \cT' \onto{p} \cT''$ is {\bf exact} (resp. {\bf weakly exact}) 
if $pi$ is isomorphic to the zero functor, 
$i$ and $p$ induce equivalence of triangulated categories 
$\cT\isoto \Ker p$ and $\cT'/\Ker p\isoto \cT''$. 
(resp. 
$i$ is fully faithful and the induced functor 
$\cT'/\cT \to \cT''$ is {\bf cofinal}. 
The last condition means that it is fully faithful, 
and every object in $\cT''$ is a direct summand of an object of $\cT'/\cT$.)\\
$\mathrm{(vii)}$ 
(\cf \cite[\S 8]{Kel96}). 
Let $(R,\rho):(\cS,\Sigma) \to (\cT,\Sigma')$ 
and $(L,\lambda):\cT \to \cS$ be two triangle functors such that 
$L$ is left adjoint to $R$. 
Let $A:\id_{\cT} \to RL$ and $B:LR\to \id_{\cS}$ 
be adjunction morphisms. 
For any objects $x$ in $\cT$ and $y$ in $\cS$, 
we write $\mu(x,y)$ for the bijection 
$\Hom_{\cS}(Lx,y) \to \Hom_{\cT}(x,Ry)$, 
$f \mapsto (Rf)(Ax)$. 
We say that $(L,\lambda)$ (resp. $(R,\rho)$) is 
{\bf left} (resp. {\bf right}) 
{\bf triangle adjoint} to $(R,\rho)$ (resp. $(L,\lambda)$) 
if the following equivalent conditions hold.\\
$\mathrm{(a)}$ 
$\lambda=(B\Sigma L)(L \rho^{-1} L)(L\Sigma' A)$.\\
$\mathrm{(b)}$ 
$\rho^{-1}=(R\Sigma B)(R\lambda R)(A \Sigma' R)$.\\
$\mathrm{(c)}$ 
$B\Sigma=(\Sigma B)(\lambda R)(L \rho)$.\\
$\mathrm{(d)}$ 
$\Sigma' A=(\rho L)(R\lambda)(B\Sigma')$.\\
$\mathrm{(e)}$ 
$\mu(\Sigma',\Sigma)\Hom(\lambda,\Sigma)\Sigma=
\Hom(\Sigma',\rho^{-1})\Sigma'\mu$. 
Namely for any objects $x$ in $\cT$ and $y$ in $\cS$, 
the diagram below is commutative.
$$
{\footnotesize{\xymatrix{
\Hom_{\cS}(Lx,y) \ar[r]^{\!\!\!\!\!\Sigma} \ar[d]_{\mu(x,y)} & 
\Hom_{\cS}(\Sigma Lx,\Sigma y) \ar[r]^{\Hom(\lambda x,\Sigma y)} & 
\Hom_{\cS}(L\Sigma' x,\Sigma y) \ar[d]^{\mu(\Sigma' x,\Sigma y)}\\
\Hom_{\cT}(x,Ry) \ar[r]_{\!\!\Sigma'} & 
\Hom_{\cT}(\Sigma x,\Sigma' Ry) \ar[r]_{\Hom(\Sigma' x,R\Sigma y)} & 
\Hom_{\cT}(\Sigma' x,R\Sigma y).
}}}$$
\sn
$\mathrm{(9)}$ 
{\bf Chain complexes}\\
$\mathrm{(i)}$ 
For the notations about chain complexes, 
we basically follow in \cite{Wei94}. 
For a chain complex, we use the homological notation. 
Namely a boundary morphisms are degree $-1$. 
For an additive category $\cB$, 
we denote the category of 
bounded complexes on $\cB$ by $\Ch_b(\cB)$. 
There exists the canonical functor 
$j_{\cB}:\cB \to \Ch_b(\cB)$ where 
$j_{\cB}(x)_k$ is $x$ if $k=0$ and $0$ if $k\ne 0$.\\
$\mathrm{(ii)}$ 
Let $f:x \to y$ be a morphism between complexes in $\Ch_b(\cB)$ 
and $k$ an integer. 
We define the complex $x[k]$ and morphism $f[k]:x[k] \to y[k]$ by 
$x[k]_n=x_{n+k}$ and $d^{x[k]}_n={(-1)}^{k}d_{n+k}^x$ and 
$f[k]_n=f_{n+k}$.\\ 
$\mathrm{(iii)}$ 
For a complex $x$ in an additive category $\cB$, 
we define the brutal truncation $\sigma_{\geq k}x$ 
as follows. 
$(\sigma_{\geq k}x)_i$ 
is equal to $0$ 
if $i<k$ and 
$x_i$ if $i\geq k$ 
and we put $\sigma_{\leq k-1}x:=x/\sigma_{\geq k}x$. 
If $\Ker d_{k+1}^x$ 
(resp. $\im d_{k+1}^x$)
exists, 
then we put 
the complex $\tau_{\leq k}x$ 
(resp. $\tau_{\geq k+1}x$) 
as follows. 
$(\tau_{\leq k}x)_n$ 
(resp. $(\tau_{\geq k+1}x)_n$)
is $x_n$ if $n\leq k$ (resp. $n\geq k+2$), is $\Ker d_{k+1}^x$ 
(resp. $\im d_{k+1}^x$) 
if $n=k+1$ 
and $0$ for otherwise.\\ 
$\mathrm{(iv)}$ 
For any chain morphism $f:x \to y$ of complexes in an additive category $\cB$, 
we define the 
{\bf mapping cone} of $f$, $\Cone f$ by the formula 
$(\Cone f)_n=x_{n-1}\oplus y_n$ and $\displaystyle{d^{\Cone f}_n=
\begin{pmatrix} 
-d^x_{n-1} & 0\\
-f_{n-1} & d_n^y
\end{pmatrix}}$.\\
$\mathrm{(v)}$ 
For a pair of integers $a\leq b$, 
let $\Ch_{[a,b]}(\cB)$ be the full subcategory of 
$\Ch_b(\cB)$ consisting of those complexes $z$ 
such that $z_n=0$ unless $n\in [a,b]$. 
For any $z$ in $\Ch_b(\cB)$, we put 
$$\length z:=\min\{b-a;z\in \Ch_{[a,b]}(\cE) \}.$$
and call it the {\bf length of $z$}.\\
$\mathrm{(vi)}$ 
We denote the homotopy category of 
$\Ch_b(\cB)$ by $\calH_b(\cB)$. 
Namely the class of objects of $\calH_b(\cB)$ 
is same as $\Ch_b(\cB)$, 
morphisms of $\calH_b(\cB)$ is 
chain homotopy classes of morphisms 
in $\Ch_b(\cB)$ and the composition of 
morphisms is induced from 
$\Ch_b(\cB)$. 
Therefore there exists the canonical additive functor 
$P_{\cB}:\Ch_b(\cB) \to \calH_b(\cB)$. 
It is well-known that $\calH_b(\cB)$ naturally becomes a triangulated 
category. (cf. \cite[6.2, \S 7]{Kel96}).\\
$\mathrm{(vii)}$ 
A {\bf strictly acyclic complex} in a Quillen exact category is 
a chain complex which decomposed into admissible short exact sequences (see \cite[\S 11]{Kel96}). 
{\bf Acyclic complexes} are chain complexes 
which are chain homotopy equivalent to strictly acyclic complexes. 
A morphism between chain complexes is a {\bf quasi-isomorphism} 
if its mapping cone is an acyclic complex. 
We denote the category of bounded acyclic complexes on a Quillen exact category 
$\cE$ by $\Acy_b(\cE)$. 
It is well-known that 
$P_{\cE}(\Acy_b\cE)$ 
the full subcategory of $\calH_b(\cE)$ spanned 
by $P_{\cE}(\Ob\Acy_b(\cE))$ where $P_{\cE}$ is the canonical 
projection functor $P_{\cE}:\Ch_b(\cE) \to \calH_b(\cE)$, is 
a triangulated subcategory. (cf. \cite[11.3]{Kel96}). 
We define the {\bf bounded derived category of $\cE$}, 
$\calD_b(\cE)$ 
by $\calD_b(\cE):=\calH_b(\cB)/P_{\cE}(\Acy_b(\cE))$.

\section{Triangulated subcategories}
\label{sec:trisubcat}

In this section, 
we will study the lattices structure and the 
functorial behaviour of the set of 
(thick) triangulated subcategories 
of triangulated categories. 
The key proposition \ref{prop:homthmfortricat} says that 
there exists the canonical lattice isomorphism between 
a lattice of (thick) triangulated subcategories of 
a triangulated category and 
that of the quotient triangulated category. 
Utilizing this proposition and the notion of 
{\it factorizable pairs}, we explicitly describe the 
join of a factorizable pair of thick subcategories 
in \ref{cor:veethi} 
and the join of general triangulated subcategories 
in \ref{prop:general vee}. 
Recall the conventions of 
partially ordered sets and 
triangulated categories 
from 
Conventions $\mathrm{(3)}$ and $\mathrm{(8)}$. 
We start by introducing 
useful lemmata~\ref{lem:complete criterion} and \ref{lem:ret in tri cat} 
to study the lattices of triangulated subcategories. 

\begin{lem}
\label{lem:complete criterion}
Let $L$ be a $\vee$-complete {\rm (}resp. $\wedge$-complete{\rm )} lattice 
with the minimum {\rm (}resp. maximum{\rm )} element. 
Then $L$ is a complete lattice.
\end{lem}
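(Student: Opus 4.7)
The plan is to prove the $\vee$-complete case and deduce the $\wedge$-complete case by passing to the opposite partially ordered set. So assume $L$ is $\vee$-complete with minimum element $0_L$. Since we already have suprema of non-empty subsets, the task reduces to producing $\inf S$ for an arbitrary non-empty $S \subseteq L$.

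First I would introduce the set of lower bounds $T := \{y \in L : y \leq x \text{ for all } x \in S\}$. The minimum element $0_L$ lies in $T$, so $T$ is non-empty, and by $\vee$-completeness $m := \sup T$ exists in $L$. The next step is to check that $m$ is itself a lower bound of $S$: indeed, each $x \in S$ is by definition an upper bound for $T$, hence $m = \sup T \leq x$. This places $m$ inside $T$ as its greatest element, and being a lower bound of $S$ that dominates every other lower bound, it is precisely $\inf S$.

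For the $\wedge$-complete case with a maximum element, I would simply apply the previous paragraph to $L^{\op}$, observing that the maximum in $L$ becomes the minimum in $L^{\op}$ and that $\wedge$-completeness in $L$ corresponds to $\vee$-completeness in $L^{\op}$; the existence of $\sup S$ in $L$ then follows from the existence of $\inf S$ in $L^{\op}$.

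No step is a genuine obstacle; the only subtle point is keeping the quantifiers in order, namely that the non-emptiness of $T$ is exactly the reason the hypothesis on the minimum element is needed, and dually for the maximum.
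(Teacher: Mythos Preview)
Your proof is correct and follows essentially the same approach as the paper: both take the set of lower bounds of $S$, use the minimum element to see it is non-empty, and identify its supremum with $\inf S$. Your version is slightly more careful in verifying that $\sup T$ really is $\inf S$, and you handle the dual case via $L^{\op}$ rather than just saying ``similarly,'' but the underlying argument is the same.
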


\begin{proof}[\bf Proof] 
We only prove for a $\vee$-complete lattice $L$. 
To prove for a $\wedge$-complete lattice is similar. 
Let $S\subset L$ be a non-empty subset. 
Then we put 
$$l(S):=\{u\in L;\text{$u\geq x$ for any $x\in S$}\}.$$
Since the minimum element is in $l(S)$, 
$l(S)$ is not the empty set. 
Therefore there exsits the element $\inf S=\sup l(S)$ in $L$. 
\end{proof}

\begin{lem}
\label{lem:ret in tri cat}
For objects $x$ and $y$ in a triangulated category $\cT$, 
if $x$ is a retraction of $y$, 
then $x$ is a direct summand of $y$. 
In particular, 
a triangulated subcategory $\calD$ of $\cT$ is thick 
if and only if it closed under retractions.
\end{lem}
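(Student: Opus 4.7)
My plan is to reduce the first assertion to showing that a certain connecting morphism in a distinguished triangle vanishes, and then derive the second assertion as a direct consequence.

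Suppose $p\colon y \to x$ and $i\colon x \to y$ satisfy $pi=\id_x$. I would first complete $i$ to a $\Sigma$-exact triangle $x \xrightarrow{i} y \xrightarrow{q} z \xrightarrow{w} \Sigma x$ by {\bf (TR 1)}. The goal is to show $w=0$, for once this holds, the triangle splits and gives $y\cong x\oplus z$, exhibiting $x$ as a direct summand of $y$. To prove $w=0$, I would apply the contravariant cohomological functor $\Hom_{\cT}(-,x)$ to the triangle, producing a long exact sequence
\[
\Hom(\Sigma x,x)\xrightarrow{w^{*}}\Hom(z,x)\xrightarrow{q^{*}}\Hom(y,x)\xrightarrow{i^{*}}\Hom(x,x)\xrightarrow{\partial}\Hom(\Sigma^{-1}z,x),
\]
where $\partial$ is induced (up to sign) by $\Sigma^{-1}w$. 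The hypothesis $pi=\id_{x}$ says precisely that $\id_{x}$ lies in the image of $i^{*}$, hence $\partial(\id_{x})=0$; evaluating this in $\Hom(\Sigma^{-1}z,x)\cong\Hom(z,\Sigma x)$ forces $w=0$.

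Next I would use $w=0$ to split the triangle explicitly. Applying $\Hom_{\cT}(z,-)$ to the triangle gives a surjection $q_{*}\colon\Hom(z,y)\twoheadrightarrow\Hom(z,z)$, so a section $s\colon z\to y$ with $qs=\id_{z}$ exists. Then the morphism $(i,s)\colon x\oplus z\to y$ is an isomorphism: a standard computation using $pi=\id_x$, $qs=\id_z$, and $qi=0$ shows that the matrix $\bigl(\begin{smallmatrix}p\\ q-qip\end{smallmatrix}\bigr)\colon y\to x\oplus z$ (or, more cleanly, the unique lift produced by the split triangle) is its two-sided inverse. This yields the desired direct sum decomposition $y\cong x\oplus z$.

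For the ``in particular'' statement, it suffices to verify that for a triangulated subcategory $\calD$ of $\cT$ closure under retractions is equivalent to closure under direct summands. The forward direction follows immediately from the first part: if $y\in\calD$ and $p\colon y\to x$ is a retraction, then $x$ is a direct summand of $y$, hence $x\in\calD$ by thickness. Conversely, if $x\oplus y\in\calD$, then $x$ is a retraction of $x\oplus y$ via the canonical inclusion and projection, so closure under retractions supplies $x\in\calD$, and symmetrically $y\in\calD$.

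The only step with any content is the vanishing of $w$; the rest is the formal splitting lemma for triangles with zero connecting morphism and a bookkeeping argument for the equivalence of the two closure conditions. I do not anticipate a genuine obstacle, since the triangulated axioms suffice and no enhancement (such as idempotent completeness of $\cT$) is needed: the retraction data $(i,p)$ already produces the complement $z$ as a cone.
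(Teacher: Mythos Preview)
Your argument is correct and follows the same overall strategy as the paper: show that the connecting morphism in the triangle on $i$ vanishes, and then split. The execution differs slightly. The paper builds a morphism from the triangle $x\xrightarrow{i}y\xrightarrow{q}z\xrightarrow{r}\Sigma x$ to the split triangle $x\to x\oplus z\to z\xrightarrow{0}\Sigma x$ using {\bf (TR3)} with middle map $\begin{pmatrix}p\\q\end{pmatrix}$; commutativity of the last square immediately forces $r=0$, after which one may take $h=\id_z$ and invoke the five lemma to conclude that $\begin{pmatrix}p\\q\end{pmatrix}$ is an isomorphism. You instead deduce $w=0$ from the long exact sequence of $\Hom(-,x)$, then produce a section $s$ of $q$ and argue that $(i,s)$ is an isomorphism. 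Both routes are standard; the paper's is a touch more economical since one diagram does all the work. One small correction: your proposed inverse $\begin{pmatrix}p\\q-qip\end{pmatrix}=\begin{pmatrix}p\\q\end{pmatrix}$ is not literally the two-sided inverse of $(i,s)$ unless $ps=0$ (the product is $\begin{pmatrix}\id_x & ps\\0 & \id_z\end{pmatrix}$), but your parenthetical five-lemma route---comparing the split triangle to the original via $(i,s)$ and $\id$'s on the ends---fixes this cleanly. Your treatment of the ``in particular'' clause is fine and matches the paper's intent.
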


\begin{proof}[\bf Proof]
Let $i:x \to y$ and $p:y\to x$ be morphisms such that 
$pi=\id_x$. 
Let us consider the diagram of $\Sigma$-exact triangles below 
$${\footnotesize{\xymatrix{
x \ar[r]^i \ar[d]_{\id_x} & y \ar[d]_{\begin{pmatrix} p\\ q\end{pmatrix}} 
\ar[r]^q & z \ar@{-->}[d]^h \ar[r]^r & \Sigma x \ar[d]^{\id_x}\\ 
x \ar[r]_{\begin{pmatrix}\id_x\\ 0 \end{pmatrix}} & 
x\oplus z \ar[r]_{\begin{pmatrix}0 & \id_z \end{pmatrix}} 
& z \ar[r]_0 & \Sigma x  .
}}}$$
By the axiom of triangulated categories, 
there exists a morphism $h$ 
which makes diagram above commutative. 
Therefore it turns out that $r=0$. 
Hence we can take $h=\id_z$. 
Now by the five lemma of $\Sigma$-exact triangles, 
we learn that 
$\displaystyle{\begin{pmatrix}p\\ q\end{pmatrix}:
y \to x\oplus z}$ 
is an isomorphism. 
\end{proof}

\begin{df}
\label{nt:Tridf}
Let $\cT$ be an essentially small triangulated category and 
$\calL$ a triangulated subcategory of $\cT$. 
We write $V_{\emptyset}(\calL)$ or $V(\calL)$ (resp. $V_{\thi}(\calL)$) 
for the set of all (thick) triangulated subcategories 
which contains $\calL$. 
In particular, we put 
$\Tri_{\#}(\cT):=V_{\#}(\{0\})$ 
where $\#=\emptyset$ or $\thi$. 
Obviously $V_{\#}(\calL)$ is 
a partially ordered set with the usual inclusion order. 
\end{df}

\begin{lemdf}
\label{lemdf:lattice functor Tri}
Let $\cT$ and $\cT'$ be 
essentially small triangulated categories and 
$f:\cT \to \cT'$ a triangulated functor. 
Then\\
$\mathrm{(1)}$ 
For any triangulated subcategory $\cN$ of $\cT$, 
$V_{\#}(\cN)$ ($\#=\emptyset$ or $\thi$) is a complete lattice. 
We denote the join and the meet in $V_{\#}(\cN)$ by 
$\vee_{\#}$ and $\wedge_{\#}$ respectively.\\
$\mathrm{(2)}$ 
For any $\cN\in\Tri_{\#}(\cT')$, 
$f^{-1}\cN$ is in $\Tri_{\#}(\cT)$.\\
$\mathrm{(3)}$ 
The association 
$\Tri_{\#}(f):\Tri_{\#}(\cT') \to \Tri_{\#}(\cT),\ \cN \mapsto f^{-1}\cN$ 
is an order preserving map.\\
In particular, there exist the functors 
$$\Tri_{\#}:\TriCat^{\op} \to \CLat\ \ (\text{$\#=\emptyset$ or $\Tri$}).$$
\end{lemdf}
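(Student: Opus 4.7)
The plan is to establish the three assertions in turn, relying on Lemma~\ref{lem:complete criterion} for the lattice structure and on routine verification of the triangulated subcategory axioms using the natural equivalence $\alpha\colon f\Sigma \to \Sigma' f$ coming from the triangle functor $f$.

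For~(1), I would show that $V_\#(\cN)$ is closed under arbitrary intersections. Let $\{\calL_i\}_{i\in I}$ be a non-empty family in $V_\#(\cN)$ and set $\calL:=\bigcap_i \calL_i$. Since each $\calL_i$ contains $\cN$, so does $\calL$. Closure under $\Sigma^{\pm 1}$ and the two-out-of-three axiom for $\Sigma$-exact triangles are visibly preserved under intersection, as is tightness (every isomorphism-closed condition is), and for the thick case closure under direct summands is preserved as well; so $\calL\in V_\#(\cN)$. Thus $V_\#(\cN)$ is $\wedge$-complete. Its maximum element is $\cT$ itself, so Lemma~\ref{lem:complete criterion} (applied to the opposite of the standing statement, which is the $\wedge$-complete version) delivers that $V_\#(\cN)$ is a complete lattice; concretely, joins are computed by $\vee_\# S = \inf\{\calM\in V_\#(\cN); \calM\supset \bigcup S\}$.

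For~(2), let $\cN\in\Tri_\#(\cT')$. The full subcategory $f^{-1}\cN$ is tight because $\cN$ is: if $x\cong y$ in $\cT$ with $y\in f^{-1}\cN$ then $fx\cong fy\in\cN$, whence $fx\in\cN$. It is closed under $\Sigma^{\pm 1}$ because the natural isomorphism $\alpha\colon f\Sigma\isoto\Sigma' f$ transports $\Sigma fx\in\cN$ back to $f\Sigma x\in\cN$, and similarly for $\Sigma^{-1}$. For the two-out-of-three axiom, any $\Sigma$-exact triangle $x\to y\to z\to \Sigma x$ in $\cT$ is carried by $f$ to a $\Sigma'$-exact triangle in $\cT'$, so if two of $fx,fy,fz$ lie in $\cN$ the third does too, and therefore the corresponding object of $\{x,y,z\}$ lies in $f^{-1}\cN$. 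In the thick case, the retraction characterization of thickness from Lemma~\ref{lem:ret in tri cat} together with the fact that $f$ preserves retractions (being an additive functor) shows that $f^{-1}\cN$ is thick whenever $\cN$ is.

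For~(3), inclusion-preservation of $f^{-1}$ is immediate: $\cN\subset\cN'$ implies $f^{-1}\cN\subset f^{-1}\cN'$. The functoriality claim then follows from the identity $(gf)^{-1}\cN=f^{-1}(g^{-1}\cN)$ and $(\id)^{-1}\cN=\cN$, which gives a contravariant functor $\Tri_\#\colon\TriCat^{\op}\to\CLat$; since morphisms in $\CLat$ are just order-preserving maps between complete lattices, no further preservation of joins or meets has to be checked. I expect no serious obstacle here; the only point demanding care is ensuring that the closure conditions defining triangulated (resp. thick) subcategories pass through the pull-back, and this is handled uniformly by the natural equivalence $\alpha$ together with Lemma~\ref{lem:ret in tri cat}.
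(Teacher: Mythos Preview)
Your proposal is correct and follows essentially the same approach as the paper: show $V_\#(\cN)$ is $\wedge$-complete with maximum element $\cT$ and invoke Lemma~\ref{lem:complete criterion}, then verify (2) and (3) directly. The paper's own proof simply declares (1) obvious (noting $\wedge_\#$ is intersection) and (2), (3) ``straightforward''; you have just spelled out the routine checks in more detail.
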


\begin{proof}[\bf Proof] 
Obviously $V_{\#}(\cN)$ has the maximum element $\cT$ and 
is $\wedge_{\#}$-complete. 
(The operation $\wedge_{\#}$ is just the intersection). 
Therefore by \ref{lem:complete criterion}, it is complete. 
Assertions $\mathrm{(2)}$ and $\mathrm{(3)}$ are straightforwards. 
\end{proof}

\sn
In this section, from now on, 
let $\cT$ be a triangulated category. 

\begin{prop}
\label{prop:homthmfortricat}
For a thick subcategory $\calL$ 
of $\cT$, 
the canonical projection 
$\pi:\cT \to \cT/\calL$ induces the isomorphism 
$$\Tri_{\#}(\pi):\Tri_{\#}(\cT/\calL)\isoto V_{\#}(\calL),\ \ 
\cN\mapsto \pi^{-1}\cN$$
where $\#=\emptyset$ or $\thi$.
\end{prop}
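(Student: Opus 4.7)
The plan is to establish the lattice isomorphism by explicitly constructing an inverse to $\Tri_\#(\pi)$. Since the Verdier quotient $\cT/\calL$ shares its class of objects with $\cT$, I define a candidate inverse $\Phi:V_\#(\calL)\to\Tri_\#(\cT/\calL)$ by letting $\Phi(\cM)$ be the isomorphism closure in $\cT/\calL$ of the full subcategory spanned by the image $\pi(\Ob\cM)$. The bulk of the proof then reduces to the following Key Lemma: for any $\cM\in V(\calL)$ and any object $x$ of $\cT$, if $\pi(x)\cong\pi(m)$ in $\cT/\calL$ for some $m\in\cM$, then already $x\in\cM$.

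To establish the Key Lemma, I would represent an isomorphism $\pi(x)\isoto\pi(m)$ by a roof $x\xleftarrow{f} x'\xrightarrow{g} m$ in $\cT$; by the calculus of fractions together with the fact that $\calL$ is thick, both $\Cone f$ and $\Cone g$ can be taken to lie in $\calL$. From the $\Sigma$-exact triangle $x'\to m\to\Cone g\to\Sigma x'$, the inclusions $m\in\cM$ and $\Cone g\in\calL\subset\cM$, and the two out of three axiom for triangulated subcategories (after a rotation), one obtains $x'\in\cM$. Applying the same reasoning to the triangle $x'\to x\to\Cone f\to\Sigma x'$ then yields $x\in\cM$.

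Granted the Key Lemma, the rest is bookkeeping. First, $\Phi(\cM)$ is a triangulated subcategory of $\cT/\calL$: every $\Sigma$-exact triangle in $\cT/\calL$ is isomorphic to the image of one from $\cT$, and whenever two vertices of such a lifted triangle lie in $\pi(\Ob\cM)$ the Key Lemma places the corresponding objects back in $\cM$, so the third vertex also lies in $\pi(\Ob\cM)$. If $\cM$ is additionally thick, then $y_1\oplus y_2\in\Phi(\cM)$ implies $y_1\oplus y_2\cong\pi(m)$ for some $m\in\cM$, hence $y_1\oplus y_2\in\cM$ by the Key Lemma, and thickness of $\cM$ gives $y_1,y_2\in\cM$ (using Lemma~\ref{lem:ret in tri cat}); so $\Phi$ restricts properly to the thick case.

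Finally, $\pi^{-1}\Phi(\cM)=\cM$ is precisely the content of the Key Lemma, while $\Phi(\pi^{-1}\cN)=\cN$ holds because $\cN$ is already closed under isomorphisms in $\cT/\calL$. Both $\Phi$ and $\Tri_\#(\pi)$ visibly preserve inclusions, so they furnish mutually inverse order isomorphisms, hence isomorphisms of complete lattices by Lemma-Definition~\ref{lemdf:lattice functor Tri}. The one non-trivial point is the Key Lemma, where one must be scrupulous about the interplay between thickness of $\calL$, the calculus of fractions presentation of morphisms in $\cT/\calL$, and the two out of three axiom.
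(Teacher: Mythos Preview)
Your proposal is correct and follows essentially the same approach as the paper: both construct the inverse by sending $\cM\in V_\#(\calL)$ to the isomorphism closure of $\pi(\Ob\cM)$, and the heart of both arguments is precisely your Key Lemma, which the paper proves as its Step~4 via the same roof calculation. The only cosmetic difference is organizational---you isolate the Key Lemma first and invoke it throughout, whereas the paper verifies closure under $\Sigma^{\pm 1}$ and cones directly (Steps~2--3) before establishing the Key Lemma; your appeal to the fact that every exact triangle in $\cT/\calL$ lifts to one in $\cT$ is exactly what the paper's Step~3 unwinds by hand.
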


\begin{proof}[\bf Proof]
The proof is carried out in several steps.

\begin{para}[Step 1]
\label{para:homthmfortricatproof1}
We will construct the 
inverse map of $\Tri_{\#}(\pi)$. 
To do so, 
we need to prove that 
for any (thick) triangulated subcategory 
$\cN$ in $V_{\#}(\calL)$, 
$\pi(\cN)$ the isomorphisms closure 
of the full subcategory spanned by 
$\pi(\Ob\cN)$ is a (thick) triangulated subcategory. 
Then the association $\cN \mapsto \pi(\cN)$ is the 
desired inverse map.
\end{para}

\begin{para}[Step 2]
\label{para:homthmfortricatproof2}
First we will prove that for any object $x$ 
in $\pi(\cN)$, $\Sigma x$ and $\Sigma^{-1}x$ are 
in $\pi(\cN)$ again. 
By definition, 
there exists an object $y$ in $\cN$, 
$x$ is isomorphic to $\pi y$ in $\cT/\calL$. 
Therefore we have the isomorphisms 
$$\Sigma^{\pm 1}x \isoto \Sigma^{\pm 1}\pi y\isoto \pi\Sigma^{\pm 1}y$$
with $\Sigma^{\pm 1}y \in \cN$. 
Hence we get the assertion. 
\end{para}

\begin{para}[Step 3]
\label{para:homthmfortricatproof3}
Next we prove that 
for any $\Sigma$-exact triangle 
$$x \onto{u} y \to z \to \Sigma x$$ 
in $\cT/\calL$, if $x$ and $y$ are in $\pi(\cN)$, 
then $z$ is also in $\pi(\cN)$. 
We represent the corresponding 
objects of $x$ and $y$ in $\cT$ by 
the same letters $x$ and $y$ respectively 
and shall assume that 
both $x$ and $y$ are in $\cN$. 
The morphism $u$ is represented by a morphisms 
$x \overset{s_u}{\leftarrow} w \overset{f_u}{\to} y$ in 
$\cT$ with $\Cone s_u \in \calL$. 
Since $\calL$ is contained in $\cN$, 
by the two out of three axiom, 
we learn that $w$ is in $\cN$ and therefore 
by the two out of three axiom again, 
$\Cone f_u$ is also in $\cN$. 
Since $z$ is isomorphic to $\pi(\Cone f_u)$, 
we notice that $z$ is in $\pi(\cN)$. 
\end{para}

\begin{para}[Step 4]
\label{para:homthmfortricatproof4}
We will prove that for any object $x$ in $\cT$, 
$\pi(x)$ is in $\pi(\cN)$ 
if and only if $x$ is in $\cN$. 
Let us assume that $\pi(x)$ is in $\pi(\cN)$. 
Then there exists an object $y$ in $\cN$ such that $\pi(y)$ 
is isomorphic to $\pi(x)$. 
The isomorphism between $\pi(x)$ and $\pi(y)$ in $\cT/\calL$ 
is represented by morphisms $x \overset{s}{\leftarrow} z \onto{t} y$ 
with $\Cone s$, $\Cone t\in \calL$ ($\subset \cN$). 
Therefore by the two out of three axiom, 
$z$ and $x$ are in $\cN$. 
The converse assertion is trivial.
\end{para}

\begin{para}[Step 5]
\label{para:homthmfortricatproof5}
We prove that if $\cN$ is thick, 
then $\pi(\cN)$ is also thick. 
We just check that for any objects $x$ and $y$ in $\cT$ 
if $\pi(x)\oplus \pi(y)$ is in $\pi(\cN)$, 
then $\pi(x)$ and $\pi(y)$ are in $\pi(\cN)$. 
By \ref{para:homthmfortricatproof4}, $x\oplus y$ is in $\cN$. 
Therefore we get the assertion.
\end{para}

\begin{para}[Step 6]
\label{para:homthmfortricatproof6}
We prove $\Tri_{\#}(\pi)$ and $\pi$ are 
inverse functors in each other. 
First we prove that 
for any $\cN$ in $\Tri_{\#}(\cT)$, 
we have the equality $\Tri_{\#}(\pi)(\pi(\cN))=\cN$. 
For any object $x$ in $\Tri_{\#}(\pi)(\pi(\cN))$, 
$\pi(x)$ is in $\pi(\cN)$ and 
therefore by \ref{para:homthmfortricatproof4}, 
$x$ is in $\cN$. Hence we get the result.
Finally we will prove that for any $\cN'$ in $\Tri_{\#}(\cT/\calL)$, 
we have the equality $\pi(\Tri_{\#}(\pi)(\cN'))=\cN'$. 
For any object $y$ in $\cT$ such that 
$\pi(y)$ is in $\pi(\Tri_{\#}(\pi)(\cN'))$, 
by \ref{para:homthmfortricatproof4}, $y$ is in $\Tri_{\#}(\pi)(\cN')$ 
and this is equivalent to the condition that $\pi(y)$ is 
in $\cN'$. 
\end{para}
\end{proof}

\begin{df}[\bf Factorizable pair] 
\label{df;factorizable pair} 
Let $\cM$ and $\cN$ be triangulated subcategories of $\cT$. 
We say that the ordered pair $(\cN,\cM)$ 
is {\bf factorizable} ({\bf in $\cT$}) if 
any morphism from an object $x$ in $\cN$ to 
an object $y$ in $\cM$ admits a factorization 
$x \to z \to y$ with $z\in\cN\cap \cM$. 
\end{df}

\begin{lem}[\bf Quotient of factorizable pairs]
\label{lem:quot of factorizable pair} 
Let $(\cN,\cM)$ be a factorizable pair in $\cT$, 
then $({(\cN/\cN\cap\cM)}_{\isom},{(\cM/\cN\cap\cM)}_{\isom})$ is 
factorizable in $\cT/\cN\cap\cM$. 
Namely any morphism in $\cT/\cN\cap\cM$ from an object in 
$\cN/\cN\cap\cM$ to an object in $\cM/\cN\cap\cM$ 
is the zero morphism.
\end{lem}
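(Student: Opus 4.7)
The plan is to show directly that any morphism in $\cT/\cN\cap\cM$ from (the image of) an object of $\cN$ to (the image of) an object of $\cM$ vanishes; the factorizability assertion then follows trivially, since the zero morphism factors through the zero object, which belongs to the intersection of $(\cN/\cN\cap\cM)_{\isom}$ and $(\cM/\cN\cap\cM)_{\isom}$.

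Let $\pi:\cT \to \cT/\cN\cap\cM$ denote the projection, fix $x\in \cN$, $y\in \cM$, and consider a morphism $\varphi:\pi(x)\to \pi(y)$. By the construction of the Verdier quotient, $\varphi$ is represented by a roof $x \overset{s}{\leftarrow} w \overset{f}{\to} y$ with $\Cone s \in \cN\cap\cM$. The plan is to reduce everything to a single application of the factorizability hypothesis for $(\cN,\cM)$.

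First I would apply the two out of three axiom to the $\Sigma$-exact triangle $w \to x \to \Cone s \to \Sigma w$: because $x\in\cN$ and $\Cone s\in \cN\cap\cM\subset \cN$, we obtain $w\in\cN$. Next, since $f:w\to y$ is a morphism from an object of $\cN$ to an object of $\cM$, factorizability of $(\cN,\cM)$ in $\cT$ yields a factorization $w \overset{a}{\to} z \overset{b}{\to} y$ with $z\in \cN\cap\cM$. Applying $\pi$, the object $\pi(z)$ is isomorphic to $0$ in $\cT/\cN\cap\cM$, so $\pi(f)=\pi(b)\pi(a)=0$, and hence the roof representing $\varphi$ is equivalent to the zero roof. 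Thus $\varphi=0$, which is the desired conclusion.

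I do not expect a serious obstacle: the whole argument is a bookkeeping exercise with roofs in Verdier quotients, using only the two out of three axiom (which holds because $\cN$ is a triangulated subcategory containing $\cN\cap\cM$) together with the hypothesis on $(\cN,\cM)$. The only point that requires a small amount of care is to notice that one must pass from the given morphism $\varphi$ to its numerator $f:w\to y$, and that the two out of three axiom is what guarantees $w\in\cN$, so that the factorizability hypothesis is actually applicable.
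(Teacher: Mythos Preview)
Your proof is correct and follows essentially the same route as the paper's own proof: represent the morphism by a roof $x \overset{s}{\leftarrow} w \overset{f}{\to} y$ with $\Cone s\in\cN\cap\cM$, use the two out of three axiom to conclude $w\in\cN$, and then apply the factorizability hypothesis to $f$ to see it factors through an object of $\cN\cap\cM$, hence becomes zero in the quotient. The only cosmetic difference is that the paper first records, via Proposition~\ref{prop:homthmfortricat}, that ${(\cN/\cN\cap\cM)}_{\isom}$ and ${(\cM/\cN\cap\cM)}_{\isom}$ are indeed triangulated subcategories of $\cT/\cN\cap\cM$, which is implicit in your reduction to objects of the form $\pi(x)$ and $\pi(y)$.
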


\begin{proof}[\bf Proof] 
First notice that by \ref{prop:homthmfortricat}, 
${(\cN/\cN\cap\cM)}_{\isom}$ and ${(\cM/\cN\cap\cM)}_{\isom}$ are 
triangulated subcategories of $\cT/\cN\cap\cM$. 
For any morphism from an object $x$ in $\cN/\cN\cap\cM$ 
to an object $y$ in $\cM/\cN\cap\cM$ is 
written by $x \overset{s}{\leftarrow} z \overset{f}{\to} y$ 
with $\Cone s\in\cN\cap\cM$. 
Then we notice that 
$z$ is in $\cM$ and therefore 
$z \onto{f} y$ admits a factorization 
$z \to w \to y$ with $w\in\cN\cap\cM$. 
\end{proof}

\begin{prop}
\label{prop:perpendicular subcategories}
Let $\cM$ and $\cN$ be triangulated subcategories 
of $\cT$. 
Let us assume that any morphism from an object in $\cM$ to 
an object in $\cN$ is the zero morphism. 
Then\\
$\mathrm{(1)}$ 
The composition 
$$\cM \rinc \cT \onto{Q} \cT/\cN$$
is fully faithful where $Q$ is the canonical quotient functor.\\
$\mathrm{(2)}$ 
$\cM_{\isom,\cT/\cN}$ is a triangulated subcategory in $\cT/\cN$.\\
$\mathrm{(3)}$ 
Moreover if $\cM$ is a thick subcategory in $\cT$, 
then $\cM_{\isom,\cT/\cN}$ is a thick subcategory 
in $\cT/\cN$. 
\end{prop}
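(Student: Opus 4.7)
The plan is to establish part (1) directly from the hypothesis $\Hom_\cT(\cM,\cN)=0$ via a triangle-splitting trick, and then bootstrap to parts (2) and (3) by transferring triangulated structure through the quotient.

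For fullness in part (1), represent a morphism $\phi\colon x\to y$ in $\cT/\cN$ with $x,y\in\cM$ by a left fraction $x\overset{s}{\leftarrow}z\overset{f}{\to}y$ where $\Cone(s)\in\cN$, and complete $s$ to a distinguished triangle $z\overset{s}{\to}x\to\Cone(s)\to\Sigma z$ in $\cT$. The middle map $x\to\Cone(s)$ lies in $\Hom_\cT(\cM,\cN)$ and so vanishes by hypothesis; arguing as in the proof of Lemma~\ref{lem:ret in tri cat} (applying $\Hom(x,-)$ shows that $s$ admits a section in $\cT$), the triangle splits, giving $z\cong x\oplus\Sigma^{-1}\Cone(s)$ under which $s$ is identified with the projection onto $x$. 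Hence $\phi$ is represented by the composition $x\hookrightarrow z\overset{f}{\to}y$, a genuine morphism in $\cT$ between objects of $\cM$. Faithfulness follows symmetrically: if $\psi\in\Hom_\cT(x,y)$ with $x,y\in\cM$ vanishes in $\cT/\cN$, choose $s\colon z\to x$ with $\Cone(s)\in\cN$ and $\psi s=0$; the same splitting $z\cong x\oplus n$ with $n\in\cN$ forces $\psi=0$ because $s$ is the projection.

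For part (2), closure under $\Sigma^{\pm1}$ is inherited directly from $\cM$. Given a distinguished triangle $x\to y\to z\to\Sigma x$ in $\cT/\cN$ with $x,y\in\cM_{\isom,\cT/\cN}$, replace $x,y$ by objects of $\cM$ up to isomorphism, lift the first map to $\cT$ via part (1), and complete it to a distinguished triangle $x\to y\to w\to\Sigma x$ in $\cT$. Since $\cM$ is triangulated in $\cT$, $w\in\cM$, and the (non-canonical) uniqueness of the third vertex of a distinguished triangle identifies $z\cong w$ in $\cT/\cN$, placing $z\in\cM_{\isom,\cT/\cN}$.

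Part (3) is the main obstacle, since direct summands in $\cT/\cN$ are controlled by idempotent endomorphisms that need not split in a general triangulated category. Suppose $u\oplus v\cong m\in\cM$ in $\cT/\cN$ and let $\bar e\in\mathrm{End}_{\cT/\cN}(m)$ be the idempotent projecting onto the $u$-summand. By full faithfulness from part (1), $\bar e$ lifts uniquely to an idempotent $\tilde e\in\mathrm{End}_\cT(m)$. To extract $u$ as an object of $\cM_{\isom,\cT/\cN}$ one splits $\tilde e$ inside $\cT$—either directly, when $\cT$ is idempotent complete, or after passing to an idempotent completion in which thickness of $\cM$ within $\cT$ is preserved—obtaining $m\cong m_1\oplus m_2$ with $\tilde e$ the projection onto $m_1$. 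Thickness of $\cM$ then places $m_1\in\cM$, and the image of $m_1$ in $\cT/\cN$ is canonically isomorphic to $u$; the symmetric argument applied to $\id_m-\tilde e$ delivers $v\in\cM_{\isom,\cT/\cN}$.
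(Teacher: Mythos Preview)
Your treatments of (1) and (2) are correct and follow the paper's proof; the splitting of the triangle $z\to x\to\Cone(s)$ via $\Hom_\cT(\cM,\cN)=0$ is exactly the mechanism the paper uses, and your lifting-and-completing argument for (2) is the same as the paper's.

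Part (3), however, has a genuine gap. The lift of $\bar e$ to an idempotent $\tilde e\in\Hom_\cT(m,m)$ is fine, but the fallback to an idempotent completion $\hat\cT$ does not yield the conclusion. The summands $m_1,m_2$ produced there lie in $\hat\cT$, not in $\cT$; the hypothesis that $\cM$ is thick \emph{in $\cT$} says nothing about summands formed in $\hat\cT$, so at best $m_1\in\hat\cM$. There is no functor $\hat\cT\to\cT/\cN$ along which to transport $m_1$, and passing instead to $\hat\cT/\hat\cN$ would only give $u\in\hat\cM_{\isom,\hat\cT/\hat\cN}$, not the required $u\in\cM_{\isom,\cT/\cN}$. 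The underlying obstruction is real: an idempotent on $m\in\cM$ that happens to split in $\cT/\cN$ need not split in $\cT$, and nothing in your argument bridges that gap.

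The paper avoids idempotents and works entirely inside $\cT$. It lifts the isomorphism $Q(u)\isoto Q(x\oplus y)$ (with $u\in\cM$) to an actual morphism $c\colon u\to x\oplus y$ in $\cT$ with $\Cone c\in\cN$, again using the splitting trick from (1). Applying the $3\times 3$-lemma to $c$ together with the split triangle $x\rinf x\oplus y\rdef y$ produces $z:=\Sigma^{-1}\Cone(\Pr_y c)$, a triangle $z\to u\to y$ in $\cT$, and a map $d\colon z\to x$ with $\Cone d\isoto\Cone c\in\cN$; hence $Q(z)\isoto Q(x)$ in $\cT/\cN$. Membership $z\in\cM$ is then argued from the triangle $z\to u\to y$ and thickness of $\cM$. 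The structural point, and the contrast with your approach, is that $z$ is an honest object of $\cT$, so thickness of $\cM$ in $\cT$ is the operative hypothesis and no completion is invoked.
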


\begin{proof}[\bf Proof] 
$\mathrm{(1)}$ 
Let $x$ and $y$ be objects in $\cM$. 
We prove that any morphism in $\Hom_{\cT/\cN}(x,y)$ is 
represented by a morphism in 
$\Hom_{\cT}(x,y)$ uniquely. 
Let $x \overset{s}{\leftarrow} z \overset{f}{\to} y$ 
be a morphism from $x$ to $y$ in $\cT/\cN$. 
Then since $\Cone s$ is in $\cN$, 
by the assumption, $s$ is an isomorphism. 
This means that 
we have the equality 
$(x \overset{s}{\leftarrow} z \overset{f}{\to} y)=
(x \overset{\id_x}{\leftarrow} z \overset{fs^{-1}}{\to} y)$. 

\sn
$\mathrm{(2)}$ 
First we prove that 
$\cM_{\isom,\cT/\cN}$ is closed under $\Sigma^{\pm}$. 
For any object $x$ in $\cM_{\isom,\cT/\cN}$, 
there are an object $y$ in $\cM$ and 
an isomorphism $x\isoto Q(y)$ in $\cT/\cN$. 
Then we have the isomorphisms 
$$\Sigma^{\pm}x\isoto \Sigma^{\pm}Qy \isoto Q\Sigma^{\pm} y.$$
Since $\Sigma^{\pm}y$ is in $\cM$, 
we learn that $\Sigma^{\pm}x$ is in $\cM_{\isom,\cT/\cN}$. 
Second we prove that 
for any $\Sigma$-exact triangle 
$x \to y \to z \to \Sigma x$ in $\cT/\cN$, 
if $x$ and $y$ are in $\cM_{\isom,\cT/\cN}$, 
then $z$ is also in $\cM_{\isom,\cT/\cN}$. 
Then there are objects $x'$ and $y'$ in $\cM$ such that 
$Q(x')\isoto x$ and $Q(y')\isoto y$ in $\cT/\cN$. 
Then by $\mathrm{(1)}$, 
there exists the morphism 
$u:x' \overset{s_u}{\leftarrow} z' \onto{f_u} y'$ 
in $\cT/\cN$ 
which makes the diagram below commutative. 
$${\footnotesize{\xymatrix{  
x \ar[r] \ar[d]_{\wr} & y \ar[r] \ar[d]_{\wr} & 
z \ar[r] \ar[d]^{\wr} & \Sigma x \ar[d]^{\wr}\\
Q(x') \ar[r]_u & Q(y') \ar[r] & Q(\Cone f_u) \ar[r] & Q(x')  .
}}}$$
Therefore we have an isomorphism $Q(\Cone f_u)$ 
and $z$, 
and it turns out that $z$ is in $\cM_{\isom,\cT/\cN}$. 

\sn
$\mathrm{(3)}$ 
Let $x$ and $y$ be objects in $\cT$ and let us 
assume that $Q(x\oplus y)$ is in $\cM_{\isom,\cT/\cN}$. 
We will prove that $Q(x)$ is in $\cM_{\isom,\cT/\cN}$. 
Then there are an object $u$ in $\cM$ and 
an isomorphism $Q(x\oplus y)\overset{a}{\leftarrow} 
Q(w) \onto{b} Q(u)$ in $\cT/\cN$. 
Since $\Cone b$ is in $\cN$, 
$b$ is an isomorphism. 
Therefore there exists the morphism 
$c=ab^{-1}:u \to x\oplus y$ with $\Cone c\in\cN$. 
Hence 
by the $3\times 3$-lemma for $\Sigma$-exact triangles, 
there exists a morphism 
$z:=\Sigma^{-1}\Cone(\Pr_y c) \onto{d} x$ which makes 
the diagram below commutative 
$${\footnotesize{\xymatrix{
z \ar[r] \ar[d]_d & u \ar[r] \ar[d]_c & 
y \ar[r] \ar[d]_{\id_y} & \Cone (\Pr_y c) \ar[d]\\
x \ar[r]_{i_x} \ar[d] & x\oplus y \ar[r]_{\Pr_y} \ar[d] & 
y \ar[r]_0 \ar[d] & \Sigma x \ar[d]\\
\Cone d \ar[r] & \Cone c \ar[r] & 
0 \ar[r] & \Sigma \Cone d
}}}$$
where the morphisms $i_x:x \to x\oplus y$ and 
$\Pr_y:x\oplus y \to y$ 
are the canonical morphisms 
and all horizontal lines are $\Sigma$-exact triangles. 
Then it turns out that 
$\Cone d$ is isomorphic to $\Cone c$ and 
hence it is in $\cN$. 
Hence we have an isomorphism 
$Q(z)\isoto Q(x)$ in $\cT/\cN$. 
Since $\cM$ is thick, $y$ is in $\cM$ and therefore 
$z$ is also in $\cM$ and it turns out that $Q(x)$ is in 
$\cM_{\isom,\cT/\cN}$. 
\end{proof}

\begin{prop}
\label{prop:fully faithful quotient}
Let $\cN$ and $\cM$ be triangulated subcategories of $\cT$. 
Let us assume that $(\cM,\cN)$ or $(\cN,\cM)$ is 
factorizable in $\cT$. 
Then\\
$\mathrm{(1)}$ 
The canonical functors 
$$\cM/\cN\cap\cM \to \cT/\cN\text{  and  } \cN/\cN\cap\cM \to \cT/\cM$$ 
are fully faithful.\\
$\mathrm{(2)}$ 
Moreover if $\cN$ and $\cM$ are thick, then 
${(\cM/\cN\cap\cM)}_{\isom,\cT/\cN}$ 
{\rm (}resp. ${(\cN/\cN\cap\cM)}_{\isom,\cT/\cM}${\rm )} 
is a thick subcategory of $\cT/\cN$ {\rm (}resp. $\cT/\cM${\rm )}.
\end{prop}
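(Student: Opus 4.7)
The plan is to derive both assertions from Lemma~\ref{lem:quot of factorizable pair} and Proposition~\ref{prop:perpendicular subcategories} by relating $\cT/\cN$ and $\cT/\cM$ to the intermediate Verdier quotient $\cT/\cN\cap\cM$. I would first observe that since $\cN \supset \cN\cap\cM$, the canonical projection $\cT \to \cT/\cN$ factors as $\cT \to \cT/\cN\cap\cM \to \cT/\cN$, the second arrow being the Verdier quotient by ${(\cN/\cN\cap\cM)}_{\isom}$; by Proposition~\ref{prop:homthmfortricat} (applied to the thick closure of $\cN\cap\cM$, which defines the same Verdier quotient) this isomorphism closure is a triangulated subcategory of $\cT/\cN\cap\cM$. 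Symmetrically, $\cT/\cM$ is identified with $(\cT/\cN\cap\cM)/{(\cM/\cN\cap\cM)}_{\isom}$.

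For the first functor in~(1), I would assume that $(\cM,\cN)$ is factorizable in $\cT$. By Lemma~\ref{lem:quot of factorizable pair} applied to this pair, every morphism in $\cT/\cN\cap\cM$ from an object of ${(\cM/\cN\cap\cM)}_{\isom}$ to an object of ${(\cN/\cN\cap\cM)}_{\isom}$ vanishes. I would then apply Proposition~\ref{prop:perpendicular subcategories}(1) inside the triangulated category $\cT/\cN\cap\cM$, with these two isomorphism closures playing the roles of $\cM$ and $\cN$ there, to conclude that the composite
\[
{(\cM/\cN\cap\cM)}_{\isom} \rinc \cT/\cN\cap\cM \to \cT/\cN
\]
is fully faithful. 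Since $\cM/\cN\cap\cM$ is equivalent to its isomorphism closure in $\cT/\cN\cap\cM$, this gives the required fully faithfulness of $\cM/\cN\cap\cM \to \cT/\cN$. The second functor is handled identically under the dual factorizability hypothesis $(\cN,\cM)$, interchanging the roles of $\cM$ and $\cN$ throughout.

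For~(2), once $\cM$ (resp.~$\cN$) is thick in $\cT$, Proposition~\ref{prop:homthmfortricat} ensures that ${(\cM/\cN\cap\cM)}_{\isom}$ (resp.~${(\cN/\cN\cap\cM)}_{\isom}$) is thick in $\cT/\cN\cap\cM$, and Proposition~\ref{prop:perpendicular subcategories}(3) then delivers the thickness of the corresponding isomorphism closures in $\cT/\cN$ and $\cT/\cM$.

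The main obstacle I expect is a minor bookkeeping one: the iterated quotient identification $\cT/\cN \simeq (\cT/\cN\cap\cM)/{(\cN/\cN\cap\cM)}_{\isom}$ is being invoked through Proposition~\ref{prop:homthmfortricat}, which is stated for thick subcategories, whereas $\cN\cap\cM$ need not itself be thick. This is overcome by replacing $\cN\cap\cM$ by its thick closure and using the standard fact that Verdier quotients are insensitive to this replacement. Once this is settled, the rest of the argument is a direct transfer of Lemma~\ref{lem:quot of factorizable pair} and Proposition~\ref{prop:perpendicular subcategories} to the intermediate quotient.
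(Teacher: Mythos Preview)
Your route through Lemma~\ref{lem:quot of factorizable pair} and Proposition~\ref{prop:perpendicular subcategories} is sound and, for part~(1), actually more self-contained than the paper's: the paper simply cites \cite[10.3]{Kel96} for the fully faithfulness of $\cM/\cN\cap\cM \to \cT/\cN$, whereas you recover it from the internal toolkit. For part~(2) your argument and the paper's coincide.

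There is, however, a genuine gap in your coverage of cases. The proposition asserts that \emph{either} factorizability hypothesis yields \emph{both} fully faithful functors. You establish the first functor assuming $(\cM,\cN)$ is factorizable and the second assuming $(\cN,\cM)$ is factorizable, but you never address the cross-cases: why, for instance, does $(\cM,\cN)$ factorizable alone force $\cN/\cN\cap\cM \to \cT/\cM$ to be fully faithful? Interchanging the letters $\cM$ and $\cN$ does not help here, since it swaps both the hypothesis and the conclusion simultaneously. The paper handles this with a one-line duality reduction: if $(\cN,\cM)$ is factorizable in $\cT$ then $(\cM^{\op},\cN^{\op})$ is factorizable in $\cT^{\op}$, and running the argument in $\cT^{\op}$ yields the other functor under the \emph{same} hypothesis. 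Adding this observation (or the analogous one for $(\cM,\cN)$) closes your argument completely.

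Your concern about Proposition~\ref{prop:homthmfortricat} requiring thickness of $\cN\cap\cM$ is legitimate for part~(1), and your fix via the thick closure is fine; for part~(2) the issue evaporates, since the intersection of two thick subcategories is thick.
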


\begin{proof}[\bf Proof] 
First notice that if $(\cN,\cM)$ 
is factorizable in $\cT$, 
then $(\cM^{\op},\cN^{\op})$ is 
factorizable in $\cT^{\op}$. 
Therefore we shall just check that if 
$(\cN,\cM)$ is factorizable, then 
the canonical functor $\cM/\cN\cap\cM \to \cT/\cN$ is 
fully faithful and moreover if 
both $\cN$ and $\cM$ are thick, 
then ${(\cM/\cN\cap\cM)}_{\isom,\cT/\cN}$ 
is a thick subcategory of $\cT/\cN$. 
The first assertion is mentioned in \cite[10.3]{Kel96}. 
To prove the second assertion, 
let us consider the factorization 
$${(\cM/\cN\cap\cM)}_{\isom,\cT/\cN} \to {(\cT/\cN\cap\cM)}_{\isom,\cT/\cN} 
\onto{Q} \cT/\cN$$
where $Q$ is the canonical quotient functor. 
${(\cM/\cN\cap\cM)}_{\isom}$ and ${(\cN/\cN\cap\cM)}_{\isom}$ 
are thick subcategories of $\cT/\cN\cap\cM$ 
by \ref{prop:homthmfortricat} 
and the ordered pair $({(\cN/\cN\cap\cM)}_{\isom},{(\cM/\cN\cap\cM)}_{\isom})$ 
is factorizable 
by \ref{lem:quot of factorizable pair}. 
Therefore the assertion follows 
from \ref{prop:perpendicular subcategories}. 
\end{proof}

\begin{cor}[\bf $\vee_{\thi}$ of $\Tri_{\thi}(\cT)$] 
\label{cor:veethi}
Let $\cN$ and $\cM$ be thick subcategories of $\cT$. 
If $(\cN,\cM)$ or $(\cM,\cN)$ is factorizable in $\cT$, 
then we have the equalities 
$$\cM\vee_{\thi} \cN=Q^{-1}({(\cM/\cN\cap\cM)}_{\isom,\cT/\cN})
={Q'}^{-1}({(\cN/\cN\cap\cM)}_{\isom,\cT/\cM})$$ 
where $Q:\cT \to \cT/\cN$ and $Q':\cT \to \cT/\cM$ are 
the canonical quotient functors. 
\end{cor}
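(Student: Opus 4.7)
The plan is to obtain the equality by transferring the join computation through the Verdier quotient $Q:\cT\to \cT/\cN$ and applying the isomorphism-type theorem \ref{prop:homthmfortricat}. Since the hypothesis is symmetric, I will only treat the first equality $\cM\vee_{\thi}\cN=Q^{-1}((\cM/\cN\cap\cM)_{\isom,\cT/\cN})$; the second follows identically, interchanging the roles of $\cN$ and $\cM$ and using $Q':\cT\to \cT/\cM$.

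First I would invoke \ref{prop:homthmfortricat} with $\calL=\cN$ (which is thick by hypothesis). This yields an order isomorphism $\Tri_{\thi}(\cT/\cN)\isoto V_{\thi}(\cN)$ sending a thick subcategory $\cS$ of $\cT/\cN$ to $Q^{-1}\cS$. In particular, this bijection restricts to an isomorphism between the sub-lattice of thick subcategories of $\cT/\cN$ containing a given full subcategory $\cU\subseteq \cT/\cN$ and the sub-lattice of thick subcategories of $\cT$ containing $Q^{-1}(\cU_{\isom})$; taking smallest elements on each side, $Q^{-1}$ sends the thick closure of $\cU$ in $\cT/\cN$ to the thick closure in $\cT$ of $Q^{-1}(\cU_{\isom})\cup \cN$.

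Next I would apply \ref{prop:fully faithful quotient}$\mathrm{(2)}$ with the factorizable hypothesis to conclude that $(\cM/\cN\cap\cM)_{\isom,\cT/\cN}$ is already a thick subcategory of $\cT/\cN$. On the other hand, because the Verdier quotient $\cM\to \cM/\cN\cap\cM$ is essentially surjective and the canonical functor $\cM/\cN\cap\cM\to \cT/\cN$ is fully faithful (by \ref{prop:fully faithful quotient}$\mathrm{(1)}$), the essential image of $\cM\to \cT/\cN$ and of $\cM/\cN\cap\cM\to \cT/\cN$ have the same isomorphism closure, which is precisely $(\cM/\cN\cap\cM)_{\isom,\cT/\cN}$. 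Being thick and equal to the iso-closure of $Q(\cM)$, it is therefore the smallest thick subcategory of $\cT/\cN$ containing $Q(\cM)$.

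Combining the two observations, $Q^{-1}((\cM/\cN\cap\cM)_{\isom,\cT/\cN})$ is the smallest thick subcategory of $\cT$ which contains both $\cN$ and $\cM$, namely $\cM\vee_{\thi}\cN$. I do not anticipate a serious obstacle: the only point requiring care is the identification of $(\cM/\cN\cap\cM)_{\isom,\cT/\cN}$ with the iso-closure (equivalently, the thick closure) of $Q(\cM)$ in $\cT/\cN$, which rests squarely on the full faithfulness statement of \ref{prop:fully faithful quotient}$\mathrm{(1)}$ and is where the factorizability hypothesis is truly used.
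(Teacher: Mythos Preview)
Your proposal is correct and follows essentially the same route as the paper: both arguments pass through the lattice isomorphism of \ref{prop:homthmfortricat} for the quotient $Q:\cT\to\cT/\cN$ and use \ref{prop:fully faithful quotient} to identify $(\cM/\cN\cap\cM)_{\isom,\cT/\cN}$ as a thick subcategory equal to the isomorphism closure of $Q(\cM)$. The paper phrases the final step as a direct verification of the universal property (showing $\cO:=Q^{-1}((\cM/\cN\cap\cM)_{\isom})$ contains $\cM$ and $\cN$, and is contained in any thick $\calL\supseteq\cM\cup\cN$ via the chain $\cM/\cN\cap\cM\rinc\calL/\cN\rinc\cT/\cN$), whereas you transport the minimality through the order isomorphism; these are the same argument in different words.
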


\begin{proof}[\bf Proof] 
By the symmetry of $\cM$ and $\cN$, 
we shall just check the first equality. 
For simplicity, we put $\cO:=Q^{-1}({(\cM/\cN\cap\cM)}_{\isom,\cT/\cN})$. 
Obviously $\cO$ contains $\cM$ and $\cN$. 
For any thick subcategory $\calL$ which contains $\cM$ and $\cN$, 
we have the fully faithful embeddings 
$$\cM/\cN\cap\cM \rinc \calL/\cN \rinc \cT/\cN$$ 
by \ref{prop:fully faithful quotient}. 
Therefore by \ref{prop:homthmfortricat}, 
$\calL=Q^{-1}({(\calL/\cN)}_{\isom,\cT/\cN})$ contains $\cO$. 
Hence we have $\cM\vee_{\thi} \cN=\cO$. 
\end{proof}

\sn
For general $\cN$ and $\cM$ in 
the proposition above, we need more subtle 
argument.

\begin{prop}[\bf $\vee_{\#}$ of $\Tri_{\#}(\cT)$ II] 
\label{prop:general vee}
Let $\cN$ and $\cM$ be are triangulated subcategories of $\cT$ 
and $Q:\cT \to \cT/\cN$ the canonical quotient functor. 
Then\\
$\mathrm{(1)}$ 
By abuse of the notations, we write 
$\cN\vee_{\thi} \cM$ for the smallest thick subcategory of $\cT$ 
which contains both $\cN$ and $\cM$. 
Then we have 
$$\cN\vee_{\thi}\cM={Q^{-1}(\im(\cM \to \cT \onto{Q} \cT/\cN))}_{\thi}.$$
In particular, we have the formula
$$\coker(\cN/\cN\cap \cM \to \cT/\cM)\isoto \cT/(\cN\vee_{\thi}\cM).$$

\sn
$\mathrm{(2)}$ 
If $\cN$ is thick, then we have 
$$\cN\vee\cM= Q^{-1}(\im(\cM \to \cT \onto{Q} \cT/\cN)).$$
\end{prop}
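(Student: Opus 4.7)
The strategy is to apply Proposition~\ref{prop:homthmfortricat}, or directly mimic its proof technique, to the quotient functor $Q: \cT \to \cT/\cN$. Setting $\cS := Q^{-1}(\im(\cM \to \cT/\cN))$, Lemma-Definition~\ref{lemdf:lattice functor Tri} shows $\cS$ is a triangulated subcategory of $\cT$; it contains $\cN$ (every $n \in \cN$ is isomorphic to zero in $\cT/\cN$, and zero lies in any triangulated subcategory) and contains $\cM$ (directly). Hence $\cS \supseteq \cN \vee \cM$ and $\cS_{\thi} \supseteq \cN \vee_{\thi} \cM$.

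For part~(2), since $\cN$ is thick, Proposition~\ref{prop:homthmfortricat} provides the lattice bijection between $\Tri(\cT/\cN)$ and $V(\cN)$. For any triangulated $\calL \supseteq \cN \cup \cM$, the full subcategory $Q(\calL)_{\isom}$ of $\cT/\cN$ is triangulated --- closure under cones proceeds exactly as in Step~3 of the proof of Proposition~\ref{prop:homthmfortricat}, using the two out of three axiom together with the inclusion $\cN \subseteq \calL$ --- and it contains $Q(\cM)$, hence contains $\im(\cM \to \cT/\cN)$; applying $Q^{-1}$ yields $\calL \supseteq \cS$, so $\cN \vee \cM = \cS$. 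For part~(1), the same argument runs without invoking the bijection: given a thick $\calL \supseteq \cN \cup \cM$ and $x \in \cS$, pick $y \in \calL$ with $Q(x) \cong Q(y)$ in $\cT/\cN$; the isomorphism is represented by a roof $x \overset{s}{\leftarrow} z \overset{t}{\to} y$ with $\Cone s, \Cone t \in \cN \subseteq \calL$, and two applications of two out of three give first $z \in \calL$ and then $x \in \calL$. Hence $\cS \subseteq \calL$ and, by thickness, $\cS_{\thi} \subseteq \calL$.

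For the ``in particular'' formula, I would use transitivity of Verdier quotients to rewrite $\cT/(\cN \vee_{\thi} \cM) \cong (\cT/\cM)/((\cN \vee_{\thi} \cM)/\cM)_{\isom}$, then apply Proposition~\ref{prop:homthmfortricat} (to the thick $\cM$) together with part~(1) in swapped roles to identify the denominator with the thick closure of $\im(\cN \to \cT/\cM)$ inside $\cT/\cM$; finally, the standard equivalence $\cT'/\cA \simeq \cT'/\cA_{\thi}$ --- every summand $c$ of an object $d \in \cA$ already satisfies $c \cong 0$ in $\cT'/\cA$ because $c \oplus c' \cong d \cong 0$ in the additive category $\cT'/\cA$ forces both summands to vanish --- matches this with the triangulated-image convention used in the definition of $\coker$. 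The main obstacle is precisely this last reconciliation between the triangulated-image appearing in the definition of $\coker$ and the thick closure inherent in $\vee_{\thi}$; the remainder is routine two-out-of-three bookkeeping on Verdier fractions combined with the lattice correspondence of Proposition~\ref{prop:homthmfortricat}.
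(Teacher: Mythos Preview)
Your argument is correct and follows essentially the same route as the paper: both directions come from the lattice correspondence of Proposition~\ref{prop:homthmfortricat} (or its proof, Steps~2--4), applied to $Q:\cT\to\cT/\cN$. The paper packages the two cases uniformly by writing $\cY=(\im(\cM\to\cT/\cN))_{\#}$ and $\cX=Q^{-1}\cY$, then observes $\cY\subset(\im(\calL\to\cT/\cN))_{\#}$ and uses $Q^{-1}$ of the latter equals $\calL$; your element-level roof argument in part~(1) is the unpacked version of exactly this.

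One small correction: in your ``in particular'' paragraph you write ``to the thick $\cM$'', but $\cM$ is only assumed triangulated. This is harmless because $\cT/\cM=\cT/\cM_{\thi}$ and $\cN\vee_{\thi}\cM\supset\cM_{\thi}$, so Proposition~\ref{prop:homthmfortricat} applies to $\cM_{\thi}$; you should also note (or cite from the lattice bijection in the thick case) that $Q'^{-1}(\cI_{\thi})=(Q'^{-1}\cI)_{\thi}$ for $\cI=\im(\cN\to\cT/\cM)$, which is the step linking the denominator $((\cN\vee_{\thi}\cM)/\cM)_{\isom}$ to $\cI_{\thi}$. The paper leaves the ``in particular'' formula as an unproven consequence, so your explicit verification is a welcome addition.
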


\begin{proof}[\bf Proof] 
For simplicity, 
we put $\#=\emptyset$ if $\cN$ is thick and $\#=\thi$ if $\cN$ is not thick. 
Moreover let us put 
$\cY=\im(\cM \to \cT\onto{Q} \cT/\cN)_{\#}$ and $\cX=Q^{-1}\cY$. 
Since $\cY$ is in $\Tri_{\#}(\cN_{\#})$, 
by \ref{prop:homthmfortricat}, 
$\cX=Q^{-1}\cY$ is in $V_{\#}(\cN_{\#})$. 
Since $Q(x)$ is in $\cY$ for any object $x$ in $\cM$, 
$\cX$ is also in $V_{\#}(\cM)$. 
Next let us take $\calL\in V_{\#}(\cN)\cap V_{\#}(\cM)$ and put 
$\cZ=\im(\calL \to \cT\onto{Q} \cT/\cN)_{\#}$. 
Then obviously we have $\cY\subset \cZ$, 
therefore $\cX=Q^{-1}\cY \subset Q^{-1}\cZ=\calL$. 
Here the last equality follows from \ref{prop:homthmfortricat}. 
\end{proof}

\section{Relative exact categories}
\label{sec:exact cat with we eq}

\sn
In this section, we study relative exact categories. 
In particular, we define the bounded derived categories 
of strict relative exact categories. 
We start by preparing a useful lemma to treat relative exact categories.

\begin{lem}
\label{lem:isom closed}
Let $\cC$ be a category with cofibrations and $w$ a class of morphisms in $\cC$ such that 
$w$ contains all identity morphisms in $\cC$. Then\\
$\mathrm{(1)}$ 
If $w$ satisfies either the extensional or the gluing axiom, 
then $w$ contains all isomorphisms in $\cC$.\\
$\mathrm{(2)}$ 
If $w$ satisfies the extensional axiom, 
then $w$ is closed under co-base change along cofibrations. 
\end{lem}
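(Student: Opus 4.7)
The plan is to handle each part by exhibiting a suitable map of cofibration sequences (or pushout diagrams) and invoking the closure axiom assumed on $w$.

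For part $\mathrm{(1)}$, let $f\colon x \to y$ be an isomorphism in $\cC$. I would first observe that $f$ is itself a cofibration with zero cokernel, so we obtain two cofibration sequences $x \overset{\id_x}{\rinf} x \rdef 0$ and $x \overset{f}{\rinf} y \rdef 0$ sitting in a commutative diagram whose vertical components are $\id_x$, $f$, and $\id_0$. Since $w$ contains every identity by hypothesis, the two outer verticals lie in $w$, and the extensional axiom, interpreted as closure of $w$ under extensions of cofibration sequences, forces the middle map $f$ into $w$. For the gluing-axiom case, I would run the parallel argument on a map of pushout spans whose induced map on pushouts is exactly $f$ while the vertical components are chosen to be identities (for example by comparing the spans $x \overset{\id}{\leftarrow} x \overset{\id}{\rinf} x$ and $x \overset{\id}{\leftarrow} x \overset{f}{\rinf} y$ via identity verticals) so that gluing produces $f \in w$.

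For part $\mathrm{(2)}$, let $f\colon x \to y$ belong to $w$ and let $j\colon x \rinf z$ be a cofibration. Form the pushout $z\sqcup_x y$, which produces a cofibration $\bar j\colon y \rinf z\sqcup_x y$ and its co-base change $\bar f\colon z \to z\sqcup_x y$. The key observation is that the cokernels of $j$ and $\bar j$ are canonically identified with $z/x$, so we obtain a map of cofibration sequences
$$\bigl(x \overset{j}{\rinf} z \rdef z/x\bigr) \longrightarrow \bigl(y \overset{\bar j}{\rinf} z\sqcup_x y \rdef z/x\bigr)$$
with vertical components $f$, $\bar f$, and $\id_{z/x}$. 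The left vertical is in $w$ by hypothesis and the right vertical is in $w$ by part $\mathrm{(1)}$; the extensional axiom applied to this map of cofibration sequences therefore yields $\bar f \in w$, which is the claim.

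The main obstacle I anticipate is the gluing variant of part $\mathrm{(1)}$: unlike the extensional case, the standard Waldhausen gluing axiom describes how pushouts propagate verticals already known to be in $w$, so extracting $f \in w$ from identities alone requires a slightly careful choice of comparison diagram whose pushout map reconstructs $f$. Once that configuration is in hand, the rest of the argument—most importantly the canonical identification $z/x \cong (z\sqcup_x y)/y$ used in part $\mathrm{(2)}$—is a routine consequence of the pushout axioms for a category with cofibrations.
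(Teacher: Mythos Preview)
Your extensional argument for part~$(1)$ and your argument for part~$(2)$ are correct; the paper handles the extensional case the same way and only cites an external reference for~$(2)$, where your proof is the standard one. The gap is in your gluing argument for~$(1)$.

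In the diagram you propose, the two spans $x \overset{\id}{\leftarrow} x \overset{\id}{\rinf} x$ and $x \overset{\id}{\leftarrow} x \overset{f}{\rinf} y$ differ in their rightmost object, so the rightmost vertical is forced to be $f:x\to y$ itself, not an identity. The gluing axiom requires all three span verticals to lie in $w$ before it says anything about the induced map on pushouts, so you would need $f\in w$ as an input---precisely what you are trying to prove. You correctly flagged this as the delicate point, but the proposed configuration does not resolve it; as written the argument is circular.

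The paper's fix is to take the \emph{same} span top and bottom (it uses $x \leftarrow 0 \rinf 0$), with all three verticals equal to identities, and to choose \emph{different pushouts}: $x$ on top with its canonical cocone, and $y$ on the bottom via the cocone $(f,0)$, which is a legitimate pushout precisely because $f$ is an isomorphism. Both horizontal faces of the resulting cube are genuine pushout squares, the three input edges are identities, and the remaining edge is $f$; the gluing axiom then gives $f\in w$. Your span $x \overset{\id}{\leftarrow} x \overset{\id}{\rinf} x$ would work equally well, provided you keep it identical on both levels and vary only the choice of pushout rather than the span itself. A minor aside on~$(2)$: the right vertical $\id_{z/x}$ is already in $w$ by the hypothesis that $w$ contains all identities, so invoking part~$(1)$ there is unnecessary.
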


\begin{proof}[\bf Proof]
Assertion $\mathrm{(2)}$ is porven in \cite[A.21]{Uni}. 
We will give a proof of $\mathrm{(1)}$. 
We denote the zero object in $\cC$ by $0$. 
Let $f:x\to y$ be an isomorphism in $\cC$. 
Then $f=\id_x\coprod_{\id_{0}}\id_0$ by the push-out diagram below and 
$f$ is also extension of $\id_x$ and $\id_0$ by the commutative diagram 
of admissible short exact sequences below. 
$${\footnotesize{
\xymatrix{
& 0 \ar[rr] \ar[ld] \ar@{-}[d] & & 0 \ar[dd]^{\id_0} \ar[ld]\\
x \ar[rr]^{\ \ \ \ \ \ \ \ \ \ \ \ \id_x} \ar[dd]_{\id_x} & \ar[d] & x \ar[dd]^{\!\!\! f}\\
 & 0 \ar[ld] 
\ar@{-}[r] & 
\ar[r] & 
0 \ar[ld]\\
x \ar[rr]_{f} & & y
} \ \ 
\xymatrix{
x \ar@{>->}[r]^{\id_x} \ar[d]_{\id_x} & x \ar[d]_f \ar@{->>}[r] & 0 \ar[d]^{\id_0}\\
x \ar@{>->}[r]_{f} & y \ar@{->>}[r] & 0.
}}}$$
Therefore if $w$ satisfies either the extensional or the 
gluing axioms, then $f$ is in $w$. 
\end{proof}

\sn
Recall the notations of relative categories, 
exact categories and chain complexes 
from Conventions $\mathrm{(6)}$, $\mathrm{(7)}$ and $\mathrm{(9)}$.

\begin{df}[\bf Relative exact categories]
\label{df:rel exact cat}
$\mathrm{(1)}$ 
A relative category $\bE=(\cE,w)$ is 
a {\bf relative exact category} 
if the underyling category $\cE$ is a Quillen exact category 
with a specific zero object $0$.\\
$\mathrm{(2)}$ 
A {\bf Relative exact functor} 
$f:\bE=(\cE,w) \to \bE'=(\cE',w')$ between 
relative exact categories is a relative functor such that 
$f:\cE \to \cE'$ is an exact functor and $f(0)=0$.\\
$\mathrm{(3)}$ 
We denote the $2$-subcategory of relative exact categories and relative 
exact functors in $\RelCat^{\#}$ by $\RelEx^{\#}$ 
for $\#\in\{+,\text{nothing}\}$.\\
$\mathrm{(4)}$ 
A relative exact functor $f:\bE \to \bE'$ 
is an {\bf exact homotopy equivalence} 
if there is a relative exact functor $g:\bE' \to \bE$ 
such that both $fg$ and $gf$ are weakly homotopic to identity functors respectively.\\
$\mathrm{(5)}$ 
A relative exact category $\bE=(\cE,w)$ is {\bf strict} if 
$\cE^w$ is a strict exact subcategory of $\cE$. 
A strict relative exact category $\bE=(\cE,w)$ is {\bf very strict} if 
the inclusion functor $\cE^w\rinc \cE$ induces a fully faithful functor 
on the bounded derived categories $\calD_b(\cE^w)\rinc\calD_b(\cE)$. 
We deno the full $2$-subcategory of strict 
(resp. very strict) 
relative exact categories in $\RelEx^{\#}$ by 
$\RelEx_{\strict}^{\#}$ (resp. $\RelEx_{v.s}^{\#}$) 
for $\#\in\{+,\text{nothing}\}$. 
\end{df}

\begin{rem}
\label{rem:E^w}
$\mathrm{(1)}$ 
Let $\bE=(\cE,w)$ be a relative exact category. 
If $w$ contains all isomorphisms between zero objects in $\cE$, 
then $\cE^w$ does not depend upon a choice 
of a specfic zero object in $\cE$.\\
$\mathrm{(2)}$ 
A relative exact functor $f:\bE=(\cE,w) \to \bE'=(\cE',w')$ 
induces a functor $f:\cE^w \to {\cE'}^w$. 
If both $\bE$ and $\bE'$ are strict, 
then the induced functor $f:\cE^w \to {\cE'}^w$ is exact. 
\end{rem}

\sn
Recall the definition of Waldhausen exact categories from 
Conventions $\mathrm{(7)}$ $\mathrm{(x)}$.

\begin{prop}[\bf Examples of strict relative exact categories]
\label{prop:strict exact categories}
For any relative exact category $\bE=(\cE,w)$ if 
either $w$ satisfies the extensional axiom or 
$\bE$ is a Waldhausen exact category, 
then $\bE$ is a strict relative exact category.
\end{prop}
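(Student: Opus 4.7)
The plan is to verify that $\cE^w \rinc \cE$ satisfies the three properties of a strict exact subcategory: $\cE^w$ is closed under admissible extensions, the inclusion is exact, and the inclusion reflects exactness. Since identities lie in $w$ by the definition of a relative category, Lemma~\ref{lem:isom closed}(1) yields that $w$ contains all isomorphisms of $\cE$ in both hypothesized cases (the extensional case uses (1) directly; the Waldhausen case applies (1) via the gluing axiom), so $\cE^w$ is automatically tight in $\cE$ and contains the zero object. Once extension closure is in hand, the admissible structure of $\cE$ restricts to $\cE^w$ cleanly, and both exactness and reflection of exactness by the inclusion are automatic; the essential content is therefore extension closure of $\cE^w$.

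In the extensional case this is immediate. Given an admissible short exact sequence $x \rinf y \rdef z$ with $x, z \in \cE^w$, consider the morphism of admissible exact sequences with top row $0 \rinf 0 \rdef 0$ and bottom row $x \rinf y \rdef z$: the outer verticals are the $w$-morphisms $0 \to x$ and $0 \to z$, so the extensional axiom forces the middle vertical $0 \to y$ into $w$, giving $y \in \cE^w$.

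In the Waldhausen exact case one has access to the gluing axiom for $(\cE, w)$ and the cogluing axiom coming from $(\cE^{\op}, w^{\op})$ being Waldhausen, but \emph{not} the extensional axiom a priori. As a preliminary, applying gluing to the morphism of pushout spans $(0 \leftarrow 0 \rinf 0) \to (x \leftarrow 0 \rinf z)$, whose vertical arrows $0 \to x$, $\id_0$, $0 \to z$ all lie in $w$, yields $0 \to x \oplus z$ in $w$, so $\cE^w$ is closed under finite biproducts. For an arbitrary extension $x \rinf y \rdef z$ with $x, z \in \cE^w$, I plan to exploit that the defining square with corners $x, y, 0, z$ is simultaneously a pushout along the cofibration $x \rinf y$ and a pullback along the admissible epimorphism $y \rdef z$, and to combine gluing and cogluing applied to a suitable morphism of such squares so as to transport the given weak equivalences $0 \to x$ and $0 \to z$ to $0 \to y$. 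This is the main obstacle: whereas the extensional case is a one-line diagram chase, the Waldhausen case must synthesize extension closure of $w$-triviality from the joint action of gluing and cogluing on carefully chosen squares of admissible exact sequences, with the biproduct reduction serving as the base step.
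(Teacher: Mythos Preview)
Your treatment of the extensional case is correct and is essentially the paper's argument.

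For the Waldhausen case there is a genuine gap, rooted in a misconception: strictness of $\cE^w \rinc \cE$ does \emph{not} require extension closure, and the paper does not attempt to prove it. From the morphism of admissible short exact sequences between $0 \rinf 0 \rdef 0$ and $x \rinf y \rdef z$, gluing yields only the implication $x,\,y \in \cE^w \Rightarrow z \in \cE^w$ (realising $0 \to z$ as the map on pushouts induced by $0 \to x$ and $0 \to y$), and cogluing yields only $y,\,z \in \cE^w \Rightarrow x \in \cE^w$. Together with finite-direct-sum closure (which you correctly obtain), these weaker closure properties already force $\cE^w$ to be a strict exact subcategory; this is the content of the paper's citation \cite[5.3]{Moc11}. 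Concretely, these conditions suffice for the exact-category axioms on $\cE^w$: given an admissible monomorphism $x \rinf y$ in $\cE^w$ and a map $x \to x'$ in $\cE^w$, the pushout $y'$ sits in an admissible sequence $x \rinf y \oplus x' \rdef y'$ in $\cE$, and the first implication together with direct-sum closure gives $y' \in \cE^w$.

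Your plan to extract the remaining 2-out-of-3 implication --- extension closure itself --- from gluing and cogluing cannot be completed. For a non-split extension $x \rinf y \rdef z$, every attempt to exhibit $0 \to y$ as the map induced on a pushout or pullback requires $0 \to y$ (or something equivalent to it) among the vertical inputs; your biproduct step handles precisely the split case and no more. So the target you set is both stronger than needed and, as far as one can see, out of reach of the Waldhausen axioms alone.
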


\begin{proof}[\bf Proof]
In the commutative diagram of admissible short exact sequences in $\cE$ below, 
we have the equalities $a=\id_0\times_c b$ and $c=\id_0\coprod_a b$. 
$${\footnotesize{
\xymatrix{
x \ar@{>->}[r] \ar[d]_a & y \ar@{->>}[r] \ar[d]_b & z \ar[d]^c\\
0 \ar@{>->}[r] & 0 \ar@{->>}[r] & 0.
}}}$$
Therefore 
if $w$ satisfies the extensional (resp. gluing, cogluing) axiom 
and if $x$ and $z$ (resp. $x$ and $y$, $y$ and $z$) are in $\cE^w$, 
then $y$ (resp. $z$, $x$) is also in $\cE^w$. 
Hence if $w$ satisfies the extensional axiom (resp. $\bE$ is a Waldhausen exact category), 
then $\cE^w$ is closed under extensions 
(resp. taking admissible sub- and quotient objects 
and finite direct sums) in $\cE$ 
and $\cE^w$ is a strict exact subcategory of $\cE$ 
(by \cite[5.3]{Moc11}). 
\end{proof}

\sn
In the rest of this section, 
let $\bE=(\cE,w)$ and $\bF=(\cF,v)$ 
be relative exact categories.

\begin{df}[\bf Level and quasi-weak equivalences]
\label{df:qw} 
$\mathrm{(1)}$ 
A morphism $f:x \to y$ 
in $\Ch_b(\cE)$ is a 
{\bf level-weak equivalence} 
if $f_n:x_n \to y_n$ is in $w$ for any integer $n$. 
We denote the class of level-weak equivalences by $lw$.\\
$\mathrm{(2)}$ 
Assume that $\bE=(\cE,w)$ is 
a strict relative exact category. 
We define the {\bf bounded derived category} of $\bE$ 
by $\calD_b(\bE)=\calD_b(\cE,w):=
\coker(\calD_b(\cE^w) \to \calD_b(\cE))$.\\
$\mathrm{(3)}$ 
In the situation $\mathrm{(2)}$, 
a morphism in $\Ch_b(\cE)$ is said to be a 
{\bf quasi-weak equivalence} if 
its image in $\calD_b(\bE)$ is an isomorphism. 
We denote the class of quasi-weak equivalences in $\Ch_b(\cE)$ 
by $qw$ and put $\Ch_b(\bE):=(\Ch_b(\cE),qw)$. 
This association defines 
the $2$-functor $\Ch_b:\RelEx^{+}_{\strict} \to \RelEx^{+}$.\\
$\mathrm{(4)}$ 
Let $f:\bE \to \bF$ be a morphism of 
strict relative exact categories. 
We say that $f$ is a {\bf derived equivalence} 
(resp. {\bf weakly derived equivalence}, {\bf derived fully faithful}) 
if induced functor on bounded derived categories 
is an equivalence of triangulated category 
(resp. equivalence of up to factor, fully faithful).\\
$\mathrm{(5)}$ 
Let $\calR$ be a subcategory of 
$\RelEx_{\strict}$. 
We denote the class of derived equivalences in 
$\calR$ by $\deq_{\calR}$ or shortly $\deq$. 
We call the (large) relative category 
$(\calR,\deq_{\calR})$ the 
{\bf homotopy theory of relative exact categories in $\calR$}. 
\end{df}

\begin{rem}
\label{rem:acyqw} 
Recall the functor $P_{\cE}:\Ch_b(\cE) \to \calH_b(\cE)$ 
is the canonical projection functor. 
We have the formula 
$$\calD_b(\cE,w)=\calH_b(\cE)/(\calH_b(\cE^w)\vee_{\thi}P_{\cE}(\Acy_b(\cE))) $$
by \ref{prop:general vee}. 
We put 
$$\Acy^{qw}_b(\cE):=P_{\cE}^{-1}
(\calH_b(\cE^w)\vee_{\thi}P_{\cE}(\Acy_b(\cE))),$$ 
$$\Acy^{lw}_b(\cE):=\Ch_b(\cE^w).$$
\end{rem}

\sn
Recall the definition of (weakly) exact sequences of triangulated categories 
from Conventions $\mathrm{(8)}$ $\mathrm{(vi)}$. 

\begin{df}[\bf Exact sequences of relative categories]
\label{df:exact seq in RelEx}
$\mathrm{(1)}$ 
A sequence $\bE \onto{u} \bF \onto{v} \bG$ 
of strict relative exact categories is 
{\bf exact}
(resp. {\bf weakly exact}) 
if induced sequence of triangulated categories 
$\calD_b(\bE) \onto{\calD_b(u)} \calD_b(\bF) \onto{\calD_b(v)} \calD_b(\bG)$ is 
exact (resp. weakly exact). 
We sometimes denote the sequence above by $(u,v)$. 
For a full subcategory $\calR$ of $\RelEx_{\strict}^{\#}$, 
we let $E(\calR)$ (resp. $E_{\weak}(\calR)$) denote the category of 
exact sequences (resp. weakly exact sequences) in $\calR$ 
as the full subcategory of $\calR^{[2]}$. 
We define three functors $s^{\calR}$, $m^{\calR}$ and $q^{\calR}$ from 
$E_{\#}(\calR)$ to $\calR$ which sends weakly exact sequence 
$\bE \to \bF \to \bG$ to $\bE$, $\bF$ and $\bG$ respectively.\\
$\mathrm{(2)}$ 
Let $\calR$ and $\calR'$ be full subcategories of $\RelEx_{\strict}$. 
A functor $F:\calR \to \calR'$ is {\bf exact} 
(resp. {\bf weakly exact}) 
if it sends an exact (resp. a weakly exact) sequence in $\calR$ 
to an exact (resp. a weakly exact) sequence in $\calR'$. 
\end{df}

\begin{prop}[\bf Example of weakly exact sequences]
\label{prop:ex of weakly exact sequences}
Let $\cE$ be an exact category and 
$v$ and $w$ classes of morphisms in $\cE$ 
such that $v\subset w$ and both $(\cE,v)$ and $(\cE,w)$ 
are very strict relative exact categories. 
Then the inclusion functor $\cE^w\rinc\cE$ and 
the identity functor of $\cE$ induce a weaky exact sequence 
$$(\cE^w,v)\to (\cE,v) \to (\cE,w).$$ 
\end{prop}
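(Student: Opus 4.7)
Since $v\subset w$, every $v$-trivial object in $\cE$ is $w$-trivial, so $\cE^v\subset \cE^w$ and $(\cE^w)^v=\cE^v$. My first step would be to verify that $(\cE^w,v)$ is itself a very strict relative exact category. Strictness of the inclusion $\cE^v\rinc \cE^w$ follows since $\cE^v$ and $\cE^w$ are both strict exact subcategories of $\cE$, so admissibility in $\cE^w$ of a sequence in $\cE^v$ coincides with admissibility in $\cE$. Very strictness follows from a simple two-out-of-three observation: the composition $\calD_b(\cE^v)\to \calD_b(\cE^w)\to \calD_b(\cE)$ is fully faithful (since $(\cE,v)$ is very strict) and the second arrow is fully faithful (since $(\cE,w)$ is very strict), forcing the first arrow to be fully faithful as well. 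Consequently the three derived categories of interest are the Verdier quotients
\[
\calD_b(\cE^w,v)=\calD_b(\cE^w)/\calD_b(\cE^v),\quad \calD_b(\cE,v)=\calD_b(\cE)/\calD_b(\cE^v),\quad \calD_b(\cE,w)=\calD_b(\cE)/\calD_b(\cE^w),
\]
formed with respect to the fully faithful chain $\calD_b(\cE^v)\subset \calD_b(\cE^w)\subset \calD_b(\cE)$.

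Next, I would verify the three conditions in the definition of weak exactness. Vanishing of the composition is immediate: every object of $\calD_b(\cE^w,v)$ lifts to $\calD_b(\cE^w)$, which is killed by the projection $\calD_b(\cE)\to \calD_b(\cE,w)$. Full faithfulness of the first functor $\calD_b(\cE^w)/\calD_b(\cE^v)\to \calD_b(\cE)/\calD_b(\cE^v)$ follows from Proposition~\ref{prop:fully faithful quotient} applied to the pair $(\calD_b(\cE^v),\calD_b(\cE^w))$ in $\calD_b(\cE)$: factorizability is trivially satisfied because the containment $\calD_b(\cE^v)\subset \calD_b(\cE^w)$ means any morphism $x\to y$ with $x\in\calD_b(\cE^v)$ factors as $x\xrightarrow{\id}x\to y$ through the intersection $\calD_b(\cE^v)\cap \calD_b(\cE^w)=\calD_b(\cE^v)$. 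Finally, for the induced comparison $\calD_b(\cE,v)/\calD_b(\cE^w,v)\to \calD_b(\cE,w)$ to be cofinal, essential surjectivity is clear because every object of $\calD_b(\cE,w)$ comes from $\calD_b(\cE)$ and hence from $\calD_b(\cE,v)$, and full faithfulness reduces to the third-isomorphism identification
\[
(\calD_b(\cE)/\calD_b(\cE^v))/(\calD_b(\cE^w)/\calD_b(\cE^v))\simeq \calD_b(\cE)/\calD_b(\cE^w),
\]
which I would derive via the universal property of Verdier quotients, combined with Proposition~\ref{prop:general vee} to pin down $\calD_b(\cE^v)\vee_{\thi}\calD_b(\cE^w)=\calD_b(\cE^w)$ inside $\calD_b(\cE)$.

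I expect the main technical obstacle to lie in the cofinality step, specifically in reconciling the triangulated (not necessarily thick) images used in the definition of $\calD_b(\bE)$ with the thick closures that appear naturally in the third isomorphism theorem. This is exactly why the notion of weak exactness (with cofinal rather than essentially surjective quotients) is the appropriate one here: the identification of quotients need only hold up to splitting idempotents, which is automatic from the construction once $\vee_{\thi}$ is computed via Proposition~\ref{prop:general vee}. The remaining verifications — strictness of $(\cE^w,v)$, factorizability, and vanishing of the composition — are formal consequences of the hypothesis $v\subset w$ and the very strictness of $(\cE,v)$ and $(\cE,w)$.
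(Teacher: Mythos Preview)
Your proposal is correct and follows essentially the same route as the paper: both apply Proposition~\ref{prop:fully faithful quotient} to the chain $\calD_b(\cE^v)\hookrightarrow\calD_b(\cE^w)\hookrightarrow\calD_b(\cE)$ to obtain full faithfulness of the first map, and both finish with the third-isomorphism identification of the iterated quotient with $\calD_b(\cE)/\calD_b(\cE^w)$. Your explicit verification that $(\cE^w,v)$ is very strict (via two-out-of-three for fully faithful functors) fills in a step the paper silently assumes, and your worry about thick closures is harmless but unnecessary---the third isomorphism yields a genuine equivalence, not merely a cofinal functor, since Verdier quotients are insensitive to replacing a triangulated subcategory by its thick closure.
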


\begin{proof}[\bf Proof]
We apply \ref{prop:fully faithful quotient} $\mathrm{(1)}$ 
to the fully faithful functors 
$\calD_b(\cE^v) \to \calD_b(\cE^w) \to \calD_b(\cE)$. 
We learn that the induced functor 
$\displaystyle{\calD_b(\cE^w,v)=\frac{\calD_b(\cE)}{\calD_b(\cE^v)} \to 
\calD_b(\cE,v)=\frac{\calD_b(\cE)}{\calD_b(\cE^v)}}$ 
is fully faithful. 
On the other hand, 
we have an equivalence of triangulated categorie 
$\displaystyle{\calD_b(\cE,w)=\frac{\calD_b(\cE)}{\calD_b(\cE^w)}\isoto
\frac{\frac{\calD_b(\cE)}{\calD_b(\cE^v)}}{\frac{\calD_b(\cE^w)}{\calD_b(\cE^v)}}}$ 
which makes the diagram below commutative
$${\footnotesize{\xymatrix{ 
 & \calD_b(\cE)/\calD_b(\cE^v) \ar[rd] \ar[ld]& \\
\calD_b(\cE)/\calD_b(\cE^w) \ar[rr]^{\sim} & & 
\frac{\calD_b(\cE)/\calD_b(\cE^v)}
{\calD_b(\cE^w)/\calD_b(\cE^v)}
}}}$$
where all functors above are induced from the identity functor of $\cE$. 
Hence we obtain the result. 
\end{proof}

\begin{lemdf}[\bf Consistent axiom]
\label{lemdf:adm}
For any strict relative exact category 
$\bE=(\cE,w)$, the following two conditions are equivalent.\\
$\mathrm{(1)}$ 
$lw \subset qw$.\\
$\mathrm{(2)}$ 
The canonical functor 
$j_{\cE}:\cE \to \Ch_b(\cE)$ 
is a relative exact functor 
$\bE \to \Ch_b(\bE)$.\\

\sn
In this case, 
we say that $w$ (or $\bE=(\cE,w)$) satisfies the {\bf consistent axiom} 
or $w$ (or $\bE$) is consistent. 
We write $\RelEx_{\consist}^{\#}$ (resp. $\WalEx_{\consist}^{\#}$) 
for the full $2$-subcategory of 
consistent relative exact categories 
(resp. consistent Waldhausen exact categories) 
in $\RelEx^{\#}$ (resp. $\WalEx^{\#}$) 
for $\#\in\{+,\text{nothing}\}$.
\end{lemdf}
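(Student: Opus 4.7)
The plan is to establish the equivalence of $\mathrm{(1)}$ and $\mathrm{(2)}$, after which the definition of ``consistent'' is unambiguous. The implication $\mathrm{(1)}\Rightarrow\mathrm{(2)}$ is immediate: for any morphism $f:x\to y$ in $w$, the chain map $j_{\cE}(f)$ equals $f$ in degree $0$ and the identity of $0$ in every other degree; since $\id_0$ lies in $w$ by Conventions $\mathrm{(6)}$ $\mathrm{(i)}$, we have $j_{\cE}(f)\in lw\subset qw$ by hypothesis $\mathrm{(1)}$.

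For the converse $\mathrm{(2)}\Rightarrow\mathrm{(1)}$, I would upgrade the degreewise hypothesis to a global statement by an induction through the brutal truncation filtration. Given a level-weak equivalence $f:x\to y$ in $\Ch_b(\cE)$, for each integer $k$ the brutal truncations fit into an admissible short exact sequence of bounded complexes
$$\sigma_{\geq k+1}x\rinf \sigma_{\geq k}x\rdef j_{\cE}(x_k)[k],$$
which is even degreewise split (each degree is either $0\rinf x_i\overset{\id}{\rdef}x_i$, $x_i\overset{\id}{\rinf} x_i\rdef 0$, or $0\rinf 0\rdef 0$). This construction is functorial under chain maps, so applying it to $f$ yields a commutative ladder of short exact sequences of complexes, which by the standard triangulated formalism descends to a morphism of distinguished triangles in $\calD_b(\bE)$.

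I would then run a downward induction on $k$ to show that $\sigma_{\geq k}f\in qw$ for every $k$. The base case is clear: for $k$ strictly above the supports of $x$ and $y$, both complexes vanish and $\sigma_{\geq k}f=0$. For the inductive step, hypothesis $\mathrm{(2)}$ gives $j_{\cE}(f_k)\in qw$, and since $qw$ is stable under the shift $[k]$ (shifts are triangle autoequivalences of $\calD_b(\bE)$), the rightmost vertical map $j_{\cE}(f_k)[k]$ is in $qw$. Combined with the inductive hypothesis $\sigma_{\geq k+1}f\in qw$, the two-out-of-three property for isomorphisms in a triangulated category applied to the morphism of distinguished triangles yields $\sigma_{\geq k}f\in qw$. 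Taking $k$ below the supports of $x$ and $y$ gives $f=\sigma_{\geq k}f\in qw$, completing the argument.

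The main obstacle is conceptual rather than technical: recognizing that the brutal truncation filtration is the right device to bootstrap the pointwise hypothesis into a global one. Once this is in place, the individual verifications, namely the admissibility and functoriality of the truncation short exact sequences, their passage to distinguished triangles in $\calD_b(\bE)$ via the triangulated quotient functor, and the two-out-of-three principle, are entirely standard.
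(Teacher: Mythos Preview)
Your proof is correct and follows essentially the same strategy as the paper: both directions are handled the same way, and for $\mathrm{(2)}\Rightarrow\mathrm{(1)}$ both arguments use brutal truncations to filter a level-weak equivalence by single-degree pieces and then induct.

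The one substantive difference is in how the inductive step is justified. The paper invokes the extensional axiom for $qw$, which is a forward reference to Proposition~\ref{prop:compness}. You instead pass the degreewise-split short exact sequence to a distinguished triangle in $\calD_b(\bE)$ and use the two-out-of-three property for isomorphisms in a triangulated category. Since $qw$ is by definition the preimage of the isomorphisms under $\Ch_b(\cE)\to\calD_b(\bE)$, your argument is self-contained at this point in the exposition, whereas the paper's phrasing leans on a result proved later. Both are correct; yours avoids the forward reference at the cost of appealing to the triangulated structure directly, which is a reasonable trade-off.
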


\begin{proof}[\bf Proof] 
We can easily check that 
condition $\mathrm{(1)}$ implies 
condition $\mathrm{(2)}$. 
Assuming condition $\mathrm{(2)}$, 
we prove condition $\mathrm{(1)}$. 
Let $f:x\to y$ be a morphism in $\Ch_b(\cE)$. 
First let us notice that 
obviously the class $qw$ is closed under the degree shift. 
Namely if $f$ is in $qw$, then $f[n]$ is also in $qw$ for any integer $n$. 
In \ref{prop:compness}, 
we will prove that $qw$ satisfies the extensional axiom. 
Suppose that $f$ is in $lw$, 
then by the following short exact sequences 
$${\footnotesize{\xymatrix{
\sigma_{\leq n}x \ar[r] \ar[d]_{\sigma_{\leq n}f} & x \ar[r] \ar[d]_f & 
\sigma_{\geq n+1}x \ar[d]^{\sigma_{\geq n+1}f}\\
\sigma_{\leq n}y \ar[r] & y \ar[r] & \sigma_{\geq n+1}y  ,
}}}$$
induction on the length of chain complexes 
and by the extensional axiom for $qw$, 
we get the desired result. 
\end{proof}

\begin{ex}
\label{ex:adm exact cat with weak equi} 
$\mathrm{(1)}$ 
A Quillen exact category $\cE$ with the class of all isomorphisms 
$(\cE,i_{\cE})$ is a consistent Waldhausen exact category.\\
$\mathrm{(2)}$ 
We will prove in \ref{prop:compness} and \ref{prop:bicomp is adm} 
that for any strict relative exact category 
$\bE$, $\Ch_b(\bE)$ is 
a complicial Waldhausen category. 
In particular, 
the category of bounded chain complexes on a Quillen exact category $\cE$ with 
the class of all quasi-isomorphisms $(\Ch_b(\cE),\qis)$ is 
a consistent Waldhaseun exact category.\\
$\mathrm{(3)}$ 
Let $A$ be a Cohen-Macaulay ring and $p$ a non-negative integer 
less than $\dim A$. 
Let us denote the category of finitely generated $A$-modules $M$ 
whose codimension is greater than $p$ by $\cM^p_A$ and 
the full subcategory of $\cM^p_A$ consisting of those $A$-modules 
such that its projective dimension is less than $p$ by $\cM^p_A(p)$. 
Then one can easily prove that $\cM^p_A(p)$ is closed under extensions 
in $\cM^p_A$ and therefore it 
naturally becomes a Quillen exact category. 
A morphism $f:x \to y$ in $\cM^p_A(p)$ 
is a {\bf generic isomorphism} 
if the codimensions of $\Ker f$ and $\coker f$ are greater than $p+1$. 
We denote the class of generic isomorphisms in $\cM^p_A(p)$ by $w$. 
Then one can easily prove that $w$ satisfies the extensional axiom 
and $(\cM^p_A(p))^w=\{0\}$. 
Therefore $qw=\qis$ and obviously 
$w$ does not satisfy the consistent axiom. 
\end{ex}

\sn
Recall the definition of the category of admissible exact sequences in 
a category with cofibrations from Conventions $\mathrm{(7)}$ $\mathrm{(xiv)}$.

\begin{lemdf}
\label{lemdf:extensional, hom rel ex cat}
$\mathrm{(1)}$ 
Let $\cG$ and $\calH$ be strict exact subcategories of $\cE$ and 
we put $\bG:=(\cG,w\cap\cG)$ and $\bH=(\calH,w\cap\calH)$. 
Let $E(\bG,\bE,\bH)$ denote the relative exact category 
$(E(\cG,\cE,\calH),E(w))$. 
If $\bE$, $\bG$ and 
$\bH$ are strict, 
then $E(\bG,\bE,\bH)$ is also.\\
$\mathrm{(2)}$ 
We write $\HOM(\bE,\bF)$ for the relative category 
$(\HOM_{\RelEx^{+}}(\bE,\bF),\Mor\HOM_{\RelEx}(\bE,\bF))$. 
In other words, 
$\HOM(\bE,\bF)$ is a relative category 
whose underyling category is 
the category of relative exact functors from $\bE$ to $\bF$ 
and whose weak equivalences are relative natural equivalences. 
If $\bF$ is a Waldhausen exact category, 
then $\HOM(\bE,\bF)$ is a relative exact category. 
Here a sequence $f \onto{a} g \onto{b} h$ of relative exact functors 
from $\bE$ to $\bF$ is an admissible exact sequence 
if for any object $x$ in $\cE$, 
a sequence $f(x) \onto{a(x)} g(x) \onto{b(x)} h(x)$ is 
an admissible exact sequence in $\cF$.\\
$\mathrm{(3)}$ 
If $\bF$ is consistent, 
then the functor 
$\Ch_b:\HOM_{\RelEx^{+}}(\bE,\bF)\to\HOM_{\RelEx^{+}}(\Ch_b(\bE),\Ch_b(\bF))$ 
preserves relative natural weak equivalences. 
In particular, 
if $\bF$ is a consistent Waldhausen exact category, 
then 
the functor $\Ch_b:\HOM(\bE,\bF) \to \HOM(\Ch_b(\bE),\Ch_b(\bF))$ 
is a relative exact functor. 
\end{lemdf}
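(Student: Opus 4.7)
My plan is to handle the three parts in order, since each is essentially a bookkeeping exercise once the right identifications are made.

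For part (1), I would first recall that $E(\cG,\cE,\calH)$ is an exact category with cofibration sequences and admissible exact sequences computed componentwise on the three functors $s$, $m$, $q$; this is standard. The class $E(w) = s^{-1}(w)\cap m^{-1}(w)\cap q^{-1}(w)$ is closed under finite compositions because $w$, $w\cap\cG$, $w\cap\calH$ are. The key observation is that a cofibration sequence $x\rinf z\rdef y$ is $E(w)$-trivial exactly when $0\to x$, $0\to z$, $0\to y$ are in $w$, i.e.\ when $x\in\cG^w$, $z\in\cE^w$, $y\in\calH^w$. Thus $E(\bG,\bE,\bH)^{E(w)}$ is literally $E(\cG^w,\cE^w,\calH^w)$. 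Since each of $\cG^w,\cE^w,\calH^w$ is by hypothesis a strict exact subcategory of the corresponding ambient, the fact that this property is preserved under the $E(-,-,-)$ construction finishes (1).

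For part (2), I would put the pointwise exact structure on $\HOM_{\RelEx^{+}}(\bE,\bF)$: admissible mono\-morphisms, epimorphisms and admissible exact sequences of relative exact functors are defined pointwise on $\cE$, as in the statement. The exact axioms are inherited from $\cF$ object by object; one only needs that pushouts of admissible monomorphisms and pullbacks of admissible epimorphisms exist and are again exact functors, which follows because $\bF$ is a Waldhausen exact category (hence has the requisite pushouts and pullbacks) and because these operations preserve exactness of composites. The class of relative natural equivalences is closed under finite composition since $v$ is. This gives the required relative exact structure.

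For part (3), let $\theta\colon f\to g$ be a relative natural equivalence in $\HOM(\bE,\bF)$, so $\theta(x)\in v$ for every $x\in\cE$. The functor $\Ch_b(\theta)\colon \Ch_b(f)\to \Ch_b(g)$ assigns to a complex $c\in\Ch_b(\cE)$ the chain map whose $n$-th component is $\theta(c_n)\in v$, so $\Ch_b(\theta)(c)$ is a level-weak equivalence in $\Ch_b(\cF)$. By Lemma-Definition~\ref{lemdf:adm}, consistency of $\bF$ gives $lw\subset qw$, so $\Ch_b(\theta)(c)\in qw$ for every $c$, which is exactly the statement that $\Ch_b(\theta)$ is a relative natural equivalence in $\HOM(\Ch_b(\bE),\Ch_b(\bF))$. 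For the final clause, when $\bF$ is moreover a consistent Waldhausen exact category, Propositions \ref{prop:compness} and \ref{prop:bicomp is adm} supply that $\Ch_b(\bF)$ is a Waldhausen exact category, so part (2) applies to the target. It then remains to observe that $\Ch_b$ preserves admissible exact sequences of functors: a pointwise admissible exact sequence $f\onto{a}g\onto{b}h$ yields $\Ch_b(f)\onto{\Ch_b(a)}\Ch_b(g)\onto{\Ch_b(b)}\Ch_b(h)$, which is pointwise on complexes and degreewise admissible exact in $\cF$, hence admissible exact in $\Ch_b(\cF)$.

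The only place that requires any care is the verification in part (2) that the pointwise admissible exact structure actually satisfies Quillen's axioms for a functor category; everything else is unwinding definitions. I expect this to be the main (but still routine) obstacle, since one must check that the pointwise push-outs and pull-backs assemble into genuine relative exact functors, which uses precisely that $\bF$ is a Waldhausen exact category rather than merely an exact category.
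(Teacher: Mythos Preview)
Your proposal is correct and follows essentially the same route as the paper. For $\mathrm{(1)}$ the paper records exactly your identification $E(\cG,\cE,\calH)^{E(w)}=E(\cG^w,\cE^w,\calH^w)$; for $\mathrm{(3)}$ the paper argues, as you do, that $\Ch_b(\theta)$ lands in $lv\subset qv$ by consistency. In $\mathrm{(2)}$ the paper is slightly more explicit than you are about the mechanism: the point you flag at the end---that the pointwise pushout $g\coprod_h f$ and pullback $g'\times_{h'}f'$ must again be \emph{relative} exact functors---is resolved by applying the gluing and cogluing axioms of the Waldhausen exact category $\bF$ to conclude that $g(a)\coprod_{h(a)}f(a)$ and $g'(a)\times_{h'(a)}f'(a)$ lie in $v$ for $a\in w$. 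Naming those axioms would make your sketch complete.
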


\begin{proof}[\bf Proof]
$\mathrm{(1)}$ 
We have an equality $E(\cG,\cE,\calH)^{E(w)}=E(\cG^w,\cE^w,\calH^w)$. 
Hence if $\bE$, $\bG$ and $\bH$ are strict, then $E(\bG,\bE,\bH)$ is strict. 

\sn
$\mathrm{(2)}$ 
If $w=i_{\cE}$ and $v=i_{\cF}$, 
then $\Mor\HOM_{\RelEx}(\bE,\bF)$ is $i_{\HOM_{\RelEx^{+}}(\bE,\bF)}$ 
and in this case, 
the assertion was essentially proven in \cite[A.11, A.18]{Uni}. 
For general case, 
only non-trivial point is that 
for any diagrams $g \linf h \to f$ and $g'\rdef h' \leftarrow f'$ 
of relative exact functors from 
$\bE$ to $\bF$, 
the functors $g\coprod_h f$, $g'\times_{h'}f':\cE \to \cF$ 
preserve weak equivalences. 
Let $a:x\to y$ be a morphism in $w$, 
then $g(a)\coprod_{h(a)}f(a)$ and 
$g'(a)\times_{h'(a)}f'(a)$ are in $v$ by the gluing and cogluing axioms. 
Hence we obtain the result. 

\sn
$\mathrm{(3)}$ 
For each natural transformation $\theta$ 
between relative exact functors 
$f$, $g:\bE=(\cE,w) \to \bF=(\cF,v)$, 
if $\theta$ is a relative natural equivalence, 
$\Ch_b(\theta):\Ch_b(f) \to \Ch_b(g)$ is contained in $lv$. 
Therefore if $v$ satisfies the consistent axiom, then 
$\Ch_b(\theta)$ is a relative natural transformation 
between the morphisms $\Ch_b(f)$, $\Ch_b(g):\Ch_b(\bE) \to \Ch_b(\bF)$.
\end{proof}

\section{Widely exact functors}
\label{sec:widely exact func}

The purpose of this section is to prove 
that for any strict relative exact category 
$\bE$, $\Ch_b(\bE)$ is a Waldhausen exact category 
which satisfies 
the extensional, the saturated, 
the factorization and the retraction axioms 
in \ref{prop:compness}. 
The key tools is the concept about {\it widely exact functors} 
which is roughly saying exact functors from 
suitable exact categories to triangulated categories. 
Here ``suitable" exact category means exact categories like 
the categories of complexes, 
namely {\it bicomplicial categories} in \ref{df:bicomp}. 
We review the notion of {\it null classes} of 
a bicomplicial category 
which is bicomplicial variant of 
triangulated subcategories in \ref{para:NC and comp weak equiv} 
and study 
functorial behaviour of lattices of null classes 
and lattices of triangulated subcategories 
by widely exact functors in \ref{cor:pullbackofwidelyexact} 
and \ref{ex:widelyexact}. 
At the end of this section, 
we will define the non-connective $K$-theory for 
consistent relative exact categories in \ref{df:non-connective K-theory} 
and prove that it is a categorical homotopy invariant functor 
in \ref{cor:homotopy inv KS}. 
We start by recalling the foundation of 
bicomplicial categories theory from \cite{Uni}. 

\begin{df}[\bf Bicomplicial category]  
\label{df:bicomp} 
$\mathrm{(1)}$ 
A {\bf bicomplicial category} is a system 
$(\cE,C,r,\iota,\sigma)$ consisting of 
a Quillen exact category $\cE$, an exact endofunctor $C:\cE \to \cE$, 
natural transformations $\iota:\id_{\cE} \to C$, $r:CC \to C$ 
and a natural equivalence $\sigma:CC\isoto CC$ 
which satisfies the following axioms:\\
$\mathrm{(i)}$ 
We have the equalities 
$rC(\iota)=r\iota_C=\id_{C}$, $\sigma C(\iota)=\iota_C$ and 
$\sigma\sigma=\id_{CC}$.\\
$\mathrm{(ii)}$ 
For any object $x$ in $\cE$, 
the morphism $\iota_x:x \to C(x)$ is 
an admissible monomorphism.\\ 
Then we put $T:=C/\id_{\cE}$ and call it the {\bf suspension functor}.\\
$\mathrm{(iii)}$ 
$T$ is essentially surjective and fully faithful.\\
We often omit $C$, $\iota$, $r$ and $\sigma$ in the notation.\\ 
$\mathrm{(2)}$ 
A {\bf complicial functor} between bicomplicial categories 
$\cE \to \cE'$ is a pair of an exact functor $f:\cE \to \cE'$ and 
a natural equivalence $c:C_{\cE'}f\isoto fC_{\cE}$ which satisfies the 
equality $c\iota_f^{\cE'}=f(\iota^{\cE})$. 
We often omit $c$ in the notation.\\ 
$\mathrm{(3)}$ 
For complicial functors 
$\cE \onto{(f,c)} \cE' \onto{(g,d)} \cE''$, 
their composition is defined by 
$(g,d)(f,c):=(gf,d\odot c)$ 
where we put $d \odot c:=g(c)d_f$.\\
$\mathrm{(4)}$ 
A {\bf complicial natural transformation} 
between complicial functors 
$(f,c)$, $(g,d):\cE \to \cE'$ from $(f,c)$ to $(g,d)$ 
is a natural transformation $\phi:f \to g$ 
which subjects to 
the condition that $d C\phi=\phi_C c$.\\
$\mathrm{(5)}$ 
We denote the $2$-category of essentially small bicomplicial categories 
by $\biComp$. 
\end{df}

\begin{ex}[\bf Bicomplicial categories] 
\label{ex:bicomp cat}
$\mathrm{(1)}$ 
Let $\cE$ be a Quillen exact category. 
Now we give the bicomplicial structure on $\Ch_{\#}(\cE)$ 
($\#=b, \pm, \emptyset$) as follows. 
The functor $C:\Ch_b(\cE) \to \Ch_b(\cE)$ 
is given by $x \mapsto Cx:=\Cone\id_x$ and 
for any complex $x$, we define morphisms 
${(\iota_x)}:x \to C(x)$, 
${(r_x)}:CC(x) \to C(x)$ 
and ${(\sigma_x)}:CC(x) \to CC(x)$ by 
$${(\iota_x)}_n={\footnotesize{
\begin{pmatrix}
0\\
\id_{x_n}
\end{pmatrix}}},\ 
{(r_x)}_n={\footnotesize{ 
\begin{pmatrix}
0 &\!\! \id_{x_{n-1}} &\!\! \id_{x_{n-1}} &\!\! 0\\
0 &\!\! 0 &\!\! 0 &\!\! \id_{x_n}
\end{pmatrix}}}
\ \text{and  } 
{(\sigma_x)}_n={\footnotesize{
\begin{pmatrix}
\!\!-\id_{x_{n-2}} &\!\! 0 &\!\! 0 &\!\! 0\\
\!\!0 &\!\! 0 &\!\! \id_{x_{n-1}} &\!\! 0\\
\!\!0 &\!\! \id_{x_{n-1}} &\!\! 0 &\!\! 0\\
\!\!0 &\!\! 0 &\!\! 0 &\!\! \id_{x_n}
\end{pmatrix}}}
.$$ 
Then the system $(\Ch_b(\cE),C,\iota,r,\sigma)$ 
forms a bicomplicial category.\\ 
$\mathrm{(2)}$ 
More generally, 
a complicial exact category in \cite[3.2.2]{Sch11} can be 
regarded as a bicomplicial category as follows. 
Let $\Ch_b(\bbZ)$ be the category of bounded chain complexes of 
finitely generated free abelian groups. 
(But in Ibid, 
we use the cohomological notation for complexes). 
Here the bilinear operation $\otimes$ on $\Ch_b(\bbZ)$ 
is given by the usual tensor products of complexes of abelian groups. 
(For more detail, see Ibid). 
We can easily learn that a complex in $\Ch_b(\bbZ)$ is 
isomorphic to finite direct sum of degree shift of the following two 
typical complexes: 
The unit complex $1\!\!1$ is $\bbZ$ 
in degree $0$ and $0$ elsewhere. 
For any positive integer $m$, 
the complex $C_m:=[\bbZ \onto{m} \bbZ]$ is 
$\bbZ$ in degrees $0$ and $1$ 
and the only non-trivial boundary map $d_1$ is 
given by multiplication of $m$. 
The complex $C_1$ has the natural 
commutative monoid object structure 
$I:1\!\!1 \to C_1$, $R:C_1\otimes C_1 \to C_1$ 
and $s:C_1\otimes C_1 \to C_1\otimes C_1$
which is implicitly explained 
in $\mathrm{(1)}$. 
In particular, 
if an exact category $\cC$ 
is complicial in the sense of \cite{Sch11}, 
that is, there is an action on $\cC$ by a biexact functor 
$-\otimes -:\Ch_b(\bbZ)\times \cC \to \cC$, 
it defines the bicomplicial structure on $\cC$ by 
$C:=C_1\otimes -$, $\iota:=I\otimes\id_{-}$, $r:=R\otimes\id_{-}$ 
and $\sigma:=s\otimes\id_{-}$. 
As we showed in \cite{Uni}, 
the almost arguments about complicial exact categories and 
complicial Waldhausen categories 
in \cite{Sch11} is essentially only using 
the commutative monoid object structure of $C_1$ and 
still works fine 
in the context of bicomplicial categories and bicomplicial pairs 
which will be defined in \ref{df:bicomp pair}. 
Therefore, 
in this paper, 
to make the definitions simplify, 
we review and utilize the theory 
of complicial exact categories (with weak equivalences) 
in \cite{Sch11} as the theory of bicomplicial categories 
and bicomplicial pairs with slight modifications.\\
$\mathrm{(3)}$ 
Recall the notations of morphisms categories from Conventions $\mathrm{(5)}$ $\mathrm{(ii)}$. 
If $\cE$ is a bicomplicial category, then $\Ar\cE$ naturally becomes 
a bicomplicial category and 
moreover $(\dom,\id)$, $(\ran,\id):\Ar\cE \to \cE$ are 
complicial functors and $\epsilon$ is a complicial natural transformation. 
\end{ex}

\begin{nt}[\bf $C$-homotopy equivalences]
\label{df:C-homotopic}
Let $(\cE,C,r,\iota,\sigma)$ be a bicomplicial category. 
$\mathrm{(1)}$ 
Morphisms $f$, $g:x\to y$ in $\cE$ are ({\bf $C$-}){\bf homotopic} 
if there exists a morphism $H:Cx \to y$ such that 
$f-g=H\iota_x$.\\ 
$\mathrm{(2)}$ 
A morphism $f:x\to y$ in $\cE$ is 
a ({\bf $C$-}){\bf homotopy equivalence} 
if there exists a morphism $g:y \to x$ such that 
$gf$ and $fg$ are homotopic to $\id_x$ and $\id_y$ respectively. 
Then we say that $x$ and $y$ are {\bf $C$-homotopy equivalent}.\\
$\mathrm{(3)}$ 
An object $x$ is ({\bf $C$-}){\bf contractible} 
if $x$ is $C$-homotopy equivalent to the zero object. 
Namely $\id_x$ is $C$-homotopic to the zero morphism.\\
$\mathrm{(4)}$ 
We can easily prove that $C$-homotopic is an equivalence relation 
on $\Hom$-sets of $\cE$ which is compatible with the composition. 
(See \cite[2.13]{Uni}). 
Therefore we can define the {\bf homotopy category} 
$\pi_0(\cE)$ 
of $\cE$ as follows. 
The object class of $\pi_0(\cE)$ is same as that of $\cE$,  
the class of morphisms in $\pi_0(\cE)$ is 
the homotopic class of morphisms in $\cE$ 
and the composition of morphisms 
is induced from $\cE$. 
We can easily prove that $\pi_0(\cE)$ is an additive category. 
\end{nt}

\begin{nt}[\bf Mapping cone functor, Mapping cylinder functor]
\label{nt:mapping cone functor}
Let $(\cE,C)$ be a bicomplicial category.\\
$\mathrm{(1)}$ 
We define the functors 
$$\Cone,\ \Cyl:\Ar\cE \to \cE$$ 
called the {\bf mapping cone functor} and 
the {\bf mapping cylinder functor} respectively as follows: 
$$\Cyl:=\ran\oplus C\dom$$
$$\Cone:=\ran\coprod_{\dom}C\dom$$
where $\Cone$ is defined the following push out diagram: 
$$
{\footnotesize{\xymatrix{
\dom \ar@{>->}[r]^{\iota_{\dom}} \ar[d]_{\epsilon} 
\ar@{}[dr]|\bigstar
& 
C\dom \ar[d]^{\mu} \ar@{->>}[r]^{\pi_{\dom}} & 
T\dom \ar[d]^{\id_{T\dom}}& \\
\ran \ar@{>->}[r]_{\kappa} & \Cone \ar@{->>}[r]_{\psi} & T\dom 
}}}$$
where $\psi$ is induced from the universal property of $\Cone$. 
We have the conflation 
$$\dom \overset{j_1}{\rinf} \Cyl \overset{\eta}{\rdef} \Cone$$ 
where $j_1:=
\footnotesize{
\begin{pmatrix}
\epsilon\\
-\iota_{\dom}
\end{pmatrix}
}$ 
and $\eta=
\footnotesize{
\begin{pmatrix}
\kappa & \mu
\end{pmatrix}
}$.\\
$\mathrm{(2)}$ 
Moreover we define 
the natural transformations $j_2:\ran \to \Cyl$, 
$j_3:C\dom \to\Cyl$ and 
$\beta:\Cyl \to \ran$ by 
$j_2:=
\footnotesize{
\begin{pmatrix}
\id_{\ran}\\
0
\end{pmatrix}
}$, 
$
j_3:=
\footnotesize{
\begin{pmatrix}
0\\
\id_{C\dom}
\end{pmatrix}}$
and 
$\beta:=
\footnotesize{
\begin{pmatrix}
\id_{\ran} & 0
\end{pmatrix}
}$. 
Then we have the following commutative diagram:
$${\footnotesize{\xymatrix{
\dom \ar@{>->}[r]^{j_1} \ar[rd]_{\epsilon} & \Cyl \ar[d]_{\beta} & 
\ran \ar@{>->}[l]_{j_2} \ar[ld]^{\id_{\ran}}\\
& \ran & .
}}}$$ 
$\mathrm{(3)}$ 
By declaring that $T$-sequence is $T$-exact if it is isomorphic to 
the following type $T$-exact triangle
$$x \onto{f} y \onto{\kappa_f} \Cone f \onto{\psi_f} Tx,$$
we can make $(\pi_0(\cE),T)$ into a triangulated category. 
(cf. \cite[3.29]{Uni}).
\end{nt}

\begin{para}[\bf Null classes and complicial weak equivalences] 
\label{para:NC and comp weak equiv}
(cf. \cite[\S 5.1]{Uni}).
Let $(\cE,C)$ be a bicomplicial category.\\
$\mathrm{(1)}$ 
We say that a full subcategory $\cN$ of $\cE$ is a 
{\bf null class} if it contains all $C$-contractible objects 
in $\cE$ and for any admissible short exact sequences 
$ x\rinf y \rdef z$ in $\cE$, 
if two of $x$, $y$ and $z$ are in $\cN$, 
then the other is also in $\cN$. 
We call the last condition the 
{\bf two out of three for admissible short exact sequences axiom}.\\
$\mathrm{(2)}$ 
We say that a null class of $\cN$ is {\bf thick} 
if $\cN$ is closed under retractions. 
In this case, $\pi_0(\cN)$ is a 
thick triangulated subcategory of $\pi_0(\cE)$.\\
$\mathrm{(3)}$ 
If $\cE$ is essentially small, we 
denote the set of null classes (resp. thick null classes) in $\cE$ 
by $\NC_{\emptyset}(\cE)$ or $\NC(\cE)$ (resp. $\NC_{\thi}(\cE)$).\\
$\mathrm{(4)}$ 
A class of morphisms $w$ in $\cE$ is a 
{\bf class of complicial weak equivalences} 
if it satisfies the saturated, the extensional axioms and 
if it contains all $C$-homotopy equivalences. 
We call a morphism in $w$ a {\bf complicial weak equivalence}.\\
$\mathrm{(5)}$ 
A class of complicial weak equivalences is {\bf thick} 
if it satisfies the retraction axiom.\\
$\mathrm{(6)}$ 
If $\cE$ is essentially small, we 
denote the set of classes of complicial weak equivalences 
(resp. thick classes of complicial weak equivalences) in $\cE$ 
by $\CW_{\emptyset}(\cE)$ or $\CW(\cE)$ (resp. $\CW_{\thi}(\cE)$).\\
$\mathrm{(7)}$ 
We can easily check that $\CW_{\#}(\cE)$ and 
$\NC_{\#}(\cE)$ are 
complete lattices 
by \ref{lem:complete criterion} where $\#=\emptyset$ or $\thi$. 
Therefore they define the functors. 
$$\NC_{\#},\ \CW_{\#}:\biComp^{\op} \to \CLat.$$
where for any complicial functor 
$f:\cE \to \cE'$, $\NC_{\#}(f)$ and $\CW_{\#}(f)$ are defined 
by pull-back by $f$.\\
$\mathrm{(8)}$ 
For any full subcategory $\cN$ of $\cE$, 
we write $\cN_{\nul,\cE}$ or simply $\cN_{\nul}$ 
(resp. $\cN_{\thi,\cE}$ or simply $\cN_{\thi}$) for 
the smallest null class (resp. thick null class) containing $\cN$ 
and call it the {\bf null} 
(resp. {\bf thick}) {\bf closure of $\cN$}. 
For any class of morphisms $v$ in $\cE$, 
we write $v_{\comp,\cE}$ or simply $v_{\comp}$ 
(resp. $v_{\thi,\cE}$ or simply $v_{\thi}$) for 
the smallest class of complicial weak equivalences 
(resp. thick complicial weak equivalences) containing $v$ 
and call it the {\bf complicial} (resp. {\bf thick complicial}) 
{\bf closure of $v$}.\\
$\mathrm{(9)}$ 
For any full subcategory $\cN$ of $\cE$, 
we put 
$$w_{\cN}:=\{f\in\Mor\cE;\Cone f\in \cN \}.$$
Then the association $\cN \mapsto w_{\cN}$ gives the 
natural equivalence $\NC_{\#}\isoto\CW_{\#}$ ($\#=\emptyset$ or $\thi$). 
Here the inverse is given by $w \mapsto \cE^w$.\\
$\mathrm{(10)}$ 
For any class of complicial weak equivalences $w$, 
$\cE^w$ is $w$-closed in the following sense. 
For any objects $x$ and $y$ in $\cE$, 
if $x$ is in $\cE^w$ and if 
there exists a morphism $x \to y$ or $y \to x$ in $w$, 
then $y$ is also in $\cE^w$. 
For in this situation, by the saturated axiom, 
the assertion that $0 \to x$ is in $w$ 
implies the assertion that $0 \to y$ is in $w$.
\end{para}

\begin{para}[\bf Frobenius exact structure] 
\label{para:Frob exact structure}
(cf. \cite[6.5, B.16]{Sch11}, \cite[2.26, 2.27, 2.30, 2.40]{Uni}). 
Let us recall that a Quillen exact category $\cF$ is 
{\bf Frobenius} if the class of projective objects in $\cF$ is 
equal to that of injective objects in $\cF$ and if 
$\cF$ has enough proj-inj objects. 
For any bicomplicial category $(\cE,C)$, 
it naturally has 
the Frobenius exact category structure as follows. 
An admissible monomorphism 
(resp. admissible epimorphism) $x \onto{a} y$ is 
{\bf Frobenius} 
if for any object $u$ and a morphism $x \onto{f} Cu$ 
(resp. $Cu \onto{f} y$), 
there exists a morphism $y \onto{g} Cu$ 
(resp. $Cu \onto{g} x$) such that $f=ga$ (resp. $f=ag$). 
In an admissible short exact sequence 
$ x \overset{i}{\rinf} y \overset{p}{\rdef} z$ in $\cE$, 
$i$ is Frobenius if and only if $p$ is. 
In this case we call 
the sequence $x \overset{i}{\rinf} y \overset{p}{\rdef} z$ 
a {\bf Frobenius admissible short exact sequence}. 
The Quillen exact category $\cE$ with 
Frobenius admissible short exact sequences forms 
a Frobenius exact category and 
we denote it by $\cE_{\frob}$. 
An object in $\cE_{\frob}$ is a proj-inj object if and only if 
it is a $C$-contractible object. 
Moreover $(\cE_{\frob},C)$ again becomes a bicomplicial category. 
For example, for any Quillen exact category $\cF$, 
a Frobenius admissible short exact sequence 
in the standard bicomplicial category 
$\Ch_b(\cF)$ is just a degree-wised split exact sequence. 
\end{para}

\begin{df}[\bf Bicomplicial pair]
\label{df:bicomp pair}
$\mathrm{(1)}$ 
A {\bf bicomplicial pair} is a pair $\bC=(\cC,w)$ of 
bicomplicial category $\cC$ and a class of complicial weak equivalences 
$w$ in $\cC$.\\
$\mathrm{(2)}$ 
A {\bf relative complicial functor} between bicomplicial categories 
$\bC \to \bC'$ is a complicial functor such that 
the underlying functor preserves weak equivalences.\\
$\mathrm{(3)}$ 
A {\bf relative complicial natural transformation} 
between relative complicial functors 
is just a complicial natural transformation.\\
$\mathrm{(4)}$ 
A {\bf relative complicial natural weak equivalence} 
is a relative complicial natural transformation 
such that the underlying natural transformation is 
a relative natural equivalence.\\
$\mathrm{(5)}$ 
We denote the $2$-category of essentially small bicomplicial pairs, 
relative complicial functors and relative complicial natural transformations 
(resp. relative complicial natural weak equivalences) 
by $\biCompPair^{+}$ (resp. $\biCompPair$).\\
$\mathrm{(6)}$ 
A bicomplicial pair $\bE=(\cE,w)$ is {\bf thick} if 
$w$ is thick. 
We write $\biCompPair_{\thi}^{\#}$ for 
the full $2$-subcategory of thick bicomplicial pairs 
in $\biCompPair^{\#}$ for $\#\in\{+,\text{nothing}\}$.\\
$\mathrm{(7)}$ 
For any bicomplicial pair $\bC=(\cC,w)$, 
$\pi_0(\cC^w)$ is a triangulated subcategory of $\pi_0(\cC)$ 
by \ref{para:NC and comp weak equiv} 
$\mathrm{(2)}$ and $\mathrm{(9)}$ 
and we put $\cT(\bC)=\cT(\cC,w):=\pi_0(\cC)/\pi_0(\cC^w)$. 
This association defines the $2$-functor 
$\cT:\biCompPair^{+} \to \TriCat$ 
which sends relative complicial natural weak equivalences 
to triangulated natural equivalences. 
\end{df}

\begin{rem}
\label{rem:a bicomp pair is good biwaldhausen} 
$\mathrm{(1)}$ 
(\cf \cite[5.18]{Uni}). 
A bicomplicial pair is a saturated extensional 
Waldhausen exact category which satisfies the factorization axiom.\\
$\mathrm{(2)}$ 
(\cf \cite[5.3]{Uni}). 
For a thick bicomplicial pair $(\cC,w)$, 
$\pi_0(\cC^w)$ is a thick subcategory of $\pi_0(\cC)$. 
\end{rem}

\sn
Recall the definition of very strict relative exact categories 
from \ref{df:rel exact cat} $\mathrm{(5)}$. 

\begin{prop}
\label{prop:bicompair is very strict}
A bicomplicial pair is very strict. 
\end{prop}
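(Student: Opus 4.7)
The plan has two stages, corresponding to the two conditions in Definition~\ref{df:rel exact cat}~(5). For strictness, the key is that by Remark~\ref{rem:a bicomp pair is good biwaldhausen}~(1), a bicomplicial pair $\bC=(\cC,w)$ is a (saturated extensional) Waldhausen exact category. Hence by Proposition~\ref{prop:strict exact categories}, $\cC^w$ is a strict exact subcategory of $\cC$, so $\bC$ is strict. This is essentially immediate.

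The real content is the fully faithfulness of $\calD_b(\cC^w)\rinc\calD_b(\cC)$. The plan is to extract the correct closure properties of $\cC^w$ and then run a standard resolution argument. First I would observe that by \ref{para:NC and comp weak equiv}~(9), $\cC^w$ corresponds to the null class attached to $w$, so for every admissible short exact sequence $x\rinf y\rdef z$ in $\cC$, if two of $x,y,z$ lie in $\cC^w$ then the third does too. In particular $\cC^w$ is closed under extensions, kernels of admissible epimorphisms whose cokernel is in $\cC^w$, and cokernels of admissible monomorphisms whose kernel is in $\cC^w$. Moreover, by \ref{para:NC and comp weak equiv}~(4), $w$ is saturated and extensional and contains all $C$-homotopy equivalences, and by \ref{rem:a bicomp pair is good biwaldhausen}~(1), $\bC$ satisfies the factorization axiom.

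The standard criterion I would then apply is: $\calD_b(\cC^w)\rinc\calD_b(\cC)$ is fully faithful provided that for any bounded complex $Z\in\Ch_b(\cC)$ and any quasi-isomorphism $s:Z\to X$ with $X\in\Ch_b(\cC^w)$, there exists a quasi-isomorphism $Z'\to Z$ with $Z'\in\Ch_b(\cC^w)$. The construction of $Z'$ proceeds by induction on $\length Z$, using the brutal truncations $\sigma_{\leq k}Z$ and $\sigma_{\geq k+1}Z$ from Convention~(9)(iii) to reduce to the single-morphism case, where the factorization axiom produces a cofibration $Z_k\rinf \widetilde{Z}_k$ followed by a weak equivalence $\widetilde{Z}_k\to X_k$; closure of $\cC^w$ under the null-class operations then forces $\widetilde{Z}_k\in\cC^w$, and the gluing/cogluing axioms assemble the pieces into the desired $Z'$. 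Alternatively one may appeal directly to \cite{Sch11}, since by Example~\ref{ex:bicomp cat}~(2) a bicomplicial pair is literally a complicial exact category with weak equivalences in Schlichting's sense, and Schlichting establishes exactly this fully faithfulness as part of setting up his non-connective $K$-theory.

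The main obstacle is the inductive resolution step: one must ensure at each stage that the truncated pieces and their replacements remain in $\Ch_b(\cC^w)$ and that the assembled quasi-isomorphism is genuinely a quasi-isomorphism in $\Ch_b(\cC)$. This is where the null-class property of $\cC^w$ is indispensable, since it propagates $w$-triviality across the short exact sequences of truncations. The complicial structure (cones and cylinders from \ref{nt:mapping cone functor}) is what makes the factorization axiom available and enables the truncation-and-replace scheme to close up properly.
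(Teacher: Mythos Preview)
Your strictness argument is correct and matches what the paper implicitly uses. For the fully faithfulness, however, the paper takes a much shorter route than your resolution scheme. It isolates the following criterion (Lemma~\ref{lem:fully faithful}, citing \cite[3.1.7(b)]{Sch11}): if for every admissible monomorphism $x\rinf y$ in $\cC$ with $x\in\cC^w$ there is some morphism $y\to z$ with $z\in\cC^w$ such that the composite $x\to z$ is again an admissible monomorphism, then $\calD_b(\cC^w)\to\calD_b(\cC)$ is fully faithful. This is verified in one line: take $z=C(y)$ and the canonical $\iota_y:y\rinf C(y)$; the composite $x\rinf y\rinf C(y)$ is an admissible monomorphism, and $C(y)\in\cC^w$ because $C(y)$ is $C$-contractible and any null class contains all contractible objects. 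No induction on complexes, no truncations, no factorization axiom needed.

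By contrast, your primary plan has two soft spots. First, there is a directional mismatch: you state the right-fraction criterion (produce $Z'\to Z$) but your factorization of $Z_k\to X_k$ as $Z_k\rinf\widetilde{Z}_k\overset{w}{\to}X_k$ yields a map $Z\to\widetilde{Z}$, not $\widetilde{Z}\to Z$. Second, and more seriously, the degreewise factorization does not automatically assemble into a chain complex $\widetilde{Z}$ with differentials compatible with those of $Z$ and $X$; building that requires a genuine inductive construction in which each step interacts with the previous differential, and the level weak equivalence $\widetilde{Z}\to X$ is not a priori a quasi-isomorphism in $\Ch_b(\cC)$ (level weak equivalences are only quasi-\emph{weak} equivalences in general). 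These issues can be repaired, but the resulting argument is substantially longer than the paper's. Your ``alternatively appeal to \cite{Sch11}'' is in fact exactly what the paper does, via the specific extension criterion above.
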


\sn
To prove Proposition~\ref{prop:bicompair is very strict}, 
we utilize the following lemma. 

\begin{lem}
\label{lem:fully faithful}
{\rm (\cf \cite[3.1.7 (b)]{Sch11}).} 
Let $\cE$ be an exact category and $\cF\overset{i}{\rinc} \cE$ 
a strict exact subcategory of $\cE$. 
We assume that the following condition $\mathrm{(\ast)}$ holds. 
Then $i$ induces a fully faithful functor $\calD_b(\cF) \to \calD_b(\cE)$. 

\sn
$\mathrm{(\ast)}$ 
For any admissible monomorphism $x \overset{a}{\rinf} y$ in $\cE$ with $x\in \Ob\cF$, 
there exists a morphism $y \onto{b} z$ with $z \in \Ob\cF$ such that 
the composition $ba:x \to z$ is an admissible monomorphism. 
\qed
\end{lem}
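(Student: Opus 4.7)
The plan is to establish fully faithfulness via the Ore calculus of right fractions for the Verdier quotient $\calD_b(\cE) = \calH_b(\cE)[\qis^{-1}]$. Any morphism $a \to b$ in $\calD_b(\cE)$ with $a,b \in \Ch_b(\cF)$ is represented by a roof $a \overset{s}{\leftarrow} c \overset{f}{\to} b$ with $c \in \Ch_b(\cE)$ and $s$ a quasi-isomorphism. Fully faithfulness reduces to showing that every such roof is equivalent to one whose apex lies in $\Ch_b(\cF)$, and that two such $\cF$-level roofs equivalent over $\cE$ are already equivalent over $\cF$.

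First I will reformulate $(\ast)$ in a form better adapted to induction on complexes: since $\cF \rinc \cE$ is strict, pushing out along $\id_x$ converts $(\ast)$ into the statement that every admissible short exact sequence $x \rinf y \rdef z$ in $\cE$ with $x \in \cF$ admits a morphism of admissible short exact sequences
\[
\begin{CD}
x @>>> y @>>> z\\
@| @VVV @VVV\\
x @>>> y' @>>> z'
\end{CD}
\]
with $y', z' \in \cF$. The central technical step will then be the following cofinality lemma, to be proved by induction on the length of $c$: for any quasi-isomorphism $s:c \to a$ in $\Ch_b(\cE)$ with $a \in \Ch_b(\cF)$, there exists a quasi-isomorphism $t:c' \to c$ with $c' \in \Ch_b(\cF)$. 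The inductive step extracts, from the mapping cylinder of $s$ in the top degree, an admissible short exact sequence of the form $a_n \rinf y \rdef z$ with $a_n \in \cF$; applying the reformulated $(\ast)$ produces the top term $c_n'$ of $c'$ in $\cF$, and the inductive hypothesis applied to a complex of strictly smaller length furnishes the remainder.

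Once the cofinality lemma is established, the fully faithfulness is formal. For surjectivity, precompose a roof $a \leftarrow c \to b$ with $t:c' \to c$ to obtain an equivalent roof $a \leftarrow c' \to b$ whose apex lies in $\Ch_b(\cF)$. For injectivity, apply the cofinality lemma to a null-homotopy roof witnessing vanishing in $\calD_b(\cE)$ and transport it into $\Ch_b(\cF)$. A subtle point arises here: one must check that a quasi-isomorphism $s:c'\to a$ in $\Ch_b(\cE)$ between complexes in $\Ch_b(\cF)$ is also a quasi-isomorphism in $\Ch_b(\cF)$; this is where $(\ast)$ is invoked one more time, to show that the cycles of the acyclic mapping cone of $s$ can be expressed in $\cF$.

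The main obstacle is the inductive assembly in the cofinality lemma. The degreewise data produced by the reformulated $(\ast)$ are local, and verifying that they splice into a genuine chain map $t: c' \to c$ whose mapping cone is acyclic in $\cE$ requires careful bookkeeping of the pushout squares at each step, to ensure compatibility with the differentials and to maintain the quasi-isomorphism property throughout the induction. Once this verification is dispatched, all subsequent steps are routine applications of Ore calculus.
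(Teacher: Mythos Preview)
Your cofinality lemma is stated in the wrong direction, and as stated it is false. Condition~$(\ast)$ produces morphisms \emph{out} of objects in $\cE$ and \emph{into} objects of $\cF$: given $x\in\cF$ and $x\rinf y$, you obtain $y\to z$ with $z\in\cF$. Your own reformulation makes this explicit: an admissible short exact sequence with left term in $\cF$ admits a map \emph{to} one lying entirely in $\cF$. This is a coresolution-type statement, and it pairs naturally with \emph{left} fractions (co-roofs $a\to c\overset{s}{\leftarrow} b$), yielding the lemma: for any quasi-isomorphism $s:a\to c$ with $a\in\Ch_b(\cF)$, there is a quasi-isomorphism $t:c\to c'$ with $c'\in\Ch_b(\cF)$. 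You have written the right-fraction version (find $c'\to c$), which $(\ast)$ does not supply.

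Here is a concrete counterexample to your lemma. Take $\cE$ the category of finitely generated abelian groups and $\cF$ the finite abelian groups; $\cF$ is a strict exact subcategory and $(\ast)$ holds (project $y$ onto its torsion summand). Let $c=[\bbZ\xrightarrow{\,2\,}\bbZ]$ in degrees $1,0$ and $a=\bbZ/2$ in degree $0$, with $s:c\to a$ the canonical quasi-isomorphism. Any chain map $c'\to c$ with $c'\in\Ch_b(\cF)$ is zero in every degree, since $\Hom(\text{finite},\bbZ)=0$; its cone is then $c'[1]\oplus c$, which is not acyclic because $H_0(c)=\bbZ/2$. So no $c'\to c$ can be a quasi-isomorphism. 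The paper does not give an argument here --- it simply invokes \cite[3.1.7\,(b)]{Sch11} --- but the standard proof there runs via left fractions, and once you reverse the direction of your cofinality lemma the rest of your outline (surjectivity by replacing the apex, injectivity by transporting the nullhomotopy witness, and the check that the resulting $\cF$-level maps are quasi-isomorphisms in $\cF$) goes through as you describe.
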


\begin{proof}[\bf Proof of \ref{prop:bicompair is very strict}.]
Let $\bC=(\cC,w)$ be a bicomplicial pair. 
We are going to check that $\cC^w\rinc \cC$ satisfies 
the condition $\mathrm{(\ast}$ in \ref{lem:fully faithful}. 
For any admissible monomorphism $x \rinf y$ with $x\in\Ob\cC^w$, 
the morphism $\iota_y:y \rinf C(y)$ is 
an admissible monomorphism with $C(y)\in\Ob \cC^w$. 
Hence we get the result by \ref{lem:fully faithful}. 
\end{proof}

\sn
Recall the notations of $E(\cE)$ and $s^{\cE}$, $q^{\cE}$ 
from Conventions $\mathrm{(7)}$ $\mathrm{(ii)}$ and $\mathrm{(xiv)}$.

\begin{df}[\bf Widely exact functors]
\label{para:widely exact}
Let $\cE$ be a bicomplicial category and $(\cT,\Sigma)$ 
a triangulated category. 
A {\bf widely exact functor} from $\cE$ to $\cT$ is 
a pair of $(f,\partial)$ consisting of 
an additive functor $f:\cE \to \cT$ and a natural equivalence 
$\partial:fq^{\cE} \to \Sigma fs^{\cE}$ 
satisfies the following conditions:\\
$\mathrm{(1)}$ 
For any admissible exact sequence 
$x \overset{i}{\rinf} y \overset{p}{\rdef} z$ in $\cE$, 
$fx \onto{fi} fy \onto{fp} fz \onto{\partial_{(i,p)}} \Sigma fx$ 
is a $\Sigma$-exact triangle.\\
$\mathrm{(2)}$ 
For any $x$ in $\cE$, 
$\partial_x:=\partial_{(\iota_x,\pi_x)}:fTx \to \Sigma fx$ is an isomorphism 
where $x \overset{\iota_x}{\rinf} Cx \overset{\pi_x}{\rdef} Tx$ is 
the canonical admissible exact sequence in $\cE$.

\sn
We often omit $\partial$ in the notation. 
We write $\Ex_{\wide}(\cE,\cT)$ for 
the set of widely exact functors from $\cE$ to $\cT$. 
\end{df}

\begin{ex}[\bf Compositions]
\label{ex:composition}
Let $(f,\partial):\cE \to \cT$ be a widely exact functor 
from a bicomplicial category $\cE$ to a triangulated category 
$(\cT,\Sigma)$.\\
$\mathrm{(1)}$ 
For a complicial functor $(f,c):\cE' \to \cE$ from 
a bicomplicial category 
$\cE'$, 
we have the widely exact functor $gf:\cE' \to \cT$.\\
$\mathrm{(2)}$ 
For a triangulated functor $(h,d):\cT \to \cT'$ from $\cT$ to a 
triangulated category $\cT'$, 
we have the widely exact functor $hf:\cT \to \cT'$.
\end{ex}

\begin{lem}[\bf Fundamental properties of widely exact functors]
\label{lem:FPOWEF}
Let $(f,c):\cE\to \cT$ be a widely exact functor 
from a bicomplicial category $(\cE,C)$ to a triangulated category 
$(\cT,\Sigma)$. 
Then\\
$\mathrm{(1)}$ 
For any object $x$ in $\cE$, 
$fCx$ is the zero object in $\cT$.\\
$\mathrm{(2)}$ 
For any morphisms $a$, $b:x \to y$ in $\cE$, 
if $a$ and $b$ are homotopic, 
then we have the equality $fa=fb$.\\
$\mathrm{(3)}$ 
If $x \onto{a} y$ is a homotopy equivalence, 
then $fa$ is an isomorphism.\\
$\mathrm{(4)}$ 
If an object $x$ in $\cE$ is contractible, 
then $fx$ is the zero object in $\cT$.
\end{lem}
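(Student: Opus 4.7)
My plan is to prove $(1)$ first, since $(2)$--$(4)$ all fall out from it in a couple of lines, using only the additivity of a widely exact functor.

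For $(1)$, I would apply the widely exact functor $(f,\partial)$ to the canonical admissible short exact sequence
\[
x \overset{\iota_x}{\rinf} Cx \overset{\pi_x}{\rdef} Tx
\]
furnished by the bicomplicial structure on $\cE$. By Definition~\ref{para:widely exact} this produces a $\Sigma$-exact triangle
\[
fx \onto{f\iota_x} fCx \onto{f\pi_x} fTx \onto{\partial_x} \Sigma fx
\]
in which $\partial_x$ is required to be an isomorphism. Rotating this triangle and appealing to the standard triangulated-category fact that the mapping cone of an isomorphism is a zero object (one may deduce this directly from \textbf{(TR 1)}--\textbf{(TR 3)} and the five-lemma for $\Sigma$-exact triangles) then forces $fCx \cong 0$ in $\cT$.

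For $(2)$, choose a homotopy witness $H : Cx \to y$ with $a - b = H\iota_x$. Because a widely exact functor is additive, $fa - fb = f(a-b) = f(H\iota_x) = (fH)\circ(f\iota_x)$; and $f\iota_x = 0$ by $(1)$ since its codomain $fCx$ is a zero object, so $fa = fb$. Assertion $(3)$ is then immediate: if $g$ is a two-sided homotopy inverse of $a$, then $(fg)(fa) = f(ga) = \id_{fx}$ and $(fa)(fg) = f(ag) = \id_{fy}$ by $(2)$ and functoriality, exhibiting $fa$ as an isomorphism with inverse $fg$. Finally $(4)$ follows by applying $(2)$ to the pair $\id_x$ and $0 : x \to x$, which are homotopic by the definition of contractibility; one obtains $\id_{fx} = f(\id_x) = f(0) = 0$, whence $fx \cong 0$.

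The only step that requires a moment's care is the assertion in $(1)$ that $\partial_x$ being an isomorphism really forces $fCx \cong 0$; this is a standard lemma on triangulated categories and presents no serious obstacle. Everything else is a two-line consequence of $(1)$ plus additivity of $f$.
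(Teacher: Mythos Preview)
Your proof is correct and follows essentially the same route as the paper. For $(1)$ the paper makes your ``cone of an isomorphism is zero'' step explicit by writing out the comparison diagram between $fx \to fCx \to fTx \to \Sigma fx$ and $fx \to 0 \to \Sigma fx \onto{-\id} \Sigma fx$; for $(2)$ and $(3)$ the arguments are identical, and for $(4)$ the paper invokes $(3)$ applied to the homotopy equivalence $x\to 0$ rather than $(2)$ applied to $\id_x$ and $0$, which amounts to the same thing.
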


\begin{proof}[\bf Proof] 
$\mathrm{(1)}$ 
For any $x$ in $\cE$, by considering 
the diagram of $\Sigma$-exact triangles below 
$${\footnotesize{\xymatrix{
fx \ar[r]^{\iota_x} \ar@{=}[d] & fCx \ar[r]^{\pi_x} 
\ar[d] & fTx \ar[r]^{\partial_{(\iota_x,\pi_x)}} 
\ar[d]^{\wr}_{-\partial_{(\iota_x,\pi_x)}} & 
\Sigma fx \ar@{=}[d]\\
fx \ar[r] & 0 \ar[r] & \Sigma fx \ar[r]_{-\id_{\Sigma fx}} & \Sigma fx ,
}}}$$
we learn that $fCx$ is the zero object. 

\sn
$\mathrm{(2)}$ 
For any morphism $a$, $b:x \to y$, 
if there exists a homotopy $H:a \to b$, 
then we have the equalities $fa-fb=fHf\iota_x=0$ by (1). 

\sn
$\mathrm{(3)}$ 
For morphisms $a:x \to y$ and $b:y\to x$ such that 
$ba$ and $ab$ are homotopic to the identity morphisms, 
$fb$ and $fa$ are the inverse morphisms in each other by (2). 

\sn
$\mathrm{(4)}$ 
Let $x$ be a contractible object, then 
since the canonical morphism $x \to 0$ is a homotopy equivalence, 
$fx \to 0$ is an isomorphism in $\cT$ by (3). 
\end{proof}

\begin{cor}
\label{cor:pullbackofwidelyexact}
Let $f:\cE \to \cT$ be a widely exact functor 
from a bicomplicial category $\cE$ to a triangulated category $\cT$.\\
$\mathrm{(1)}$ 
For a triangulated subcategory $\calL$ of $\cT$, 
$f^{-1}\calL$ is a null class in $\cE$.\\
$\mathrm{(2)}$ 
In $\mathrm{(1)}$, moreover if $\calL$ is thick, 
then $f^{-1}\calL$ is also thick.\\

\sn
In particular, $f$ induces the ordered map 
$\Tri_{\#}(\cT) \to \NC_{\#}(\cE)$, $\cN \mapsto f^{-1}\cN$ 
where $\#=\emptyset$ or $\thi$. 
\end{cor}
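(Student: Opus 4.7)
The plan is to verify the two defining properties of a null class for $f^{-1}\calL$, and then transfer thickness from $\calL$ across the functor $f$.

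For part (1), I would proceed in two steps. First, I would check that $f^{-1}\calL$ contains all $C$-contractible objects: by Lemma~\ref{lem:FPOWEF}(4) any contractible $x$ satisfies $fx\cong 0$ in $\cT$, and since $\calL$ is a triangulated subcategory of $\cT$ it contains the zero object, so $x\in f^{-1}\calL$. Second, I would verify the two-out-of-three axiom for admissible short exact sequences. Given an admissible short exact sequence $x\rinf y\rdef z$ in $\cE$, the definition of a widely exact functor (Definition~\ref{para:widely exact}(1)) produces a $\Sigma$-exact triangle $fx\to fy\to fz\to \Sigma fx$ in $\cT$. Since $\calL$ is a triangulated subcategory, it satisfies the two-out-of-three for $\Sigma$-exact triangles axiom recorded in Conventions $\mathrm{(8)}\,\mathrm{(ii)}$, so whenever two of $fx,fy,fz$ lie in $\calL$, the third does as well. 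Hence $f^{-1}\calL$ is a null class.

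For part (2), assume $\calL$ is thick. I want to show $f^{-1}\calL$ is closed under retractions. Let $x$ be a retraction of an object $y$ in $f^{-1}\calL$, witnessed by morphisms $i:x\to y$ and $p:y\to x$ with $pi=\id_x$. Applying $f$ yields morphisms $fi$, $fp$ in $\cT$ with $(fp)(fi)=\id_{fx}$, so $fx$ is a retraction of $fy$ in $\cT$. By Lemma~\ref{lem:ret in tri cat}, $fx$ is then a direct summand of $fy\in \calL$, and thickness of $\calL$ gives $fx\in \calL$, i.e.\ $x\in f^{-1}\calL$.

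Finally, the ``in particular'' clause follows formally: $\calN\subset \calN'$ implies $f^{-1}\calN\subset f^{-1}\calN'$, so $\calN\mapsto f^{-1}\calN$ is order preserving, and parts (1) and (2) show it lands in $\NC_{\#}(\cE)$ for $\#\in\{\emptyset,\thi\}$. I do not expect a serious obstacle here; the only subtlety is making sure we genuinely use both axioms in Definition~\ref{para:widely exact} (the sequence axiom for two-out-of-three, and Lemma~\ref{lem:FPOWEF}(4), which itself relied on Definition~\ref{para:widely exact}(2) via the isomorphism $\partial_x$, for the contractibility step), and invoking Lemma~\ref{lem:ret in tri cat} to pass from retractions to direct summands inside the triangulated category.
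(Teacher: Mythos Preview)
Your proposal is correct and follows essentially the same approach as the paper: use Lemma~\ref{lem:FPOWEF}(4) for contractible objects, the $\Sigma$-exact triangle coming from Definition~\ref{para:widely exact}(1) for the two-out-of-three axiom, and Lemma~\ref{lem:ret in tri cat} for thickness. Your write-up is somewhat more explicit than the paper's (which compresses part~(2) to a one-line reference to Lemma~\ref{lem:ret in tri cat}), but there is no substantive difference.
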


\begin{proof}[\bf Proof]
$\mathrm{(1)}$ 
For a contractible object $x$ in $\cC$, 
since $f(x)$ is the zero object, 
$x$ is in $f^{-1}\calL$. 
For an admissible short exact sequence 
$x \rinf y \rdef z$, 
since $fx \to fy \to fz \to \Sigma fx$ is 
$\Sigma$-exact, 
if two of $fx$, $fy$ and $fz$ are in $\calL$, 
then third one is also in $\calL$ and therefore 
$f^{-1}\calL$ is a null class. 
Assertion $\mathrm{(2)}$ follows from \ref{lem:ret in tri cat}. 
\end{proof}

\begin{prop}[\bf Widely exact functors]
\label{ex:widelyexact}
$\mathrm{(1)}$ 
For a bicomplicial category $\cE$, 
the canonical functor 
$$\omega_{\cE}:\cE_{\frob} \to \pi_0(\cE)$$
is widely exact.\\
$\mathrm{(2)}$ 
In the situation above, 
let $(\cT,\Sigma)$ be a triangulated category. 
Then the association 
$$\omega_{\cE}^{\ast}:\Hom_{\TriCat}(\pi_0(\cE),\cT)\ni g 
\mapsto g\circ \omega_{\cC} \in\Ex_{\wide}(\cE_{\frob},\cT)$$
gives a bijective correspondence. 
Moreover $\omega_{\cE}$ induces the lattices isomorphism 
$$\Tri_{\#}(\pi_0(\cE))\isoto \NC_{\#}(\cE_{\frob}),\ \ \cN\mapsto \omega_{\cE}^{-1}\cN .$$ 
$\mathrm{(3)}$ 
For a bicomplicial pair $\bE=(\cE,w)$, 
the canonical functor
$$\widetilde{\omega}_{\cE}:\cE \to \cT(\cE,w)$$
is widely exact.\\
$\mathrm{(4)}$ 
In the situation above, 
if $\bE$ is thick, then 
the canonical functor 
$\widetilde{\omega}_{\cE}$ 
induces the lattice isomorphism 
$$\{\cN\in\NC_{\#}(\cE_{\frob});\cE^w \subset \cN \}\isoto
\Tri_{\#}(\cT(\cE,w)).$$
\end{prop}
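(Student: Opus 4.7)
The plan is to treat parts $\mathrm{(1)}$--$\mathrm{(2)}$ for $\omega_{\cE}: \cE_{\frob} \to \pi_0(\cE)$ first, then deduce $\mathrm{(3)}$--$\mathrm{(4)}$ for $\widetilde{\omega}_{\cE}$ by factoring it as $\pi \circ \omega_{\cE}$, where $\pi: \pi_0(\cE) \to \cT(\cE,w)$ is the canonical projection. For $\mathrm{(1)}$, recall that the $T$-exact triangles in $\pi_0(\cE)$ are generated by the canonical mapping cone triangles (\ref{nt:mapping cone functor} $\mathrm{(3)}$). Given a Frobenius admissible short exact sequence $(i,p): x \rinf y \rdef z$ in $\cE_{\frob}$, I use the Frobenius property of $i$ to extend $\iota_x: x \to Cx$ along $i$ to a morphism $h: y \to Cx$; then $\pi_x h: y \to Tx$ vanishes on $i(x)$ and factors through $p$ to define $\partial_{(i,p)}: z \to Tx$, well-defined up to $C$-homotopy. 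Comparison with the canonical mapping cone triangle $x \overset{j_1}{\rinf} \Cyl(i) \overset{\eta}{\rdef} \Cone(i)$ via $\beta: \Cyl(i) \to y$ (a $C$-homotopy equivalence) and a $C$-homotopy equivalence $\Cone(i) \to z$ built from $p$ and $h$ yields an isomorphism of candidate triangles in $\pi_0(\cE)$, certifying $T$-exactness. Condition $\mathrm{(2)}$ of Definition \ref{para:widely exact} is immediate by taking $h = \id_{Cx}$ for the canonical sequence $(\iota_x, \pi_x)$.

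For $\mathrm{(2)}$, the universal property follows from Lemma \ref{lem:FPOWEF}: any widely exact $f: \cE_{\frob} \to \cT$ sends $C$-homotopic morphisms to equal morphisms, hence factors uniquely through $\pi_0(\cE)$ as an additive functor $\overline{f}$. That $\overline{f}$ is triangulated is because every $T$-exact triangle in $\pi_0(\cE)$ is isomorphic to a mapping cone triangle coming from a Frobenius admissible exact sequence, which $f$ sends to a $\Sigma$-exact triangle. For the lattice isomorphism, pull-back $\omega_{\cE}^{-1}$ sends triangulated subcategories to null classes by Corollary \ref{cor:pullbackofwidelyexact}; its inverse is $\cN' \mapsto \omega_{\cE}(\cN')_{\isom,\pi_0(\cE)}$, whose image is a triangulated subcategory by the same description of $T$-exact triangles. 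The key identity $\omega_{\cE}^{-1}\omega_{\cE}(\cN')_{\isom} = \cN'$ reduces to showing null classes are closed under $C$-homotopy equivalence: for any such equivalence $a: x \to y$, the object $\Cone(a)$ is contractible, and applying the two-out-of-three axiom for null classes to the admissible short exact sequences $x \rinf \Cyl(a) \rdef \Cone(a)$ and $Cx \rinf \Cyl(a) \rdef y$ shows $x \in \cN'$ iff $y \in \cN'$. The thick variant follows from Lemma \ref{lem:ret in tri cat}.

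For $\mathrm{(3)}$, given an admissible short exact sequence $(i,p): x \rinf y \rdef z$ in $\cE$ (not necessarily Frobenius), the canonical sequence $x \overset{j_1}{\rinf} \Cyl(i) \overset{\eta}{\rdef} \Cone(i)$ is always Frobenius admissible in $\cE_{\frob}$---Frobenius-ness of $j_1$ follows from its formula together with the bicomplicial identity $r \iota_C = \id$---and thus defines a $T$-exact triangle in $\pi_0(\cE)$ by $\mathrm{(1)}$. Compare with the original sequence via $\beta: \Cyl(i) \to y$ (a $C$-homotopy equivalence, hence in $w$) and $\phi: \Cone(i) \to z$ induced by $p$. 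The map $\phi$ fits into an admissible short exact sequence $Cx \rinf \Cone(i) \overset{\phi}{\rdef} z$ in $\cE$ with $Cx$ contractible, hence $Cx \in \cE^w$. Applying the extensional axiom for $w$ to the morphism of admissible short exact sequences $(Cx \rinf \Cone(i) \rdef z) \to (0 \rinf z \overset{\id}{\rdef} z)$, whose outer vertical components $Cx \to 0$ (by the saturated axiom) and $\id_z$ both lie in $w$, forces $\phi \in w$. Thus $\beta$ and $\phi$ become isomorphisms in $\cT(\cE,w)$, providing the required $T$-exact triangle and defining $\partial_{(i,p)}$.

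For $\mathrm{(4)}$, combine $\mathrm{(2)}$ with Proposition \ref{prop:homthmfortricat} applied to $\pi: \pi_0(\cE) \to \cT(\cE,w) = \pi_0(\cE)/\pi_0(\cE^w)$: since $\pi_0(\cE^w)$ is thick in $\pi_0(\cE)$ by Remark \ref{rem:a bicomp pair is good biwaldhausen} $\mathrm{(2)}$ (using that $\bE$ is thick), we obtain $\Tri_{\#}(\cT(\cE,w)) \isoto V_{\#}(\pi_0(\cE^w))$; composing with the isomorphism of $\mathrm{(2)}$ (and using $\omega_{\cE}^{-1}\pi_0(\cE^w) = \cE^w$, which holds by the closure property of $\mathrm{(2)}$) yields the claim. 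The main obstacle is in $\mathrm{(1)}$: verifying that the comparison map $\Cone(i) \to z$ is a $C$-homotopy equivalence when $i$ is Frobenius, which is the core Happel-type input for the triangulated structure on the stable category of a Frobenius exact category; once this is established, the extensional-axiom argument in $\mathrm{(3)}$ and the lattice manipulations in $\mathrm{(2)}$ and $\mathrm{(4)}$ are relatively routine.
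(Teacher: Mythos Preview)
Your proposal is correct and follows essentially the same architecture as the paper: part $\mathrm{(1)}$ is cited in the paper from \cite{Uni} while you sketch the Happel-type argument directly; parts $\mathrm{(2)}$ and $\mathrm{(4)}$ match the paper's use of Lemma~\ref{lem:FPOWEF} and Proposition~\ref{prop:homthmfortricat} almost verbatim. For $\mathrm{(3)}$ the paper writes the same comparison diagram with $\beta_i$ and the map $\Cone(0,p):\Cone i\to z$, asserting the latter is a weak equivalence; your route via the admissible sequence $Cx\rinf\Cone i\overset{\phi}{\rdef} z$ and the extensional axiom is a clean way to justify that step and is equivalent to what the paper leaves implicit.
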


\begin{proof}[\bf Proof] 
$\mathrm{(1)}$ is proven in \cite[3.25]{Uni}. 

\sn
$\mathrm{(2)}$ 
For any widely exact functor $(f,\partial):\cE_{\frob} \to \cT$, 
let us define the triangle functor 
$(\bar{f},\bar{\partial}):\pi_0(\cE) \to \cT$ 
by the formula $\bar{f}(\omega_{\cE}{a}):=f(a)$ for any morphism 
$a:x\to y$ in $\cE$. 
By virtue of \ref{lem:FPOWEF} $\mathrm{(2)}$, 
this association is well-defined and 
the association $f \mapsto \bar{f}$ gives the inverse map of 
$\omega_{\cE}^{\ast}$. 
For the second assertion, 
the association $\cN \mapsto \pi_0(\cN)$ gives the inverse map of 
$\omega_{\cE}^{-1}$. 

\sn
$\mathrm{(3)}$ 
For an admissible short exact sequence 
$x \overset{i}{\rinf} y \overset{p}{\rdef} z$ 
in $\cE$, 
let us consider the following commutative 
diagram of admissible short exact sequences 
$${\footnotesize{\xymatrix{
x \ar@{>->}[r]^{j_1(i)} \ar@{=}[d] & 
\Cyl i \ar@{->>}[r]^{\eta_i} \ar[d]_{\beta_i} & 
\Cone i \ar[d]^{\Cone(0,p)}\\
x \ar@{>->}[r]_i & y \ar@{->>}[r]_p & z .
}}}$$ 
Since $\beta_i$ is a homotopy equivalence, 
it turns out that $\Cone(0,p)$ is a weak equivalence. 
Therefore the sequence 
$x \onto{i} y \onto{p} z \onto{\psi_i\Cone(0,p)^{-1}} Tx$ 
is a $T$-exact triangle in $\cT(\bE)$.

\sn
$\mathrm{(4)}$ Since $w$ is thick, 
$\pi_0(\cE^w)$ is a thick subcategory of $\pi_0(\cE)$ 
by \ref{rem:a bicomp pair is good biwaldhausen} $\mathrm{(2)}$. 
Therefore we have the lattice isomorphisms 
$$\NC_{\#}(\cE_{\frob})\isoto\Tri_{\#}(\pi_0(\cE)) \ \ \text{and}$$
$$V_{\#}(\pi_0(\cE^w))\isoto \Tri_{\#}(\cT(\cE,w))$$
by $\mathrm{(1)}$ and \ref{prop:homthmfortricat}. 
Hence we obtain the desired lattice isomorphism. 
\end{proof}

\begin{cor} 
\label{cor:inv by thick closure}
For any bicomplicial pair $(\cE,w)$, 
the identity functor $(\cE,w) \to (\cE,w_{\thi,\cE_{\frob}})$ 
induces an equivalence of triangulated categories 
$\cT(\cE,w)\isoto\cT(\cE,w_{\thi,\cE_{\frob}})$.
\end{cor}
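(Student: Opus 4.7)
The identity functor on $\cE$ defines a relative complicial functor
$F\colon(\cE,w)\to(\cE,w_{\thi,\cE_{\frob}})$ since $w\subset w_{\thi,\cE_{\frob}}$, and
it induces a triangle functor
$\cT(F)\colon\cT(\cE,w)\to\cT(\cE,w_{\thi,\cE_{\frob}})$ that is the identity on
objects. My plan is to identify $\cT(F)$ explicitly with the projection
$\pi_0(\cE)/\calL\to\pi_0(\cE)/\calL_{\thi,\pi_0(\cE)}$ for $\calL:=\pi_0(\cE^w)$,
and then to invoke the abstract principle that, for any triangulated category $\cT$
and any triangulated subcategory $\calL\subset\cT$, the canonical projection
$\cT/\calL\to\cT/\calL_{\thi,\cT}$ is an equivalence of triangulated categories.

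For the identification, I would establish
$\pi_0(\cE^{w_{\thi,\cE_{\frob}}})=(\pi_0(\cE^w))_{\thi,\pi_0(\cE)}$.
By the natural equivalence $\NC_{\thi}(\cE_{\frob})\isoto\CW_{\thi}(\cE_{\frob})$ of
\ref{para:NC and comp weak equiv} (9), which sends a thick null class $\cN$ to
$w_{\cN}$ with inverse $v\mapsto\cE^{v}$, the thick complicial closure
$w_{\thi,\cE_{\frob}}$ corresponds to the thick null closure
$(\cE^w)_{\thi,\cE_{\frob}}$, so
$\cE^{w_{\thi,\cE_{\frob}}}=(\cE^w)_{\thi,\cE_{\frob}}$. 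The lattice
isomorphism $\NC_{\thi}(\cE_{\frob})\isoto\Tri_{\thi}(\pi_0(\cE))$,
$\cN\mapsto\pi_0(\cN)$, of \ref{ex:widelyexact} (2) then transports the right-hand
side to the thick triangulated closure $(\pi_0(\cE^w))_{\thi,\pi_0(\cE)}$, yielding
the required identification and hence the desired description of $\cT(F)$.

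For the abstract principle, given a triangulated subcategory $\calL\subset\cT$ and
an object $x\in\calL_{\thi,\cT}$, by definition of thickness $x$ is a retract of some
$\ell\in\calL$, and by \ref{lem:ret in tri cat} is actually a direct summand:
$\ell\cong x\oplus y$ in $\cT$. In $\cT/\calL$ the image of $\ell$ is a zero object,
so $x$ becomes a direct summand of zero in an additive category, forcing $x$ itself
to be zero in $\cT/\calL$. Hence every morphism of $\cT$ with cone in $\calL_{\thi,\cT}$
is inverted by the projection $\cT\to\cT/\calL$, and the universal property of Verdier
localization supplies an inverse triangle functor $\cT/\calL_{\thi,\cT}\to\cT/\calL$.
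The hard part will be the bookkeeping in the identification step: one must track three
parallel lattices --- thick classes of complicial weak equivalences in $\cE_{\frob}$,
thick null classes in $\cE_{\frob}$, and thick triangulated subcategories of
$\pi_0(\cE)$ --- and confirm that the thick-closure operations agree under the lattice
isomorphisms linking them; once this is in hand, the abstract principle of the third
paragraph is a purely formal consequence of universal properties.
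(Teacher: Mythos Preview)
Your approach is essentially the same as the paper's: both identify $\pi_0(\cE^{w_{\thi,\cE_{\frob}}})$ with the thick closure $\pi_0(\cE^w)_{\thi}$ via the lattice isomorphism of \ref{ex:widelyexact}~(2), and then use that Verdier quotients are insensitive to passing to the thick closure. The paper compresses this into a single displayed equation, while you spell out the intermediate lattice bookkeeping and supply a justification for the abstract principle $\cT/\calL\isoto\cT/\calL_{\thi}$.

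One small inaccuracy worth flagging: in your third paragraph you write ``by definition of thickness $x$ is a retract of some $\ell\in\calL$''. This is not the definition adopted in the paper (Conventions~(8)(iv) defines $\calL_{\thi}$ as the \emph{smallest} thick triangulated subcategory containing $\calL$); rather, it is a theorem that for a triangulated subcategory $\calL$ the thick closure coincides with the class of direct summands of objects of $\calL$. Once you grant this (standard) fact your argument goes through, but as written the step is asserted rather than justified.
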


\begin{proof}[\bf Proof] 
By \ref{ex:widelyexact} $\mathrm{(2)}$, 
$\pi_0(\cE^{w_{\thi,\cE_{\frob}}})=\pi_0(\cE^w)_{\thi}$. 
Therefore we have 
$$\cT(\cE,w)=\pi_0(\cE)/\pi_0(\cE^w)\isoto
\pi_0(\cE)/\pi_0(\cE^w)_{\thi}=\cT(\cE,w_{\thi,\cE_{\frob}}).$$
\end{proof}

\sn
Recall the definition of cokernel of triangulated functors from 
Conventions $\mathrm{(8)}$ $\mathrm{(v)}$.

\begin{df}
\label{df:coker of rel exact cat}
$\mathrm{(1)}$ 
Let $f:\bE=(\cE,w) \to \bF=(\cF,v)$ be a relative exact functor 
between strict relative exact categories. 
Then we define the relative exact category $\coker f$ by 
$(\Ch_b(\cF),w_{f})$. 
Here the morphism $a:x\to y$ in $\Ch_b(\cF)$ is in $w_f$ 
if and only if the image of $a$ by composition of 
the canonical projection functors 
$\Ch_b(\cF)\to\calD_b(\cF)\to\coker \calD_b(f)$ is an isomorphism.\\
$\mathrm{(2)}$ 
If $f:\bE \to \bF$ is derived fully faithful, 
then we write $\bF/\bE$ for $\coker f$.\\
$\mathrm{(3)}$ 
If $\bF$ is consistent, then 
the functor $j_{\cF}:\cF \to \Ch_b(\cF)$ 
induces a relative exact functor 
$\pi_f:\bF \to \coker f$.\\
$\mathrm{(4)}$ 
Let 
$${\footnotesize{\xymatrix{
\bE \ar[r]^f \ar[d]_a & \bF \ar[d]^b\\
\bE'\ar[r]_{f'} & \bF'
}}}$$ 
be a commutative diagram of strict relative exact categories. 
Then we define $\coker(a,b):\coker f \to \coker f'$ 
to be a relative exact functor by $\coker(a,b):=\Ch_b(b)$.
\end{df}

\begin{rem}
\label{rem:coker of rel exact cat}
$\mathrm{(1)}$ 
In \ref{df:coker of rel exact cat} $\mathrm{(1)}$, 
we have the canonical equivalence of triangulated categories 
$\coker \calD_b(f)\isoto\cT(\Ch_b(\cF),w_f)$.\\
$\mathrm{(2)}$ 
In \ref{df:coker of rel exact cat} $\mathrm{(4)}$,
if both $\bF$ and $\bF'$ are consistent, then the diagram below is commutative.
$${\footnotesize{\xymatrix{
\bF \ar[r]^{\pi_f} \ar[d]_b & \coker f\ar[d]^{\coker(a,b)}\\
\bF' \ar[r]_{\pi_{f'}} & \coker f'.
}}}$$ 
\end{rem}

\begin{ex}
\label{ex:quotient of rel exact cat}
For any strict relative exact category $\bE=(\cE,w)$, 
we have the equality 
$$\Ch_b(\bE)=\coker((\cE^w,i_{\cE^w})\to (\cE,i_{\cE})).$$
\end{ex}

\sn
Recall the definition about exact sequences of 
triangulated categories 
from Conventions $\mathrm{(8)}$ $\mathrm{(vi)}$.

\begin{prop}
\label{prop:compness}
Let $f:\bE=(\cE,w)\to \bF(\cF,v)$ be 
a relative exact functor between 
strict relative exact categories. 
Then\\ 
$\mathrm{(1)}$ 
$\coker f$ is a thick bicomplicial pair. 
In particular, 
$\Ch_b(\bE)$ is a thick bicomplicial pair. 
In particular, 
$\Ch_b(\bE)$ is a Waldhausen exact category 
which satisfies 
the extensional, the saturated, the factorization and 
the retraction axioms.\\
$\mathrm{(2)}$ 
The inclusion functor $\Ch_b(\cE)^{qw}\rinc \Ch_b(\cE)$ 
and the identity functor of $\Ch_b(\cE)$ induce 
an exact sequence of triangulated categories
$$\cT(\Ch_b(\cE)^{qw},\qis) \to \cT(\Ch_b(\cE),\qis)\to \cT(\Ch_b(\cE),qw).$$
\end{prop}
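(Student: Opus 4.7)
The plan is to deduce both parts from the lattice-theoretic results of Section~\ref{sec:trisubcat} combined with a direct verification of the complicial weak equivalence axioms.

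For part~(1), I would consider the composite
\[
\widetilde Q\colon \Ch_b(\cF)\to \calD_b(\cF)\to \coker\calD_b(f),
\]
so that $w_f=\widetilde Q^{-1}(\isom)$ by Definition~\ref{df:coker of rel exact cat}. Since $\widetilde Q$ factors through the homotopy category $\calH_b(\cF)=\pi_0(\Ch_b(\cF))$, it identifies $C$-homotopic morphisms, so $w_f$ contains every $C$-homotopy equivalence. Saturation and the retraction axiom are immediate from the corresponding properties of the class of isomorphisms in the triangulated category $\coker\calD_b(f)$, using Lemma~\ref{lem:ret in tri cat} for the latter. The extensional axiom follows because any admissible short exact sequence in $\Ch_b(\cF)$ induces an exact triangle in $\calD_b(\cF)$ (via the classical quasi-isomorphism from the mapping cone to the quotient term), so that the $5$-lemma for triangles propagates the isomorphism property in the sense required. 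Thus $w_f$ is a thick class of complicial weak equivalences on $\Ch_b(\cF)$, i.e.\ $\coker f$ is a thick bicomplicial pair. The assertions for $\Ch_b(\bE)$ then follow: Example~\ref{ex:quotient of rel exact cat} realises it as a $\coker$, and Remark~\ref{rem:a bicomp pair is good biwaldhausen}~$\mathrm{(1)}$ upgrades the bicomplicial pair structure to a saturated extensional Waldhausen exact category satisfying the factorization axiom, with thickness supplying the retraction axiom.

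For part~(2), I would unwind each of the three triangulated categories explicitly. By definition $\Ch_b(\cE)^{\qis}=\Acy_b(\cE)$, giving $\cT(\Ch_b(\cE),\qis)=\calD_b(\cE)$; by Remark~\ref{rem:acyqw} one has $\pi_0(\Ch_b(\cE)^{qw})=\calH_b(\cE^w)\vee_{\thi}P_{\cE}(\Acy_b(\cE))$, so $\cT(\Ch_b(\cE),qw)=\calD_b(\bE)$; and the third term is
\[
\cT(\Ch_b(\cE)^{qw},\qis)=\bigl(\calH_b(\cE^w)\vee_{\thi}P_{\cE}(\Acy_b(\cE))\bigr)/P_{\cE}(\Acy_b(\cE)).
\]
The standard Verdier fact invoked in the proof of Proposition~\ref{prop:fully faithful quotient}~$\mathrm{(1)}$ makes the induced functor $i$ from this third term to $\calD_b(\cE)$ fully faithful, and Proposition~\ref{prop:general vee}~$\mathrm{(1)}$ identifies its essential image with $\im(\calD_b(\cE^w)\to\calD_b(\cE))_{\thi}$, which is exactly $\Ker p$ for the Verdier quotient $p\colon \calD_b(\cE)\to \calD_b(\bE)$. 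Hence $i$ induces an equivalence onto $\Ker p$; the equivalence $\calD_b(\cE)/\Ker p\isoto \calD_b(\bE)$ is immediate from the definition of $\calD_b(\bE)$ as a cokernel; and $pi\cong 0$ because every object of $\Ch_b(\cE)^{qw}$ is $qw$-trivial. Exactness in the sense of Conventions~$\mathrm{(8)}$~$\mathrm{(vi)}$ follows.

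The main obstacle I anticipate is bookkeeping rather than conceptual difficulty. In part~(1) one must carefully justify that admissible short exact sequences in $\Ch_b(\cF)$ really do induce exact triangles in $\calD_b(\cF)$ before invoking the $5$-lemma; this is classical but must be done compatibly with the sign conventions of the mapping cone. In part~(2), one must remain attentive to the distinction between $\im(\calD_b(\cE^w)\to\calD_b(\cE))$ and its thick closure, which coincide only under the very strict hypothesis of Definition~\ref{df:rel exact cat}~$\mathrm{(5)}$ that is not assumed here; this is exactly what forces the use of the thick join $\vee_{\thi}$ in Remark~\ref{rem:acyqw} rather than $\vee$.
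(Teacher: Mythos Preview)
Your proof is correct for both parts, but it takes a more hands-on route than the paper.

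For part~$\mathrm{(1)}$, the paper observes that $w_f$ is characterised by the vanishing of objects under the composite $\Ch_b(\cF)\to\calD_b(\cF)\to\calD_b(\bF)\to\coker\calD_b(f)$, and then invokes the machinery of \emph{widely exact functors}: since this composite is widely exact (Example~\ref{ex:composition}), Corollary~\ref{cor:pullbackofwidelyexact} says the pullback of the thick subcategory $\{0\}$ is automatically a thick null class, hence via~\ref{para:NC and comp weak equiv}~$\mathrm{(9)}$ a class of thick complicial weak equivalences. Your argument unpacks exactly what this abstraction encodes: the factorisation through $\calH_b(\cF)$ gives the $C$-homotopy equivalence axiom, the behaviour of isomorphisms in a triangulated category gives saturation and retraction, and the fact that admissible short exact sequences become triangles gives extensionality. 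The paper's approach is more modular and reusable; yours makes the individual verifications transparent and avoids the intermediate notion of null class entirely.

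For part~$\mathrm{(2)}$, the paper again leans on the lattice machinery: it applies Proposition~\ref{ex:widelyexact}~$\mathrm{(4)}$ to the thick bicomplicial pair $(\Ch_b(\cE),\qis)$, so that the thick null class $\Ch_b(\cE)^{qw}\supset\Acy_b(\cE)$ of $\Ch_b(\cE)_{\frob}$ corresponds under the lattice isomorphism to a thick subcategory of $\calD_b(\cE)$, namely $\cT(\Ch_b(\cE)^{qw},\qis)$; exactness is then immediate from the definition of $qw$. Your approach instead computes each vertex of the sequence explicitly using Remark~\ref{rem:acyqw} and then appeals to the Section~\ref{sec:trisubcat} results (Propositions~\ref{prop:homthmfortricat}, \ref{prop:fully faithful quotient}, \ref{prop:general vee}) directly, identifying the image with $\im(\calD_b(\cE^w)\to\calD_b(\cE))_{\thi}=\Ker p$. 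This is again correct; the factorizability hypothesis in Proposition~\ref{prop:fully faithful quotient} is trivially satisfied since $P_\cE(\Acy_b(\cE))\subset\calH_b(\cE^w)\vee_{\thi}P_\cE(\Acy_b(\cE))$. The paper's route is shorter because Proposition~\ref{ex:widelyexact}~$\mathrm{(4)}$ already bundles Propositions~\ref{prop:homthmfortricat} and~\ref{ex:widelyexact}~$\mathrm{(2)}$ together; your route makes the dependence on Section~\ref{sec:trisubcat} explicit and shows precisely where the thick join $\vee_{\thi}$ is forced.
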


\begin{proof}[\bf Proof]
$\mathrm{(1)}$ 
$w_f$ is corresponding to the null class $\Acy^{w_f}_b\cF$ 
which is the pull back of $\{0\}$ by the composition of 
the widely exact functor and the triangulated functors
$$\Ch_b(\cF) \to \calD_b(\cF) \to \calD_b(\bF) \to \coker \calD_b(f).$$
Therefore by \ref{ex:composition} and \ref{cor:pullbackofwidelyexact}, 
it is a class of thick complicial weak equivalences. 
The last assertion follows from \ref{rem:a bicomp pair is good biwaldhausen}.

\sn
$\mathrm{(2)}$ 
We only need to check that 
$\cT(\Ch_b(\cE)^{qw},\qis)$ is a thick subcategory of $\cT(\Ch_b(\cE),\qis)$. 
We apply \ref{ex:widelyexact} $\mathrm{(4)}$ 
to a thick bicomplicial pair $(\Ch_b(\cE),\qis)$. 
A thick null class ($\Acy^b(\cE)\subset$)$\Ch_b(\cE)^{qw}$ 
of $\Ch_b(\cE)_{\frob}$ 
corresponds to 
a thick subcategory $\cT(\Ch_b(\cE)^{qw},\qis)$ of $\cT(\Ch_b(\cE),\qis)$.
\end{proof}

\begin{cor}
\label{cor:Ch_b is 2-functor}
$\mathrm{(1)}$ 
The $2$-functor $\Ch_b$ induces the $2$-functor 
$\RelEx_{\strict}^{+} \to \biCompPair_{\thi}$.\\
$\mathrm{(2)}$ 
The functor $\calD_b:\RelEx_{\strict}^{+} \to \TriCat$ 
is a $2$-functor and the restriction 
$\RelEx_{\consist}^{+} \to \TriCat$ sends 
relative natural weak equivalences to 
triangulated natural weak equivalences. 
In particular, 
for a morphism $f:\bE \to \bF$ in $\RelEx_{\strict}$, 
if $F$ is consistent and if $f$ is an exact homotopy equivalence, 
then it is a derived equivalence. 
\end{cor}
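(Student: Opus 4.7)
The strategy is to assemble the 2-functoriality from the object-level facts already proved and to factor $\calD_b$ through $\Ch_b$ and $\cT$. For part $\mathrm{(1)}$, Proposition~\ref{prop:compness}$\mathrm{(1)}$ already handles the action on objects, so the remaining work is with 1- and 2-morphisms. A relative exact functor $f:\bE \to \bF$ induces $\Ch_b(f):\Ch_b(\cE)\to\Ch_b(\cF)$ by levelwise application, which is complicial since the explicit cone formulas in Example~\ref{ex:bicomp cat}$\mathrm{(1)}$ are preserved degreewise by any exact functor. To see that $\Ch_b(f)$ preserves $qw$, I would invoke the description $\Acy^{qw}_b(\cE)=P_\cE^{-1}(\calH_b(\cE^w)\vee_{\thi} P_\cE(\Acy_b\cE))$ from Remark~\ref{rem:acyqw}: since $f$ is exact and sends $\cE^w$ into $\cF^v$, it sends $\calH_b(\cE^w)$ into $\calH_b(\cF^v)$ and $\Acy_b(\cE)$ into $\Acy_b(\cF)$, so Lemma-Definition~\ref{lemdf:lattice functor Tri} gives the required containment. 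On 2-morphisms, a natural transformation $\theta:f\to g$ produces $\Ch_b(\theta)$ componentwise; compatibility with the cone structure is automatic because the cone in $\Ch_b$ is a degreewise construction.

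For part $\mathrm{(2)}$, I would define $\calD_b$ as the composite $\RelEx_{\strict}^{+}\xrightarrow{\Ch_b}\biCompPair_{\thi}^{+}\xrightarrow{\cT}\TriCat$, so that 2-functoriality is immediate from part~$\mathrm{(1)}$ together with the 2-functoriality of $\cT$ recorded in Definition~\ref{df:bicomp pair}$\mathrm{(7)}$. For the consistency-preservation claim, let $\theta:f\to g$ be a relative natural weak equivalence between relative exact functors $\bE\to\bF$ with $\bF$ consistent. For every complex $y\in\Ch_b(\cE)$ the morphism $\Ch_b(\theta)(y)=(\theta(y_n))_n$ has each component $\theta(y_n)$ in $v$, so $\Ch_b(\theta)(y)\in lv$. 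By Lemma-Definition~\ref{lemdf:adm} applied to $\bF$, $lv\subset qv$, so $\Ch_b(\theta)(y)\in qv$ and hence becomes an isomorphism in $\calD_b(\bF)=\cT(\Ch_b(\bF))$. Thus $\calD_b(\theta)$ is a triangulated natural equivalence, as required.

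The final assertion then follows by applying the consistency-preservation to the zig-zag $fg\simeq\id_\bF$ afforded by an exact homotopy equivalence $f:\bE\to\bF$ with partner $g:\bF\to\bE$: since the target $\bF$ is consistent, this yields $\calD_b(f)\calD_b(g)\cong\id_{\calD_b(\bF)}$, making $\calD_b(g)$ a fully faithful section and $\calD_b(f)$ essentially surjective. The main obstacle is the asymmetry in the hypothesis: the companion zig-zag $gf\simeq\id_\bE$ has target $\bE$, which is only strict and not assumed consistent, so the argument of the preceding paragraph does not apply directly. My plan is to resolve this by pushing the $gf$-zig-zag forward via $\calD_b(f)$: the components of $\calD_b(f)(\theta)$ have the form $f(\theta(x))$ with $\theta(x)\in w$, hence lie in $v$, and by consistency of $\bF$ again become isomorphisms in $\calD_b(\bF)$. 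Combining this with the already-established relation $\calD_b(f)\calD_b(g)\cong\id$ and the fullness of $\calD_b(g)$ on its essential image, I would extract a natural isomorphism $\calD_b(gf)\cong\id_{\calD_b(\bE)}$ by transferring the isomorphism across the fully faithful embedding $\calD_b(g)$, thereby promoting $\calD_b(f)$ to a triangulated equivalence.
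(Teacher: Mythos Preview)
Your treatment of part~$\mathrm{(1)}$ and of the first two claims in part~$\mathrm{(2)}$ is correct and coincides with the paper's argument: the paper just cites Proposition~\ref{prop:compness}\,$\mathrm{(1)}$ for the first part and observes that $\calD_b$ is canonically isomorphic to $\cT\circ\Ch_b$, so the second part follows from Lemma-Definition~\ref{lemdf:extensional, hom rel ex cat}\,$\mathrm{(3)}$. You have simply unpacked the content of that lemma rather than citing it.

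You are also right to isolate the asymmetry in the ``in particular'' clause: with only $\bF$ assumed consistent, the zig-zag $gf\simeq\id_\bE$ lives in $\HOM(\bE,\bE)$, and Lemma-Definition~\ref{lemdf:extensional, hom rel ex cat}\,$\mathrm{(3)}$ does not apply. The paper's one-line proof does not address this point; given that the sentence immediately preceding the ``in particular'' is about the restriction to $\RelEx_{\consist}^{+}$, the intended reading is almost certainly that both $\bE$ and $\bF$ are consistent, in which case both zig-zags descend directly and there is nothing further to do.

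Your proposed workaround for the asymmetric case, however, does not go through. First, from $\calD_b(f)\calD_b(g)\cong\id$ you infer that $\calD_b(g)$ is fully faithful; but a one-sided inverse up to isomorphism is automatically faithful, \emph{not} full (the diagonal $\cC\to\cC\times\cC$ with left inverse $\pi_1$ is a counterexample). Second, even granting full faithfulness of $\calD_b(g)$, the transfer step is in the wrong direction: you have a natural isomorphism $\calD_b(f)\circ\calD_b(gf)\cong\calD_b(f)\circ\id$, and extracting $\calD_b(gf)\cong\id$ from this requires cancelling $\calD_b(f)$ on the \emph{left}, which would need $\calD_b(f)$ (not $\calD_b(g)$) to be fully faithful or at least conservative---precisely what you are trying to prove. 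Without an additional hypothesis such as consistency of $\bE$, I do not see how to close this circle.
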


\begin{proof}[\bf Proof]
Assertion $\mathrm{(1)}$ follows from \ref{prop:compness} $\mathrm{(1)}$. 
Since $\calD_b$ is canonically isomorphic to $\cT\Ch_b$, 
assertion $\mathrm{(2)}$ follows from \ref{lemdf:extensional, hom rel ex cat} $\mathrm{(3)}$. 
\end{proof}

\sn
Now we give a definition of non-connective $K$-theory of 
consistent relative exact categories.

\begin{df}[\bf Non-connective $K$-theory]
\label{df:non-connective K-theory}
For any consistent relative exact category $\bE:=(\cE,w)$, 
we define the non-connective $K$-theory $\bbK(\bE)=\bbK(\cE;w)$ of $\bE$ by 
the non-connective $K$-theory of the Frobenius pair 
$(\Ch_b(\cE)_{\frob},\Ch_b(\cE)^{qw}_{\frob})$. 
Then $\bbK$ is the functor from $\RelEx_{\consist}$ to the stable category of 
spectra. 
\end{df}

\sn 
Recall the definition of (categorical) homotopy invariant functors from 
Conventions $\mathrm{(6)}$ $\mathrm{(iii)}$. 
We will prove in \ref{cor:homotopy inv KS} 
that 
the non-connective $K$-theory is a 
categorical homotopy invarinat functor. 
This result is implicitly utilzing in the proof of Lemma~7 in \cite{Sch06} 
to do the Eilenberg swindle argument. 
(See also \ref{prop:fund property of add theory} $\mathrm{(2)}$). 
First recall the following remarks. 

\begin{rem}
\label{rem:hotopy equivalences}
It is well-known that the functor $K^W$ is 
a categorical homotopy invariant functor. 
Namely, if morphisms between 
Waldhausen exact categories are weakly homotopic, 
then they induce the same maps on 
their Waldhausen connective $K$-theory spectra in 
the stable category of spectra. (See \cite[p.330 1.3.1]{Wal85}). 
Therefore an exact homotopy equivalence between Waldhausen exact categories 
is a $K^W$-equivalence. 
Namely it induces a homotopy equivalence on Waldhausen $K$-theory. 
\end{rem}

\begin{rem}
\label{rem:Schlichting F and S}
We recall the definition of 
the functors 
$\fF$ and $\fS$ on $\biCompPair$ from 
\cite[3.2.23]{Sch11} with slight minor adjustments. 
Let $\bC=(\cC,v)$ and $\bC'=(\cC',v')$ be bicomplicial pairs. 
The argument is carried out in several steps.\\
$\mathrm{(1)}$ 
First let us recall 
the {\bf countable envelope functor} $\fF$ 
from \cite[Appendix B]{Kel90} 
where $\fF\cE$ was written by $\cE^{\sim}$. 
We just use the following properties. 
$\fF$ is a $2$-functor from $\ExCat$ to $\ExCat$ 
which preserves admissible exact sequences of exact functors 
and there exist a natural transformation $\id_{\ExCat} \to \fF$.\\
$\mathrm{(2)}$ 
Therefore by \cite[2.61]{Uni}, 
$\fF$ induces the $2$-functor 
$\fF:\biCompPair \to \biCompPair$ 
and there exists a natural transformation 
$\xi:\id_{\biCompPair}\to\fF$.\\
$\mathrm{(3)}$ 
We define the $2$-functor $\fF:\biCompPair \to \biCompPair$ 
by $\fF\bC=(\fF\cC,w_{\fF\bC})$ 
where a morphism $a:x\to y$ in $\fF\cC$ is in $w_{\fF\bC}$ 
if and only if 
the image of $a$ by compositions of 
the canonical projection functors 
$\fF\cC \to \pi_0(\fF\cC) \to \coker(\pi_0(\fF(\cC^w))\to \pi_0(\fF\cC))$ 
is an isomorphism. 
Then $\xi$ induces a natural transformation 
$\xi:\id_{\biCompPair} \to \fF$.\\
$\mathrm{(4)}$ 
By using \ref{para:NC and comp weak equiv} $\mathrm{(9)}$, 
it turns out that 
$\fF$ preserves 
relative complicial natural weak equivalences in the following way. 
Let $q:a \to b$ be a relative complicial natural weak equivalence 
between relative complicial functors $a$, $b:\bC \to \bC'$. 
Then for any object $x$ in $\cC$, $\Cone q_x$ is in $\cC^{v'}$. 
Therefore, 
for any objects $x'$ in $\fF\cC$, $\Cone \fF q_{x'}$ 
is in $\fF(\cC^{v'})$. 
Hence $\fF q$ is in $w_{\fF\bC'}$.\\
$\mathrm{(5)}$ 
We define the $2$-functor $\fS:\biCompPair \to \biCompPair$ 
by $\fS\bC=(\fS\cC,w_{\fS\bC})$ 
where a morphism $a:x\to y$ in $\fF\cC$ is in $w_{\fS\bC}$ 
if and only if 
the image of $a$ by compositions of 
the canonical projection functors 
$\fF\cC \to \cT(\fF\bC) \to \coker(\cT(\bC)\onto{\cT(\xi_{\bC})} \cT(\fF\bC))$ 
is an isomorphism. 
By construction and $\mathrm{(4)}$, 
the functor $\fS$ 
also preserves relative complicial natural weak equivalences. 
\end{rem}

\begin{cor}
\label{cor:homotopy inv KS}
The non-connective $K$-theory is a categorical homotopy invarinat functor 
from $\RelEx_{\consist}$ to the stable category of spectra.
\end{cor}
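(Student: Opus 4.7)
The plan is to trace the definition $\bbK = \bbK^S \circ \Ch_b$ and verify homotopy invariance at each stage. First I would reduce the problem to showing that the composite
$$\RelEx_{\consist} \xrightarrow{\Ch_b} \biCompPair \xrightarrow{\bbK^S} \text{Spectra}$$
is categorical homotopy invariant. Given a relative natural equivalence $\theta : f \to g$ between relative exact functors $f, g : \bE \to \bF$ in $\RelEx_{\consist}$, Lemma-Definition~\ref{lemdf:extensional, hom rel ex cat} $\mathrm{(3)}$ says that $\Ch_b(\theta)$ is a relative complicial natural weak equivalence between $\Ch_b(f)$ and $\Ch_b(g)$ in $\biCompPair$. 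So weak homotopies in $\RelEx_{\consist}$ are sent to weak homotopies in $\biCompPair$, and the problem reduces to homotopy invariance of $\bbK^S$ on $\biCompPair$.

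Next I would recall Schlichting's construction of the non-connective spectrum: the $n$-th level of $\bbK^S(\bC)$ is $K^W(\fS^n \bC)$, with structure maps coming from a fibration sequence $K^W(\fS^n \bC) \to K^W(\fF \fS^n \bC) \to K^W(\fS^{n+1} \bC)$ whose middle term is contractible via an Eilenberg swindle. By Remark~\ref{rem:Schlichting F and S} $\mathrm{(4)}$ and $\mathrm{(5)}$, both $\fF$ and $\fS$ preserve relative complicial natural weak equivalences, so if $\eta : a \to b$ is such a natural equivalence between relative complicial functors $a, b : \bC \to \bC'$, then $\fS^n \eta$ is again a relative complicial natural weak equivalence for every $n \geq 0$. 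In particular $\fS^n a$ and $\fS^n b$ are weakly homotopic as morphisms of Waldhausen exact categories.

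Now I would invoke categorical homotopy invariance of the Waldhausen $K$-theory functor $K^W$, recorded in Remark~\ref{rem:hotopy equivalences}, to conclude the equality $K^W(\fS^n a) = K^W(\fS^n b)$ as maps of connective spectra for every $n$. Because the fibration sequences assembling $\bbK^S$ are natural in $\bC$, these levelwise equalities are compatible with the structure maps and therefore glue to an equality $\bbK^S(a) = \bbK^S(b)$ in the stable category of spectra. Finally, an arbitrary weak homotopy between relative exact functors is by definition a finite zig-zag of relative natural equivalences, so transitivity of equality together with the preceding argument on each leg yields $\bbK(f) = \bbK(g)$ for any weakly homotopic pair $f, g$ in $\RelEx_{\consist}$.

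The main potential obstacle is checking that the levelwise equalities $K^W(\fS^n a) = K^W(\fS^n b)$ are compatible with the structure maps of the Schlichting spectrum; however, since these structure maps are constructed entirely from functorial operations ($\fF$, $\fS$, and the Eilenberg swindle) that are natural in the bicomplicial pair, this compatibility is automatic from naturality and does not require additional input. Thus the argument is essentially bookkeeping that propagates the homotopy invariance of $K^W$ through the spectrum construction.
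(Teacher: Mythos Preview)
Your proposal is correct and follows essentially the same approach as the paper: both arguments reduce to showing each level functor $K^W(\fS^n\Ch_b(-))$ is categorical homotopy invariant, invoking Lemma-Definition~\ref{lemdf:extensional, hom rel ex cat}~$\mathrm{(3)}$, Remark~\ref{rem:Schlichting F and S}, and Remark~\ref{rem:hotopy equivalences} in turn. Your treatment is simply more explicit about the compatibility with structure maps, which the paper leaves implicit.
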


\begin{proof}[\bf Proof]
For any non-negative integer $n$, the 
functor $K^W(\fS^n\Ch_b(-))$ from $\RelEx_{\consist}$ 
to the stable category of spectra is homotopy invariant 
by virtue of \ref{lemdf:extensional, hom rel ex cat} $\mathrm{(3)}$, 
\ref{rem:hotopy equivalences} and 
\ref{rem:Schlichting F and S}. 
Therefore we obtain the result. 
\end{proof}

\section{Total quasi-weak equivalences}
\label{sec:tw}

In this section, 
let $\bE=(\cE,w)$ be a bicomplicial pair. 
The main theme in this section is defining and studying 
the class of {\it total quasi-weak equivalences} $tw$ on 
$\Ch_b(\cE)$. 
The pivot in this section is the theorem~\ref{thm:tw=qis+lw} 
and as its by-product, 
we establish 
the universal property of $\Ch_b(\bG)$ 
for any consistent relative exact category $\bG$ 
in \ref{cor:univ} 
and 
learn that $\bE$ is 
a consistent Waldhausen exact category in \ref{prop:bicomp is adm} 
and as its corollary, 
we obtain the Gillet-Waldhausen theorem for strict relative exact categories 
in \ref{cor:comp of derived cat}. 

\begin{rem}
\label{rem:two comp str on chbE} 
$\Ch_b(\cE)$ has two bicomplicial structure. 
Namely, the complicial structure induced from $\cE$ which is 
denoted by $\bA$. 
The second complicial structure is the usual structure on 
the category of complexes as 
in \ref{ex:bicomp cat} $\mathrm{(1)}$. 
We denote it by $\bB$. 
For example $\Acy^{lw}_b\cE$ is a null class in $\bA$ but is not 
closed under $C^{\bB}$-contractible objects. 
$\Acy_b\cE$ is a null class in $\bB_{\frob}$ 
by \ref{ex:widelyexact} $\mathrm{(2)}$. 
\end{rem}

\begin{nt}[\bf Total functor]
\label{nt:tot}
(cf. \cite[4.15, 4.16]{Uni}). 
There exists the triple $(\Tot_{\cE},c^{\bA},c^{\bB})$ 
consisting of an exact functor 
$\Tot_{\cE}=\Tot:\Ch_b(\cE) \to \cE$ and 
natural equivalences $c^{\bA}:C^{\bE}\Tot\isoto\Tot C^{\bA}$ 
and $c^{\bB}:C^{\bE}\Tot\isoto\Tot C^{\bB}$ 
such that 
both $(\Tot,c^{\bA}):\bA \to \bE$ and 
$(\Tot,c^{\bB}):\bB \to \bE$ 
are complicial functors 
and $\Tot j_{\cE}=\id_{\cE}$. 
It is unique up to the unique complicial natural equivalence. 
(For more precise statement, please see Ibid). 
We call it the {\bf total functor} on 
$\Ch_b(\cE)$. 
\end{nt}

\begin{df}[\bf Total quasi-weak equivalences]
\label{df:tot qis}
We put $tw:=\Tot^{-1}w$ and call it 
the {\bf class of total quasi-weak equivalences}. 
We call a morphism in $tw$ a {total quasi-weak equivalence}. 
We put $\Acy^{tw}_b(\cE):=\Ch_b(\cE)^{tw}$. 
Since $\Tot$ is a complicial functor, 
if $w$ is thick, then $tw$ is also thick. 
\end{df}

\sn 
The hinge in this section is the following theorem. 

\begin{thm}
\label{thm:tw=qis+lw}
$\mathrm{(1)}$ 
{\bf (tw=lw$\vee$qis).} 
$\Acy_b^{tw}\cE={({(\Acy_b^{lw}\cE)}_{\qis})}_{\nul,\bB_{\frob}}=
{({(\Acy_b\cE)}_{lw})}_{\nul,\bA}$.\\
$\mathrm{(2)}$ 
In particular if 
the image of $\Acy_b\cE$ and $\Acy_b^{lw}\cE$ by the projection 
$\bA \to \pi_0(\bA)$ {\rm (}resp. $\bB \to \pi_0(\bB)${\rm )} is 
factorizable in a suitable order, 
then we have $\Acy_b^{tw}\cE={(\Acy_b\cE)}_{lw}$ 
{\rm (}resp. $\Acy_b^{tw}\cE={(\Acy_b^{lw}\cE)}_{\qis}${\rm )}.\\
$\mathrm{(3)}$ 
$qw=tw_{\thi,\bB_{\frob}}$. 
In particular if $w$ is thick, $qw=tw$. 
\end{thm}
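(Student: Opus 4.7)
The plan is to prove (1), from which (2) will follow as a specialization under the factorizability hypothesis and (3) will follow by taking thick closures.

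For (1), I would organize the argument around the two inclusions. For the ``$\supseteq$'' direction, I first observe that $\Acy_b^{tw}\cE = \Tot^{-1}(\cE^w)$ is a null class in both $\bA$ and $\bB_{\frob}$: since $\cE^w$ is the null class corresponding to $w$ by \ref{para:NC and comp weak equiv}(9) and $\Tot$ is complicial with respect to both structures, Corollary~\ref{cor:pullbackofwidelyexact} (applied to the widely exact composites $\omega_\cE\circ\Tot$) gives the claim, noting that a null class in $\bB$ is automatically one in $\bB_{\frob}$ since the Frobenius structure has fewer admissible exact sequences. Next I verify $\Acy_b^{lw}\cE, \Acy_b\cE \subseteq \Acy_b^{tw}\cE$: the first because $\cE^w$ is closed under the iterated cone/extension operations building $\Tot$, the second because $\Tot$ takes acyclics to contractibles in $\cE$ and contractibles lie in every null class. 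Finally, since $\Acy_b^{tw}\cE$ is $tw$-closed by \ref{para:NC and comp weak equiv}(10), and both $lw \subseteq tw$ (extensional axiom on $w$ applied through the total construction) and $\qis \subseteq tw$ (the mapping cone of a qis is acyclic and $\Tot$ commutes with cones, so its total is contractible, hence in $\cE^w$), both desired null closures are contained in $\Acy_b^{tw}\cE$.

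The main obstacle will be the ``$\subseteq$'' direction. My approach is induction on the length of $x \in \Acy_b^{tw}\cE$. The base case (length $0$) is immediate: $x = j_\cE(x_0)[k]$ with $\Tot x = T^k x_0 \in \cE^w$, so $x_0 \in \cE^w$ as $\cE^w$ is closed under $T^{\pm}$, hence $x \in \Acy_b^{lw}\cE$. For the inductive step I would exploit the Gillet--Waldhausen-type phenomenon implicit in the bicomplicial structure: the identity $\Tot\circ j_\cE = \id_\cE$ from \ref{nt:tot}, together with the universal property of $\Ch_b$, supplies natural chains of quasi-isomorphisms (in $\bB_{\frob}$) and of level weak equivalences (in $\bA$) connecting $x$ to $j_\cE(\Tot x)$, which lies in $\Acy_b^{lw}\cE$ since $\Tot x \in \cE^w$. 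Concretely, the filtration of $x$ by brutal truncations $\sigma_{\geq k}x$ provides degree-wise split short exact sequences whose totals fit into admissible exact sequences in $\cE$, and iterated comparison of these with $j_\cE(\Tot x)$ (or with a strictly acyclic resolution in the $\bA$ case) yields the desired zig-zag; rigorously carrying out this comparison while keeping track of whether intermediate terms remain in $\cE^w$ is the technical heart of the argument.

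For (2), the factorizability hypothesis on one of the ordered pairs $(\pi_0(\Acy_b\cE),\pi_0(\Acy_b^{lw}\cE))$ or its reverse (viewed in $\pi_0$ of the appropriate Frobenius bicomplicial structure) allows me to invoke Corollary~\ref{cor:veethi} to eliminate the outer null closure in the formula from (1), leaving the simpler expressions $(\Acy_b\cE)_{lw}$ or $(\Acy_b^{lw}\cE)_{\qis}$ respectively. For (3), I take the thick closure in $\bB_{\frob}$ of the identity $\Acy_b^{tw}\cE = ((\Acy_b^{lw}\cE)_{\qis})_{\nul,\bB_{\frob}}$ from (1) and apply the lattice isomorphism~\ref{ex:widelyexact}(2) together with Remark~\ref{rem:acyqw}; the thick null closure of $\Acy_b^{tw}\cE$ corresponds precisely to the thick triangulated subcategory $\calH_b(\cE^w)\vee_{\thi} P_\cE(\Acy_b\cE)$ of $\calH_b(\cE)$, which is exactly $\Acy_b^{qw}\cE$. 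When $w$ is already thick, Corollary~\ref{cor:pullbackofwidelyexact}(2) ensures $\Tot^{-1}(\cE^w)$ is already thick, so no further closure is needed and $qw=tw$.
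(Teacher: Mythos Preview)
Your overall architecture matches the paper's: prove the two containments for (1), then deduce (2) via \ref{cor:veethi} and (3) via \ref{rem:acyqw} and \ref{ex:widelyexact}(2). Your ``$\supseteq$'' direction and your treatments of (2) and (3) are essentially what the paper does.

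The gap is in your ``$\subseteq$'' direction. Two specific problems:

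\textbf{First}, your proposed induction via brutal truncations does not work. The brutal pieces $\sigma_{\geq k}x$ and $\sigma_{\leq k-1}x$ of an $x\in\Acy_b^{tw}\cE$ have no reason to lie in $\Acy_b^{tw}\cE$ themselves (their $\Tot$ is not in $\cE^w$ in general), so the inductive hypothesis does not apply to them; and the inclusion $\sigma_{\geq k}x \rinf x$ is neither a quasi-isomorphism nor a level weak equivalence. The paper's construction (Lemma~\ref{prop:tw=qw}) is different and genuinely uses the $\bA$-structure: one replaces the top term $x_n$ by $C^{\cE}(x_n)$ and $x_{n-1}$ by $\Cone^{\cE}(d_n)$. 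The map $x\to x'$ into this modified complex is a quasi-isomorphism (by the push-out argument of \ref{lem:solid check}), and the map into $x'$ from the complex with $0$ in degree $n$ is in $lw$ because $0\to C^{\cE}(x_n)$ is in $w$. Iterating reduces the length. This works for \emph{all} $x$, not just those in $\Acy_b^{tw}\cE$; the hypothesis $\Tot x\in\cE^w$ is only used at the very end to identify the target $j_{\cE}(\Tot x)$ as lying in $\Acy_b^{lw}\cE$.

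\textbf{Second}, and more fundamentally, even granting a zig-zag of quasi-isomorphisms and level weak equivalences from $x$ to something in $\Acy_b^{lw}\cE$, you have not explained why this places $x$ in $((\Acy_b^{lw}\cE)_{\qis})_{\nul,\bB_{\frob}}$. Following a $\qis$-step is unproblematic, but following an $lw$-step requires knowing that $\Cone_{\bB}f \in (\Acy_b^{lw}\cE)_{\qis}$ whenever $f\in lw$. This is the content of the paper's Lemma~\ref{lem:solid check}(1), proved by comparing $\Cone_{\bB}f$ with $\Tot^{\bB}[C^{\bA}x\to\Cone^{\bA}f]$ via a quasi-isomorphism; the dual statement (part (2)) is needed for the $\bA$-side equality. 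Your phrase ``keeping track of whether intermediate terms remain in $\cE^w$'' misidentifies the issue: the intermediate complexes in the zig-zag do \emph{not} have all terms in $\cE^w$, and what actually needs to be controlled is where the $\bB$-cones (resp.\ $\bA$-cones) of the connecting maps land.
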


\sn 
To prove the theorem, 
we need the several lemmata~\ref{prop:qis in tw}, 
\ref{lem:solid check} and \ref{prop:tw=qw} below. 

\begin{lem}
\label{prop:qis in tw}
Let $(\cE,w)$ be a bicomplicial pair. 
Then $\qis \subset tw$. 
\end{lem}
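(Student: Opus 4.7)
The plan is to unpack the definitions and reduce to showing that $\Tot$ sends bounded acyclic complexes to $w$-trivial objects in $\cE$. By \ref{para:NC and comp weak equiv} $\mathrm{(9)}$, a morphism $g$ in $\cE$ belongs to $w$ if and only if $\Cone(g) \in \cE^w$; and since $(\Tot, c^{\bB})$ is a complicial functor with respect to the standard $\bB$-structure on $\Ch_b(\cE)$, it commutes with mapping cones up to canonical isomorphism, giving $\Cone(\Tot(f)) \cong \Tot(\Cone(f))$. As $f$ is a quasi-isomorphism precisely when $\Cone(f) \in \Acy_b(\cE)$, the lemma reduces to showing $\Tot(\Acy_b(\cE)) \subset \cE^w$.

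For this reduction, composing $(\Tot, c^{\bB})$ with the projection $\omega_{\cE}: \cE \to \pi_0(\cE)$ yields, by \ref{ex:composition} $\mathrm{(2)}$ and \ref{ex:widelyexact} $\mathrm{(1)}$, a widely exact functor from the Frobenius bicomplicial category $\Ch_b(\cE)_{\frob}$ (with its $\bB$-structure) to $\pi_0(\cE)$, which descends to a triangle functor $\overline{\Tot}: \calH_b(\cE) \to \pi_0(\cE)$. Because $\pi_0(\cE^w)$ is a thick triangulated subcategory of $\pi_0(\cE)$ by \ref{rem:a bicomp pair is good biwaldhausen} $\mathrm{(2)}$, its preimage $\overline{\Tot}^{-1}(\pi_0(\cE^w))$ is a thick triangulated subcategory of $\calH_b(\cE)$. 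Since $P_{\cE}(\Acy_b(\cE))$ is the triangulated subcategory of $\calH_b(\cE)$ generated, up to isomorphism, by the images of strictly acyclic bounded complexes, the inclusion need only be verified on such complexes.

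I would establish this by induction on the length of a strictly acyclic complex $z$. Given $z$ supported in degrees $[0, n]$, the strict acyclicity provides admissible short exact sequences $K_i \rinf z_i \rdef K_{i-1}$; in particular $d_n : z_n \rinf z_{n-1}$ is an admissible monomorphism with image $K_{n-1}$. I would then form the admissible short exact sequence of complexes $A \rinf z \rdef z/A$, where $A$ is the two-term contractible complex $z_n \xrightarrow{\id} z_n$ placed in degrees $n$ and $n-1$ and mapped into $z$ by $\id_{z_n}$ in degree $n$ and by $d_n$ in degree $n-1$; a direct unpacking of the factorizations $z_{i+1} \rdef K_i \rinf z_i$ shows that $z/A$ is strictly acyclic with support in $[0, n-1]$. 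Since $A$ is chain contractible, $\Tot(A)$ is $C$-contractible in $\cE$ and therefore lies in the null class $\cE^w$; by the inductive hypothesis $\Tot(z/A) \in \cE^w$; applying the exact functor $\Tot$ to the short exact sequence of complexes and invoking the two-out-of-three axiom of the null class $\cE^w$ then gives $\Tot(z) \in \cE^w$, completing the induction. The main technical obstacle will be confirming that $A \rinf z$ is indeed a degreewise admissible monomorphism and that $z/A$ is again strictly acyclic; both follow from a careful but routine unpacking of the admissibility data carried by the strictly acyclic structure of $z$.
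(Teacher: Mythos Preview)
Your proof is correct and follows essentially the same three-step strategy as the paper: reduce to showing $\Tot(\Acy_b(\cE)) \subset \cE^w$, pass to strictly acyclic complexes, then induct on length. The only cosmetic differences are that the paper reduces to strictly acyclic more directly---observing that $\Tot$, being complicial for $\bB$, carries $C^{\bB}$-homotopy equivalences to $C^{\cE}$-homotopy equivalences, and that the null class $\cE^w$ is closed under those---and that its induction uses the good truncations $\tau_{\leq n}x \rinf x \rdef \tau_{\geq n+1}x$ rather than peeling off a contractible two-term piece from the top.
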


\begin{proof}[\bf Proof] 
We need only check that for any acyclic complex $x$ in $\Ch_b(\cE)$, 
$\Tot x$ is in $\cE^w$. 
The proof is carried out in several steps.
\begin{para}[Step 1] 
Let $x$ be a complex in $\Ch_b(\cE)$ and let us assume that 
$x$ is acyclic. 
Then there exists a strictly acyclic complex $y$ and a 
$C^{\bB}$-homotopy equivalence $f:x \to y$. 
Since $\Tot$ preserves $C$-homotopy equivalences and 
$\cE^w$ is closed under $C^{\bE}$-homotopy equivalences, 
$\Tot y\in\cE^w$ implies $\Tot x\in\cE^w$. 
We shall assume that $x$ is a strictly acyclic complex. 
\end{para}

\begin{para}[Step 2] 
Since $\Tot$ is an exact functor and $\cE^w$ is closed under extensions, 
we shall assume that the length of $x$ is $1$ by 
induction of the length of $x$ and 
by the admissible exact sequence
$$\tau_{\leq n}x \rinf x \rdef \tau_{\geq n+1}x .$$
\end{para}

\begin{para}[Step 3] 
Since $\Tot$ commutes with 
the suspension functors and $\cE^w$ is closed under 
the suspension functor, 
we shall assume that 
$x=[x_1 \onto{d_1} x_0]$ and $d_1$ is an isomorphism. 
In this case, we have the equality 
$\Tot x=\Cone d_1$ by the construction of $\Tot$ and it is in $\cE^w$. 
\end{para}
\end{proof}

\begin{lem}
\label{lem:solid check}
Let $f:x \to y$ be a morphism in $\Ch_b(\cE)$. 
Then\\
$\mathrm{(1)}$ 
If $f$ is in $lw$, then $\Cone_{\bB} f$ is in $\Acy^{lw}_b(\cE)_{\qis}$. 
In particular, $lw \subset w_{\Acy^{lw}_b(\cE)_{\qis}}$.\\
$\mathrm{(2)}$ 
If $f$ is in $\qis$, then $\Cone_{\bA} f$ is in $\Acy_b(\cE)_{lw}$. 
In particular, $\qis \subset w_{\Acy_b(\cE)_{lw}}$.
\end{lem}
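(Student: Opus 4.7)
The plan is to prove both parts by comparing the two mapping cones $\Cone_{\bA} f$ and $\Cone_{\bB} f$ available in $\Ch_b(\cE)$ for an arbitrary $f:x\to y$: the former is the ``degree-wise'' cone with $(\Cone_{\bA} f)_n = \Cone^{\cE}(f_n) = y_n \coprod_{x_n} C^{\cE}(x_n)$ (using the bicomplicial structure $\bA$ that $\Ch_b(\cE)$ inherits level-wise from $\cE$), while the latter is the usual chain-complex mapping cone with $(\Cone_{\bB} f)_n = x_{n-1} \oplus y_n$. The comparison between them goes through the $\bA$-mapping cylinder $\Cyl_{\bA} f = y \oplus C^{\bA} x$, through which $f$ factors as $f = \beta^{\bA} \circ j_1^{\bA}$ with $j_1^{\bA}: x \rinf \Cyl_{\bA} f$ a level-wise admissible monomorphism having cokernel $\Cone_{\bA} f$, and $\beta^{\bA}: \Cyl_{\bA} f \to y$ a $C^{\bA}$-homotopy equivalence.

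For part~(1), suppose $f \in lw$. Then each $f_n \in w$, so by the identification $w = w_{\cE^w}$ from \ref{para:NC and comp weak equiv}~(9) each $\Cone^{\cE}(f_n)$ lies in $\cE^w$, whence $\Cone_{\bA} f \in \Ch_b(\cE^w) = \Acy^{lw}_b(\cE)$. Since both bicomplicial structures $\bA$ and $\bB$ on $\Ch_b(\cE)$ induce the same triangulated structure on $\calD_b(\cE)$, both cones model the cone of $f$ in $\calD_b(\cE)$, and hence are connected by a natural zig-zag of quasi-isomorphisms in $\Ch_b(\cE)$; following this zig-zag yields $\Cone_{\bB} f \in \Acy^{lw}_b(\cE)_{\qis}$, which is the assertion. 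For part~(2), suppose $f \in \qis$, so $\Cone_{\bB} f \in \Acy_b(\cE)$ by definition. The key observation is that $\beta^{\bA}$, being a $C^{\bA}$-homotopy equivalence, is level-wise a $C^{\cE}$-homotopy equivalence in $\cE$ and therefore lies degree-wise in $w$; that is, $\beta^{\bA} \in lw$. Combining this with the short exact sequence $x \rinf \Cyl_{\bA} f \rdef \Cone_{\bA} f$, one obtains a zig-zag of \emph{level}-weak equivalences connecting $\Cone_{\bA} f$ to $\Cone_{\bB} f$, placing $\Cone_{\bA} f \in \Acy_b(\cE)_{lw}$.

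The main obstacle is to make the zig-zag concrete enough that in~(1) it consists of quasi-isomorphisms and in~(2) of level-weak equivalences. A natural approach is induction on the total length of $x$ and $y$ using the brutal truncation short exact sequences $\tau_{\leq k} z \rinf z \rdef \tau_{\geq k+1} z$ (both $\Cone_{\bA}$ and $\Cone_{\bB}$ being compatible with extensions), reducing to the base case where $x$ and $y$ are each concentrated in a single degree. In that base case $\Cone_{\bB} f$ is the two-term complex $[x \overset{-f}{\to} y]$, and both claims can be verified by an explicit comparison with $\Cone^{\cE}(f)$ viewed as a complex in degree $0$, using the $\cE$-mapping cylinder $\Cyl^{\cE} f = y \oplus C^{\cE} x$ as the intermediate term and exploiting the projection $\Cyl^{\cE} f \rdef y$, which is a $C^{\cE}$-homotopy equivalence in $\cE$ (hence lies in $w$) and the quotient $\Cyl^{\cE} f \rdef \Cone^{\cE}(f)$.
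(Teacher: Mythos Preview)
Your proposal has a genuine gap: the central claim that $\Cone_{\bA}f$ and $\Cone_{\bB}f$ are connected by a zig-zag of quasi-isomorphisms (for part~(1)) or of level-weak equivalences (for part~(2)) is \emph{false}. Take $x=y=j_{\cE}(a)$ for some $a\in\cE$ with $a\neq 0$ and $a\notin\cE^w$, and let $f=\id$. Then $f\in lw\cap\qis$. Here $\Cone_{\bB}(\id)=C^{\bB}(j_{\cE}a)$ is contractible, hence acyclic, while $\Cone_{\bA}(\id)=j_{\cE}(C^{\cE}a)$ is a nonzero object concentrated in degree~$0$, hence \emph{not} acyclic; so the two cones are not quasi-isomorphic and your argument for~(1) fails. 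For~(2), any $lw$-zig-zag from $\Cone_{\bA}(\id)$ to $\Cone_{\bB}(\id)$ would force the degree-$1$ terms to be simultaneously $w$-connected to $0$ and to $a$; since $\cE^w$ is $w$-closed (\ref{para:NC and comp weak equiv}~(10)) this would put $a\in\cE^w$, a contradiction. The high-level justification you give --- that both bicomplicial structures ``induce the same triangulated structure on $\calD_b(\cE)$'' --- is itself the problem: $T^{\bA}x$ is \emph{not} isomorphic to $x[1]$ in $\calD_b(\cE)$ in general, so $\Cone_{\bA}f$ does not sit in the distinguished triangle on $f$ there.

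The paper avoids this by \emph{not} trying to compare the two cones directly. Instead, for~(1) it produces a single quasi-isomorphism from $\Cone_{\bB}f=\Tot^{\bB}[x\onto{f}y]$ to $\Tot^{\bB}[C^{\bA}x\to\Cone^{\bA}f]$, the latter lying in $\Ch_b(\cE^w)$ because both $C^{\bA}x$ and $\Cone^{\bA}f$ do; for~(2) the roles of $\bA$ and $\bB$ are swapped. The map comes from the push-out square defining $\Cone^{\#}f$, whose cokernel $[T^{\#}x\overset{\id}{\to}T^{\#}x]$ is what makes the induced map on totalizations a $\qis$ (for~$\#=\bA$, $\Tot^{\bB}$) or an $lw$ (for~$\#=\bB$, $\Tot^{\bA}$). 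Your cylinder factorization $f=\beta^{\bA}j_1^{\bA}$ gives only a mixed $\qis$/$lw$ zig-zag, which is not what either statement requires; and your proposed induction on length would need $(\Acy^{lw}_b\cE)_{\qis}$ and $(\Acy_b\cE)_{lw}$ to be closed under extensions, which you have not established.
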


\begin{proof}[\bf Proof]
Let $f:x \to y$ be a morphism in $\Ch_b(\cE)$. 
Assume that $f$ is in $lw$ (resp. $\qis$). 
Namely $\Cone_{\bA}f$ (resp. $\Cone_{\bB}f$) 
is in $\Acy^{lw}_b(\cE)$ (resp. $\Acy_b(\cE)$). 
Let us consider the push out diagram below
$${\footnotesize{\xymatrix{
x \ar@{>->}[r] \ar[d]_{f} & C^{\#}x \ar[d]\\
y \ar@{>->}[r] & \Cone^{\#}f  
}}}$$
where $\#=\bA$ (resp. $\#=\bB$). 
Then by \cite[p.406 step 1]{Kel90}, 
the morphism $[x \onto{f} y] \to [C^{\#}x \to \Cone^{\#}f]$ 
between the complexes in $\Ch_b(\Ch_b(\cE))$ is a quasi-isomorphism. 
Taking the totalized complex 
$\Tot^{\#}:\Ch_b(\Ch_b(\cE)) \to \Ch_b(\cE)$, 
it turns out that 
$\Cone^{\#}f=\Tot^{\#}[x \onto{f} y]$ is 
connected with  
the complex $\Tot^{\#}[C^{\ast}x \to \Cone^{\ast}{\cE}f]$ in 
$\Acy_b^{lw}(\cE)$ (resp. $\Acy_b\cE$) by 
the morphisms in $\qis$ (resp. $lw$) 
where $\#=\bB$ and $\ast=\bA$ (resp. $\#=\bA$ and $\ast=\bB$). 
Hence we complete the proof. 
\end{proof}

\begin{lem}
\label{prop:tw=qw}
For any complex $x$ in $\Ch_b(\cE)$, 
there is a zig-zag sequence of 
quasi-isomorphisms and level weak equivalences 
connecting it to a degree shift of 
$j_{\cE}(\Tot x)$. 
\end{lem}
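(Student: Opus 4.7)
The proof proceeds by induction on $\length x$. \textbf{Base case.} Suppose $\length x = 0$, so $x = j_{\cE}(y)[-n]$ for some $y \in \cE$ and $n \in \bbZ$; the case $n = 0$ is trivial. Assume $n \geq 1$ (the case $n \leq -1$ is analogous, using the admissible short exact sequence $T^{-1}y \rinf C(T^{-1}y) \rdef y$). Form the two-term complex $z \in \Ch_b(\cE)$ concentrated in degrees $n, n-1$ with $z_n = y$, $z_{n-1} = Cy$, and boundary $\iota_y$. The canonical projection $z \rdef j_{\cE}(y)[-n]$ (identity in degree $n$, $Cy \to 0$ in degree $n-1$) is a level weak equivalence: since $Cy$ is $C$-contractible, $Cy \to 0$ lies in $w$ because $w$ contains all $C$-homotopy equivalences. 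The canonical projection $z \rdef j_{\cE}(Ty)[-(n-1)]$ (zero in degree $n$, quotient $Cy \rdef Ty$ in degree $n-1$) is a quasi-isomorphism, because its kernel is the strictly acyclic two-term complex $[y \xrightarrow{\sim} y]$ in degrees $n, n-1$ (where the second $y$ is identified with the image of $\iota_y$). We thus obtain the zigzag
$$ j_{\cE}(y)[-n] \xleftarrow{\ lw\ } z \xrightarrow{\ \qis\ } j_{\cE}(Ty)[-(n-1)]. $$
Iterating $n$ times connects $j_{\cE}(y)[-n]$ to $j_{\cE}(T^n y)$. Since $\Tot j_{\cE} = \id_{\cE}$ and $\Tot$ intertwines degree shift on $\Ch_b(\cE)$ with the suspension $T$ on $\cE$ (as $\Tot$ is a complicial functor for the standard $\bB$-bicomplicial structure whose suspension is precisely the degree shift), one has $T^n y = \Tot x$, so the target is $j_{\cE}(\Tot x)$.

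\textbf{Inductive step.} Suppose $\length x = \ell \geq 1$ and the claim holds for complexes of length $<\ell$. Writing $x \in \Ch_{[a,b]}(\cE)$ with $b - a = \ell$, consider the brutal-truncation admissible short exact sequence
$$ j_{\cE}(x_b)[-b] \rinf x \rdef \sigma_{\leq b-1} x, $$
whose outer terms have lengths $0$ and $\ell - 1$ respectively. Applying the exact functors $\Tot$ and then $j_{\cE}$ yields an admissible short exact sequence in $\Ch_b(\cE)$ whose middle term, after a uniform degree shift by some integer $k$, equals $j_{\cE}(\Tot x)[k]$. By the inductive hypothesis each outer term admits a zigzag of $\qis$ and $lw$ to the corresponding shift of $j_{\cE}\Tot(-)$.

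\textbf{Combining the two zigzags --- the main obstacle.} The principal difficulty is to lift the two outer zigzags into a single zigzag for the middle term $x$, compatibly with the ambient short exact sequence. The strategy is to advance the two outer zigzags one step at a time and, at each step, replace the current admissible short exact sequence through the middle by a new admissible short exact sequence whose middle term is updated by pushout along an admissible monomorphism (when the outer step is a cofibration) or pullback along an admissible epimorphism. The closure of $w$ under such pushouts and pullbacks --- i.e.\ the gluing and cogluing axioms, which hold because a bicomplicial pair is a Waldhausen exact category by Remark~\ref{rem:a bicomp pair is good biwaldhausen} --- ensures that each lifted step in the middle remains a level weak equivalence; the analogous stability of the class of quasi-isomorphisms on $\Ch_b(\cE)$ (itself a bicomplicial pair with respect to the $\bB$-structure) handles the $\qis$ steps. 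After finitely many steps, the middle of the ambient admissible short exact sequence becomes $j_{\cE}(\Tot x)[k]$, completing the induction.
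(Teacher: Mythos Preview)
Your base case is correct and is, in fact, a special instance of the paper's one-step construction. The gap is in the inductive step.

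You correctly identify the main obstacle: lifting the two outer zigzags to a zigzag for the extension $x$. Your proposed resolution via pushout and pullback, however, handles only two of the four relevant cases. Given the ambient sequence $A \rinf B \rdef C$, a forward step $A \to A_1$ on the subobject side can be pushed out along $A \rinf B$ to produce $A_1 \rinf B_1 \rdef C$ with $B \to B_1$ in the same class; dually, a backward step $C_1 \to C$ on the quotient side can be pulled back along $B \rdef C$. But a \emph{backward} step $A_1 \to A$ on the subobject side admits no such construction: from $A \rinf B \rdef C$ and a morphism $A_1 \to A$ alone there is no natural way to produce $A_1 \rinf B_1 \rdef C$ together with a compatible $B_1 \to B$. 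The same failure occurs for forward steps on the quotient side. Since the zigzags furnished by the inductive hypothesis go in both directions, your strategy cannot be completed as stated. A secondary issue is that the degree shifts arising from the inductive hypothesis on the two outer terms need not coincide, so the target short exact sequence is not automatically the common endpoint of the two lifted zigzags.

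The paper avoids this entirely by a direct construction that does not split $x$ into sub and quotient. Given $x$ with top nonzero term $x_n$, form the complex $x'$ with $x'_n = C^{\cE}(x_n)$, $x'_{n-1} = \Cone^{\cE}(d_n)$, and $x'_m = x_m$ for $m \leq n-2$; the map $x \to x'$ is a quasi-isomorphism by the pushout argument of Lemma~\ref{lem:solid check}. Replacing the contractible top term $C^{\cE}(x_n)$ by $0$ yields $x''$ with $x'' \to x'$ in $lw$. Since $\length x'' = \length x - 1$ and $\Tot x'' \cong \Tot x$, iterating this single explicit step produces the required zigzag without ever needing to merge two independent zigzags through a short exact sequence.
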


\begin{proof}[\bf Proof]
Let $x=[\cdots \to 0 \to x_n \to x_{n-1} \to \cdots]$ 
be a complex in $\Ch_b(\cE)$. 
Then we have the following morphisms of complexes. 
$${\footnotesize{\xymatrix{
x_n \ar[r] \ar[d]_{d_n} & C^{\cE}(x_n) \ar[d] & 0 \ar[l]\ar[d]\\ 
x_{n-1} \ar[r] \ar[d]_{d_{n-1}} & \Cone^{\cE} d_{n-1} \ar[d] & 
\Cone^{\cE} d_{n-1} \ar[d] \ar[l]_{\id}\\
x_{n-2} \ar[d]_{d_{n-2}} \ar[r]^{\id} & x_{n-2} \ar[d]_{d_{n-2}} & 
x_{n-2} \ar[l]_{\id} \ar[d]^{d_{n-2}}\\ 
\vdots & \vdots & \vdots
}}}$$
where the left morphism is a quasi-isomorphism 
by the proof of \ref{lem:solid check} 
and obviously the right morphism is in $lw$. 
Now by induction of the length of $x$, 
we give the algorithm of connecting $x$ to a dgree shift of $j_{\cE}(\Tot x)$ 
by a zig-zag sequence of 
quasi-isomorphisms and level-weak equivalences. 
\end{proof}

\begin{proof}[\bf Proof of Theorem~\ref{thm:tw=qis+lw}] 
To prove assertion $\mathrm{(1)}$, 
we will show the following inclusions. 
$${(\Acy_b^{lw}\cE)}_{\qis} \underset{\textbf{I}}{\subset} 
{({(\Acy_b\cE)}_{lw})}_{\nul} \underset{\textbf{II}}{\subset} 
\Acy_b^{tw}\cE \underset{\textbf{III}}{\subset} 
{({(\Acy_b^{lw}\cE)}_{\qis})}_{\nul}
$$

\sn 
The proof for the inclusion \textbf{I}: 
Since the zero complex 
and any complex $x$ in $\Acy_b^{lw}\cE$ is connected by 
the canonical morphism $0 \to x$ in $lw$, 
we have $\Acy_b^{lw}\cE \subset {(\Acy_b\cE)}_{lw}$. 
Now the inclusion \textbf{I} follows from 
\ref{para:NC and comp weak equiv} $\mathrm{(10)}$ 
and \ref{lem:solid check} $\mathrm{(2)}$. 

\sn
The proof for the inclusion \textbf{II}: 
Since $\Tot(\Acy_b^{lw}\cE)\subset \cE^w$, we have $lw\subset tw$. 
Therefore the inclusion \textbf{II} follows from 
\ref{para:NC and comp weak equiv} $\mathrm{(10)}$ and 
\ref{prop:qis in tw}. 

\sn
The proof for the inclusion \textbf{III}: 
The inclusion \textbf{III} follows from 
\ref{para:NC and comp weak equiv} $\mathrm{(10)}$, 
\ref{lem:solid check} $\mathrm{(1)}$, 
\ref{prop:tw=qw} and 
the inclusion $\qis \subset w_{{\Acy_b^{lw}\cE}_{\qis}}$.

\sn
Assertion $\mathrm{(2)}$ follows 
from $\mathrm{(1)}$, \ref{cor:veethi} and 
\ref{ex:widelyexact} $\mathrm{(2)}$. 
Assertion $\mathrm{(3)}$ follows from $\mathrm{(1)}$, 
\ref{rem:acyqw} 
and \ref{ex:widelyexact} $\mathrm{(2)}$. 
\end{proof}

\begin{cor}[\bf Universal property of $\Ch_b$]
\label{cor:univ}
For any consisitent relative 
exact category $\bG=(\cG,w_{\cG})$ and 
any thick bicomplicial pair $\bC=(\cC,w_{\cC})$ 
and any relative exact functor $f:\bG\to \bC$, 
there exists a relative complicial functor 
$(\bar{f},c):\Ch_b(\bG) \to \bC$ 
such that $\bar{f}j_{\cG}=f$. 
$(\bar{f},c)$ is unique in the following sense. 
For another $(\bar{f'},c')$ such that 
$\bar{f'}j_{\cG}=f$, 
there exists a unique relative complicial natural equivalence 
$\theta:\bar{f}\isoto\bar{f'}$ such that $\theta j_{\cG}=\id_f$. 
\end{cor}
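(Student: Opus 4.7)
The plan is to take as the extension
$$\bar f \;:=\; \Tot_{\cC} \circ \Ch_b(f)\;:\;\Ch_b(\cG) \to \cC,$$
where $\Ch_b(f)$ is the termwise lift of $f$ between the $\bB$-bicomplicial categories of bounded complexes and $\Tot_{\cC}$ is the total functor of \ref{nt:tot}. The identity $\bar f \circ j_{\cG}=f$ then follows at once from naturality of $j$ (which gives $\Ch_b(f)\circ j_{\cG}=j_{\cC}\circ f$) together with the defining property $\Tot_{\cC}\circ j_{\cC}=\id_{\cC}$. I would equip $\bar f$ with a complicial structure $c$ by composing the evident complicial structure on $\Ch_b(f)$ (coming from the fact that the $\bB$-cone of complexes is computed termwise and $f$ is exact) with the complicial structure $c^{\bB}$ carried by $\Tot_{\cC}$.

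The next step is to prove that $\bar f(qw) \subset w_{\cC}$. By \ref{rem:acyqw} it suffices to show that $\bar f^{-1}(\cC^{w_{\cC}})$ contains both $\Ch_b(\cG^{w_{\cG}})$ and $\Acy_b(\cG)$, and is closed under the operations that generate $\Acy_b^{qw}(\cG)$ as a thick null class. For a complex $x\in \Ch_b(\cG^{w_{\cG}})$, every term $f(x_n)$ lies in $\cC^{w_{\cC}}$ since $f$ is relative; hence $\Ch_b(f)(x)\in\Ch_b(\cC^{w_{\cC}})=\Acy_b^{lw}(\cC)$, and by Theorem~\ref{thm:tw=qis+lw}$\,\mathrm{(1)}$ this sits inside $\Acy_b^{tw}(\cC)=\Tot_{\cC}^{-1}(\cC^{w_{\cC}})$, so $\bar f(x)\in \cC^{w_{\cC}}$. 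For $x\in \Acy_b(\cG)$, exactness of $f$ forces $\Ch_b(f)(x)\in\Acy_b(\cC)$, and \ref{prop:qis in tw} yields $\Tot_{\cC}(\Ch_b(f)(x))\in\cC^{w_{\cC}}$. Finally, composing $\bar f$ with the widely exact functor $\widetilde\omega_{\cC}:\cC \to \cT(\bC)$ of \ref{ex:widelyexact}$\,\mathrm{(3)}$ produces by \ref{cor:pullbackofwidelyexact} a thick null class in $\Ch_b(\cG)_{\frob}$; this null class contains both generators above, hence contains $\Acy_b^{qw}(\cG)$, and since thickness of $\bC$ yields $\widetilde\omega_{\cC}^{-1}(0)=\cC^{w_{\cC}}$, the inclusion $\bar f(qw)\subset w_{\cC}$ follows.

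For uniqueness, given a second extension $(\bar f',c')$ with $\bar f'\circ j_{\cG}=f$, I would construct $\theta:\bar f\isoto \bar f'$ by induction on the length of complexes. On a concentrated complex $j_{\cG}(a)$ the component is forced to be $\theta_{j_{\cG}(a)}=\id_{f(a)}$; on its suspensions $T^n j_{\cG}(a)$ the complicial structures $c,c'$ each identify $\bar f(T^n j_{\cG}(a))$ and $\bar f'(T^n j_{\cG}(a))$ with $T^n f(a)$, fixing $\theta$ there. For a complex of positive length I would write $x$ as a mapping cone of a morphism between complexes of strictly smaller length and transport $\theta$ through $\Cone$ using the fact that complicial functors preserve cones up to their complicial equivalence.

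The step I expect to be the main obstacle is verifying coherence of this induction: the pointwise data $\theta_x$ must assemble into a natural transformation whose compatibility with $c$ and $c'$, with the bicomplicial axioms of \ref{df:bicomp} for $\iota$, $r$ and $\sigma$, and with all possible cone decompositions is uniformly satisfied. This amounts to a diagram chase built on the naturality of $\Cone$ and $\Cyl$ recorded in \ref{nt:mapping cone functor}, but it is the substantive content of the universal property.
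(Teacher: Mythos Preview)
Your construction of $\bar f:=\Tot_{\cC}\circ\Ch_b(f)$ is exactly the paper's, and your justification that $\bar f$ preserves weak equivalences is correct. However, the paper dispatches this step in one line: since $\bC$ is thick, Theorem~\ref{thm:tw=qis+lw}\,$\mathrm{(3)}$ gives $qw_{\cC}=tw_{\cC}=\Tot_{\cC}^{-1}(w_{\cC})$, so $\Tot_{\cC}:\Ch_b(\bC)\to\bC$ is relative; and $\Ch_b(f):\Ch_b(\bG)\to\Ch_b(\bC)$ is relative by the $2$-functoriality of $\Ch_b$ (\ref{df:qw}\,$\mathrm{(3)}$). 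Your null-class argument via $\widetilde\omega_{\cC}$ and \ref{cor:pullbackofwidelyexact} reproves this by hand---it works, but the factorisation into two relative functors is cleaner and avoids invoking the lattice description of $\Acy_b^{qw}(\cG)$.

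For uniqueness, the paper does not argue; it cites \cite[4.9]{Uni}. Your inductive sketch (build $\theta$ on concentrated complexes, propagate by suspensions and cone decompositions) is precisely the content of that reference, and your caveat about coherence is well placed: ensuring that the components $\theta_x$ are independent of the chosen cone decomposition and compatible with $\iota$, $r$, $\sigma$ is the real work. That argument is carried out once and for all in the cited universal-property result for $\Ch_b$ over bicomplicial categories, so you need not redo it here.
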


\begin{proof}[\bf Proof]
Since $\bC$ is thick, the functor 
$\Tot:\Ch_b(\cC) \to \cC$ is a relative exact functor 
$\Ch_b(\bC) \to \bC$ by \ref{thm:tw=qis+lw} $\mathrm{(3)}$. 
We put $\bar{f}:=\Tot \Ch_b(f)$. 
Then we can easily check that $\bar{f}j_{\cG}=f$. 
Uniqueness of $\bar{f}$ follows from \cite[4.9]{Uni}.
\end{proof}

\begin{lemdf}[\bf Solid axiom] 
\label{lemdf:solid axiom} 
For any strict relative exact category $\bF=(\cF,v)$, 
as in the proof of 
\ref{lemdf:adm}, we have the implications 
$\mathrm{(1)}\Rightarrow \mathrm{(2)} \Rightarrow \mathrm{(3)}$ for 
the conditions below.\\
$\mathrm{(1)}$ 
$lv \subset v_{\Acy^{lv}_b(\cF)_{\qis}}$.\\
$\mathrm{(2)}$ 
For any morphism $f:x \to y$ in $v$, 
there is a zig-zag sequence of quasi-isomorphisms 
connecting the complex 
$\Cone f=[x \onto{f} y]$ to a bounded complex in $\cF^v$.\\
$\mathrm{(3)}$ 
$lv \subset v_{{(\Acy^{lv}_b(\cF)_{\qis})}_{\nul}}$\\
We say that $v$ or $\bF$ satisfies the {\bf solid axiom} 
or $v$ or $\bF$ is solid 
if $v$ satisfies condition $\mathrm{(2)}$ above. 
We can easily check that the solid axiom implies the consistent axiom. 
We denote the full $2$-subcategory of 
solid relative exact categories 
(resp. solid Waldhausen exact categories) in $\RelEx^{\#}$ by 
$\RelEx_{\SOL}^{\#}$ (resp. $\WalEx_{\SOL}^{\#}$) 
for $\#\in\{+,\text{nothing}\}$. 
\qed
\end{lemdf}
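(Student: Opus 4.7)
The plan is to prove the three implications $(1)\Rightarrow(2)\Rightarrow(3)$ and then solid $\Rightarrow$ consistent in sequence, using the machinery from Section~\ref{sec:widely exact func}. For $(1)\Rightarrow(2)$, I would take $f:x\to y$ in $v$ and observe that $j_\cF(f)\in\Ch_b(\cF)$ is trivially in $lv$, since its only nontrivial component is $f\in v$. Its standard mapping cone (in the bicomplicial structure $\bB$) is canonically isomorphic to the two-term complex $[x\onto{f}y]$, so applying condition $(1)$ to $j_\cF(f)$ yields exactly the conclusion of $(2)$.

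The substantive implication $(2)\Rightarrow(3)$ proceeds by induction on the length of complexes, following the pattern in the proof of~\ref{lemdf:adm}. Given $f:x\to y$ in $lv$, the brutal truncations produce a commutative diagram of admissible short exact sequences in $\Ch_b(\cF)$ and, after taking mapping cones, an admissible short exact sequence
$$\Cone(\sigma_{\leq n}f)\rinf \Cone f\rdef \Cone(\sigma_{\geq n+1}f)$$
which is moreover degree-wise split (hence Frobenius admissible in $\bB_{\frob}$). The two-out-of-three axiom for admissible short exact sequences built into the definition of a null class then forces $\Cone f\in ((\Acy^{lv}_b(\cF)_{\qis}))_{\nul}$ once the two outer cones are known to lie there. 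Iterating reduces to the base case where $x$ and $y$ are concentrated in a single degree; after a shift (which preserves $\Acy^{lv}_b(\cF)_{\qis}$ because both $\Ch_b(\cF^v)$ and quasi-isomorphisms are shift-stable), this is precisely condition $(2)$. The one delicate point is bookkeeping the ambient bicomplicial structure implicit in $\nul$: fixing it as $\bB_{\frob}$ (consistent with the conventions of~\ref{thm:tw=qis+lw}) is what makes the short exact sequences above automatically Frobenius and the two-out-of-three axiom directly applicable.

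Finally, solid $\Rightarrow$ consistent is a short corollary of $(2)$. For $f:x\to y$ in $v$, condition $(2)$ supplies a zig-zag of quasi-isomorphisms connecting $\Cone(j_\cF(f))\cong [x\onto{f}y]$ to a bounded complex in $\cF^v$, i.e., an object of $\Acy^{lv}_b(\cF)=\Ch_b(\cF^v)$. Hence the class of $\Cone(j_\cF(f))$ in $\calD_b(\cF)$ lies in the essential image of $\calD_b(\cF^v)$ and therefore vanishes in $\calD_b(\bF)=\coker(\calD_b(\cF^v)\to\calD_b(\cF))$. This says $j_\cF(f)$ is a quasi-weak equivalence; equivalently, $j_\cF$ is a relative exact functor $\bF\to\Ch_b(\bF)$, which is condition $(2)$ of Lemma-Definition~\ref{lemdf:adm} and hence the consistent axiom $lv\subset qw$.
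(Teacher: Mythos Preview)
Your proposal is correct and follows essentially the same route as the paper: the paper's proof is literally ``as in the proof of \ref{lemdf:adm}'', and you have reconstructed precisely that argument (reduction via brutal truncations and induction on length, with the extensional axiom replaced by the two-out-of-three property of null classes for $(2)\Rightarrow(3)$). Your added remark fixing the ambient structure as $\bB_{\frob}$ so that the truncation sequences of cones are automatically Frobenius admissible is a useful clarification the paper leaves implicit.
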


\begin{cor}
\label{prop:bicomp is adm}
For any bicomplicial pair $(\cF,v)$, 
$v$ satisfies the solid axiom. 
In particular, 
for any strict relative exact category 
$\bG=(\cG,u)$, 
$\Ch_b(\bG)=(\Ch_b(\cG),qu)$ is a solid Waldhausen exact category. 
\qed
\end{cor}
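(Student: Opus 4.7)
The plan is to verify condition $\mathrm{(2)}$ of Lemma-Definition~\ref{lemdf:solid axiom} for an arbitrary bicomplicial pair $(\cF,v)$, and then deduce the second assertion by specializing to $\bG$. Note that a bicomplicial pair is a Waldhausen exact category by Remark~\ref{rem:a bicomp pair is good biwaldhausen}~$\mathrm{(1)}$, hence strict by Proposition~\ref{prop:strict exact categories}, so the solid axiom is well-posed in this setting. The main observation is that condition $\mathrm{(1)}$ of the solid axiom, $lv\subset v_{\Acy^{lv}_b(\cF)_{\qis}}$, is precisely what Lemma~\ref{lem:solid check}~$\mathrm{(1)}$ asserts, and since $\mathrm{(1)}\Rightarrow\mathrm{(2)}$ by Lemma-Definition~\ref{lemdf:solid axiom}, we are essentially done.

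Spelling this out: given $f\colon x\to y$ in $v$, consider the chain map $j_{\cF}(f)\colon j_{\cF}(x)\to j_{\cF}(y)$ in $\Ch_b(\cF)$, which equals $f$ in degree $0$ and is the identity on the zero object elsewhere. It lies in $lv$ because $v$ contains all identities. Lemma~\ref{lem:solid check}~$\mathrm{(1)}$ then gives $\Cone_{\bB}j_{\cF}(f)\in \Acy^{lv}_b(\cF)_{\qis}=\Ch_b(\cF^{v})_{\qis}$; that is, $\Cone_{\bB}j_{\cF}(f)$ is connected by a zig-zag of quasi-isomorphisms to a bounded complex in $\cF^{v}$. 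Inspecting the mapping-cone formula in Conventions~$\mathrm{(9)(iv)}$, the complex $\Cone_{\bB}j_{\cF}(f)$ is the two-term complex $[x\overset{-f}{\to}y]$ concentrated in degrees $0,1$, which is canonically isomorphic to $\Cone f=[x\overset{f}{\to}y]$ via the sign change on $y$. This establishes condition~$\mathrm{(2)}$.

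For the ``in particular'' clause, let $\bG=(\cG,u)$ be a strict relative exact category. By Proposition~\ref{prop:compness}~$\mathrm{(1)}$, $\Ch_b(\bG)=(\Ch_b(\cG),qu)$ is a thick bicomplicial pair, and in particular a Waldhausen exact category. Applying the first part of the corollary to this bicomplicial pair, $qu$ satisfies the solid axiom, whence $\Ch_b(\bG)$ is a solid Waldhausen exact category. There is no substantial obstacle in either step: the heavy lifting has been carried out earlier in Lemma~\ref{lem:solid check}~$\mathrm{(1)}$ and Proposition~\ref{prop:compness}~$\mathrm{(1)}$; here it only remains to recognize the two-term complex in the solid axiom as a mapping cone of $j_{\cF}(f)$ and to quote the appropriate predecessor results.
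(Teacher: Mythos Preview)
Your proof is correct and follows essentially the same approach as the paper: the corollary carries a bare \qed because it is meant to be an immediate consequence of Lemma~\ref{lem:solid check}~$\mathrm{(1)}$ (which gives condition~$\mathrm{(1)}$ of Lemma-Definition~\ref{lemdf:solid axiom}) together with Proposition~\ref{prop:compness}~$\mathrm{(1)}$ for the ``in particular'' clause, and that is exactly what you unpack. Your second paragraph, verifying condition~$\mathrm{(2)}$ explicitly via $j_{\cF}(f)$, is a correct but slightly redundant elaboration of the $\mathrm{(1)}\Rightarrow\mathrm{(2)}$ implication.
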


\begin{cor}
\label{cor:comp of cone}
For any morphism $f:x \to y$ in $\Ch_b(\cE)$, 
$\Cone^{\bA}f$ is canonically quasi-weak equivalent to 
$\Cone^{\bB}f$. 
\end{cor}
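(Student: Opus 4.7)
My plan is to construct a common ``third object'' $W$ together with canonical cofibrations $\Cone^{\bA}f\rinf W$ and $\Cone^{\bB}f\rinf W$, both of which I will verify are quasi-weak equivalences; the resulting zig-zag $\Cone^{\bA}f\rinf W\linf\Cone^{\bB}f$ then supplies the asserted canonical equivalence in $qw$.

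Set $W:=y\coprod_x C^{\bA}C^{\bB}x$, the pushout of $f:x\to y$ along the composed inflation $x\overset{\iota^{\bB}_x}{\rinf}C^{\bB}x\overset{\iota^{\bA}_{C^{\bB}x}}{\rinf}C^{\bA}C^{\bB}x$. Applying the exact endofunctor $C^{\bA}$ to the canonical $\bB$-cofibration sequence $x\rinf C^{\bB}x\rdef x[-1]$ yields an admissible short exact sequence $C^{\bA}x\rinf C^{\bA}C^{\bB}x\rdef C^{\bA}(x[-1])$, while the canonical $\bA$-cofibration sequence on $C^{\bB}x$ reads $C^{\bB}x\rinf C^{\bA}C^{\bB}x\rdef T^{\bA}(C^{\bB}x)$. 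Pushing these out along $f$ produces canonical cofibrations $\Cone^{\bA}f\rinf W$ and $\Cone^{\bB}f\rinf W$ with cofibers $C^{\bA}(x[-1])$ and $T^{\bA}(C^{\bB}x)$ respectively, since cofibers are preserved under pushout of admissible short exact sequences in exact categories.

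The crux is to verify that both cofibers lie in $\Acy^{qw}_b(\cE)$. For $C^{\bA}(x[-1])$, each component equals $C^{\cE}(x_{n-1})$, which is $C^{\cE}$-contractible and hence lies in $\cE^w$; thus $C^{\bA}(x[-1])\in\Ch_b(\cE^w)=\Acy^{lw}_b(\cE)\subseteq\Acy^{qw}_b(\cE)$ by Remark~\ref{rem:acyqw}. For $T^{\bA}(C^{\bB}x)$, I invoke the admissible short exact sequence $C^{\bB}x\rinf C^{\bA}(C^{\bB}x)\rdef T^{\bA}(C^{\bB}x)$: the first term lies in $\Acy_b(\cE)\subseteq\Acy^{qw}_b(\cE)$, and the middle term lies in $\Acy^{lw}_b(\cE)\subseteq\Acy^{qw}_b(\cE)$ by the same componentwise contractibility argument. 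Since $\Acy^{qw}_b(\cE)$ arises as the kernel of the triangulated quotient functor $\Ch_b(\cE)\to\calD_b(\bE)$ and thus satisfies the two-out-of-three axiom for admissible short exact sequences, we conclude $T^{\bA}(C^{\bB}x)\in\Acy^{qw}_b(\cE)$. The main subtlety is precisely this two-out-of-three verification; it rests on Proposition~\ref{prop:compness}, which endows $(\Ch_b(\cE),qw)$ with the structure of a thick bicomplicial pair, so that $\Acy^{qw}_b(\cE)$ behaves functorially as a null class under pushouts and quotients.
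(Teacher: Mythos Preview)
Your argument is correct and genuinely different from the paper's. The paper works in $\Ch_b(\Ch_b(\cE))$: it considers the morphism $[0\to\Cone^{\bA}f]\to[C^{\bA}x\to\Cone^{\bA}f]$ (which lies in $llw$) and the morphism $[x\onto{f}y]\to[C^{\bA}x\to\Cone^{\bA}f]$ (a quasi-isomorphism, as in the proof of Lemma~\ref{lem:solid check}), then applies $\Tot^{\bB}$ to obtain the zig-zag
\[
\Cone^{\bB}f \xrightarrow{\ \qis\ } \Tot^{\bB}[C^{\bA}x\to\Cone^{\bA}f] \xleftarrow{\ lw\ } \Cone^{\bA}f,
\]
and finally invokes $lw\subset qw$ via Theorem~\ref{thm:tw=qis+lw} and Lemma~\ref{lem:solid check}~$\mathrm{(1)}$. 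By contrast, you bypass the $\Tot$-machinery and the $tw$-theory entirely: your pushout $W=y\coprod_x C^{\bA}C^{\bB}x$ gives two admissible monomorphisms whose cofibers you place in $\Acy^{qw}_b(\cE)$ using only the null class (two-out-of-three) property guaranteed by Proposition~\ref{prop:compness}. Your route is more elementary and self-contained; the paper's route is more in keeping with the section's organising theme of comparing $tw$, $lw$, and $\qis$ through $\Tot$. One small wording point: $\Ch_b(\cE)\to\calD_b(\bE)$ is not a triangle functor but a widely exact functor (Proposition~\ref{ex:widelyexact}~$\mathrm{(3)}$), which is exactly what makes $\Acy^{qw}_b(\cE)$ a null class via Corollary~\ref{cor:pullbackofwidelyexact}; your reasoning is right, only the label needs adjusting.
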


\begin{proof}[\bf Proof]
The canonical morphism $[0 \to \Cone^{\bA}f] \to 
[C^{\bA}x \to \Cone^{\bA}f]$ in $\Ch_b(\Ch_b(\cE))$ is in $llw$. 
Therefore by taking the total functor, 
we have the zig-zag sequence of 
the quasi-isomorphism and the level weak equivalence 
$$\Cone^{\bB} f \to \Tot[C^{\bA} x \to \Cone^{\bA} f] \leftarrow
\Cone^{\bA} f.$$
Since a level weak equivalence is 
a quasi weak equivalence by \ref{thm:tw=qis+lw} and 
\ref{lem:solid check} $\mathrm{(1)}$, 
we get the desired result. 
\end{proof}

\begin{cor}
\label{cor:comp of derived cat}
$\mathrm{(1)}$ 
Let us assume that $\bE$ is thick. 
Then the canonical functor 
$\bE \onto{j_{\cE}} \Ch_b(\bE)$ 
induces an equivalence of triangulated categories 
$$\cT(\cE,w)\isoto \cT(\Ch_b(\cE),qw)=\calD(\cE,w).$$
$\mathrm{(2)}$ 
{\bf (Derived Gillet-Waldhausen theorem).} 
Let $\bF=(\cF,v)$ be a strict relative exact category, 
then the canonical inclusion functor 
$j_{\cF}:\bF \to \Ch_b(\bF)$ is a derived equivalence.
\end{cor}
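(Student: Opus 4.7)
The plan is to prove (1) by showing the total functor $\Tot$ is a quasi-inverse to $j_{\cE}$ on derived categories, then to deduce (2) by applying (1) to the thick bicomplicial pair $\Ch_b(\bF)$.

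For (1), I would first exploit that, since $\bE$ is thick, Theorem~\ref{thm:tw=qis+lw}$\mathrm{(3)}$ gives $qw=tw=\Tot^{-1}(w)$, so $\Tot:\Ch_b(\cE)\to\cE$ is a relative exact functor $\Ch_b(\bE)\to\bE$ satisfying $\Tot\circ j_{\cE}=\id_{\cE}$ strictly. Passing to $\cT$ yields a strict retraction $\cT(\Tot)\circ\cT(j_{\cE})=\id_{\cT(\bE)}$, so $\cT(j_{\cE})$ is faithful. Essential surjectivity will follow from Lemma~\ref{prop:tw=qw}: every $x\in\Ch_b(\cE)$ is connected to a suspension of $j_{\cE}(\Tot x)$ by a zigzag of level weak equivalences---which lie in $qw$ by the consistent axiom, valid for bicomplicial pairs by Corollary~\ref{prop:bicomp is adm}---and quasi-isomorphisms. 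Fullness will then follow from interpreting the same zigzag as a natural isomorphism $\id\simeq\cT(j_{\cE})\circ\cT(\Tot)$ in $\cT(\Ch_b(\bE))$: any $f:j_{\cE}(x)\to j_{\cE}(y)$ becomes $\cT(j_{\cE})(\cT(\Tot)(f))$ after conjugation by this natural iso, which acts trivially on objects in the image of $j_{\cE}$ thanks to $\Tot\circ j_{\cE}=\id$.

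For (2), I would apply (1) to $\Ch_b(\bF)$, which is a thick bicomplicial pair by Proposition~\ref{prop:compness}$\mathrm{(1)}$. This produces an equivalence $\cT(j_{\Ch_b(\cF)}):\cT(\Ch_b(\bF))\isoto\cT(\Ch_b(\Ch_b(\bF)))$. Using Remark~\ref{rem:acyqw} to identify $\cT\circ\Ch_b$ with $\calD_b$, this rewrites as $\calD_b(\bF)\isoto\calD_b(\Ch_b(\bF))$. It remains to identify this equivalence with $\calD_b(j_{\cF})=\cT(\Ch_b(j_{\cF}))$. For this, I would observe that both $j_{\Ch_b(\cF)}$ and $\Ch_b(j_{\cF})$ are strict sections of the total functor $\Tot:\Ch_b(\Ch_b(\cF))\to\Ch_b(\cF)$: the former by construction, the latter by direct verification that applying $j_{\cF}$ termwise produces a bicomplex supported along a single row whose totalization recovers the original complex. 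Since $\cT(\Tot)$ is an equivalence by (1), both sections realize its quasi-inverse on $\cT$, hence are naturally isomorphic.

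The main obstacle will be the fullness step in (1): the zigzag of Lemma~\ref{prop:tw=qw} must be shown to assemble into a genuinely natural isomorphism in $\cT(\Ch_b(\bE))$, and the suspension shift appearing there (whose amplitude depends on $x$) must be absorbed coherently. This is delicate because the shift is not uniform, but since suspension is an auto-equivalence of the triangulated category the issue ultimately resolves after passing to $\cT$; everything else in the argument is structural bookkeeping once the retraction/quasi-inverse framework is in place.
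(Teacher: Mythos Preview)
Your strategy matches the paper's---exploit the retraction $\Tot\circ j_{\cE}=\id$ for (1), then apply (1) to $\Ch_b(\bF)$ for (2)---but you and the paper analyze opposite sides of the retraction, and this is where a gap appears in your argument.

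The paper proves that $\cT(\Tot)$ is an equivalence: it has trivial kernel (by the very definition of $tw$), is essentially surjective (from $\Tot j_{\cE}=\id$), and is full. For fullness the paper uses only the \emph{existence} of isomorphisms $j_{\cE}(\Tot x)\simeq x$ for each individual $x$ (from Lemma~\ref{prop:tw=qw})---no naturality whatsoever---combined with a commutative square of Hom-sets in which the bottom row has the section $\cT(j_{\cE})$. Then \cite[3.18]{Bal07} (a full triangulated functor with trivial kernel is faithful) finishes the job, and $\cT(j_{\cE})$ is automatically the inverse equivalence. Finally it passes from $tw$ to $qw$ via Corollary~\ref{cor:inv by thick closure}.

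You instead try to prove $\cT(j_{\cE})$ is an equivalence directly. Faithfulness and essential surjectivity are fine, but your fullness step requires the zigzag of Lemma~\ref{prop:tw=qw} to assemble into a \emph{natural} isomorphism $\id\simeq\cT(j_{\cE})\cT(\Tot)$. The resolution you sketch---that suspension is an auto-equivalence so the shift is absorbed---handles only the non-uniform degree shift, not naturality. The zigzag depends on the amplitude of each complex; while it is functorial on each $\Ch_{[m,n]}(\cE)$ separately, gluing these into a single natural transformation on all of $\Ch_b(\cE)$ needs coherence you have not supplied. And since a morphism $j_{\cE}(a)\to j_{\cE}(b)$ in $\cT(\Ch_b(\bE))$ is represented by a roof through an arbitrary complex $z$, knowing that $\eta$ is the identity on the image of $j_{\cE}$ does not by itself pin down its behavior on such morphisms. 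The paper's route via $\cT(\Tot)$ and Balmer's lemma sidesteps this entirely.

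For (2) your argument is actually more careful than the paper's. The paper applies (1) to $\bE=\Ch_b(\bF)$ and stops, but this yields an equivalence induced by $j_{\Ch_b(\cF)}$, whereas the statement concerns $\calD_b(j_{\cF})=\cT(\Ch_b(j_{\cF}))$. Your observation that both $j_{\Ch_b(\cF)}$ and $\Ch_b(j_{\cF})$ are strict sections of $\Tot:\Ch_b(\Ch_b(\cF))\to\Ch_b(\cF)$, hence both realize the inverse of the equivalence $\cT(\Tot)$ and are therefore naturally isomorphic on $\cT$, correctly completes this identification.
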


\begin{proof}[\bf Proof] 
$\mathrm{(1)}$ 
First, we will prove that 
$\Tot:\Ch_b(\cE) \to \cE$ 
induces 
an equivalence of triangulated categories
$$\cT(\Tot):\cT(\Ch_b(\cE),tw)\isoto \cT(\cE,w).$$
Since $\pi_0(\cE^w)$ is thick by 
\ref{para:NC and comp weak equiv} $\mathrm{(2)}$, 
we have the equality
$$\pi_0(\Acy_b^{tw}(\cE))=\Ker(\pi_0(\Ch_b(\cE)) \onto{\pi_0(\Tot)} 
\pi_0(\cE) \to \cT(\cE,w)).$$
Therefore for any object $x$ in $\cT(\Ch_b(\cE),tw)$, 
$\cT(\Tot)(x)\isoto 0$ implies 
$x\isoto 0$ in $\cT(\Ch_b(\cE),tw)$. 
Since we have $\Tot j_{\cE}=\id$, 
$\cT(\Tot)$ is essentially surjective. 
Moreover by \ref{thm:tw=qis+lw} and \ref{prop:tw=qw}, 
for any complexes $x$ and $y$ in $\Ch_b(\cE)$, 
there exist isomorphisms 
$a:j_{\cE}(\Tot x)\isoto x$ and $y\isoto j_{\cE}(\Tot y)$ in 
$\cT(\Ch_b(\cE),tw)$. 
Now let us consider the commutative diagram below. 
$${\footnotesize{
\xymatrix{
\Hom_{\cT(\Ch_b(\cE);tw)}(x,y) 
\ar[r]^{\cT(\Tot)} \ar[d]^{\wr}_{\Hom(a,b)} & 
\Hom_{\cT(\cE;w)}(\Tot x,\Tot y) 
\ar[d]_{\wr}^{\Hom(\cT(\Tot)(a),\cT(\Tot)(b))}\\
\Hom_{\cT(\Ch_b(\cE);tw)}(j_{\cE}(x),j_{\cE}(y)) 
\ar[r]_{\cT(\Tot)} & 
\Hom_{\cT(\cE;w)}(\Tot x,\Tot y) .
}}}$$
Since the bottom $\cT(\Tot)$ has the section $j_{\cE}$, 
it is surjective and therefore the top $\cT(\Tot)$ is also. 
Hence $\cT(\Tot)$ is full. 
Now utilizing \cite[3.18]{Bal07}, 
it turns out that $\cT(\Tot)$ is an equivalence of triangulated categories. 
Since $\Tot j_{\cE}=\id$, $j_{\cE}$ is the inverse functor 
of the equivalence above. 
By \ref{cor:inv by thick closure} 
and \ref{thm:tw=qis+lw} $\mathrm{(3)}$, 
the identity functor $(\Ch_b(\cE),tw) \to (\Ch_b(\cE),qw)$ 
induces an equivalence of triangulated categories 
$\cT(\Ch_b(\cE),tw)\isoto \cT(\Ch_b(\cE),qw)$. 
Hence we obtain the result. 

\sn
$\mathrm{(2)}$ 
By \ref{prop:compness} and \ref{prop:bicomp is adm}, 
$\Ch_b(\bF)$ is a solid thick bicomplicial pair. 
Then by applying assertion $\mathrm{(1)}$ 
to $\bE=\Ch_b(\bF)$, 
we obtain an equivalence of triangulated categories 
$\calD_b(\bF)=\cT(\Ch_b(\cF),qu)\isoto\calD_b(\Ch_b(\bF))$. 
\end{proof}

\sn
Recall the terminologies in relative category theory 
from Conventions $\mathrm{(6)}$.

\begin{cor}
\label{cor:homotopy inv of homotopy theories}
Let $U$ be the forgetful $2$-functor from 
$\biCompPair_{\thi}^{+}$ to $\RelEx_{\consist}^{+}$. 
Then the pair $(\RelEx_{\consist}^{+},\deq) 
\substack{\overset{\Ch_b}{\to}\\ \underset{U}{\leftarrow}} 
(\biCompPair_{\thi}^{+},\deq)$ 
are relative funcrors wirh 
adjunction morphisms 
$j;\id_{\RelEx_{\consist}^{+}}\to U\Ch_b$ and $\Ch_bU\to 
\id_{\biCompPair_{\thi}^{+}}$. 
Moreover $j$ and $\Tot$ are relative natural equivalences. 
In particular the homotopy theories of 
consistent relative exact categories 
and thick bicomplicial pairs are homotopy equivalent.
\qed
\end{cor}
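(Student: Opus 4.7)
The plan is to verify each of the four ingredients in the statement in turn, using essentially the Derived Gillet-Waldhausen theorem (Corollary~\ref{cor:comp of derived cat}) as the principal input.

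First, I would check that $\Ch_b$ and $U$ are well-defined as relative functors between $(\RelEx_{\consist}^{+},\deq)$ and $(\biCompPair_{\thi}^{+},\deq)$. For $U$: it is the evident forgetful $2$-functor, and its image lies in $\RelEx_{\consist}^{+}$ by Corollary~\ref{prop:bicomp is adm}; preservation of derived equivalences is immediate since the condition ``induces an equivalence on $\calD_b$'' only depends on the underlying relative exact category. For $\Ch_b$: by Corollary~\ref{cor:Ch_b is 2-functor}~$\mathrm{(1)}$ it is a $2$-functor into $\biCompPair_{\thi}$, and it respects $\deq$ by a two-out-of-three argument using naturality of $j$ together with Corollary~\ref{cor:comp of derived cat}~$\mathrm{(2)}$: if $f:\bE\to\bE'$ in $\RelEx_{\consist}^{+}$ is a derived equivalence, then in the naturality square $\calD_b(\Ch_b(f))\calD_b(j_{\bE})=\calD_b(j_{\bE'})\calD_b(f)$ the three non-$\Ch_b(f)$ arrows are equivalences of triangulated categories, forcing $\calD_b(\Ch_b(f))$ to be one as well.

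Next I would exhibit the two natural transformations. For $j:\id_{\RelEx_{\consist}^{+}}\to U\Ch_b$, the component at $\bE=(\cE,w)$ is the canonical inclusion $j_{\cE}:\bE\to\Ch_b(\bE)$, which is a relative exact functor precisely by the consistent axiom \ref{lemdf:adm}; naturality in $\bE$ is clear from the definition of $\Ch_b$ on morphisms. For $\Tot:\Ch_b U\to\id_{\biCompPair_{\thi}^{+}}$, the component at $\bC=(\cC,w)$ is the total functor $\Tot_{\cC}:\Ch_b(\cC)\to\cC$ of~\ref{nt:tot}. Since $\bC$ is thick, Theorem~\ref{thm:tw=qis+lw}~$\mathrm{(3)}$ gives $qw=tw=\Tot^{-1}w$, so $\Tot_{\cC}$ is a relative complicial functor; naturality (up to the unique complicial natural equivalence) is built into the construction of $\Tot$. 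The unit/counit triangle identities reduce to $\Tot\circ j_{\cE}=\id_{\cE}$, which is part of the definition of $\Tot$, together with the uniqueness clause of Corollary~\ref{cor:univ}.

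Now comes the heart of the matter: verifying that $j$ and $\Tot$ are relative natural equivalences, i.e.\ that their components are derived equivalences. This is exactly the content of the two parts of Corollary~\ref{cor:comp of derived cat}: part~$\mathrm{(2)}$, the Derived Gillet-Waldhausen theorem, says $j_{\cE}:\bE\to\Ch_b(\bE)$ is a derived equivalence for every strict (in particular consistent) $\bE$; part~$\mathrm{(1)}$, applied with $\bE=\bC$ a thick bicomplicial pair, says $\Tot_{\cC}:\Ch_b(\bC)\to\bC$ induces an equivalence $\cT(\Ch_b(\cC),qw)\isoto\cT(\cC,w)$, which under the identification $\calD_b(\bC)=\cT(\cC,w)$ coming from \ref{cor:comp of derived cat}~$\mathrm{(2)}$ is precisely a derived equivalence. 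I expect these two inputs to be the real content of the proof; everything else is formal bookkeeping.

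Finally, from the existence of relative natural equivalences $j:\id\Rightarrow U\Ch_b$ and $\Tot:\Ch_bU\Rightarrow\id$, the two compositions $U\Ch_b$ and $\Ch_b U$ are weakly homotopic (via a single relative natural transformation, which is a degenerate zig-zag) to the respective identity functors. By the definition of homotopy equivalence of relative categories in Conventions~$\mathrm{(6)(ii)}$, this shows that $\Ch_b$ and $U$ form mutually inverse homotopy equivalences between $(\RelEx_{\consist}^{+},\deq)$ and $(\biCompPair_{\thi}^{+},\deq)$, as claimed. The most delicate point in this proof plan is not any one logical step but keeping track of which of the two bicomplicial structures $\bA$ and $\bB$ of Remark~\ref{rem:two comp str on chbE} and which notion of weak equivalence ($lw$, $qis$, $qw$, $tw$) is in play at each moment; the coincidence $qw=tw$ under thickness, afforded by Theorem~\ref{thm:tw=qis+lw}~$\mathrm{(3)}$, is what makes $\Tot$ behave well and is the technical linchpin of the argument.
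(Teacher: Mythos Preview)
Your proposal is correct and supplies exactly the details the paper omits (the paper gives no proof, only a \qed). You have correctly identified the key inputs: Corollary~\ref{cor:comp of derived cat} for the derived equivalences of $j$ and $\Tot$, Theorem~\ref{thm:tw=qis+lw}~$\mathrm{(3)}$ for $qw=tw$ making $\Tot$ a relative functor on thick pairs, Corollary~\ref{cor:Ch_b is 2-functor} and Corollary~\ref{prop:bicomp is adm} for the $2$-functoriality and target categories, and Corollary~\ref{cor:univ} for the adjunction identities.

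One small simplification: your argument that $\Tot_{\cC}$ is a derived equivalence routes through part~$\mathrm{(1)}$ of Corollary~\ref{cor:comp of derived cat} and then an identification coming from part~$\mathrm{(2)}$, which is slightly circuitous. It is cleaner to argue directly: since $\Tot_{\cC}\circ j_{\cC}=\id_{\cC}$ and $j_{\cC}:\bC\to\Ch_b(\bC)$ is a derived equivalence by part~$\mathrm{(2)}$ (a thick bicomplicial pair being consistent by Corollary~\ref{prop:bicomp is adm}), the composite $\calD_b(\Tot_{\cC})\circ\calD_b(j_{\cC})=\id$ forces $\calD_b(\Tot_{\cC})$ to be an equivalence as well. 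This avoids having to match up $\cT(\cC,w)$ with $\calD_b(\bC)$ explicitly.
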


\sn
Recall the definition of exact and weakly exact functors 
from \ref{df:exact seq in RelEx} 
and the definition of quotient of strictly relative exact categories 
from \ref{df:coker of rel exact cat}. 

\begin{cor}
\label{cor:weakly exact seq}
$\mathrm{(1)}$ 
The functor $\Ch_b:\RelEx_{\consist} \to \RelEx_{\consist}$ 
is exact and weakly exact.\\
$\mathrm{(2)}$ 
Let $\bG=(\cG,u)$ be a very strict solid relative exact category. 
Then\\
$\mathrm{(i)}$ 
The inclusion functor 
$\Ch_b(\cG^u) \rinc \Ch_b(\cG)^{qu}$ 
induces an equivalence of triangulated categories 
$\calD_b(\cG^u)\isoto\cT(\Ch_b(\cG)^{qu},\qis)$.\\
$\mathrm{(ii)}$ 
The inclusion functor $\cG^u\rinc \cG$ and the identity functor of $\cG$ 
induce an exact sequence
$$(\cG^u,i_{\cG^u})\to (\cG,i_{\cG})\to (\cG,u).$$
$\mathrm{(3)}$ 
Let $f:\bF=(\cF,v) \to \bG=(\cG,u)$ 
be a derived fully faithful relative exact functor 
from a strict relative exact category $\bF$ to 
a consistent relative exact category $\bG$. 
Then the sequence $\bF \onto{f}\bG \onto{\pi_f} \bG/\bF$ 
is weakly exact.\\
$\mathrm{(4)}$ 
Let 
$${\footnotesize{\xymatrix{
\bF \ar[r]^f \ar[d]_a & \bG=(\cG,u) \ar[d]^b\\
\bF' \ar[r]_{f'} & \bG'=(\cG',u')
}}}$$
be a commutative diagram of strict relative exact categories. 
if both $a$ and $b$ are derived equivalences, 
then $\coker(a,b)$ is also a derived equivalence.
\end{cor}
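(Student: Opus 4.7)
The proofs are driven by the derived Gillet--Waldhausen theorem, Corollary~\ref{cor:comp of derived cat}, which identifies $\calD_b(\bE)$ with $\calD_b(\Ch_b(\bE))$ for every strict relative exact category $\bE$. My strategy is to reduce each assertion to the level of triangulated categories and then read it off from Proposition~\ref{prop:compness} together with the defining formalism of Verdier cokernels.

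Parts (1), (3), (4) are formal consequences. For (1), given a (weakly) exact sequence $\bE\to\bF\to\bG$ in $\RelEx_{\consist}$, applying $\Ch_b$ yields a sequence in $\biCompPair_{\thi}$ whose image on $\calD_b$ is naturally equivalent to $\calD_b(\bE)\to\calD_b(\bF)\to\calD_b(\bG)$ by Corollary~\ref{cor:comp of derived cat}, and the latter is (weakly) exact by hypothesis. For (4), Remark~\ref{rem:coker of rel exact cat}(1) combined with Corollary~\ref{cor:comp of derived cat} identifies $\calD_b(\coker f)$ with $\coker\calD_b(f)$ and likewise for $f'$; the derived equivalences $\calD_b(a)$ and $\calD_b(b)$ therefore descend, by functoriality of the Verdier quotient, to an equivalence $\coker\calD_b(f)\isoto\coker\calD_b(f')$, which is exactly $\calD_b(\coker(a,b))$. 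For (3), the same identification together with the fully faithfulness of $\calD_b(f)$ makes the sequence $\calD_b(\bF)\to\calD_b(\bG)\to\coker\calD_b(f)\cong\calD_b(\bG/\bF)$ exact in the sense of Convention~(8)(vi), and a fortiori weakly exact.

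For (2)(i), applying Proposition~\ref{prop:compness}(2) to $\bG$ yields the exact triangulated sequence
\[
\cT(\Ch_b(\cG)^{qu},\qis)\to\cT(\Ch_b(\cG),\qis)\to\cT(\Ch_b(\cG),qu),
\]
whose middle and right terms are $\calD_b(\cG)$ and $\calD_b(\bG)=\coker(\calD_b(\cG^u)\to\calD_b(\cG))$ respectively. Hence $\cT(\Ch_b(\cG)^{qu},\qis)$ is canonically identified with $\Ker(\calD_b(\cG)\to\calD_b(\bG))$, which by construction is the thick closure of the image of $\calD_b(\cG^u)$ in $\calD_b(\cG)$. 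Very strictness of $\bG$ ensures the inclusion $\calD_b(\cG^u)\rinc\calD_b(\cG)$ is fully faithful, so the induced map $\calD_b(\cG^u)\to\cT(\Ch_b(\cG)^{qu},\qis)$ is itself fully faithful. Once (2)(i) is in hand, (2)(ii) follows immediately: $\calD_b(\cG^u)$ is identified with the kernel of $\calD_b(\cG)\to\calD_b(\bG)$, and the defining Verdier quotient supplies the right-hand equivalence of the exact sequence.

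The main obstacle is the essential surjectivity in (2)(i), equivalently proving that the fully faithful image of $\calD_b(\cG^u)$ is already thick in $\calD_b(\cG)$ so that no passage to an idempotent completion is required. I expect this is precisely where solidity earns its keep: Lemma-Definition~\ref{lemdf:solid axiom}, together with the decomposition $tw=\qis\vee lw$ from Theorem~\ref{thm:tw=qis+lw}, should let me connect every $qw$-trivial bounded complex in $\cG$ to an actual bounded complex in $\cG^u$ through a zig-zag of quasi-isomorphisms, by inducting on length and applying the solidity axiom differential-by-differential.
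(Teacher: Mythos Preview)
Your approach is essentially the paper's for all four parts: (1), (3), (4) are reduced via Corollary~\ref{cor:comp of derived cat} exactly as the paper does, and for (2) the paper sets up the same comparison diagram, using Proposition~\ref{prop:compness}(2) together with very strictness to obtain full faithfulness of the comparison map $\calD_b(\cG^u)\to\cT(\Ch_b(\cG)^{qu},\qis)$, and then deducing (ii) from (i) via that diagram.

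The only point worth flagging is your route to essential surjectivity in (2)(i). Theorem~\ref{thm:tw=qis+lw} is stated for bicomplicial pairs, so it does not apply to an arbitrary very strict solid $\bG$, and ``inducting on length and applying solidity differential-by-differential'' is not how the paper proceeds. Instead, the paper asserts in one line that solidity gives the equality $\Ch_b(\cG)^{qu}=\Ch_b(\cG^u)_{\qis}$ at the level of $\Ch_b(\cG)$---i.e.\ every $qu$-trivial bounded complex is already connected by a zig-zag of quasi-isomorphisms to a bounded complex in $\cG^u$---and reads essential surjectivity off from that. Your instinct that solidity is exactly what forces the fully faithful image to be all of $\cT(\Ch_b(\cG)^{qu},\qis)$ (so that no passage to a thick closure is needed) is correct; only the proposed mechanism via Theorem~\ref{thm:tw=qis+lw} is misdirected.
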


\begin{proof}[\bf Proof]
$\mathrm{(1)}$ 
Let 
$\bG=(\cG,w_{\cG}) \onto{a} \bH=(\calH,w_{\calH}) \onto{b} \bI=(\cI,w_{\cI})$ 
be a sequence of consistent relative exact categories. 
Consider the commutative diagram of triangulated categories 
$${\footnotesize{\xymatrix{
\calD_b(\bG) \ar[r]^{\calD_b(a)} \ar[d]_{\calD_b(j_{\cG})} & 
\calD_b(\bH) \ar[r]^{\calD_b(b)} \ar[d]^{\calD_b(j_{\calH})} & \calD_b(\bI) 
\ar[d]^{\calD_b(j_{\cI})} \\
\calD_b(\Ch_b(\bG)) \ar[r]_{\calD_b(\Ch_b(a))} & 
\calD_b(\Ch_b(\bH)) \ar[r]_{\calD_b(\Ch_b(b))} & 
\calD_b(\Ch_b(\bI)).
}}}$$
Here the vertical morphisms are equivalences of triangulated categories 
by \ref{cor:comp of derived cat} $\mathrm{(2)}$. 
Hence if the top line is exact (resp. weakly exact), 
then the bottom line is also exact (resp. weakly exact).

\sn
$\mathrm{(2)}$ 
Let us consider the commutative diagram
\begin{equation}
\label{equ:very strict solid}
{\footnotesize{\xymatrix{
\calD_b(\cG^u) \ar[r]^{\textbf{I}} \ar[d]_{\textbf{III}} & 
\calD_b(\cG) \ar[r] \ar@{=}[d] & \calD_b(\bG)\ar@{=}[d]\\
\cT(\Ch_b(\cG)^{qu},\qis) \ar[r]_{\textbf{II}} & 
\cT(\Ch_b(\cG),\qis) \ar[r] & 
\cT(\Ch_b(\cG),qu)
}}}
\end{equation}
The functors \textbf{I} and \textbf{II} are fully faithful 
by the assumption and \ref{prop:compness} $\mathrm{(2)}$ respectively. 
Therefore the functor \textbf{III} is also fully faithful. 
Since $\bG$ is solid, 
we have an equality $\Ch_b(\cG)^{qu}=\Ch_b(\cG^u)_{\qis}$. 
Therefore the functor \textbf{III} is essentially surjective. 
Hence we complete the proof of $\mathrm{(i)}$. 
Now the exactness of the bottom line 
in the diagram (\ref{equ:very strict solid}) implies 
the exactness of the top line. 
We obtain the proof of $\mathrm{(ii)}$.

\sn
$\mathrm{(3)}$ 
Consider the following commutaive diagram of triangulated categories. 
$${\footnotesize{\xymatrix{
\calD_b(\bF) \ar[r]^{\calD_b(f)} \ar@{=}[d] & 
\calD_b(\bG) \ar[r]^{\cT(\id_{\Ch_b(\cG)})} \ar@{=}[d] & 
\cT(\Ch_b(\cG),w_f) \ar[d]^{\cT(j_{\Ch_b(\cG)})}\\
\calD_b(\bF) \ar[r]_{\calD_b(f)} & 
\calD_b(\bG) \ar[r]_{\calD_b(\pi_f)} & 
\calD_b(\bG/\bF).
}}}$$
Here the top line is weakly exact and 
the right vertical line is an equivalences of triangulated categories 
by \ref{cor:comp of derived cat} $\mathrm{(2)}$. 
Hence we obtaion the result.

\sn
$\mathrm{(4)}$ 
Consider the following commutative diagram
$${\footnotesize{\xymatrix{
\coker \calD_b(f) 
\ar[r]_{\!\!\!\!\!\!\!\!\sim}^{\!\!\!\!\!\!\!\!\!\!{\textbf{I}}} 
\ar[d]_{\textbf{II}} & 
\cT(\Ch_b(\cG),w_f) \ar[r]^{\ \ \ {\textbf{III}}}_{\ \ \ \sim} 
\ar[d]^{\cT(\Ch_b(b))} &
\calD_b(\bG/\bF) \ar[d]^{\calD_b(\coker(a,b))}\\
\coker \calD_b(f') 
\ar[r]^{\!\!\!\!\!\!\!\!\sim}_{\!\!\!\!\!\!\!\!\!\!{\textbf{I}}} & 
\cT(\Ch_b(\cG'),w_{f'}) \ar[r]_{\ \ \ {\textbf{III}}}^{\ \ \ \sim} & 
\calD_b(\bG'/\bF').
}}}$$
Here the morphisms \textbf{I}, \textbf{II} and \textbf{III} 
are equivalences of triangulated categories 
by \ref{rem:coker of rel exact cat} $\mathrm{(1)}$, assumption and 
\ref{cor:comp of derived cat} $\mathrm{(2)}$ respectively. 
Hence the relativ exact functor $\coker(a,b)$ is a derived equivalence.
\end{proof}

\begin{para}[\bf Proof of Corollary~\ref{cor:localization}]
\label{proof:locinv}
For any weakly exact sequence of consistent relative exact categories 
$\bE \onto{u} \bF \onto{v} \bG$, the induced sequence 
$\Ch_b(\bE) \onto{\Ch_b(u)} \Ch_b(\bF) \onto{\Ch_b(v)} \Ch_b(\bG)$ 
is a weakly exact sequence of bicomplicial pairs and 
complicial exact functors 
by \ref{prop:compness} $\mathrm{(1)}$ and 
\ref{cor:weakly exact seq} $\mathrm{(1)}$. 
Therefore the assertion follows from \cite[3.2.27]{Sch06}. 
\qed
\end{para}

\section{Resolution theorems}
\label{sec:resol thm}

In this section, we review the (strongly) resolution conditions 
in \ref{df:resol cond} 
and introduce the resolution theorems in \ref{thm:resol thms}. 
Recall the definition of multiplicative systems from Conventions $\mathrm{(5)}$ $\mathrm{(xiv)}$.

\begin{para}
\label{para:multi closed}
For a pair of a category $\cC$ and a multiplicative system $v$ of $\cC$, 
we define the simplicial subcategory $\cC(-,v)$ in $[m] \mapsto \cC^{[m]}$ 
$$[m]\mapsto \cC(m,v)$$
where $\cC(m,v)$ is the full subcategory of $\cC^{[m]}$ 
consisting of those functors 
which take values in $v$. 
\end{para}

\begin{df}[\bf Resolution conditions]
\label{df:resol cond}
$\mathrm{(1)}$ 
We say that 
the inclusion functor of Quillen exact categories 
$\cE \rinc \cF$ 
satisfies {\bf the resolution conditions} 
if it satisfies the following three conditions.\\
{\bf (Res 1)} $\cE$ is closed under extensions in $\cF$.\\
{\bf (Res 2)} For any object $x$ in $\cF$, 
there are an object $y$ in $\cE$ and an admissible epimorphism $y \rdef x$.\\
{\bf (Res 3)} For any admissible short exact sequences 
$x\rinf y \rdef z$ in $\cF$, 
if $y$ is in $\cE$, then $x$ is also in $\cE$.\\
$\mathrm{(2)}$ 
(cf. \cite[1.12]{Moc11}). 
Moreover assume that 
there exists a class of morphisms $v$ in $\cF$ 
such that the pair $(\cF,v)$ is a Waldhausen exact category. 
Let us put $w=\cE\cap v$. 
We say that the inclusion functor 
$(\cE,w)\rinc (\cF,v)$ 
satisfies {\bf the strongly resolution conditions} 
if for any non-negative integer $m$, 
the inclusion functor $\cE(m,w)\rinc \cF(m,v)$ satisfies 
the resolution conditions.
\end{df}

\begin{thm}
\label{thm:resol thms}
Let $i:\bE=(\cE,w) \rinc \bF=(\cF,v)$ be 
an inclusion functor between 
strict relative exact categories.\\
$\mathrm{(1)}$ 
{\bf (Derived resolution theorem).}
Let us assume that both the inclusion functors 
$\cE \rinc \cF$ and $\cE^w \rinc \cF^v$ satisfy the resolution conditions. 
Then $i$ is a derived equivalence.\\
$\mathrm{(2)}$ 
{\bf (Resolution theorem).}
Let us assume that the following conditions hold.\\
$\mathrm{(i)}$ 
$v\cap \cE=w$.\\ 
$\mathrm{(ii)}$ 
Both $\bE$ and $\bF$ are Waldhausen exact categories.\\
$\mathrm{(iii)}$ 
$(\cE,w) \rinc (\cF,v)$ satisfies the strongly resolution conditions.\\
Then $i$ is a $K^W$-equivalence. 
\end{thm}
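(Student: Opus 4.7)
My plan for part $\mathrm{(1)}$ is to reduce the assertion to the classical derived resolution theorem for pairs of Quillen exact categories. Recall that, by Definition~\ref{df:qw}, $\calD_b(\bE)=\coker(\calD_b(\cE^w)\to\calD_b(\cE))$ and similarly for $\bF$. The classical derived resolution theorem (see e.g.\ \cite[\S 12]{Kel96}) applied to $\cE\rinc\cF$ yields an equivalence of triangulated categories $\calD_b(\cE)\isoto\calD_b(\cF)$; applied to $\cE^w\rinc\cF^v$ it yields $\calD_b(\cE^w)\isoto\calD_b(\cF^v)$. Both squares fit into the obvious commutative diagram with vertical arrows induced by the inclusions $\cE^w\rinc\cE$ and $\cF^v\rinc\cF$. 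Passing to Verdier quotients of the columns and invoking the fact that equivalences of triangulated categories descend to equivalences of cokernels (for which the lattice formalism of Section~\ref{sec:trisubcat}, in particular \ref{prop:homthmfortricat}, provides a clean formal framework), one obtains the induced equivalence $\calD_b(\bE)\isoto\calD_b(\bF)$, which is exactly the definition of $i$ being a derived equivalence.

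For part $\mathrm{(2)}$, the strongly resolution hypothesis is designed precisely to upgrade the derived statement to a statement about Waldhausen $K$-theory. The strategy is to run a Waldhausen-type approximation argument level-wise in the $S_\bullet$-construction. By condition $\mathrm{(iii)}$, for each non-negative integer $m$ the inclusion $\cE(m,w)\rinc\cF(m,v)$ satisfies the resolution conditions, so one may functorially resolve any object of $\cF(m,v)$ by an object of $\cE(m,w)$. Condition $\mathrm{(i)}$ ensures that the weak equivalences on $\cE$ are exactly those of $\cF$ restricted to $\cE$, so these resolutions interact properly with the weak equivalence structure; condition $\mathrm{(ii)}$ puts us squarely within Waldhausen's framework. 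Assembling the level-wise resolutions into a functorial construction and applying the additivity theorem (or equivalently Waldhausen's approximation theorem), one deduces that $w_\bullet S_\bullet\cE\to v_\bullet S_\bullet\cF$ is a level-wise weak equivalence of bisimplicial sets, hence a $K^W$-equivalence on geometric realization. This is, up to the generality of the relative exact category setting, the argument of \cite[1.12]{Moc11}.

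The main obstacle is the coherence issue in part $\mathrm{(2)}$: one must verify that the level-wise resolutions can be chosen functorially in $m$ so as to assemble into a genuine simplicial map, rather than just a degree-wise collection of equivalences. This is exactly the reason for introducing the \emph{strongly} resolution condition (as opposed to the plain one): the fact that the resolution conditions hold for every inclusion $\cE(m,w)\rinc\cF(m,v)$, not only for $m=0$, provides precisely the simplicial coherence needed to run Waldhausen's argument. Part $\mathrm{(1)}$, by contrast, is largely formal bookkeeping once the classical derived resolution theorem is in hand, since quotients of equivalent diagrams by equivalent subcategories are equivalent.
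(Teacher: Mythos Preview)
Your proposal is correct and matches the paper's approach: for $\mathrm{(1)}$ the paper likewise reduces to the classical derived resolution theorem (citing \cite[3.3.8]{Sch11} rather than \cite{Kel96}) applied separately to $\cE\rinc\cF$ and $\cE^w\rinc\cF^v$, then passes to Verdier quotients; for $\mathrm{(2)}$ the paper simply defers to \cite[\S 1]{Moc11}, whose argument you have accurately sketched.
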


\begin{proof}[\bf Proof] 
$\mathrm{(1)}$ 
If the classes $v$ and $w$ are the class of all isomorphisms, 
then the assertion is proven in \cite[3.3.8]{Sch11}. 
Therefore for general cases, 
by the assumption, 
the inclusion functors $\cE \rinc \cF$ and $\cE^w \rinc \cF^v$ 
induce equivalences of triangulated categories
$$\calD_b(\cE)\isoto \calD_b(\cF),\ \ \calD_b(\cE^w)\isoto\calD_b(\cF^v).$$ 
Hence the inclusion functor $(\cE,w) \rinc (\cF,v)$ induces an equivalence of 
triangulated categories
$$\calD_b(\cE,w)\isoto\calD_b(\cF,v).$$
Assertion $\mathrm{(2)}$ is proven in \cite[\S 1]{Moc11}.
\end{proof}

\begin{cor}
\label{cor:res cond and consisitent}
Let $i:\bE=(\cE,w) \rinc \bF=(\cF,v)$ 
be an inclusion functor between 
strict relative exact categories. 
Assume that the following conditions hold.\\
$\mathrm{(1)}$ 
$\cE^w \rinc \cF^v$ satisfies the resolution conditions.\\
$\mathrm{(2)}$ 
$v$ satisfies the solid axiom.\\ 
Then $w$ also satisfies the solid axiom. 
\end{cor}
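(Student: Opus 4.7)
The plan is to verify the solid axiom in the form of condition~$\mathrm{(2)}$ of Lemma-Definition~\ref{lemdf:solid axiom} for $\bE$. Fix a morphism $f:x\to y$ in $w$. Because $i$ is a relative exact functor we have $w\subset v$, so $f$ lies in $v$ as well, and the solid axiom for $v$ produces a zig-zag
$$\Cone f\;\leftarrow\;\bullet\;\rightarrow\;\bullet\;\leftarrow\;\cdots\;\rightarrow\;z$$
of quasi-isomorphisms in $\Ch_b(\cF)$, with $z$ a bounded complex in $\cF^v$.

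I would then invoke the derived resolution theorem (Theorem~\ref{thm:resol thms}~$\mathrm{(1)}$) applied to $\cE^w\rinc\cF^v$. By hypothesis this inclusion satisfies the resolution conditions, so it yields an equivalence $\calD_b(\cE^w)\isoto\calD_b(\cF^v)$. Consequently $z$ is connected, by a further zig-zag of quasi-isomorphisms inside $\Ch_b(\cF^v)$, to some $z'\in\Ch_b(\cE^w)$. Splicing the two zig-zags together yields a composite zig-zag of quasi-isomorphisms in $\Ch_b(\cF)$ joining $\Cone f\in\Ch_b(\cE)$ to $z'\in\Ch_b(\cE^w)\subset\Ch_b(\cE)$.

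The main obstacle is that the intermediate terms of this composite zig-zag a priori live in $\Ch_b(\cF)$, whereas the solid axiom for $w$ asks for a zig-zag interpreted inside $\Ch_b(\cE)$; a quasi-isomorphism in $\Ch_b(\cF)$ between two complexes of $\Ch_b(\cE)$ need not be a quasi-isomorphism in $\Ch_b(\cE)$, since acyclicity in $\cF$ does not automatically descend to $\cE$. The technical heart of the argument is therefore to lift the $\Ch_b(\cF)$-level zig-zag to a $\Ch_b(\cE)$-level one using the resolution data for $\cE^w\rinc\cF^v$. I expect this to proceed by induction on the length of the zig-zag: at each step one replaces the intermediate complex degree by degree via Res~2 (every object of $\cF^v$ admits an admissible epimorphism from an object of $\cE^w$) by an object of $\Ch_b(\cE^w)\subset\Ch_b(\cE)$, and then invokes Res~3 (closure of $\cE^w$ under kernels of admissible epimorphisms in $\cF^v$) to guarantee that the resulting connecting morphisms have cones that are already strictly acyclic in $\cE$.
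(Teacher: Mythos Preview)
Your first two paragraphs are exactly the paper's proof: the paper invokes solidity of $v$ to connect $\Cone f$ by quasi-isomorphisms in $\Ch_b(\cF)$ to some complex in $\Ch_b(\cF^v)$, then invokes the derived resolution theorem~\ref{thm:resol thms}~$\mathrm{(1)}$ for $\cE^w\rinc\cF^v$ to replace that complex by one in $\Ch_b(\cE^w)$, and stops. The paper does not address the issue you raise in your third paragraph.

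The concern you raise is legitimate, and your proposed repair does not work. The only resolution hypothesis available is for $\cE^w\rinc\cF^v$, so Res~2 lets you resolve objects of $\cF^v$ by objects of $\cE^w$, nothing more. But the intermediate complexes in the first half of the zig-zag (the one coming from solidity of $v$, connecting $\Cone f$ to $z$) live in $\Ch_b(\cF)$, not in $\Ch_b(\cF^v)$; you have no tool to replace them degreewise by objects of $\cE$, let alone of $\cE^w$. Your inductive scheme therefore cannot start. The second half of the zig-zag (from $z$ to $z'$) does live in $\Ch_b(\cF^v)$, but lifting only that piece is of no help: you would still be stranded in $\Ch_b(\cF)$ between $\Cone f$ and $z$.

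In the paper's sole application of this corollary (Corollary~\ref{cor:solid denpan}), Theorem~\ref{thm:adroit system}~$\mathrm{(1)}$ actually supplies resolution conditions for \emph{both} inclusions $\cF\ltimes\cE_1\rinc\cF\ltimes\cE_2$ and ${(\cF\ltimes\cE_1)}^{tv}\rinc{(\cF\ltimes\cE_2)}^{tv}$; with the extra hypothesis that $\cE\rinc\cF$ also satisfies the resolution conditions one can run the derived resolution argument on the ambient categories as well, and then the difficulty disappears. As stated, with only condition~$\mathrm{(1)}$, neither the paper's proof nor your proposed completion closes the gap.
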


\begin{proof}[\bf Proof]
Let $f:x\to y$ be a morphism in $w$. 
Then by assumption $\mathrm{(2)}$, 
the complex $\Cone f=[x \onto{f} y]$ is quasi isomorphic to a complex 
in $\Ch_b(\cF^v)$. 
Now by the assumption $\mathrm{(1)}$, 
$(\cE^w,i_{\cE^w}) \to (\cF^v,i_{\cF^v})$ is a derived equivalence 
by \ref{thm:resol thms} $\mathrm{(1)}$. 
Therefore the complex $\Cone f$ is quasi-isomorphic to a complex 
in $\Ch_b(\cE^w)$. 
\end{proof}

\section{Quasi-split exact sequences}
\label{sec:quasi-split}

\sn
In this section, 
we will prepare the terminologies about 
quasi-split exact sequences and flags of 
triangulated categories or particular relative exact categories. 
We start by recollecting a profitable lemma to manage 
adjoint functors. 
Recall the definition of triangle adjoint from 
Convention $\mathrm{(8)}$ $\mathrm{(vii)}$. 

\begin{lem}
\label{lem:adjoint functor}
$\mathrm{(1)}$ 
Let $f:\cX \to \cY$ be a functor between 
categories. 
Assume that $f$ admits a right {\rm (}resp. left{\rm )} 
adjoint functor $g:\cY \to \cX$ with 
adjunction maps $fg \onto{A} \id_{\cY}$ and $\id_{\cX} \onto{B} gf$ 
{\rm (}resp. $id_{\cY} \onto{A} fg $ and $gf \onto{B} \id_{\cX}${\rm )} 
and assume that $B$ is a natural equivalence. 
Then $f$ is fully faithful.\\
$\mathrm{(2)}$ 
Let $(\cT,\Sigma)$ and $(\cT',\Sigma')$ be 
triangulated categories, 
$(g,\rho):\cT' \to \cT$ 
a triangle functor, 
$f$ a left {\rm (}resp. right{\rm )} adjoint of $g$ 
with $\Phi:fg \to \id_{\cT}$ and $\Psi:\id_{\cT'}\to gf$ 
{\rm (}resp. 
$\Phi:\id_{\cT} \to fg$ and $\Psi:gf\to \id_{\cT'}$
{\rm )} 
adjunction morphisms 
and $\lambda:=(\Phi\Sigma' L)(L\rho^{-1} L)(L\Sigma \Psi)$ 
{\rm (}resp. 
$\lambda:={\{(f\Sigma \Psi)(f\rho f)(\Phi\Sigma' f)\}}^{-1}$
{\rm )}. 
Then $(L,\lambda)$ is a triangle functor and 
it is a left {\rm (}resp. 
right
{\rm )} 
triangle adjoint of $(R,\rho)$. 
\end{lem}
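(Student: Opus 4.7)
The plan is to treat the two parts separately. Part (1) is a classical fact about arbitrary adjunctions, while Part (2) is essentially a translation of \cite[\S 8]{Kel96} into our notational conventions, so the structural arguments are available off-the-shelf and only our sign conventions need to be matched.

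For Part (1), I would handle only the right-adjoint case; the left-adjoint case is formally dual via passage to opposite categories. For $x,y \in \cX$, consider the chain of natural maps
\[
\Hom_{\cY}(fx,fy) \isoto \Hom_{\cX}(x,gfy) \isoto \Hom_{\cX}(x,y),
\]
where the first arrow is the adjunction bijection $\mu_{x,fy}$ and the second is post-composition with $B_y^{-1}$, which exists by hypothesis. A one-line diagram chase using the triangle identity $A_{fx} \circ f(B_x) = \id_{fx}$ shows that this composite is inverse to the map $f_{\ast}: \Hom_{\cX}(x,y) \to \Hom_{\cY}(fx,fy)$, so $f_{\ast}$ is a bijection and $f$ is fully faithful.

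For Part (2) (I write out the left-adjoint case; the right-adjoint case is dual), I would proceed in three steps. First, take $\lambda := (\Phi\Sigma' f)(f\rho^{-1} f)(f\Sigma\Psi)$ as prescribed and verify it is a natural equivalence: it is a composition of three natural transformations, the middle one being a natural equivalence by hypothesis on $\rho$, and the outer two assembling with $\Psi$ and $\Phi$ via the adjunction triangle identities to a natural equivalence. Second, show that $(f,\lambda)$ preserves exact triangles: given $x \to y \to z \to \Sigma x$ in $\cT$, complete the morphism $fx \to fy$ to an exact triangle $fx \to fy \to w \to \Sigma' fx$ in $\cT'$; using that $(g,\rho)$ is a triangle functor and the unit $\Psi$, together with TR3, produce a morphism $w \to fz$ compatible with the boundary maps, and then apply the triangulated five-lemma to conclude it is an isomorphism. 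Third, check that $(f,\lambda)$ is left triangle adjoint to $(g,\rho)$ by verifying condition (a) of Convention~$\mathrm{(8)}$~$\mathrm{(vii)}$; by the very definition of $\lambda$, this is tautological.

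The main obstacle lies in the second step of Part (2): producing the isomorphism $w \isoto fz$ and verifying its naturality and compatibility with the boundary map into $\Sigma' fx$. This is a standard but sign-sensitive triangulated-category diagram chase, and the chosen form of $\lambda$ (with $\rho^{-1}$ in the middle rather than $\rho$) is precisely what makes the signs align with the chosen completion. Since this argument is parallel to that of \cite[\S 8]{Kel96}, my strategy is to invoke Keller's treatment for the structural content and only verify that our formula matches Keller's signs after accounting for the flipped convention $(R,\rho):\cS \to \cT$ versus $(L,\lambda):\cT \to \cS$.
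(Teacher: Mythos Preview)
Your proposal is correct and aligns with the paper's treatment. For Part~(1), the paper argues in two steps---first observing that $B$ being a natural equivalence makes $g$ a left quasi-inverse of $f$ (hence $f$ is faithful), then explicitly constructing a preimage $b:=(By)^{-1}(ga)(Bx)$ for any $a:f(x)\to f(y)$ and verifying $fb=a$ via the triangle identity $(Af)(fB)=\id_f$---which is exactly the computation underlying your adjunction-bijection argument, just unpacked. For Part~(2), the paper simply cites \cite[8.3]{Kel96} for the left-adjoint case, so your plan to invoke Keller and check sign conventions is precisely what the paper does (with less detail).
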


\begin{proof}[\bf Proof]
Assertion $\mathrm{(2)}$ for left adjoint case 
is mentioned in \cite[8.3]{Kel96}. 
We will prove assertion $\mathrm{(1)}$. 
Since $f$ has a left quasi-inverse functor $g$, 
$f$ is faithful. 
We will prove that $f$ is full. 
For any morphism $a:f(x) \to f(y)$ in $\cY$, 
we put $b:={(By)}^{-1}ga Bx$ (resp. $b:=Byga{(Bx)}^{-1}$). 
Then we have the equality 
$$fb={(fBy)}^{-1}fga fBx={(fBy)}^{-1}{(Afy)}^{-1}aAfxfBx=a$$ 
$$\text{(resp. }fByfga{(fBx)}^{-1}=fByAfya{(Afx)}^{-1}{(fBx)}^{-1}=a \text{)}.$$ 
Hence $f$ is full. 
\end{proof}

\begin{df}[\bf Relative exact adjoint functors]
\label{df:relative exact adj}
Let 
$f:\bE=(\cE,w) \to \bE'=(\cE',w')$ 
and $g:\bE' \to \bE$ 
be relative exact functors between relative exact categories 
and $A:\id_{\cE} \to gf$ and $B:fg\to \id_{\cE'}$ natural transformations 
such that 
$(Bf)(fA)=\id_f$ and $(gB)(Ag)=\id_g$. 
Then we say that 
$f$ (resp. $g$) 
is a {\bf left} (resp. {\bf right}) 
relative exact adjoint functor of $g$ (resp. $f$).
\end{df}

\begin{lemdf}[\bf Quasi-split exact sequences]
\label{lemdf:quasi-split exact seq}
$\mathrm{(1)}$ 
Let 
$$\displaystyle{\bE=(\cE,w)
\substack{\overset{i}{\to}\\ \underset{q}{\leftarrow}}
\bF=(\cF,v)
\substack{\overset{p}{\to}\\ \underset{j}{\leftarrow}}
\bG=(\cG,u)}$$
be relative exact functors between relative exact categories 
and 
$A:iq\to \id_{\cF}$, $B:\id_{\cE}\to qi$, 
$C:\id_{\cF}\to jp$ and $D:pj\to \id_{\cG}$ are 
natural transformations such that 
$(Ai)(iB)=\id_i$, $(qA)(Bq)=\id_q$, 
$(jD)(Cj)=\id_j$ and $(Dp)(pC)=\id_p$. 
Suppose that 
a sequence 
$iq \onto{A}\id_{\cF} \onto{C} jp$ is 
admissible exact. Then\\
$\mathrm{(i)}$ 
The natural transformations 
$Ai$, $iB$, $Cj$ and $jD$ are natural equivalences.\\
$\mathrm{(ii)}$ 
The bifunctor $\Hom_{\cF}(i(-),j(-))$ 
from $\cE^{\op}\times \cG$ to the category of abelian groups 
is trivial.\\
$\mathrm{(iii)}$ 
The functors $pi$ and $qj$ are trivial.\\
$\mathrm{(iv)}$ 
The following conditions are equivalent.\\
$\mathrm{(a)}$ 
$i$ {\rm (}resp. $j${\rm )} is fully faithful.\\
$\mathrm{(b)}$ 
$i$ {\rm (}resp. $j${\rm )} is conservative.\\
$\mathrm{(c)}$ 
$B$ {\rm (}resp. $D${\rm )} is a natural equivalence.\\
$\mathrm{(v)}$ 
If the conditions in $\mathrm{(iv)}$ is verified, 
then the functor $i$ {\rm (}resp. $j${\rm )} 
reflects exactness and 
the functor 
$pi$ {\rm (}resp. $qj${\rm )} is trivial.\\
If the equivalent conditions in $\mathrm{(iv)}$ hold, 
namely both $i$ and $j$ are fully faithful and both 
$B$ and $D$ are natural equivalences, 
then we call the sequence 
$\bE \onto{i} \bF \onto{p} \bG$ 
(resp. $\bG\onto{j}\bF\onto{q}\bE$) 
a {\bf right} (resp. {\bf left}) {\bf quasi-split exact sequence}. 
We say that a system 
$(j,q,A,B,C,D)$ (resp. $(i,p,C,D,A,B)$) or shortyl 
$(j,q)$ (resp. $(i,p)$) 
is a {\bf right} (resp. {\bf left}) {\bf quasi-splitting} of 
a sequence $(i,p)$ (resp. $(j,q)$).\\
$\mathrm{(2)}$ 
Let $(\cT,\Sigma) \onto{i} (\cT',\Sigma')
\onto{p} (\cT'',\Sigma'')$ be a sequence of 
triangulated categories. 
We say that a sequence $(i,p)$ is 
{\bf right} (resp. {\bf left}) {\bf quasi-split} 
if both $i$ and $p$ admit 
right (resp. left) adjoint functors 
$q:\cT'\to \cT$ and $j:\cT'' \to \cT'$ with 
adjunction maps 
$iq \onto{A} \id_{\cT'}$, $\id_{\cT}\onto{B}qi$, 
$\id_{\cT'}\onto{C} jp$ and $pj\onto{D}\id_{\cT''}$ 
(resp. 
$\id_{\cT'} \onto{A} iq$, $qi\onto{B}\id_{\cT}$, 
$jp\onto{C}\id_{\cT'}$ and $\id_{\cT''}\onto{D}pj$
) 
respectively 
such that $B$ and $D$ are natural equivalences and 
if there exists a triangle natural transformation 
$jp\onto{E}\Sigma' iq$ 
(resp. $iq\onto{E}\Sigma' jp$) 
such that a triangle 
$(A,C,E)$ 
(resp. $(C,A,E)$) 
is a $\Sigma'$-exact triangle. 
We call a system $(j,q,A,B,C,D,E)$ (resp. $(j,q,C,D,A,B,E)$) 
or shortly $(j,q,E)$ 
a {\bf right} (resp. {\bf left}) {\bf splitting} of 
a sequence $(i,p)$. 
Then\\
$\mathrm{(i)}$ 
The functors $i$ and $j$ are fully faithful.\\
$\mathrm{(ii)}$ 
The natural transformations $Ai$ and $Cj$ are natural equivalences.\\
$\mathrm{(iii)}$ 
The functors $pi$ and $qj$ are trivial.\\
$\mathrm{(iv)}$ 
The bifunctor $\Hom_{\cT'}(i(-),j(-))$ 
(resp. $\Hom_{\cT'}(j(-),i(-))$) from 
$\cT^{\op}\times\cT''$ (resp. ${\cT''}^{\op}\times\cT$) 
to the category of abelian groups is trivial.\\
$\mathrm{(3)}$ 
A sequence of strict relative exact categories 
$\bE\onto{i} \bF\onto{p} \bG$ 
is a {\bf right} (resp. {\bf left}) {\bf quasi-split exact sequence} 
if the induced sequence 
$\calD_b(\bE)\onto{\calD_b(i)}\calD_b(\bF)\onto{\calD_b(p)}\calD_b(\bG)$ 
is a right (resp. left) quasi-split exact sequence of 
triangulated categories.
\end{lemdf}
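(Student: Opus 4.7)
The backbone of part $\mathrm{(1)}$ is evaluating the conflation $iq \overset{A}{\rinf} \id_{\cF} \overset{C}{\rdef} jp$ at objects of the form $i(y)$ and $j(z)$, combined with the four triangle identities. For $\mathrm{(i)}$, at $x = i(y)$ the identity $A_{i(y)} \cdot i(B_y) = \id_{i(y)}$ makes $A_{i(y)}$ a split epi, and composing the cokernel relation $C_{i(y)} \cdot A_{i(y)} = 0$ with $i(B_y)$ on the right forces $C_{i(y)} = 0$. An admissible epi equal to zero has zero target, so $jpi(y) = 0$, and the conflation collapses to an isomorphism $A_{i(y)}\colon iqi(y) \isoto i(y)$; then $iB = (Ai)^{-1}$ is automatically iso. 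At $x = j(z)$ the dual identity $j(D_z) \cdot C_{j(z)} = \id_{j(z)}$ forces $A_{j(z)} = 0$, hence $iqj(z) = 0$, and the conflation collapses to $C_{j(z)}\colon j(z) \isoto jpj(z)$, making $Cj$ and $jD$ natural equivalences.

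Now $\mathrm{(ii)}$ follows by naturality of $C$: for any $f\colon i(y) \to j(z)$, $C_{j(z)} \cdot f = jp(f) \cdot C_{i(y)} = 0$, and since $C_{j(z)}$ is iso by $\mathrm{(i)}$, $f = 0$. Part $\mathrm{(iii)}$ then follows by adjunction: $\Hom_{\cG}(pi(y), z) \cong \Hom_{\cF}(i(y), j(z)) = 0$, and specialising to $z = pi(y)$ yields $pi(y) = 0$; symmetrically $qj = 0$. For $\mathrm{(iv)}$, the implication $\mathrm{(c)} \Rightarrow \mathrm{(a)}$ is Lemma~\ref{lem:adjoint functor}~$\mathrm{(1)}$, and $\mathrm{(a)} \Rightarrow \mathrm{(b)}$ is the routine check that fully faithful functors are conservative. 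The real content lies in $\mathrm{(b)} \Rightarrow \mathrm{(c)}$: part $\mathrm{(i)}$ already tells us that $iB = (Ai)^{-1}$ is iso pointwise, and conservativity of $i$ lifts this to $B$ being a natural equivalence. Then $\mathrm{(v)}$ is immediate: with $B$ invertible, its naturality square identifies any candidate sequence $x \to y \to z$ with $qi(x) \to qi(y) \to qi(z)$, which is the image under the exact functor $q$ of the admissible exact sequence $i(x) \to i(y) \to i(z)$.

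Part $\mathrm{(2)}$ is the triangulated analogue, and the same skeleton works with the $\Sigma'$-exact triangle $iq \to \id_{\cT'} \to jp \to \Sigma' iq$ replacing the conflation. Statement $\mathrm{(i)}$ is Lemma~\ref{lem:adjoint functor}~$\mathrm{(1)}$ applied to the hypothesis that $B$ and $D$ are natural equivalences, and the triangle identities then yield $Ai = (iB)^{-1}$ and $Cj = (jD)^{-1}$, proving $\mathrm{(ii)}$. Since $A_{i(y)}$ is now an isomorphism, its cone $jpi(y)$ is the zero object of $\cT'$; full faithfulness of $j$ promotes this to $pi(y) = 0$ in $\cT''$, and symmetrically $qj = 0$, giving $\mathrm{(iii)}$. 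Finally $\mathrm{(iv)}$ follows exactly as in $\mathrm{(1)(ii)}$, using naturality of $C$ in the right-quasi-split case and of $A$ in the left one. The chief subtlety is $\mathrm{(1)(iv)}$, $\mathrm{(b)} \Rightarrow \mathrm{(c)}$: without part $\mathrm{(i)}$ the triangle identity $(Ai)(iB) = \id_i$ does not on its own make $iB$ invertible, so conservativity alone would not suffice, and the conflation hypothesis is genuinely used.
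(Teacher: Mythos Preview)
Your proof is correct and follows essentially the same route as the paper's. The only variation is in $\mathrm{(1)(i)}$: the paper observes directly that $A_{i(y)}$, being an admissible monomorphism with right inverse $i(B_y)$, is an isomorphism, whereas you first deduce $C_{i(y)}=0$ from $CA=0$ composed with the section $iB$ and then collapse the conflation; both arguments are straightforward, and the remaining steps $\mathrm{(ii)}$--$\mathrm{(v)}$ and part $\mathrm{(2)}$ match the paper's treatment.
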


\begin{proof}[\bf Proof]
$\mathrm{(1)}$ 
$\mathrm{(i)}$ 
By exactness $iq\overset{A}{\rinf}\id_{\cF} \overset{C}{\rdef} jp$, 
$Aix$ (resp. $Cjx$) is a monomorphism (resp. an epimorphism) 
for any object $x$ in $\cE$ (resp. $\cG$). 
On the other hand, 
$Aix$ (resp. $Cjx$) has a right (resp. left) inverse 
$iBx$ (resp. $jDx$). 
Therefore $Aix$ (resp. $Cjx$) and its right (resp. left) 
inverser $iBx$ (resp. $jDx$) are isomorphisms. 

\sn
$\mathrm{(ii)}$ 
In the exact sequence 
$iqi\overset{Ai}{\rinf}i \overset{Ci}{\rdef} jpi $ 
(resp. $iqj\overset{Aj}{\rinf}j \overset{Cj}{\rdef} jpj$), 
since $Ai$ (resp. $Cj$) is an isomorphism, 
we have $jpi\isoto 0$ (resp. $iqj\isoto 0$). 
Then, 
for any objects $x$ in $\cE$ and $z$ in $\cG$ and 
any morphism $a:ix \to jz$, 
we have equalities 
$$a={(Cjz)}^{-1}(Cjz)a={(Cjz)}^{-1}(jpa)(Cix)=0.$$
$${\footnotesize{\xymatrix{
ix \ar[r]^{Cix} \ar[d]_a & jpix \ar[r]^{\sim} \ar[d]^{jpa}& 0\\
jz \ar[r]^{\!\!\!\sim}_{Cjz} & jpjz.
}}}$$

\sn
$\mathrm{(iii)}$ 
By assumption, we have the isomorphisms of bifunctors
$$\Homo_{\cT'}(i(-),j(-))\isoto\Hom_{\cT''}(-,qj(-))\isoto\Hom_{\cT}(pi(-),-)$$
and they are trivial by $\mathrm{(ii)}$. 
Hence we obtain the result.

\sn
$\mathrm{(iv)}$ 
Implications $\mathrm{(a)} \Rightarrow \mathrm{(b)} \Rightarrow \mathrm{(c)}$ 
are straightfoward. 
By \ref{lem:adjoint functor} $\mathrm{(1)}$, 
assertion $\mathrm{(c)}$ implies assertion $\mathrm{(a)}$. 

\sn
$\mathrm{(v)}$ 
Let $x \onto{a} y \onto{b} z$ be 
a sequence in $\cE$ and assume that 
$ix \overset{ia}{\rinf} iy \overset{ib}{\rdef} iz$ 
is an admissible exact sequence in $\cF$. 
Consider the commutative diagram below.
$${\footnotesize{\xymatrix{
x \ar[r]^a \ar[d]_{Bx}^{\wr} & 
y \ar[r]^b \ar[d]_{By}^{\wr} & 
z \ar[d]^{Bz}_{\wr}\\
qix \ar@{>->}[r]_{qia} & 
qiy \ar@{->>}[r]_{qib} & 
qiz. 
}}}$$
Here the bottom line is an admissible exact sequence and 
the vertical lines are isomorphisms. 
Hence a sequence $(a,b)$ is an admissible exact sequence in $\cE$. 
The other assertions are trivial.\\
We can similarly prove the assertions in $\mathrm{(2)}$. 
\end{proof}

\begin{rem}
\label{rem:quasi-split}
For a right (resp. left) 
quasi-split exact sequence of 
triangulated categroies, 
we can take a right (resp. left) quasi-splitting 
of it as a system of 
triangulated adjoint functors and 
triangle natural transformations 
by virtue of \ref{lem:adjoint functor} $\mathrm{(2)}$. 
\end{rem}

\sn
Recall the definition of the category of admissible exact sequences 
in an exact category from 
Conventions $\mathrm{(7)}$ $\mathrm{(xiv)}$ and 
\ref{lemdf:extensional, hom rel ex cat} $\mathrm{(1)}$.

\begin{cor}
\label{cor:str of qsexseq}
Let $\bE=(\cE,w) \onto{i} \bF=(\cF,v) \onto{p} \bG=(\cG,u)$ 
be a right quasi-split exact sequence of 
relative exact categories with 
a right quasi-splitting 
$(j,q,A,B,C,D)$. 
Then\\ 
$\mathrm{(1)}$ 
For any admissible exact sequence in $\cF$, 
$ix \overset{a}{\rinf} y \overset{b}{\rdef} jz$ 
with $x$ in $\cE$, $y$ in $\cF$ and $z$ in $\cG$, 
there are unique isomorphisms 
$\alpha:ix\isoto iqy$ and $\beta:jz\isoto jpy$ 
which make the diagram below commutative. 
$${\footnotesize{\xymatrix{
ix \ar@{>->}[r]^a \ar[d]_{\alpha}^{\wr} & 
y \ar@{->>}[r]^c \ar@{=}[d] & 
jz \ar[d]^{\beta}_{\wr}\\
iqy \ar@{>->}[r]_{Ay} & 
y \ar@{->>}[r]_{Cy} & 
jpy.
}}}$$
$\mathrm{(2)}$ 
We regard $\cE$ and $\cG$ as strict exact subcategories of 
$\cF$ by the exact functors $i$ and $j$ respectively. 
The functor $\Theta:\cF\to E(\cE,\cF,\cG)$ 
which sends an object $y$ in $\cF$ to 
an admissible exact sequence $iqy \overset{Ay}{\rinf} y \overset{Cy}{\rdef} 
jpy$ gives an equivalence of 
relative exact categories 
$\bF \to E(\bE,\bF,\bG)$ which makes the diagrams below commutative 
up to unique natural equivalences. 
$${\footnotesize{
\xymatrix{
\bE \ar[r]^i \ar@{=}[d] &
\bF \ar[r]^p \ar[d]_{\Theta} & 
\bG \ar@{=}[d]\\
\bE \ar[r]_{\!\!\!\!\In_{\cE}} &
E(\bE,\bF,\bG) \ar[r]_{\ \ \ \ q_{E(\cE,\cF,\cG)}} & 
\bG,
}\ \ \ 
\xymatrix{
\bG \ar[r]^j \ar@{=}[d] &
\bF \ar[r]^q \ar[d]_{\Theta} & 
\bE \ar@{=}[d]\\
\bG \ar[r]_{\!\!\!\!\In_{\cG}} &
E(\bE,\bF,\bG) \ar[r]_{\ \ \ \ s_{E(\cE,\cF,\cG)}} & 
\bE.
}}}$$
\end{cor}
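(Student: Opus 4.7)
The plan rests on two consequences of Lemma-Definition~\ref{lemdf:quasi-split exact seq} that I will use repeatedly: the natural transformations $Ai$, $iB$, $Cj$, $jD$ are natural equivalences, and the composite functors $pi$ and $qj$ are trivial. These two facts will let me decompose any admissible exact sequence of the special form appearing in part (1) by applying the adjoints $q$ and $p$.

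For part (1), I would apply the relative exact functor $q$ to the admissible exact sequence $ix \overset{a}{\rinf} y \overset{b}{\rdef} jz$ to obtain an admissible exact sequence $qix \overset{qa}{\rinf} qy \overset{qb}{\rdef} qjz$ in $\cE$. Since $qj \cong 0$ forces $qjz \cong 0$, the admissible mono $qa$ must be an isomorphism. Then naturality of $A$ yields $Ay\circ iqa = a\circ Aix$, and since $Aix$ is invertible I would set $\alpha := iqa \circ (Aix)^{-1}$, which gives an isomorphism $ix \isoto iqy$ satisfying $Ay\circ \alpha = a$. Uniqueness of $\alpha$ follows from $Ay$ being an admissible monomorphism, hence mono. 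The construction of $\beta$ is dual: apply $p$ to the sequence, use $pi \cong 0$ to deduce that $pb$ is an isomorphism, then combine with naturality of $C$ and the invertibility of $Cjz$.

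For part (2), the first step is to check that $\Theta$ is a well-defined relative exact functor; this will be routine, since $A$ and $C$ are natural, $\Theta$ preserves the weak equivalences because $p$, $q$, $i$, $j$ do, and $\Theta$ is exact because $iq$, $\id_{\cF}$ and $jp$ are exact. For essential surjectivity, given an object $(iu \overset{c}{\rinf} w \overset{d}{\rdef} jv)$ of $E(\bE,\bF,\bG)$, part (1) supplies isomorphisms $\alpha:iu\isoto iqw$ and $\beta:jv\isoto jpw$ with middle component $\id_w$, which is precisely an isomorphism to $\Theta(w)$ in $E(\bE,\bF,\bG)$. For fully faithfulness, I would show that a morphism $(f_1,f_2,f_3):\Theta(y)\to\Theta(y')$ in $E$ is forced to be of the form $\Theta(f_2)$: commutativity of the left square gives $Ay'\circ f_1 = f_2\circ Ay$, naturality of $A$ gives $Ay'\circ iqf_2 = f_2\circ Ay$, and the monomorphism property of $Ay'$ then forces $f_1 = iqf_2$; dually $f_3 = jpf_2$. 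The commutativity of the two square diagrams will follow from the natural equivalences $iB$ and $jD$ and the vanishings $jpi\cong 0$, $iqj\cong 0$, under the evident identifications $\cE,\cG \rinc \cF$.

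The main obstacle I anticipate is not any single conceptual leap but the careful bookkeeping needed to verify uniqueness in (1) and to ensure that the isomorphisms produced in the essential surjectivity argument assemble into a bona fide morphism of $E(\bE,\bF,\bG)$ whose middle component is $\id_w$. Once this coherence is in hand, every remaining verification reduces to an application of naturality of $A$ and $C$ together with the mono/epi property of their components, so the work is essentially diagrammatic.
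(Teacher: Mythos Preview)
Your proof is correct. The main point of departure from the paper is in part~(1): the paper does not apply $q$ or $p$ to the sequence at all, but instead uses the vanishing $\Hom_{\cF}(i(-),j(-))=0$ from Lemma-Definition~\ref{lemdf:quasi-split exact seq}(1)(ii) to conclude directly that $Cy\circ a=0$ and $b\circ Ay=0$; the universal properties of the kernel $Ay$ and the cokernel $b$ then produce $\alpha$ and an inverse $\alpha'$ simultaneously (and dually $\beta$, $\beta'$). Your route via the exactness of $q$, $p$ together with $qj\cong 0$, $pi\cong 0$ is equally short and arguably more functorial, while the paper's avoids invoking exactness of the adjoints and stays entirely inside $\cF$. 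For part~(2) the paper compresses your essential-surjectivity and fully-faithfulness checks into the single observation that $m=m_{E(\cE,\cF,\cG)}$ is a quasi-inverse to $\Theta$: one has $m\Theta=\id_{\cF}$ on the nose, and the natural isomorphism $\Theta m\isoto\id$ (with middle component the identity) is exactly what part~(1) provides. Your argument unpacks the same content.
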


\begin{proof}[\bf Proof]
$\mathrm{(1)}$ 
Since the compositions $Cy a$ and $cAy$ are trivial 
by \ref{lemdf:quasi-split exact seq} 
$\mathrm{(1)}$ $\mathrm{(iii)}$ 
we have the morphisms 
$\alpha:ix\to iqy$, $\beta:iz\to jpy$, 
$\alpha':iqy \to ix$ and $\beta':jpy \to jz$ 
such that $Ay\alpha=a$, $\beta c=Cy$, 
$a\alpha'=Ay$ and $\beta' Cy=c$. 
Since $a$ and $Ay$ are monomorphisms and $c$ and $Cy$ are epimorphisms, 
$\alpha$, $\alpha'$, $\beta$ and $\beta'$ are unique. 
By uniqueness, it turns out that 
$\alpha$ and $\alpha'$, $\beta$ and $\beta'$ are inverse morphisms in each others respectively. 

\sn
$\mathrm{(2)}$ 
The inverse functor of $\Theta$ is given by $m=m_{E(\cE,\cF,\cG)}$. 
$m\Theta=\id_{\cF}$ is trivial. 
The unique natural equivalence 
$\Omega:\Theta m \to \id_{E(\cE,\cF,\cG)}$ 
such that $m\Omega=\id_{\id_{\cF}}$ is given by assertion $\mathrm{(1)}$.
\end{proof}

\begin{cor}
\label{cor:quasi split is exact}
$\mathrm{(1)}$ 
A right {\rm(}resp. left{\rm )} quasi-split exact sequence 
of triangulated categories 
is exact.\\
$\mathrm{(2)}$ 
A derived right {\rm(}resp. left{\rm )} quasi-split exact 
sequence of strict relative exact categories is exact.
\end{cor}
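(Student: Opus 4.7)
Claim $\mathrm{(2)}$ is immediate from $\mathrm{(1)}$: a derived right (resp.\ left) quasi-split exact sequence of strict relative exact categories induces by definition a right (resp.\ left) quasi-split exact sequence of bounded derived categories, and exactness of the former is defined to be exactness of the latter. For $\mathrm{(1)}$ I treat the right case; the left case follows by passing to opposite categories, since right adjoints in $\cT$ become left adjoints in $\cT^{\op}$ and Verdier exactness is self-dual. Fix a right quasi-splitting $(j,q,A,B,C,D,E)$ of $\cT\onto{i}\cT'\onto{p}\cT''$. Exactness in the sense of Conventions $\mathrm{(8)}$ $\mathrm{(vi)}$ requires three items: (a) $pi\cong 0$; (b) $i$ induces an equivalence $\cT\isoto\Ker p$; (c) the functor $\bar p:\cT'/\Ker p\to\cT''$ induced by $p$ is an equivalence. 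Item (a) is \ref{lemdf:quasi-split exact seq} (2)(iii), and $\Ker p$ is thick in $\cT'$ because the zero object of $\cT''$ is closed under direct summands.

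\textbf{Kernel identification.} The functor $i$ is fully faithful by \ref{lemdf:quasi-split exact seq} (2)(i) and factors through $\Ker p$ by (a). For essential surjectivity, take $y\in\cT'$ with $py\cong 0$ and apply the $\Sigma'$-exact splitting triangle
$$iqy\onto{Ay} y\onto{Cy} jpy\onto{Ey}\Sigma' iqy.$$
Since $j$ is additive, $jpy\cong j(0)\cong 0$, so $Ay$ is an isomorphism and $y\cong iqy\in\im i$, which proves (b).

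\textbf{Quotient identification.} Let $Q:\cT'\to\cT'/\Ker p$ be the Verdier localization. I plan to show that $\Phi:=Qj:\cT''\to\cT'/\Ker p$ is an equivalence; its quasi-inverse is then the induced functor $\bar p$, since $\bar p\Phi=pj\cong\id_{\cT''}$ via $D$, and $\Phi\bar p(Qy)=Q(jpy)\cong Qy$ by the splitting triangle $iqy\to y\to jpy$ together with $iqy\in\Ker p$. The same observation gives essential surjectivity of $\Phi$. For fully faithfulness, the key input is the orthogonality
$$\Hom_{\cT'}(x,jz)\cong\Hom_{\cT''}(px,z)=0\quad(x\in\Ker p,\ z\in\cT''),$$
coming from the adjunction $p\dashv j$ and the definition of $\Ker p$. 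Any morphism $jz\to jz'$ in $\cT'/\Ker p$ is represented by a roof $jz\overset{s}{\leftarrow}y\onto{f}jz'$ whose cone $k$ of $s$ lies in $\Ker p$; applying $\Hom_{\cT'}(-,jz')$ to the distinguished triangle $y\to jz\to k\to\Sigma y$, and using that $\Ker p$ is closed under $\Sigma^{\pm 1}$, the orthogonality kills the flanking Hom groups and $f$ factors uniquely through $s$. Hence the roof collapses to an honest morphism $jz\to jz'$ in $\cT'$, which with the fully faithfulness of $j$ gives $\Hom_{\cT'/\Ker p}(jz,jz')\cong\Hom_{\cT'}(jz,jz')\cong\Hom_{\cT''}(z,z')$, proving (c). The main obstacle I anticipate is the fiddly bookkeeping that the Verdier equivalence relation on roofs is compatible with this collapse; but this is precisely the Bousfield--Neeman localization machinery of \cite[\S 9]{Nee01} cited in the introduction, applied to the thick subcategory $\Ker p$ whose inclusion admits the right adjoint $q$.
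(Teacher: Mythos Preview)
Your proof is correct and uses the same core idea as the paper: the splitting $\Sigma'$-exact triangle $iq\onto{A}\id_{\cT'}\onto{C}jp\onto{E}\Sigma'iq$. Your kernel identification (b) is identical to the paper's argument.

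For the quotient identification (c), the paper proceeds more directly and symmetrically than you do. It simply observes that on $\cT'/\Ker p$ the functor $iq$ vanishes (since $i$ lands in $\Ker p$), so the triangle forces $C$ to become a natural isomorphism $\id_{\cT'/\Ker p}\isoto jp$; combined with $D:pj\isoto\id_{\cT''}$ this exhibits $\bar p$ and $Qj$ as mutually quasi-inverse. You actually write down exactly this observation (``$\Phi\bar p(Qy)=Q(jpy)\cong Qy$ by the splitting triangle $\ldots$ together with $iqy\in\Ker p$''), and together with $\bar p\Phi\cong\id_{\cT''}$ via $D$ that already finishes the proof. Your subsequent roof-collapse argument using the orthogonality $\Hom_{\cT'}(\Ker p,j(\cT''))=0$ is correct and is the standard Bousfield--Neeman route, but it is redundant here: once both compositions are naturally isomorphic to identities, $\Phi$ is an equivalence and no separate fully-faithfulness check is needed. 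The paper's proof is thus a strict subset of yours.
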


\begin{proof}[\bf Proof]
$\mathrm{(1)}$ 
We will only give a proof for a right quasi-split case. 
Let $(\cT,\Sigma) \onto{i} (\cT',\Sigma')\onto{p} (\cT'',\Sigma'')$ 
be a right quasi-split exact sequence 
with a right quasi-splitting $(j,q,A,B,C,D,E)$. 
We write same letters $i$, $q$, $p$ and $j$ 
for the induced functors $\cT\substack{\rightarrow\\ \leftarrow}\Ker p$, 
$\cT'/\cT \substack{\rightarrow\\ \leftarrow} \cT''$. 
By assumption, 
$B$ and $D$ gives equivalences of functors 
$\id_{\cT}\isoto qi$ and $pj\isoto \id_{\cT''}$ respectively. 
Since $jp$ (resp. $iq$) is trivial on 
$\Ker p$ (resp. $\cT'/\cT$), 
it turns out that $A$ (resp. $C$) 
gives an equivalence of functors 
$iq\isoto\id_{\Ker p} $ (resp. $\id_{\cT'/\cT}\isoto jp$) by 
the $\Sigma'$-exact triangle 
$iq \onto{A}\id_{\cT'}\onto{C} jp \onto{E}\Sigma'iq$. 
Hence we obtain the result.\\
Assertion $\mathrm{(2)}$ is a direct consequence of assertion $\mathrm{(1)}$.
\end{proof}

\begin{rem}
\label{rem:quasi split is derived quasi-split}
Let 
$\bE_1=(\cE_1,w_1)\onto{i}\bE_2=(\cE_2,w_2)\onto{p}\bE_3=(\cE_3,w_3)$ 
be a right (resp. left) 
quasi-exact sequence of 
strict relative exact categories 
with a right (resp. left) 
quasi-splitting $(j,q,A,B,C,D)$. 
Then\\
$\mathrm{(1)}$ 
The sequence 
$\Ch_b(i)\Ch_b(q)
\overset{\Ch_b(A)}{\rinf} \id_{\Ch_b(\cE_2)} 
\overset{\Ch_b(C)}{\rdef}
\Ch_b(j)\Ch_b(p)$ is 
an admissible exact sequence of exact endofunctors 
on $\Ch_b(\cE_2)$ by \ref{lemdf:extensional, hom rel ex cat} $\mathrm{(3)}$.\\
$\mathrm{(2)}$ 
Let a pair
$(\tilde{\omega}_{\Ch_b(\cE_2)},\partial):\Ch_b(\cE_2)\to\cT(\Ch_b(\bE_2))
(=\calD_b(\bE_2))$ 
be a widely exact functor in \ref{ex:widelyexact} $\mathrm{(3)}$. 
We put $E:=\partial_{\Ch_b(A),\Ch_b(C)}$. 
Then the sequence of triangulated categories 
$\calD_b(\bE_1)\onto{\calD_b(i)}\calD_b(\bE_2)\onto{\calD_b(p)}\calD_b(\bE_3)$ 
is a right (resp. left) quasi-split exact sequence 
with a right (resp. left) quasi-splitting 
$(\calD_b(j),\calD_b(q),\calD_b(A),\calD_b(B),\calD_b(C),\calD_b(D),E)$.\\
$\mathrm{(3)}$ 
In particular, 
a right (resp. left) quasi-split exact sequence 
of strict relative exact categories is 
a derived right (resp. left) quasi-split exact sequence.
\end{rem}

\begin{ex}[\bf Quasi-split exact sequences]
\label{ex:quasi-split}
$\mathrm{(1)}$ 
We enumerate examples of quasi-split exact sequences 
from \cite[A.2.8]{Sch11}.\\
$\mathrm{(i)}$ 
Let $p:\cT \to \cT'$ 
be a triangle functor which admits a right adjoint 
functor $j:\cT'\to \cT$ with adjunction morphisms 
$D:pj\to \id_{\cT'}$ and $C:\id_{\cT}\to jp$ 
such that $D$ is a natural equivalence and $i:\ker p\to \cT$ 
a natural inclusion functor. 
Then the sequence $\ker p\onto{i}\cT\onto{p}\cT'$ 
is a right quasi-split exact sequence.\\ 
$\mathrm{(ii)}$ 
Let $\cT$ be a triangulated category and 
$\cT_0$ and $\cT_1$ triangulated subcategories of $\cT$ 
such that the following conditions hold.\\
$\mathrm{(a)}$ 
$\cT=\cT_0\vee\cT_1$.\\
$\mathrm{(b)}$ 
$\Hom_{\cT}(x,y)=0$ for any objects $x$ in $\cT_0$ and $y$ in $\cT_1$.\\
We write $i_j:\cT_j \to \cT$ and $\pi_j:\cT \to \cT/\cT_j$ for 
the natural inclusion functor and the natural quotient functor 
for $j=0$, $1$. 
Then the composition $x:\cT_1 \overset{i_1}{\rinc} \cT \onto{\pi_0} \cT/\cT_0$ 
is an equivalence of triangulated categories and 
the sequence $\cT_0 \onto{i_0} \cT \onto{x^{-1}\pi_1} \cT_1$ is a right quasi-split exact sequence.\\
$\mathrm{(2)}$ 
Now we illustrate the typical example of Bousfield-Neeman localization 
of triangulated categories. 
We assume that readers are familiar with Voevodsky's motive category, 
for example \cite[\S 14]{MVW06}. 
Let $k$ be a perfect field and $\calD^{-}(\Sh_{\Nis}(\SmCor(k)))$ 
the derived category of Nisnevich sheaves with transfers over $k$, 
$\DM^{\eff}(k)$ the full subcategory of $\bbA^1$-local objects in 
$\calD^{-}(\Sh_{\Nis}(\SmCor(k)))$ and 
$j:\DM^{\eff}(k)\rinc \calD^{-}(\Sh_{\Nis}(\SmCor(k)))$ 
the inclusion functor. 
Then $j$ has a left adjoint functor 
$C_{\ast}:\calD^{-}(\Sh_{\Nis}(\SmCor(k))) \to \DM^{\eff}(k)$ 
such that the adjunction morphism $C_{\ast}j \to \id_{\DM^{\eff}(k)}$ 
is a natural equivalence. (See Ibid). 
Therefore the sequence 
$\ker C_{\ast} \to
\calD^{-}(\Sh_{\Nis}(\SmCor(k))) \onto{C_{\ast}} 
\DM^{\eff}(k)$ 
is a right quasi-split exact sequence by $\mathrm{(1)}$ $\mathrm{(i)}$. 
\end{ex}

\begin{df}[\bf Extension closed subcategory]
\label{df:ext closed}
Let $\calR\rinc\calR'$ be 
full subcategories of $\RelEx$ such that $\calR'$ contains 
the trivial relative exact category $0$. 
We say that $\calR$ is {\bf closed under extensions} in $\calR'$ 
if it contains the trivial relative exact category $0$ and 
if for any right quasi-split exact sequence of relative exact categories 
$\bE_1\to \bE_2\to \bE_3$ in $\calR'$, 
if $\bE_1$ and $\bE_3$ are in $\calR$, 
$\bE_2$ is also in $\calR$.
\end{df}

\begin{prop}
\label{prop:extension closed}
$\RelEx_{\consist}$ is closed under extensions in $\RelEx_{\strict}$.
\end{prop}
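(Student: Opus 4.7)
The plan is to show that for any right quasi-split exact sequence $\bE_1=(\cE_1,w_1)\overset{i}{\to}\bE_2=(\cE_2,w_2)\overset{p}{\to}\bE_3=(\cE_3,w_3)$ in $\RelEx_{\strict}$ with right quasi-splitting $(j,q,A,B,C,D)$, if $\bE_1$ and $\bE_3$ are consistent then $\bE_2$ is consistent. By Lemma-Definition~\ref{lemdf:adm} it suffices to verify that for every morphism $f:x\to y$ in $w_2$ the morphism $j_{\cE_2}(f)$ lies in $qw_2$.

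First I would push $f$ along $q$ and $p$. Since $q$ and $p$ are relative exact functors, $q(f)\in w_1$ and $p(f)\in w_3$. By the assumed consistency of $\bE_1$ and $\bE_3$ together with Lemma-Definition~\ref{lemdf:adm}, the morphisms $j_{\cE_1}(q(f))\in qw_1$ and $j_{\cE_3}(p(f))\in qw_3$. Applying Corollary~\ref{cor:Ch_b is 2-functor}~$\mathrm{(1)}$, the functors $\Ch_b(i):\Ch_b(\bE_1)\to\Ch_b(\bE_2)$ and $\Ch_b(j):\Ch_b(\bE_3)\to\Ch_b(\bE_2)$ are relative complicial, hence preserve quasi-weak equivalences. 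Since $\Ch_b(i)\circ j_{\cE_1}=j_{\cE_2}\circ i$ and similarly for $j$, it follows that $j_{\cE_2}(iq(f))$ and $j_{\cE_2}(jp(f))$ both belong to $qw_2$.

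Next I would assemble the naturality squares of $A:iq\rinf\id_{\cE_2}$ and $C:\id_{\cE_2}\rdef jp$ at the morphism $f$ and apply the exact functor $j_{\cE_2}$. This produces the commutative diagram
\[
\xymatrix{
j_{\cE_2}(iqx) \ar@{>->}[r] \ar[d]_{j_{\cE_2}(iq(f))} & j_{\cE_2}(x) \ar@{->>}[r] \ar[d]^{j_{\cE_2}(f)} & j_{\cE_2}(jpx) \ar[d]^{j_{\cE_2}(jp(f))} \\
j_{\cE_2}(iqy) \ar@{>->}[r] & j_{\cE_2}(y) \ar@{->>}[r] & j_{\cE_2}(jpy)
}
\]
in $\Ch_b(\cE_2)$ whose rows are admissible exact sequences, because $iq\overset{A}{\rinf}\id_{\cE_2}\overset{C}{\rdef}jp$ is an admissible exact sequence of exact endofunctors. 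Proposition~\ref{prop:compness}~$\mathrm{(1)}$ says that $\Ch_b(\bE_2)$ is a thick bicomplicial pair, so its weak equivalences $qw_2$ satisfy the extensional axiom. Since the two outer vertical arrows are in $qw_2$, the extensional axiom forces the middle arrow $j_{\cE_2}(f)$ into $qw_2$, completing the verification.

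There is no substantial obstacle: the only conceptual input is the combination of (a) the admissible exact sequence $iq\rinf\id_{\cE_2}\rdef jp$ provided by the right quasi-splitting and (b) the closure of $qw_2$ under extensions inherited from the bicomplicial pair structure on $\Ch_b(\bE_2)$. One subtlety worth flagging is that the argument uses only that $\Ch_b$ is a $2$-functor and that $qw$ is extensional, so it never requires $w_2$ itself to satisfy any extensional or saturated property.
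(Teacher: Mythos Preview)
Your proposal is correct and follows essentially the same route as the paper's own proof: both push $f$ along $q$ and $p$, use the consistency of $\bE_1$ and $\bE_3$ to get the outer vertical morphisms into $qw_2$ via the factorization $\Ch_b(i)\circ j_{\cE_1}=j_{\cE_2}\circ i$ (and similarly for $j$), and then invoke the extensional axiom for $qw_2$ from Proposition~\ref{prop:compness}~$\mathrm{(1)}$ on the diagram coming from the admissible exact sequence $iq\rinf\id_{\cE_2}\rdef jp$. Your write-up is slightly more explicit about why the outer verticals land in $qw_2$, but the argument is the same.
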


\begin{proof}[\bf Proof]
Let $\bE_1 \onto{i} \bE_2 \onto{p} \bE_3$ be 
a right (resp. left) 
quasi-split exact sequence of strict relative exact category 
and assume that $\bE_1$ and $\bE_3$ are consistent. 
We put $\bE_i=(\cE_i,w_i)$ for $i=1$, $2$ and $3$ 
and let $(j,q,A,B,C,D)$ be a right quasi-splitting of the sequence $(i,p)$. 
For any morphism $a:x\to y$ in $w_2$, 
we consider the commutative diagram of admissible exact sequences 
in $\Ch_b(\cE_2)$ 
$${\footnotesize{\xymatrix{
\Ch_b(i)j_{\cE_1}qx \ar@{>->}[r] \ar[d]_{\Ch_b(i)j_{\cE_1}(qa)} & 
j_{\cE_2}x \ar@{->>}[r] \ar[d]_{j_{\cE_2}a} &
\Ch_b(j)j_{\cE_3}px \ar[d]^{\Ch_b(j)j_{\cE_3}pa}\\
\Ch_b(i)j_{\cE_1}qy \ar@{>->}[r] & 
j_{\cE_2}y \ar@{->>}[r] &
\Ch_b(j)j_{\cE_3}pa.
}}}$$
Here both $\Ch_b(i)j_{\cE_1}(qa)$($=j_{\cE_2}iqa$) and 
$\Ch_b(j)j_{\cE_3}pa$($=j_{\cE_2}jpa$) are in $qw_2$ by assumption. 
Since $qw_2$ is closed under extensions 
by \ref{prop:compness} $\mathrm{(1)}$, 
$j_{\cE_2}a$ is also in $qw_2$. 
Hence we obtain the result.
\end{proof}

\begin{df}[\bf Flag]
\label{df:flag}
$\mathrm{(1)}$ 
A {\bf right} (resp. {\bf left}) {\bf flag} of a triangulated category 
$\cT$ is a finite sequence of fully faithful functors 
$$\{0\}=\cT_0 \onto{k_0}\cT_1\onto{k_1}\cT_2\onto{k_2}\cdots\onto{k_{n-1}}
\cT_n=\cT$$
such that for any $i$, 
the canonical sequence $\cT_i\onto{k_i}\cT_{i+1}\to \cT_{i+1}/\cT_i$ 
is a right (resp. left) quasi-split exact sequence.\\
$\mathrm{(2)}$ 
A {\bf derived right} (resp. {\bf left}) {\bf flag} of a 
relative exact category $\bE$ is 
a finite sequence of derived fully faithful relative exact functors 
$$\{0\}=\bE_0 \onto{k_0}\bE_1\onto{k_1}\bE_2\onto{k_2}\cdots\onto{k_{n-1}}
\bE_n=\bE$$
such that the induced sequence of triangulated categories 
$$\{0\}=\calD_b(\bE_0) \onto{\calD_b(k_0)}\calD_b(\bE_1)\onto{\calD_b(k_1)}
\calD_b(\bE_2)\onto{\calD_b(k_2)}\cdots\onto{\calD_b(k_{n-1})}
\calD_b(\bE_n)=\calD_b(\bE)$$
is a right (resp. left) flag of $\calD_b(\bE)$.
\end{df}

\begin{ex}
\label{ex:flag}
Let $A$ be a commutative noetherian ring with unit and 
$\bE=(\Perf_{\bbP^n_A},\qis)$ 
the relative exact category of perfect complexes over $\bbP^n_A$. 
For any integer $0\leq k\leq n$, 
we let $\cT_k$ and $\cT_k'$ be the thick subcategories of 
$\cT(\bE)\isoto\calD_b(\bE)$ 
spanned by $\cO(l)$ where $-k\leq l\leq 0$ and $\cO(-k)$ respectively and 
$\cE_k$ the pull-back of $\cT_k$ by the projection functor 
$\Perf_{\bbP^n_A} \to \cT(\bE)\isoto\calD_b(\bE)$ and 
we put $\bE_k=(\cE_k,\qis)$. 
Then we can easily check that 
the subcategories $\cT_{k-1}$, $\cT_k'\rinc \cT_k$ satisfy the 
conditions in \ref{ex:quasi-split} $\mathrm{(1)}$ $\mathrm{(ii)}$. 
(See \cite[3.5.1]{Sch11}). 
Therefore the sequence of the inclusion functors 
$$0 \to \bE_1 \to \bE_2 \to \cdots \to \bE_n=\bE$$
is a derived right flag. 
\end{ex}

\section{Additive and localizing theories}
\label{sec:Localizing theory}

\sn
In this section, 
we will axiomize both 
$K^W$- and $\bbK$-theories for suitable 
relative exact categories. 
We will describe the relationship 
between the Gillet-Waldhausen type formula 
and the fibration theorem in \ref{lemdf:excellent}. 
Using this observation and the results in previous sections, 
we will bring examples of relative exact categories 
which behave well to $K^W$- and $\bbK$-theories in \ref{prop:excellent} 
and in \ref{ex:device}. 
We will also formulate additive and localizing theories for 
certain relative exact categories and straighten up 
their fundamental properties in \ref{prop:fund property of add theory} 
and \ref{prop:fund property of loc theory} respectively. 

\begin{df}[\bf Reasonable subcategories]
\label{df:Res subcat}
A full subcategory $\calR$ of $\RelEx_{\strict}$ is 
{\bf reasonable} if $\calR$ contains the trivial relative exact category $0$ 
and if for any relative exact categories $\bE=(\cE,w)$ in $\calR$, 
$(\cE,i_{\cE})$, $(\cE^w,i_{\cE^w})$ and $\Ch_b\bE$ are also in $\calR$.
\end{df}

\begin{rem}
\label{rem:i omit}
Let $F$ be a functor from a reasonable subcategory $\calR$ of $\RelEx_{\strict}$ 
to a category $\cX$ and $\cE$ an exact category such that $(\cE,i_{\cE})$ is in $\calR$. 
We sometimes write $F(\cE)$ for $F(\cE,i_{\cE})$. 
We say that a relative exact functor 
$f:\bF \to \bG$ in $\calR$ is a {\bf $F$-equivalence} if 
$F(f):F(\bF)\to F(\bG)$ is an isomorphism in $\cX$. 
\end{rem}

\begin{ex}[\bf Reasonable subcategories]
\label{ex:resonable subcat}
The categories $\RelEx_{?}^{\#}$ and $\WalEx^{\#}_{?}$ 
for $\#\in\{+,\text{nothing}\}$ and $?\in\{\consist,\SOL\}$
are reasonable subcategories 
by \ref{prop:compness} 
and \ref{prop:bicomp is adm}.\\
\end{ex}

\begin{lemdf}[\bf Excellent relative exact categories]
\label{lemdf:excellent} 
Let $\calR$ be a reasonable subcategory of $\RelEx_{\consist}$ 
and $F$ a functor from $\calR$ to the stable category of 
spectra and $\bE=(\cE,w)$ a consistent relative exact category in $\calR$. 
We assume the following axiom holds.

\sn
{\bf ($F$-weak Gillet-Waldhausen axiom).} 
The canonical functors 
$$j_{\cE}:(\cE,i_{\cE}) \to (\Ch_b(\cE),\qis) \ \ \text{and}$$
$$j_{\cE^w}:(\cE^w,i_{\cE^w}) \to (\Ch_b(\cE^w),\qis)$$
are $F$-equivalences.

\sn
Then two of the following axioms imply the other axiom. 

\sn
{\bf ($F$-Gillet-Waldhausen axiom).} 
The canonical functors $j_{\cE}:\bE \to \Ch_b(\bE)$ 
is a $F$-equivalence. 

\sn
{\bf ($F$-weak fibration axiom).} 
The inclusion functor $\cE^w\rinc \cE$ and the identity functor of $\cE$ 
induces a fibration sequence of spectra
$$F(\Ch_b(\cE^w),\qis) \to F(\Ch_b(\cE),\qis) \to F(\Ch_b(\bE)).$$

\sn
{\bf ($F$-fibration axiom).} 
The inclusion functor $\cE^w\rinc \cE$ and the identity functor of $\cE$ 
induces a fibration sequence of spectra
$$F(\cE^w,i_{\cE^w}) \to F(\cE,i_{\cE}) \to F(\bE).$$

\sn
We say that $\bE$ is {\bf $F$-excellent} if $\bE$ satisfies four axioms above. 
We write $F-\calR$ for the full subcategory of $F$-excellent relative 
exact categories in $\calR$. 
\end{lemdf}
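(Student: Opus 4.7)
The plan is to reduce the equivalence of any two of the three axioms to a diagram chase in the stable category of spectra.
First I would assemble the commutative diagram of spectra
$$\xymatrix{
F(\cE^w,i_{\cE^w}) \ar[r] \ar[d]_{F(j_{\cE^w})} & F(\cE,i_{\cE}) \ar[r] \ar[d]_{F(j_{\cE})}  & F(\bE) \ar[d]^{F(j_{\cE})} \\
F(\Ch_b(\cE^w),\qis) \ar[r] & F(\Ch_b(\cE),\qis) \ar[r] & F(\Ch_b(\bE))
}$$
whose squares commute by the naturality of $j_{(-)}:\id\to\Ch_b(-)$ and the functoriality of $F$. The existence of the bottom row uses that $\calR$ is reasonable, so that $(\cE,i_{\cE})$, $(\cE^w,i_{\cE^w})$ and $\Ch_b(\bE)$ all lie in $\calR$.

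Next I would invoke the $F$-weak Gillet-Waldhausen axiom, which tells us that the left two vertical arrows are homotopy equivalences of spectra. Recall that in the stable category of spectra, a sequence $A\to B\to C$ is a fibration sequence if and only if it induces a long exact sequence on stable homotopy groups. Applying $\pi_\ast$ to the diagram above yields a ladder of sequences of abelian groups whose first two columns are isomorphisms in every degree.

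From this point, all three desired implications are instances of the five lemma applied to this ladder. Explicitly:
\begin{itemize}
\item ($F$-Gillet-Waldhausen $+$ $F$-weak fibration $\Rightarrow$ $F$-fibration). Under these hypotheses all three vertical arrows are equivalences and the bottom row induces a long exact sequence; hence the top row does too and is a fibration sequence.
\item ($F$-weak fibration $+$ $F$-fibration $\Rightarrow$ $F$-Gillet-Waldhausen). Both rows induce long exact sequences; since the first two verticals are isomorphisms on $\pi_\ast$, the five lemma forces the right vertical $F(j_{\cE}):F(\bE)\isoto F(\Ch_b(\bE))$ to be an isomorphism on each $\pi_n$, hence a homotopy equivalence.
\item ($F$-Gillet-Waldhausen $+$ $F$-fibration $\Rightarrow$ $F$-weak fibration). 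Symmetric to the first case, transferring exactness from the top row to the bottom row through the ladder whose columns are now all equivalences.
\end{itemize}

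There is no genuine obstacle here beyond bookkeeping: the only subtle point is verifying that the displayed diagram is actually defined and commutes in $\calR$, which is why the notion of reasonable subcategory in Definition~\ref{df:Res subcat} is set up to guarantee that each term lies in the domain of $F$ and that the structure maps (inclusions $\cE^w\rinc\cE$, identity of $\cE$, and $j$) are all morphisms of $\calR$. Once this is in place the proof is pure homological algebra.
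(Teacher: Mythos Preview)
Your proposal is correct and essentially identical to the paper's own proof: the paper draws exactly the same commutative square, notes that the two left vertical arrows are equivalences by the $F$-weak Gillet-Waldhausen axiom, and then argues that any two of the three remaining conditions force the third by comparing the two rows. Your write-up is slightly more explicit (invoking $\pi_\ast$ and the five lemma), but the argument is the same.
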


\begin{proof}[\bf Proof]
Consider the commutative diagram below.
$${\footnotesize{\xymatrix{
F(\cE^w,i_{\cE^w}) \ar[r] \ar[d]_{F(j_{\cE^w})}^{\wr} &
F(\cE,i_{\cE}) \ar[r] \ar[d]_{F(j_{\cE})}^{\wr} & 
F(\bE) \ar[d]^{F(j_{\cE})}\\
F(\Ch_b(\cE^w),\qis) \ar[r] &
F(\Ch_b(\cE),\qis) \ar[r] & 
F(\Ch_b(\bE)) .
}}}$$
Then if we assume both the horizontal lines 
above are fibration sequences of spectra, 
then the map $F(j_{\cE}):F(\bE) \to F(\Ch_b(\bE))$ 
is a homotopy equivalence of spectra. 
Next if we assume the map $F(j_{\cE}):F(\bE)\to F(\Ch_b(\bE))$ is a homotopy equivalence of spectra, 
then the top line is a fibration sequence of spectra if and only if 
the bottom line is. 
\end{proof}

\begin{prop}[\bf Examples of excellent relative exact categories]
\label{prop:excellent}
$\mathrm{(1)}$ 
A very strict solid Waldhausen 
exact category which satisfies the $K^W$-fibration 
axiom is $K^W$-excellent.\\
$\mathrm{(2)}$ 
A very strict consistent relative exact category is $\bbK$-excellent. 
\end{prop}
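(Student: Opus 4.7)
The plan is to apply Lemma-Definition~\ref{lemdf:excellent} in both parts: once the $F$-weak Gillet-Waldhausen axiom is verified, any two of the three remaining axioms force the third.

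For part (2), I will verify directly the $\bbK$-weak Gillet-Waldhausen, $\bbK$-Gillet-Waldhausen and $\bbK$-fibration axioms, and then recover the $\bbK$-weak fibration axiom from Lemma-Definition~\ref{lemdf:excellent}. The derived Gillet-Waldhausen theorem (Corollary~\ref{cor:comp of derived cat}(2)) says $j_{\cF}:\bF\to\Ch_b(\bF)$ is a derived equivalence for any strict relative exact category $\bF$, and Corollary~\ref{cor:localization} turns such a derived equivalence between consistent relative exact categories into a $\bbK$-equivalence. Applying this with $\bF=(\cE,i_{\cE})$, $(\cE^w,i_{\cE^w})$ and $\bE$ settles both Gillet-Waldhausen-type axioms for $\bbK$. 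For the $\bbK$-fibration axiom, Proposition~\ref{prop:ex of weakly exact sequences} applied with $v=i_{\cE}$ (trivially very strict because $\cE^{i_{\cE}}$ is a zero subcategory) and the given very strict $w$ produces a weakly exact sequence $(\cE^w,i_{\cE^w})\to(\cE,i_{\cE})\to\bE$, which Corollary~\ref{cor:localization} promotes to the desired fibration sequence of $\bbK$.

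For part (1), the $K^W$-weak Gillet-Waldhausen axiom is the classical Gillet-Waldhausen theorem for the exact categories $\cE$ and $\cE^w$ (the latter a strict exact subcategory since $\bE$ is strict), and the $K^W$-fibration axiom is among the hypotheses, so by Lemma-Definition~\ref{lemdf:excellent} it remains to produce the $K^W$-weak fibration axiom. Proposition~\ref{prop:compness}(1) gives that $\Ch_b(\bE)$ is a bicomplicial pair, hence (Remark~\ref{rem:a bicomp pair is good biwaldhausen}(1)) a Waldhausen exact category satisfying the factorization, extensional, saturated and retraction axioms; combined with the exact sequence of triangulated categories $\cT(\Ch_b(\cE)^{qw},\qis)\to\cT(\Ch_b(\cE),\qis)\to\calD_b(\Ch_b(\bE))$ from Proposition~\ref{prop:compness}(2), Schlichting's fibration theorem yields a fibration
\[K^W(\Ch_b(\cE)^{qw},\qis)\to K^W(\Ch_b(\cE),\qis)\to K^W(\Ch_b(\bE)).\]
It remains to identify $K^W(\Ch_b(\cE^w),\qis)$ with $K^W(\Ch_b(\cE)^{qw},\qis)$ along the inclusion, and this is where very strictness and solidness enter: Corollary~\ref{cor:weakly exact seq}(2)(i) says the inclusion $\Ch_b(\cE^w)\rinc\Ch_b(\cE)^{qw}$ induces an equivalence $\calD_b(\cE^w)\isoto\cT(\Ch_b(\cE)^{qw},\qis)$ of triangulated categories, and since both $(\Ch_b(\cE^w),\qis)$ and $(\Ch_b(\cE)^{qw},\qis)$ are bicomplicial pairs (the second because $\Ch_b(\cE)^{qw}$, as a null class, is stable under cones and under extensions), Schlichting's derived-invariance theorem for complicial Waldhausen categories upgrades this to a $K^W$-equivalence. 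This last step is the main obstacle: it requires recognising $\Ch_b(\cE)^{qw}$ as a bicomplicial pair rather than merely as a subcategory of complexes, after which Schlichting's machinery applies and closes the chain of axioms.
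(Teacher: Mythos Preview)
Your argument is correct and closely parallels the paper's, with two minor differences in organization. For part~(2) the paper verifies the $\bbK$-weak fibration axiom directly (via Corollary~\ref{cor:weakly exact seq} and \cite[3.2.27]{Sch11}) and then deduces the $\bbK$-fibration axiom from Lemma-Definition~\ref{lemdf:excellent}, whereas you verify the $\bbK$-fibration axiom directly via Proposition~\ref{prop:ex of weakly exact sequences} and Corollary~\ref{cor:localization} and deduce the weak version; both routes are equally valid. For part~(1) the paper is more compact: it invokes Corollary~\ref{cor:weakly exact seq} (combining parts~(2)(ii) and~(1)) to assert immediately that $(\Ch_b(\cE^w),\qis)\to(\Ch_b(\cE),\qis)\to\Ch_b(\bE)$ is an exact sequence of complicial Waldhausen categories, and then applies \cite[3.2.23]{Sch11}. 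Your argument instead first produces a fibration with $\Ch_b(\cE)^{qw}$ in the left slot (via Proposition~\ref{prop:compness}(2)) and then separately identifies $K^W(\Ch_b(\cE^w),\qis)\simeq K^W(\Ch_b(\cE)^{qw},\qis)$ using Corollary~\ref{cor:weakly exact seq}(2)(i) and derived invariance; this is precisely the content of the proof of Corollary~\ref{cor:weakly exact seq}(2)(ii) made explicit, so no genuinely new idea is involved. One small slip: Proposition~\ref{prop:compness}(2) gives the third term as $\cT(\Ch_b(\cE),qw)=\calD_b(\bE)$, not $\calD_b(\Ch_b(\bE))$, though the two agree by Corollary~\ref{cor:comp of derived cat}(2).
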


\begin{proof}[\bf Proof]
Let $\bE=(\cE,w)$ be a very strict solid Waldhausen 
(resp. very strict relative) 
exact category. 
Then the sequence 
$$(\Ch_b(\cE^w),\qis) \to (\Ch_b(\cE),\qis) \to \Ch_b(\bE)$$
induced from the inclusion functor $\cE^w\rinc \cE$ and 
the identity functor of $\cE$ 
is an exact sequence 
(resp. weakly exact sequence) 
of complicial Waldhausen categories 
by \ref{cor:weakly exact seq}. 
Therefore by \cite[3.2.23]{Sch11} (resp. \cite[3.2.27]{Sch11}), 
$\bE$ satisfies the $K^W$-weak fibration 
(resp. $\bbK$-weak fibration) axiom. 
On the other hand, 
for any exact category (resp. consistent relative exact category), 
it satisfies $K^W$-weak Gillet-Waldhausen 
(resp. $\bbK$-Gillet-Waldhausen)
axiom by \cite{Cis02} (resp. \cite[3.2.29]{Sch11} and 
\ref{cor:comp of derived cat}). 
Hence we obtain the result by \ref{lemdf:excellent}. 
\end{proof}

\begin{ex}
\label{ex:device}
$\mathrm{(1)}$ 
Let $\cE$ be an exact category. 
Since $\cE^{i_{\cE}}$ is trivial, 
the pair $(\cE,i_{\cE})$ 
is a very strict solid Waldhausen exact category 
which satisfies the $K^W$-fibration axiom. 
In particular, 
it is $K^W$-excellent and $\bbK$-excellent by \ref{prop:excellent}.\\
$\mathrm{(2)}$ 
In \cite[Theorem 11]{Sch06}, 
Schlichting showed that a Waldhausen category which satisfies 
the extensional, the saturated and the factorization axioms, 
also satisfies the $K^W$-fibration axiom. 
In particular, a bicomplicial pair 
satisfies the $K^W$-fibration axiom 
by \ref{rem:a bicomp pair is good biwaldhausen}. 
Hence a bicomplicial pair is  
$K^W$-excellent and $\bbK$-excellent 
by \ref{prop:bicompair is very strict}, 
\ref{prop:bicomp is adm} and \ref{prop:excellent}. 
\end{ex}

\begin{para}[\bf Proof of Theorem~\ref{thm:agreement}]
\label{proof:agreement}
Let $\bE=(\cE,w)$ be a consistent relative exact category. 
Assertion $\mathrm{(1)}$ follows from \cite[Theorem 8]{Sch06}. 

\sn
$\mathrm{(2)}$ 
If $w$ is the class of all isomorphisms in $\cE$, 
the definition of $\bbK(\bE)$ is compatible with Definition~8 in \cite{Sch06}. 
If $\bE$ is a complicial Waldhausen category, 
the result follows from \ref{cor:comp of derived cat} $\mathrm{(2)}$ 
and \cite[Proposition~3]{Sch06}.

\sn
$\mathrm{(3)}$ 
We have a homotopy equivalence of spectra 
$\bbK_n(\bE)=\bbK^S_n(\Ch_b(\bE))\isoto K_n^W(\Ch_b(\bE))$ 
for any positive integer $n$ 
by \cite[Theorem 8]{Sch06} again. 
If $\bE$ is a very strict solid Waldhausen exact category 
and satisfies the $K^W$-fibration axiom, 
then $j_{\cE}$ induces a homotopy equivalence of spectra between 
$K_n^W(\bE)$ and $K_n^W(\Ch_b(\bE))$ 
by \ref{prop:excellent}. 
\qed
\end{para}

\sn
Recall the definition of $E(\cE)$ 
and the exact functors $s$, $m$, $q$ from $E(\cE)$ to $\cE$ 
from Conventions $\mathrm{(7)}$ $\mathrm{(xiv)}$.

\begin{lemdf}[\bf Additive and localizing theories] 
\label{lemdf: Loc th}
$\mathrm{(1)}$ 
(\cf \cite[1.3.2]{Wal85}, \cite[3.1]{GSVW92}). 
Let $F$ 
be a functor from a full subcategory $\calR$ of $\RelEx$ 
which closed under extensions to an additive category $\cB$. 
Each of the following assertions implies 
all the three others.\\
$\mathrm{(i)}$ 
For any right quasi-split exact sequence 
$\bE\onto{i}\bF\onto{p}\bG$ 
in $\calR$ 
with a right quasi-splitting 
$(j,q)$, the morphism 
$$\displaystyle{\begin{pmatrix}F(q)\\ F(p)\end{pmatrix}:
F(\bF) \to F(\bE)\oplus F(\bG)}$$
is an isomorphism.\\
$\mathrm{(ii)}$ 
For any relative exact category $\bE$ in $\calR$, 
the following projection is an isomorphism 
$$
\displaystyle{\begin{pmatrix}F(s)\\ F(q)\end{pmatrix}:
F(E(\bE)) \to F(\bE)\oplus F(\bE)}.
$$
$\mathrm{(iii)}$ 
For any relative exact category $\bE$ in $\calR$, 
we have the equality $F(m)=F(s)+F(q)$ 
for morphisms $F(E(\bE))\to F(\bE)$.\\
$\mathrm{(iv)}$ 
For any admissible exact sequence $f\overset{A}{\rinf}g\overset{B}{\rdef}h$ 
of relative exact functors 
between relative exact categories $\bE\to \bF=(\cF,v)$ in $\calR$, 
we have the equality $F(g)=F(f)+F(h)$.\\
We say that $F$ is an {\bf additive theory} 
if $F$ satisfies the assertions above.\\
$\mathrm{(2)}$ 
A functor $F$ from a reasonable subcategory $\calR$ of $\RelEx_{\strict}$ to 
the stable category of spectra is 
a {\bf localizing} (resp. {\bf strictly localizing}) {\bf theory} 
if $F$ sends a weakly exact (resp. exact) sequence to 
a fibration sequence of spectra. 
\qed
\end{lemdf}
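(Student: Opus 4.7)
The strategy is to establish the cycle of implications
$\mathrm{(iii)} \Leftrightarrow \mathrm{(iv)} \Rightarrow \mathrm{(ii)} \Leftrightarrow \mathrm{(iii)}$
as the ``internal" Waldhausen-style equivalences,
and then package condition $\mathrm{(i)}$ via the structural description of
right quasi-split exact sequences supplied by Corollary~\ref{cor:str of qsexseq}.
Throughout, I use that $\cB$ is an additive category so that
the diagonal/codiagonal formalism is available,
and that $\calR$ is closed under extensions,
which guarantees $E(\bE)$ and $E(\bE,\bF,\bG)$
belong to $\calR$ whenever the pieces do.

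\sn
\textbf{The equivalences $\mathrm{(iii)} \Leftrightarrow \mathrm{(iv)}$.}
The implication $\mathrm{(iv)}\Rightarrow \mathrm{(iii)}$ is immediate
by applying $\mathrm{(iv)}$ to the tautological admissible exact sequence
$s_{E(\bE)} \rinf m_{E(\bE)} \rdef q_{E(\bE)}$
of exact functors from $E(\bE)$ to $\bE$ (Conventions $\mathrm{(7)}$ $\mathrm{(xiv)}$
and $\mathrm{(xv)}$).
For $\mathrm{(iii)}\Rightarrow \mathrm{(iv)}$,
given an admissible exact sequence $f \overset{A}{\rinf} g \overset{B}{\rdef} h$
of relative exact functors $\bE \to \bF$, the universal property of
$E(\bF)$ furnishes a canonical relative exact functor
$\Phi:\bE \to E(\bF)$ with $s\Phi = f$, $m\Phi = g$ and $q\Phi = h$;
here $\Phi$ lands in $\calR$ since $E(\bF)$ does.
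Apply $F$ to $\Phi$ and combine with $\mathrm{(iii)}$ for $\bF$
to obtain $F(g) = F(m)F(\Phi) = F(s)F(\Phi) + F(q)F(\Phi) = F(f)+F(h)$.

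\sn
\textbf{The equivalences $\mathrm{(ii)} \Leftrightarrow \mathrm{(iii)}$.}
The exact functors $\In^l_{\bE}, \In^r_{\bE}:\bE \to E(\bE)$ give
a left inverse to $(F(s),F(q))^T$, and one checks directly that
$s\In^l = \id = q\In^r$, $q\In^l = 0 = s\In^r$,
$m\In^l = \id_{\bE} = m\In^r$.
Assuming $\mathrm{(iii)}$, one verifies that
$F(\In^l_{\bE}) + F(\In^r_{\bE})$ is inverse to $(F(s),F(q))^T$
by composing both ways and using $F(m) = F(s)+F(q)$,
which yields $\mathrm{(ii)}$.
Conversely, assuming $\mathrm{(ii)}$, post-composing
$F(m):F(E(\bE)) \to F(\bE)$ with the isomorphism
$(F(s),F(q))^T$ from $F(\bE)\oplus F(\bE)$ and evaluating on
$F(\In^l)$ and $F(\In^r)$ forces $F(m) = F(s)+F(q)$, giving $\mathrm{(iii)}$.

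\sn
\textbf{The equivalence of $\mathrm{(i)}$ with the others.}
This is where Corollary~\ref{cor:str of qsexseq} does the real work.
Given a right quasi-split exact sequence
$\bE \onto{i}\bF\onto{p}\bG$ in $\calR$ with splitting $(j,q)$,
the corollary supplies an exact equivalence
$\Theta:\bF \isoto E(\bE,\bF,\bG)$
compatible with the obvious functors.
By closure of $\calR$ under extensions, $E(\bE,\bF,\bG)$ is in $\calR$.
Via $\Theta$, conditions about $(F(q), F(p))^T:F(\bF) \to F(\bE)\oplus F(\bG)$
translate into conditions about $(F(s),F(q))^T$ for $E(\bE,\bF,\bG)$,
and a routine variant of the $\mathrm{(ii)}\Leftrightarrow \mathrm{(iii)}$
argument above (with the two outer categories allowed to differ)
shows $\mathrm{(ii)}$ implies $\mathrm{(i)}$.
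For $\mathrm{(i)}\Rightarrow \mathrm{(ii)}$,
observe that $s \rinf m \rdef q:E(\bE)\to \bE$ is itself
a right quasi-split exact sequence:
$\In^l_{\bE}$ is right adjoint to $s_{E(\bE)}$ and
$\In^r_{\bE}$ is right adjoint to $q_{E(\bE)}$,
and the resulting unit/counit data satisfies the hypotheses
of Lemma-Definition~\ref{lemdf:quasi-split exact seq};
hence $\mathrm{(i)}$ applied to this sequence is exactly $\mathrm{(ii)}$.

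\sn
\textbf{Main obstacle.}
The only delicate point is the verification that
$s \rinf m \rdef q$ genuinely fits the quasi-split framework
so that $\mathrm{(i)}$ specializes to $\mathrm{(ii)}$,
and symmetrically that $\Theta$ of Corollary~\ref{cor:str of qsexseq}
transports the splitting data cleanly;
the remaining implications are essentially bookkeeping with additivity
in $\cB$. I expect the bulk of the write-up
to consist of checking these compatibilities rather than
uncovering new ideas.
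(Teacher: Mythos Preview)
Your proof is correct and complete; note that the paper itself gives no argument at all---it simply writes \qed and points to \cite[1.3.2]{Wal85} and \cite[3.1]{GSVW92}---so what you have written is strictly more than what the paper supplies.

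Two small phrasing corrections. First, in your $\mathrm{(i)}\Rightarrow\mathrm{(ii)}$ step, the adjunction directions are reversed: $s_{E(\bE)}$ is right adjoint to $\In^l_{\bE}$ (not the other way round), and $\In^r_{\bE}$ is right adjoint to $q_{E(\bE)}$. With these roles, the sequence of \emph{categories} $\bE\onto{\In^l}E(\bE)\onto{q_{E(\bE)}}\bE$ is right quasi-split with splitting $(\In^r,s)$, and applying $\mathrm{(i)}$ to it yields exactly $\mathrm{(ii)}$. Your sentence ``$s\rinf m\rdef q:E(\bE)\to\bE$ is itself a right quasi-split exact sequence'' conflates the admissible exact sequence of functors with the quasi-split sequence of categories; the content is right but the wording should be adjusted.

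Second, in $\mathrm{(iii)}\Rightarrow\mathrm{(ii)}$, the verification that $F(\In^l)F(s)+F(\In^r)F(q)=\id_{F(E(\bE))}$ does not follow from the single identity $F(m)=F(s)+F(q)$ on $E(\bE)$; one must apply $\mathrm{(iii)}$ to $E(\bE)$ (equivalently, invoke $\mathrm{(iv)}$ for the admissible exact sequence $\In^l s\rinf\id\rdef\In^r q$ of endofunctors of $E(\bE)$). Since you have already established $\mathrm{(iii)}\Leftrightarrow\mathrm{(iv)}$, this is harmless, but the phrase ``using $F(m)=F(s)+F(q)$'' undersells what is actually being used. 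Similarly, your $\mathrm{(ii)}\Rightarrow\mathrm{(i)}$ via Corollary~\ref{cor:str of qsexseq} tacitly uses $\mathrm{(iv)}$ for the sequence $\In^l s\rinf\id\rdef\In^r q$ on $E(\bE,\bF,\bG)$; again this is fine given the established equivalences, and closure of $\calR$ under extensions does guarantee $E(\bE,\bF,\bG)\in\calR$ since $\bE\onto{\In^l}E(\bE,\bF,\bG)\onto{q}\bG$ is itself right quasi-split.
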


\begin{prop}
\label{prop:fund property of add theory}
Let $\calR$ be a full $2$-subcategory of $\RelEx^{+}$ 
which closed under extensions 
and $F$ is an additive theory from $\calR$ to an additive category $\cB$. 
Then\\
$\mathrm{(1)}$ 
The canonical map $F(0)\to 0$ is an isomorphism.\\
$\mathrm{(2)}$ 
Assume that $F$ is categorical homotopy invariant. 
Then for any bicomplicial pair $\bC=(\cC,v)$ with 
the suspension functor $T:\cC \to \cC$ in $\calR$, 
we have the equality $F(T)=-\id_{F(\cC)}$.
\end{prop}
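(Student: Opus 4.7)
The plan is to deduce both parts directly from the additivity axiom~(iv) of \ref{lemdf: Loc th}, combined with categorical homotopy invariance for part~(2). Part~(1) is a standard ``identity is idempotent'' observation for additive theories, and part~(2) is the usual contractible-cone trick that gives $F(T) = -\id$ for a suspension.

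For part~(1), I will apply axiom~(iv) to the trivially admissible exact sequence of identity endofunctors
$$\id_0 \rinf \id_0 \rdef \id_0$$
of the trivial relative exact category $0 \in \calR$ (which lies in $\calR$ by the extension-closed hypothesis, since $0$ is a right quasi-split extension of $0$ by $0$). Axiom~(iv) yields $F(\id_0) = F(\id_0) + F(\id_0)$, i.e.\ $\id_{F(0)} = 0$ in $\cB$, and in any additive category this forces the canonical morphism $F(0) \to 0$ to be an isomorphism.

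For part~(2), the strategy is to exploit the canonical admissible exact sequence of endofunctors of $\cC$
$$\id_{\cC} \overset{\iota}{\rinf} C \overset{\pi}{\rdef} T$$
supplied by the bicomplicial structure. Before applying~(iv) I must verify that $C$ and $T$ are genuine relative exact functors $\bC \to \bC$. For $C$: every object $Cx$ is $C$-contractible, hence lies in $\cC^v$ since $v$ contains all $C$-homotopy equivalences; by the saturated axiom for complicial weak equivalences, any morphism between objects of $\cC^v$ is in $v$, so $C$ preserves $v$. For $T$: the functoriality square $x \rinf Cx \rdef Tx$ together with the extensional axiom for $v$ shows that $T$ preserves $v$ as well. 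Applying (iv) then gives
$$F(C) = F(\id_{\cC}) + F(T) = \id_{F(\bC)} + F(T).$$

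To conclude, I will show $F(C)$ is the zero morphism. The natural transformation $0 \to C$ from the zero endofunctor of $\bC$ to $C$ has components $0 \to Cx$, each of which lies in $v$ precisely because $Cx$ is in $\cC^v$; hence $0 \to C$ is a relative natural equivalence, so $C$ is weakly homotopic to the zero endofunctor of $\bC$. By categorical homotopy invariance $F(C)$ equals $F$ applied to the zero endofunctor, which factors through $0 \in \calR$; combining this factorization with part~(1) shows $F$ of the zero endofunctor is the zero morphism. Substituting back into the additivity relation gives $F(T) = -\id_{F(\bC)}$, matching the claim (with the paper's mild notational identification of $F(\cC)$ and $F(\bC)$). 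The main obstacle I anticipate is the careful verification that $C$ and $T$ are relative exact functors, since without this the admissible exact sequence of endofunctors is not one to which axiom~(iv) applies; once this is in hand, the remaining steps are formal consequences of the structural lemmas already established in the paper.
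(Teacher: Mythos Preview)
Your proof is correct and follows essentially the same route as the paper: for part~(1) the paper applies axiom~(i) to the trivial right quasi-split sequence $0\to 0\to 0$ to get $F(0)\isoto F(0)\oplus F(0)$ rather than your axiom-(iv) argument, and for part~(2) the paper gives exactly the same $\id_{\cC}\rinf C\rdef T$ argument without your added verification that $C$ and $T$ preserve $v$. One minor correction: for $T$ preserving $v$, the relevant tool is the gluing axiom (available since a bicomplicial pair is a Waldhausen exact category by \ref{rem:a bicomp pair is good biwaldhausen}), not the extensional axiom, since you know the left and middle morphisms $f,Cf$ are in $v$ and want the cokernel map $Tf$.
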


\begin{proof}[\bf Proof]
$\mathrm{(1)}$ 
Consider the right quasi-split sequence $0 \to 0 \to 0$. 
Then we obtain the isomorphism $F(0)\isoto F(0)\oplus F(0)$ 
by additivity. 
Hence we obtain the result.

\sn
$\mathrm{(2)}$ 
By applying \ref{lemdf: Loc th} $\mathrm{(1)}$ $\mathrm{(iv)}$ 
to the admissible exact sequence 
$\id_{\cC} \to C \to T$ on $\cC$, 
we get the equality $\id_{F(\cC)}+F(T)=F(C)$. 
On the other hand, since the natural transformation $0 \to C$ is 
a relative natural equivalence, 
$F(C)=0$ by homotopy invariance of $F$.
\end{proof}

\begin{prop}
\label{prop:fund property of loc theory}
Let $F$ be a localizing {\rm (}resp. strictly localizing{\rm )} 
theory on a reasonable subcategory $\calR$ 
of $\RelEx_{\strict}$. 
Then\\
$\mathrm{(1)}$ 
The canonical map $F(0) \to 0$ is a homotopy equivalence of spectra.\\
$\mathrm{(2)}$ 
{\rm {\bf (Approximation theorem).}} 
Let $f:\bE \to \bF$ be a relative exact functor in $\calR$. 
If $f$ is a weakly derived equivalence 
{\rm (}resp. derived equivalence{\rm )}, 
then $f$ is a $F$-equivalence.\\
$\mathrm{(3)}$ 
{\rm {\bf (Additivity theorem).}} 
Assume that moreover $F$ is a categorical homotopy invariant functor 
and $\calR$ is closed under extensions. 
Then $F$ is an additivity theory.
\end{prop}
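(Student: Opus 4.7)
The plan is to tackle the three statements in sequence, each time producing a fibration sequence of spectra from the (strictly) localizing hypothesis on $F$. Part $\mathrm{(1)}$ is nearly automatic: applying $F$ to the sequence $0 \to \bE \onto{\id} \bE$ in $\calR$, whose derived version is plainly exact in both the strict and weak senses, produces a fibration sequence $F(0) \to F(\bE) \onto{\id} F(\bE)$ of spectra, which forces $F(0) \simeq 0$.

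For $\mathrm{(2)}$ I reduce the approximation statement to applying $F$ to the sequence $\bE \onto{f} \bF \to 0$. When $f$ is a derived equivalence this sequence is strictly exact, and strictly localizing $F$ turns it into a fibration $F(\bE) \to F(\bF) \to F(0) \simeq 0$. When $f$ is merely a weakly derived equivalence and $F$ is merely localizing, I must check weak exactness: $\calD_b(f)$ is fully faithful by hypothesis, and for the cofinality of $\calD_b(\bF)/\calD_b(\bE) \to 0$ I observe that every object of $\calD_b(\bF)$ is a direct summand of one lying in the image of $\calD_b(\bE)$, hence becomes a direct summand of the zero object in the Verdier quotient, hence is itself zero; the quotient category is therefore trivial and cofinality is automatic. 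In either case, combining with $F(0) \simeq 0$ forces $F(f)$ to be an equivalence.

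For $\mathrm{(3)}$ I verify criterion $\mathrm{(i)}$ of Lemma-Definition~\ref{lemdf: Loc th}. Given a right quasi-split exact sequence $\bE \onto{i} \bF \onto{p} \bG$ in $\calR$ with right quasi-splitting $(j, q, A, B, C, D)$, Corollary~\ref{cor:quasi split is exact} together with Remark~\ref{rem:quasi split is derived quasi-split} shows that this is an exact sequence of strict relative exact categories, so $F$ produces a fibration sequence $F(\bE) \onto{F(i)} F(\bF) \onto{F(p)} F(\bG)$ of spectra. Because $B : \id_{\cE} \to qi$ and $D : pj \to \id_{\cG}$ are natural isomorphisms, and hence relative natural weak equivalences (isomorphisms lying in $w$ for the classes of relative exact categories under consideration), the pairs $qi, \id_{\cE}$ and $pj, \id_{\cG}$ are weakly homotopic, so categorical homotopy invariance of $F$ yields $F(q)F(i) = \id_{F(\bE)}$ and $F(p)F(j) = \id_{F(\bG)}$. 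In the stable category of spectra, a fibration sequence whose first map admits a retraction splits canonically, so the induced morphism $F(\bF) \to F(\bE) \oplus F(\bG)$ with components $F(q)$ and $F(p)$ is an equivalence, which is precisely condition $\mathrm{(i)}$.

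The main obstacle will be the cofinality argument in part $\mathrm{(2)}$: one must translate the abstract cofinal condition of Convention~$\mathrm{(8)}$~$\mathrm{(vi)}$ into the concrete statement that $\calD_b(\bF)/\calD_b(\bE)$ is trivial, using that a direct summand of the zero object is zero in an additive category. The remainder is formal manipulation of split fibration sequences in the stable category of spectra, combined with the structural results on quasi-split exact sequences already assembled in Section~\ref{sec:quasi-split}.
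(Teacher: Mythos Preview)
Your proof is correct and follows essentially the same strategy as the paper's. The only notable difference is in part $\mathrm{(2)}$: the paper applies the localizing hypothesis to the sequence $0 \to \bE \onto{f} \bF$, which is immediately weakly exact because the cofinality condition for $\calD_b(\bE)/0 \to \calD_b(\bF)$ is precisely the definition of a weakly derived equivalence; your choice of $\bE \onto{f} \bF \to 0$ is equally valid but requires the additional step of checking that $\calD_b(\bF)/\calD_b(\bE)$ is trivial. Part $\mathrm{(1)}$ differs only cosmetically (the paper uses $0\to 0\to 0$), and part $\mathrm{(3)}$ is the same argument, with the paper packaging the splitting of the fibration sequence as a comparison diagram with the split sequence $F(\bE)\to F(\bE)\oplus F(\bG)\to F(\bG)$ rather than invoking the abstract splitting principle directly.
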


\begin{proof}[\bf Proof]
$\mathrm{(1)}$ 
Consider the following commutative diagram of fibration sequences of spectra 
$$
{\footnotesize{\xymatrix{
F(0) \ar[r] \ar@{=}[d] & 
F(0) \ar[r] \ar@{=}[d] &
F(0) \ar[d]\\
F(0) \ar[r] & 
F(0) \ar[r] &
0.
}}}
$$
Then it turns out that $F(0) \to 0$ is a homotopy equivalent of spectra. 

\sn
$\mathrm{(2)}$ 
We just apply the assumption of $F$ to a weakly exact 
(resp. exact) sequence
$0 \to \bE \onto{f} \bF$.

\sn
$\mathrm{(3)}$ 
let $\bE \onto{i} \bF \onto{p} \bG$ be a right 
quasi-split exact sequence with a right quasi-splitting $(q,j)$. 
By \ref{cor:quasi split is exact} and 
\ref{rem:quasi split is derived quasi-split}, 
the sequence $(i,p)$ is exact. 
Consider the following commutative diagram of fibration sequences of spectra 
$$
{\footnotesize{\xymatrix{
F(\bE) \ar[r]^{F(i)} \ar@{=}[d] \ar@{}[rd]|{\bigstar} & 
F(\bF) \ar[r]^{F(p)} \ar[d]^{\begin{pmatrix}F(q)\\ F(p)\end{pmatrix}} & 
F(\bG) \ar@{=}[d]\\
F(\bE) \ar[r]_{\begin{pmatrix}\id_{F(\bE)}\\ 0\end{pmatrix}} & 
F(\bE)\oplus F(\bG) \ar[r]_{\begin{pmatrix}0 & \id_{F(\bG)}\end{pmatrix}} & 
F(\bG).
}}}$$
Here we use homotopy invariance of $F$ to prove the commutativity of 
$\bigstar$. 
Then we learn that the map 
$\displaystyle{\begin{pmatrix}F(q)\\ F(p)\end{pmatrix}:
F(\bF) \to F(\bE)\oplus F(\bG)}$ 
is a homotopy equivalence of spectra.
\end{proof}

\begin{cor}
\label{cor:restrict localzing}
Let $F$ be a functor from 
a reasonable subcategory $\calR$ of 
$\RelEx_{\consist}$ to the stable category of spectra.
Assume that the following three conditions hold.\\
$\mathrm{(1)}$ 
For any essentially small exact category $\cE$, 
$(\cE,i_{\cE})$ is in $F-\calR$.\\
$\mathrm{(2)}$ 
$\Ch_b(F-\calR)\subset F-\calR$.\\
$\mathrm{(3)}$ 
The functor $F\Ch_b$ on $F-\calR$ is a localizing 
(resp. strictly localizing) theory.\\
Then $F$ is a localizing (resp. strictly localizing) theory on $F-\calR$. 
\end{cor}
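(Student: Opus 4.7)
The plan is to reduce the desired localization property of $F$ to the hypothesis on $F\Ch_b$ via the $F$-Gillet-Waldhausen axiom packaged into the definition of $F$-excellence. Concretely, let $\bE\to \bF\to \bG$ be a weakly exact (resp. exact) sequence in $F-\calR$. I would first invoke Corollary~\ref{cor:weakly exact seq}~$\mathrm{(1)}$, which says that the $2$-functor $\Ch_b:\RelEx_{\consist}\to\RelEx_{\consist}$ is both exact and weakly exact, to obtain an induced weakly exact (resp. exact) sequence
$$\Ch_b(\bE)\to\Ch_b(\bF)\to\Ch_b(\bG).$$

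Hypothesis $\mathrm{(2)}$ ensures that the three terms of this new sequence all lie in $F-\calR$. By hypothesis $\mathrm{(3)}$, $F\Ch_b$ is a localizing (resp. strictly localizing) theory on $F-\calR$, so applying it to the original weakly exact (resp. exact) sequence yields a fibration sequence of spectra
$$F\Ch_b(\bE)\to F\Ch_b(\bF)\to F\Ch_b(\bG).$$
Finally, since $\bE$, $\bF$ and $\bG$ are $F$-excellent, the $F$-Gillet-Waldhausen axiom from \ref{lemdf:excellent} supplies homotopy equivalences of spectra $F(j_{\cE}):F(\bE)\isoto F(\Ch_b(\bE))$, and similarly for $\bF$ and $\bG$. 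The naturality of the canonical inclusions $j_{(-)}$ with respect to relative exact functors produces a commutative ladder whose vertical arrows are homotopy equivalences, and chasing this ladder upgrades the bottom fibration sequence to a fibration sequence $F(\bE)\to F(\bF)\to F(\bG)$ on the top.

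I do not expect a genuine obstacle: the argument is essentially a three-out-of-three argument in the stable category of spectra, leveraging the Gillet-Waldhausen component of $F$-excellence to identify $F$ with $F\Ch_b$ on the $F$-excellent objects. The only point that requires a moment of care is verifying that the ladder in the last step really does commute on the nose (not merely up to homotopy) or at least commutes up to a specified natural equivalence, but this is immediate from the fact that $j_{(-)}$ is a natural transformation of $2$-functors $\id\to\Ch_b$ on $\RelEx_{\consist}^{+}$.
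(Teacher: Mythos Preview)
Your proposal is correct and follows essentially the same route as the paper: build the commutative ladder with $F(j_{(-)})$ as vertical maps, use hypothesis $\mathrm{(3)}$ to make the bottom row a fibration sequence, and use the $F$-Gillet-Waldhausen axiom from $F$-excellence to make the vertical maps equivalences. Your invocation of Corollary~\ref{cor:weakly exact seq}~$\mathrm{(1)}$ and hypothesis $\mathrm{(2)}$ is a harmless elaboration---the paper simply applies $F\Ch_b$ directly to the original sequence in $F-\calR$, which hypothesis $\mathrm{(3)}$ already licenses.
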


\begin{proof}[\bf Proof]
For any weakly exact 
(resp. exact) 
sequence $\bE=(\cE,u) \onto{i} \bF=(\cF,v) \onto{j} \bG=(\cG,w)$ in $F-\calR$, 
we have the commutative diagram in the stable category of spectra below
$${\footnotesize{\xymatrix{
F(\bE) \ar[r]^{F(i)} \ar[d]_{j_{\cE}} & F(\bF) \ar[r]^{F(j)} \ar[d]_{j_{\cF}} & F(\bG) \ar[d]^{j_{\cG}}\\
F(\Ch_b(\bE)) \ar[r]_{F(\Ch_b(i))} & F(\Ch_b(\bF)) \ar[r]_{F(\Ch_b(j))} & F(\Ch_b(\bE)).
}}}$$
Here the bottom line is a fibration sequence 
by assumption 
and all vertical lines are 
homotopy equivalence by $F$-Gillet-Waldhausen axiom. 
Therefore we get the desired fibration sequence 
$$F(\bE) \onto{F(i)} F(\bF) \onto{F(j)} F(\bG).$$
\end{proof}

\begin{ex}
\label{ex:K^W is strictly localizing}
Exact categories and bicomplicial pairs are $K^W$-excellent by \ref{ex:device}. 
Moreover $K^W\Ch_b$ is a strongly localizing theory 
on $K^W-\RelEx_{\consist}$ by \ref{cor:weakly exact seq} $\mathrm{(1)}$ 
and \cite[3.2.23]{Sch11}. 
Hence the functor $K^W$ is a strictly localizing theory 
on $K^W-\RelEx_{\consist}$ by \ref{cor:restrict localzing}. 
\end{ex}

\section{Multi semi-direct products}
\label{sec:Multi semi-direct} 

\sn
In this section, 
we elaborate the theory of multi semi-direct products of exact categories 
as a continuation of \cite{Moc11}. 
In the last of this section, 
we will get \ref{thm:flag} which 
is an abstraction of Corollary~5.14 in \cite{Moc11}.

\begin{para}
\label{para:cubedf} 
For a set $S$, 
an {\bf $S$-cube} in a category $\cC$ is a contravariant functor 
from $\cP(S)$ to $\cC$. 
We denote the category of $S$-cubes in a category $\cC$ by $\Cub^S\cC$ where 
morphisms between cubes are just natural transformations. 
Let $x$ be an $S$-cube in $\cC$. 
For any $T\in\cP(S)$, 
we denote $x(T)$ by $x_T$ and call it a 
{\bf vertex of $x$} ({\bf at $T$}). 
For $k \in T$, 
we also write $d^{x,k}_T$ or 
shortly $d^k_T$ 
for $x(T\ssm\{k\} \rinc T)$ 
and call it 
a ({\bf $k-$}){\bf boundary morphism of $x$} ({\bf at $T$}). 
An $S$-cube $x$ is {\bf monic} if for any pair of subsets 
$U\subset T$ in $S$, $x(U\subset V)$ is a monomorphism. 
\end{para}

\sn
In the rest of this section, 
we assume that $S$ is a finite set.

\begin{para}[\bf Admissible cubes]
\label{para:admcube}
Fix an $S$-cube $x$ 
in an abelian category $\cA$. 
For any element $k$ in $S$, 
the {\bf $k$-direction $0$-th homology} of $x$ 
is an $S\ssm\{k\}$-cube $\Homo_0^k(x)$ in $\cA$ 
and defined by $\Homo_0^k(x)_T:=\coker d_{T\cup\{k\}}^k$. 
For any $T\in\cP(S)$ and $k\in S\ssm T$, 
we denote the canonical projection morphism 
$x_T \to \Homo_0^k(x)_T$ by $\pi^{k,x}_T$ or simply $\pi^k_T$. 
When $\# S=1$, 
we say that $x$ is {\bf admissible} if $x$ is monic, 
namely if its unique boundary morphism is a monomorphism. 
For $\# S>1$, 
we define the notion of an admissible cube inductively 
by saying that 
$x$ is {\bf admissible} if $x$ is monic and if 
for every $k$ in $S$, 
$\Homo^k_0(x)$ is admissible. 
If $x$ is admissible, 
then for any distinct elements 
$i_1,\ldots,i_k$ in $S$ and for any automorphism $\sigma$ of 
the set $\{i_1,\ldots,i_k\}$, 
the identity morphism on $x$ induces an isomorphism:
$$\Homo^{i_1}_0(\Homo^{i_2}_0(\cdots(\Homo_0^{i_k}(x))\cdots))
\isoto 
\Homo^{i_{\sigma(1)}}_0(\Homo^{i_{\sigma(2)}}_0(\cdots(
\Homo_0^{i_{\sigma(k)}}(x))\cdots))$$
where $\sigma$ is a bijection on $S$. (cf. \cite[3.11]{Moc11}). 
For an admissible $S$-cube $x$ and a subset $T=\{i_1,\ldots,i_k\}\subset S$, 
we put 
$\Homo^T_0(x):=\Homo^{i_1}_0(\Homo^{i_2}_0(\cdots(\Homo^{i_k}_0(x))\cdots))$ 
and $\Homo^{\emptyset}_0(x)=x$. 
Notice that $\Homo^T_0(x)$ is an $S\ssm T$-cube for any $T\in\cP(S)$. 
Then we have the isomorphisms
\begin{equation}
\label{equ:Totisom}
\Homo_p(\Tot(x))\isoto 
\begin{cases}
\Homo^S_0(x) & \text{for $p=0$}\\
0 & \text{otherwise}
\end{cases}
\ \ .
\end{equation}
See \cite[3.13]{Moc11}.
\end{para}

\sn
In the rest of this section, 
let $U$ and $V$ be a pair of disjoint subsets of $S$. 

\begin{df}[\bf Multi semi-direct products]
\label{df:mult semi-direct prod}
Let $\fF=\{\cF_T\}_{T\in\cP(S)}$ be 
a family of full subcategories of $\cA$.\\
$\mathrm{(1)}$ 
We put $\fF|_U^V:=\{\cF_{V\coprod T}\}_{T\in\cP(U)}$ 
and call it the {\bf restriction of $\fF$} 
(to $U$ along $V$).\\
$\mathrm{(2)}$ 
Then we define 
$\ltimes \fF=\underset{T\in\cP(S)}{\ltimes} \cF_T$ 
the {\bf multi semi-direct products 
of the family $\fF$} 
as follows. 
$\ltimes \fF$ 
is the full subcategory of $\Cub^S(\cA)$ 
consisting of 
those cube $x$ 
such that $x$ is admissible and each vertex of 
$\Homo^T_0(x)$ is in $\cF_T$ for any $T\in\cP(S)$.\\
$\mathrm{(3)}$ 
If $S$ is a singleton (namely $\#S=1$), 
then we write $\cF_{S}\ltimes \cF_{\emptyset}$ for $\ltimes \fF$.\\ 
\end{df}

\begin{rem}
\label{rem:indformulaformultsemiprod}
For any element $u$ in $U$, 
we have the equality $\ltimes\fF|_U^V=(\ltimes \fF|_{U\ssm\{u\}}^{V\coprod\{u\}})
\ltimes (\ltimes \fF|^{V}_{U\ssm\{u\}})$. 
(See \cite[3.19]{Moc11}).
\end{rem}

\begin{df}[\bf Exact family] 
\label{df:exact family} 
Let $\fF=\{\cF_T\}_{T\in\cP(S)}$ be 
a family of strict exact subcategories of an 
abelian category $\cA$. 
We say that $\fF$ is an {\bf exact family} (of $\cA$) 
if for any disjoint pair of subsets $P$ and $Q$ of $S$, 
$\ltimes \fF|_P^Q$ is a strict exact subcategory of 
$\Cub^P\cA$.\\
\end{df}

\begin{lem}
\label{lem:exact family}
{\rm (\cf \cite[3.20]{Moc11}).} 
Let $\fF=\{\cF_T\}_{T\in\cP(S)}$ be 
a family of full subcategories of $\cA$. 
If $\cF_T$ is closed under either extensions or 
taking sub- and quotient objects and direct sums in $\cA$, 
then $\fF$ is an exact family.
\qed
\end{lem}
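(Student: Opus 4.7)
The plan is to argue by induction on $\#P$, reducing to the basic single-variable semi-direct product via the recursive decomposition in Remark~\ref{rem:indformulaformultsemiprod}. For the base case $\#P=0$, note that $\ltimes\fF|_\emptyset^Q=\cF_Q$, so the hypothesis on $\cF_Q$ immediately gives the desired strict exact subcategory of $\Cub^\emptyset\cA=\cA$. For the inductive step with $\#P\geq 1$, fix $u\in P$ and invoke the formula
$$\ltimes\fF|_P^Q \;=\; \bigl(\ltimes\fF|_{P\ssm\{u\}}^{Q\coprod\{u\}}\bigr) \ltimes \bigl(\ltimes\fF|_{P\ssm\{u\}}^{Q}\bigr),$$
which realizes $\ltimes\fF|_P^Q$ as a semi-direct product inside $\Cub^{\{u\}}\bigl(\Cub^{P\ssm\{u\}}\cA\bigr)=\Cub^P\cA$. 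The inductive hypothesis makes both factors strict exact subcategories of the abelian category $\Cub^{P\ssm\{u\}}\cA$, each inheriting whichever of the two closure properties (extensions, or sub-/quotient-objects and direct sums) the underlying $\cF_T$'s satisfy. Thus everything reduces to the following single-variable claim.

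\emph{Claim.} Let $\cB$ be an abelian category and $\cE,\cF$ full subcategories, each closed under extensions in $\cB$, or each closed in $\cB$ under sub-objects, quotient objects, and direct sums. Then $\cF\ltimes\cE\subset\Ar\cB$ is a strict exact subcategory, inheriting the same closure type.

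To prove the Claim, I would take a short exact sequence in $\Ar\cB$
$$(y_1\rinf z_1)\rinf(y_2\rinf z_2)\rdef(y_3\rinf z_3),$$
which decomposes as two compatible short exact sequences $y_1\rinf y_2\rdef y_3$ and $z_1\rinf z_2\rdef z_3$ in $\cB$ linked by the verticals $y_i\rinf z_i$. The snake lemma applied to the pair of short exact columns will show that whenever two of the three vertical maps are monic the third is as well, and will deliver a short exact sequence of cokernels $z_1/y_1\rinf z_2/y_2\rdef z_3/y_3$ in $\cB$. In the extension-closure case, closure of $\cE$ and $\cF$ under extensions propagates the defining conditions of $\cF\ltimes\cE$ from the outer terms to the middle term, establishing extension-closure. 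In the sub-/quotient-/sum-closure case, the same snake-lemma exact sequences propagate the conditions from the middle term outward, using closure of $\cE$ and $\cF$ under sub- and quotient-objects. Closure under finite direct sums in either case is immediate componentwise, and the inclusion into $\Ar\cB$ reflects exactness because $\Ar\cB$ is abelian.

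The main obstacle is purely organizational: at every induction step one must simultaneously track admissibility (i.e., that every boundary map of the cube is monic) and the membership of all vertices of each iterated $\Homo^T_0(-)$ in the prescribed $\cF_{Q\coprod T}$, and verify that the snake-lemma comparison preserves both conditions. No individual step is deep; the argument is essentially the abstract, arbitrary-parameter version of \cite[3.20]{Moc11}, with Remark~\ref{rem:indformulaformultsemiprod} doing the combinatorial bookkeeping automatically.
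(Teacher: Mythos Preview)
The paper does not give a proof of this lemma; it simply cites \cite[3.20]{Moc11} and appends \qed. Your sketch supplies the argument that the paper omits, and it follows the natural strategy: induct on $\#P$ via the recursive formula of Remark~\ref{rem:indformulaformultsemiprod}, reducing to a single-step semi-direct product $\cF\ltimes\cE$ inside the abelian arrow category, where the snake lemma does the work. This is presumably close to what \cite{Moc11} does, and the snake-lemma verification you outline (monicity of the middle vertical from vanishing of the outer kernels, then the induced short exact sequence of cokernels) is exactly right.

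One small point worth flagging: your induction carries a closure property (extension-closed, or closed under sub-/quotient-objects and direct sums) through the recursion, and your single-variable Claim assumes $\cE$ and $\cF$ satisfy the \emph{same} one of the two alternatives. The lemma as phrased could be read as allowing different $\cF_T$ to satisfy different alternatives; in that mixed situation the inherited closure property of $\ltimes\fF|_{P\ssm\{u\}}^{Q}$ is not obvious, and the inductive step as written would stall (for instance, closure under sub- and quotient-objects and direct sums does not imply closure under extensions, so you cannot simply promote one alternative to the other). In practice, and in every application in this paper, the hypothesis is uniform across all $T$, so this is a cosmetic issue rather than a genuine gap; but you should state explicitly that you treat the two alternatives separately and uniformly over all $\cF_T$.
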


\sn
In the rest of this section, 
let $\fF=\{\cF_T\}_{T\in\cP(S)}$ be an exact family of 
$\cA$. 

\begin{lemdf}
\label{lemdf:three exact functors}
For any non-empty subset $W$ of $U$ 
and $j=0$ or $1$, 
we will define 
$$\ext^V_{U\ssm W,W}:\ltimes\fF|_{U\ssm W}^V\to \ltimes\fF|_U^V,$$ 
$$\res_{U,W}^{V,j}:\ltimes\fF|_U^V \to \ltimes\fF|_{U\ssm W}^{V} 
\ \ \ \text{and}$$ 
$$\Homo_U^{V,W}:\ltimes\fF|_U^V\to\ltimes\fF|_{U\ssm W}^{V\coprod W}$$ 
to be exact functors by induction on the cardinality of $W$ 
and call them the 
{\bf extension functor}, the {\bf restriction functor} and the 
{\bf homology functor}. 
First assume that $W$ is a singleton $W=\{w\}$. 
Then we have the equality $\ltimes\fF|_U^V=
(\ltimes \fF|_{U\ssm W}^{V\coprod W})\ltimes 
(\ltimes\fF|_{U\ssm W}^{V})$ 
by \ref{rem:indformulaformultsemiprod}. 
Regarding $\ltimes \fF|_U^V$ 
as a subcategory of one dimensional cubes in 
$\ltimes \fF|^V_{U\ssm W}$, 
we define the three exact functors 
$\ext^V_{U\ssm W,W}$, 
$\res_{U,W}^{V,j}$ and 
$\Homo_U^{V,W}$ 
by sending an object 
$x$ in $\ltimes\fF|_{U\ssm W}^V$ 
and $[x_1 \onto{d^x} x_0]$ in $\ltimes \fF|_U^V$ 
to $[x\onto{\id_x}x]$, $x_j$ and $\coker d^x$ respectively.\\
Next for any non-trivial disjoint decomposition of $W=W_1\coprod W_2$, 
we put 
$$\ext^V_{U\ssm W,W}:=\ext^V_{U\ssm W_2,W_2}\ext^V_{U\ssm W,W_1},$$
$$\res^{V,j}_{U,W}:=\res^{V,j}_{U\ssm W_1,W_2}
\res^{V,j}_{U,W_1} \ \ \ \text{and}$$
$$\Homo_U^{V,W}:=\Homo_{U\ssm W_1}^{V\coprod W_1,W_2}\Homo_U^{V,W_1}.$$
Then the definitions of $\ext^V_{U\ssm W,W}$, 
$\res_{U,W}^{V,j}$ and $\Homo_U^{V,W}$ 
do not depend upon a choice of disjoint decomposition of $W$ 
up to canonical isomorphisms and they are exact functors. 
\qed
\end{lemdf}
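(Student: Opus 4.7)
The plan is to dispatch the base case $W=\{w\}$ first, then give intrinsic vertex-wise formulas for $\ext$, $\res$ and $\Homo$ on arbitrary $W$, and finally use these formulas to read off both exactness and independence of the chosen decomposition.

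For the base case, the identification $\ltimes\fF|_U^V = (\ltimes\fF|_{U\ssm W}^{V\coprod W}) \ltimes (\ltimes\fF|_{U\ssm W}^V)$ from \ref{rem:indformulaformultsemiprod} lets one view an object of $\ltimes\fF|_U^V$ as an admissible monomorphism $x_1 \rinf x_0$ in $\ltimes\fF|_{U\ssm W}^V$ whose cokernel lies in $\ltimes\fF|_{U\ssm W}^{V\coprod W}$. Under this identification the four functors correspond to $x \mapsto [x \onto{\id_x} x]$, the domain, the codomain, and the cokernel respectively. Given an admissible short exact sequence in $\ltimes\fF|_U^V$, the exact family hypothesis applied to both $\ltimes\fF|_{U\ssm W}^V$ and $\ltimes\fF|_{U\ssm W}^{V\coprod W}$ (cf.\ \ref{lem:exact family}) yields admissible short exact sequences on sub, quotient and cokernel layers, whence each of the four functors is exact.

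For general $W$, I would introduce the following intrinsic descriptions and verify by induction on $\#W$ that any inductive composition arising from a decomposition $W=W_1\coprod W_2$ recovers them:
\begin{itemize}
\item $\ext^V_{U\ssm W, W}(x)_T = x_{T\cap(U\ssm W)}$ for $T\in\cP(U)$, with identity boundary morphisms in the $W$-directions and those of $x$ otherwise;
\item $\res^{V,0}_{U,W}(x)_T = x_T$ and $\res^{V,1}_{U,W}(x)_T = x_{T\coprod W}$ for $T\in\cP(U\ssm W)$;
\item $\Homo^{V,W}_U(x) = \Homo^W_0(x)$, the iterated $0$-th homology of \ref{para:admcube}.
\end{itemize}
For $\ext$ and $\res$ independence is essentially tautological once the vertex-wise formulas are established, because restriction to a face and extension by identities in disjoint directions commute strictly on the nose. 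Exactness is likewise immediate, since these functors are vertex-wise projections and cubes carry the componentwise exact structure.

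The genuine content lies in the $\Homo$ case. Independence of the decomposition reduces to the symmetry isomorphism $\Homo^{i_1}_0 \Homo^{i_2}_0(-) \isoto \Homo^{i_2}_0 \Homo^{i_1}_0(-)$ on admissible cubes already quoted in \ref{para:admcube}, applied iteratively along the elements of $W_1$ and $W_2$; one has to check that at each stage the intermediate object still belongs to the appropriate $\ltimes\fF|_{?}^{?}$, which is guaranteed by the exact family axiom because admissibility of a cube is preserved by iterated $\Homo^{\bullet}_0$ and each vertex lies in the required $\cF_{T}$. Exactness of $\Homo^{V,W}_U$ then follows either by composing the exact base-case functors or, directly, by observing that on an admissible short exact sequence the iterated cokernel remains admissible termwise by the exact family condition.

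The main obstacle is precisely this last verification: namely, tracking the successive parameter shifts $(V,U)\rightsquigarrow(V\coprod W_1,U\ssm W_1)\rightsquigarrow(V\coprod W,U\ssm W)$ in the inductive composition and confirming that the symmetry isomorphism of $\Homo^W_0$ is compatible with the multi-semi-direct-product identifications used to set up the induction. Once this bookkeeping is in place, the canonical isomorphisms asserted in the lemma are forced by universal properties of cokernels, and the three exact functors are well defined.
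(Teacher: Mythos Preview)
The paper gives no proof of this Lemma-Definition at all: the statement ends with \qed\ immediately after the inductive formulas, so there is nothing to compare against beyond the definitions themselves. Your proposal is correct and supplies exactly the verifications the paper suppresses; in particular, your observation that $\Homo_U^{V,W}$ coincides with the iterated $\Homo_0^W$ of \ref{para:admcube} and that independence of the decomposition is the symmetry isomorphism recorded there (citing \cite[3.11]{Moc11}) is the intended mechanism, and your reduction of exactness to the vertex-wise exact structure on $\Cub^U\cA$ together with the strictness clause in the definition of an exact family (\ref{df:exact family}) is the right argument.
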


\sn
we can easily prove the following lemma. 

\begin{lem}
\label{lem:compat of three exact functors}
$\mathrm{(1)}$ 
For any non-empty subset $W$ of $U$, 
we have the equality 
$$\res_{U,W}^{V,j}\ext_{U\ssm W}^{V,W}=\id_{\ltimes \fF|^V_{U\ssm W}}$$ 
for $j=0$, $1$.\\
$\mathrm{(2)}$ 
For any pair of disjoint non-empty subsets $W_1$ and $W_2$ of $U$, 
we have the equality 
$$\Homo_U^{V,W_1}\ext_{U\ssm W_2}^{V,W_2}=
\ext_{U\ssm W_1}^{V\coprod W_1,W_2}\Homo_{U\ssm W_2}^{V,W_1}.$$
\qed
\end{lem}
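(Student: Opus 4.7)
The plan is to prove both assertions by induction on cardinality, reducing everything to the singleton case where the formulas from Lemma-Definition~\ref{lemdf:three exact functors} can be checked directly. First I would handle assertion (1) by induction on $\#W$. In the base case $W=\{w\}$, by definition $\ext^V_{U\ssm W,W}$ sends $x\in\ltimes\fF|_{U\ssm W}^V$ to the one-dimensional cube $[x\overset{\id_x}{\to} x]$, and $\res^{V,j}_{U,W}$ picks out its $j$-th vertex in direction $w$, which equals $x$ for either $j=0$ or $j=1$; compatibility on morphisms is immediate from the same description.

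For the inductive step with $\#W\geq 2$, I would select any nontrivial disjoint decomposition $W=W_1\coprod W_2$ and unwind
$$\res_{U,W}^{V,j}\ext_{U\ssm W}^{V,W} = \res^{V,j}_{U\ssm W_1,W_2}\,\res^{V,j}_{U,W_1}\,\ext^V_{U\ssm W_2,W_2}\,\ext^V_{U\ssm W,W_1}.$$
Collapsing this requires an auxiliary ``mixed'' identity $\res^{V,j}_{U,W_1}\ext^V_{U\ssm W_2,W_2}=\ext^V_{U\ssm W,W_2}\res^{V,j}_{U\ssm W_2,W_1}$, which is itself reduced to the singleton case and is immediate there, since extension along $W_2$ and restriction along $W_1$ act on disjoint coordinate directions and hence commute up to the canonical identifications provided by \ref{rem:indformulaformultsemiprod}. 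Applying the induction hypothesis then yields $\id_{\ltimes\fF|^V_{U\ssm W}}$.

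For assertion (2), I would proceed similarly: induct on $\#W_1+\#W_2$ and reduce to the case $W_1=\{w_1\}$, $W_2=\{w_2\}$. In this reduced case, for $x$ in $\ltimes\fF|_{U\ssm W_2}^V$, the cube $\ext^{V,W_2}_{U\ssm W_2}(x)$ has an identity morphism as its $w_2$-boundary at every face, so taking cokernels along the orthogonal $w_1$-direction yields a cube whose $w_2$-boundaries remain identities; this object coincides, on the nose, with $\ext^{V\coprod W_1,W_2}_{U\ssm W_1}\Homo^{V,W_1}_{U\ssm W_2}(x)$. The inductive step is pure bookkeeping using the independence of the composite operations from the chosen disjoint decomposition, as established in \ref{lemdf:three exact functors}.

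The main obstacle, though not conceptual, is careful bookkeeping of coordinate directions. Disjointness of $W_1,W_2\subset U$ guarantees that all the operations involved act on independent coordinates, so every commutation reduces to the trivial observations that an identity morphism has an identity cokernel and that restriction to a vertex in one direction commutes with taking cokernels in another direction. No genuinely new input beyond the definitions is required.
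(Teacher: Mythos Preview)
Your proposal is correct and is exactly the kind of argument the paper has in mind: note that the paper gives no proof at all for this lemma, simply prefacing it with ``we can easily prove the following lemma'' and closing with \qed. Your inductive reduction to the singleton case, together with the observation that operations in disjoint coordinate directions commute (up to the canonical identifications of \ref{rem:indformulaformultsemiprod} and the independence-of-decomposition clause in \ref{lemdf:three exact functors}), is precisely the routine verification being omitted.
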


\sn
In the rest of this section, 
let $w$ be a class of morphisms in $\cF_S$ and 
assume that for any zero objects $0$ and $0'$ in $\cF_S$, 
the canonical morphism $0 \to 0'$ is in $w$. 

\begin{df}
\label{df:weak equivalences on ltimesfF}
We will define the class of {\bf total weak equivalences} 
$tw(\ltimes \fF|_U^V)$ or simply $tw$ 
in $\ltimes \fF|_U^V$. 
First assume that $S=U\coprod V$. 
Then $tw$ is defined by pull-back of $w$ by the exact functor 
$\Homo_U^{V,U}:\ltimes\fF|_U^V \to \cF_S$.\\
Next we assume that $U\coprod V\neq S$. 
Then $tw$ is defined 
by pull-back of $tw(\ltimes \fF|_{S\ssm V}^V)$ by the exact functor 
$\ext_{U,S\ssm(U\coprod V)}^V:\ltimes\fF|_U^V \to \ltimes \fF|_{S\ssm V}^V$.
\end{df}

\begin{rem}
\label{rem:total qis}
$\mathrm{(1)}$ 
(\cf. \cite[3.19]{Moc11}). 
For any element $u$ of $U$, we have the equality 
$${(\ltimes \fF|_U^V)}^{tw}
={(\ltimes \fF|_{U\ssm\{u\}}^{V\coprod\{u\}})}^{tw}
\ltimes(\ltimes \fF|^V_{U\ssm\{u\}}).$$
$\mathrm{(2)}$ 
For any non-empty subset $W$ of $U$ and $j=0$ or $1$, 
the exact functors $\ext^V_{U\ssm W,W}$, $\res^{V,j}_{U,W}$ and $\Homo_U^{V,W}$ 
preserve $tw$.
\end{rem}

\sn
We can easily prove the following lemma. 

\begin{lem}
\label{lem:denpan}
if $w$ satisfies the extensional 
{\rm (}resp. gluing, cogluing{\rm )} axiom, 
then $tw$ in $\ltimes \fF|_U^V$ also satisfies the axiom. 
\qed
\end{lem}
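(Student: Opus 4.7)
The plan is to express $tw$ as a single pull-back of $w$ along an exact functor, and then to reduce the statement to the elementary observation that pull-backs under exact functors preserve each of the three axioms. Unfolding Definition~\ref{df:weak equivalences on ltimesfF} by applying the non-base case once and then the base case, one obtains
$$tw(\ltimes\fF|_U^V) \;=\; F^{-1}(w),$$
where
$$F \;:=\; \Homo_{S\ssm V}^{V,\, S\ssm V}\;\circ\;\ext_{U,\, S\ssm(U\sqcup V)}^V \;\colon\; \ltimes\fF|_U^V \longrightarrow \cF_S,$$
with the convention that the $\ext$-factor is the identity when $U\sqcup V = S$. By Lemma-Definition~\ref{lemdf:three exact functors}, $F$ is a composition of exact functors and is itself exact.

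It therefore suffices to verify the following general statement: if $G\colon \cE \to \cE'$ is an exact functor between exact categories and $v$ is a class of morphisms in $\cE'$ satisfying the extensional (resp.\ gluing, cogluing) axiom, then $G^{-1}(v)$ in $\cE$ satisfies the same axiom. For the extensional axiom, a map of conflations $(f,g,h)\colon (x\rinf y\rdef z)\to(x'\rinf y'\rdef z')$ in $\cE$ with $f,h\in G^{-1}(v)$ is sent by $G$ to a map of conflations in $\cE'$ with outer legs in $v$; extensionality of $v$ forces $Gg\in v$, hence $g\in G^{-1}(v)$. For the gluing axiom, the pushout $y\sqcup_x z$ of a diagram $y\linc x\to z$ along an admissible monomorphism is characterized by the conflation $x\rinf y\oplus z\rdef y\sqcup_x z$, which $G$ preserves, so $G(y\sqcup_x z) \cong Gy\sqcup_{Gx} Gz$, and the gluing axiom for $v$ transports immediately to $G^{-1}(v)$. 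The cogluing axiom is handled dually.

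The only point that is not purely formal is the compatibility of $G$ with the pushouts (resp.\ pullbacks) appearing in the gluing (resp.\ cogluing) axiom. This is automatic because such pushouts (resp.\ pullbacks) in an exact category are characterized by the associated conflation, and an exact functor preserves conflations by definition. Hence no genuine obstacle is anticipated, and the proof reduces to bookkeeping along the exact functor $F$.
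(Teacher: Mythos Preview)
Your proof is correct. The paper omits the proof of this lemma entirely (it is stated with a bare \qed as an easy exercise), and your argument --- writing $tw$ as the pull-back of $w$ along the exact functor $F=\Homo_{S\ssm V}^{V,S\ssm V}\circ\ext_{U,S\ssm(U\sqcup V)}^{V}$ and then checking that each of the extensional, gluing, and cogluing axioms is stable under pull-back along an exact functor --- is exactly the natural elaboration the author had in mind.
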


\begin{df}
\label{df:comaptible weak eq}
We say that $w$ is {\bf compatible with} $\fF$ 
if for any disjoint pair of subsets $P$ and $Q$ of $S$, 
$(\ltimes\fF|_P^Q,tw)$ is a strict relative exact category. 
\end{df}

\begin{cor}
\label{cor:denpan}
If either $(\cF_S,w)$ satisfies the extensional axiom or 
$(\cF_S,w)$ is a Waldhausen exact category, 
then $w$ is compatilbe with $\fF$. 
\end{cor}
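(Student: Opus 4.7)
The strategy is to reduce to the results already on hand: Lemma~\ref{lem:denpan} transports axioms from $w$ on $\cF_S$ to $tw$ on $\ltimes\fF|_P^Q$, and Proposition~\ref{prop:strict exact categories} turns those axioms into strictness. So the task is mostly bookkeeping to verify that the objects $(\ltimes\fF|_P^Q, tw)$ actually are relative exact categories in each case, and then to quote these two results.

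First, I will fix a disjoint pair $P, Q \subset S$ and check that $(\ltimes\fF|_P^Q, tw)$ is a relative exact category. The underlying category is a strict exact subcategory of $\Cub^P\cA$ by the exact family hypothesis on $\fF$, and the zero cube is a specific zero object. For the class $tw$, I will note that in either hypothesis of the corollary, Lemma~\ref{lem:isom closed} $\mathrm{(1)}$ gives $i_{\cF_S} \subset w$, in particular $w$ contains identities; since $tw$ is defined as the pullback of $w$ along a composition of the exact functors $\ext$ and $\Homo$ (Remark~\ref{rem:total qis} $\mathrm{(2)}$), it is closed under finite compositions and contains identities as well.

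Next I will split on the two hypotheses. In the extensional case, Lemma~\ref{lem:denpan} immediately gives that $tw$ in $\ltimes\fF|_P^Q$ satisfies the extensional axiom, and then Proposition~\ref{prop:strict exact categories} concludes that $(\ltimes\fF|_P^Q, tw)$ is strict. In the Waldhausen case, $w$ satisfies both the gluing and the cogluing axioms on $\cF_S$, so applying Lemma~\ref{lem:denpan} in both variants shows that $tw$ satisfies gluing and cogluing on $\ltimes\fF|_P^Q$; hence $(\ltimes\fF|_P^Q, tw)$ is itself a Waldhausen exact category, and Proposition~\ref{prop:strict exact categories} again gives strictness.

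No step should present a genuine obstacle; the only mildly subtle point is making sure that Lemma~\ref{lem:denpan} is applicable in the form I need on $\ltimes\fF|_P^Q$ rather than on $\cF_S$, and this is exactly what that lemma asserts for arbitrary disjoint $U, V$. Since the strictness is required for every disjoint pair $P, Q$, I will emphasize that all of the above arguments are uniform in $(P,Q)$, so the conclusion that $w$ is compatible with $\fF$ in the sense of Definition~\ref{df:comaptible weak eq} follows at once.
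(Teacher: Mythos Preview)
Your proposal is correct and follows essentially the same route as the paper: apply Lemma~\ref{lem:denpan} to transport the relevant axiom(s) from $(\cF_S,w)$ to $(\ltimes\fF|_P^Q,tw)$, then invoke Proposition~\ref{prop:strict exact categories} to obtain strictness. The paper's proof is more terse and does not spell out the bookkeeping that $(\ltimes\fF|_P^Q,tw)$ is a relative exact category or that the Waldhausen case requires both gluing and cogluing, but these are exactly the details you fill in.
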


\begin{proof}[\bf Proof]
If either $(\cF_S,w)$ satisfies the extensional axiom or 
$(\cF_S,w)$ is a Waldhausen exact category, 
then $(\ltimes\fF|_{P}^{Q},tw)$ 
is also for any pair of disjoint subsets $P$ and $Q$ of $S$ 
by \ref{lem:denpan}. 
Hence $(\ltimes\fF|_{P}^{Q},tw)$ is 
strict by \ref{prop:strict exact categories}. 
\end{proof}

\begin{df}[\bf Adroit system] 
\label{df:good triple}
(\cf \cite[2.20]{Moc11}). 
An {\bf adroit} (resp. a {\bf strongly adroit}) {\bf system} in 
an abelian category $\cA$ is a system $\cX=(\cE_1,\cE_2,\cF)$ consisting of 
strict exact subcategories $\cE_1\rinc \cE_2\linc \cF$ in $\cA$ and 
they satisfies the following axioms 
{\bf (Adr 1)}, {\bf (Adr 2)}, {\bf (Adr 3)} and 
{\bf (Adr 4)} 
(resp. {\bf (Adr 1)}, {\bf (Adr 2)}, 
{\bf (Adr 3)} and  {\bf (Adr 5)}). 

\sn
{\bf (Adr 1)} 
$\cF\ltimes \cE_1$ and $\cF\ltimes\cE_2$ are 
strict exact subcategories of $\Ch_b(\cA)$. \\
{\bf (Adr 2)} 
$\cE_1$ is closed under extensions in $\cE_2$.\\
{\bf (Adr 3)} 
Let 
$x \rinf y \rdef z$ 
be an admissible short exact sequence in $\cA$. 
Assume that $y$ is isomorphic 
to an object in $\cE_1$ 
and $z$ is isomorphic 
to an object in $\cE_1$ or $\cF$. 
Then $x$ is isomorphic 
to an object in $\cE_1$.\\
{\bf (Adr 4)} 
For any object $z$ in $\cE_2$, 
there exists an object $y$ in $\cE_1$ 
and an admissible epimorphism $y \rdef z$.\\ 
{\bf (Adr 5)} 
For any non-negative integer $m$ 
and an object $z$ in $\cE_2^{[m]}$, 
there exists an object $y$ in $\cE_1^{[m]}$ 
and an admissible epimorphism $y \rdef z$.
\end{df}

\begin{rem}
\label{rem:adroit system}
Let $(\cE_1,\cE_2,\cF)$ 
be an adroit (resp. a strongly adroit) system of $\cA$ 
and $\calH$ a strict exact subcategory of $\cF$. 
Then a triple $(\cE_1,\cE_2,\calH)$ 
is an adroit (resp. a strongly adroit) system.
\end{rem}

\sn
Next theorem is a variation of Theorem~2.21 in \cite{Moc11}.

\begin{thm}
\label{thm:adroit system}
Let $\cX=(\cE_1, \cE_2, \cF)$ be an adroit system and 
$v$ a class of morphisms in $\cF$ such that 
either $v$ satisfies the extension axiom or 
$(\cF,v)$ is a Waldhausen exact category. 
Then\\
$\mathrm{(1)}$ 
The canonical inclusion functors
$$\cF\ltimes \cE_1 \rinc \cF\ltimes \cE_2 \text{  and  }  
\cF^v\ltimes \cE_1 \rinc \cF^v\ltimes \cE_2$$
satisfy the resolution conditions. 
In particular, the inclusion functor 
$$(\cF\ltimes \cE_1,tv) \to (\cF\ltimes \cE_2,tv)$$ 
is a derived equivalence.\\
$\mathrm{(2)}$ 
Assume that $(\cF,v)$ is a consistent relative exact category. 
Then the relative exact functor 
$\Homo_0:(\cF\ltimes\cE_1,tw)\to(\cF,w)$ 
is a derived equivalence.\\
$\mathrm{(3)}$ 
The exact functors $\cE_1\to \cF\ltimes\cE_1$, $x\mapsto [x\onto{\id_x}x]$ 
and $\Homo_0:\cF\ltimes\cE_1 \to \cF$ yield 
a right quasi-split exact sequence 
$(\cE_1,i_{\cE_1})\to (\cF\ltimes\cE_1,i_{\cF\ltimes\cE_1})\to 
(\cF,i_{\cF})$. 
\end{thm}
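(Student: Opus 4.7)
The plan is to treat the three parts in order: (1) and (2) will reduce to Theorem~\ref{thm:resol thms}(1) via verification of appropriate resolution conditions, while (3) will follow from a direct construction of adjoints at the derived level.

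For (1), I will check the resolution conditions (Res~1)--(Res~3) for the inclusion $\cF \ltimes \cE_1 \rinc \cF \ltimes \cE_2$ (and analogously for $\cF^v$, using Remark~\ref{rem:adroit system}). Condition (Res~1) is immediate from (Adr~2) applied pointwise to a componentwise admissible short exact sequence of $1$-cubes; the cube-cokernel remains in $\cF$ since $\cF$ is closed under extensions. For (Res~2), given $[y_1 \rinf y_0] \in \cF \ltimes \cE_2$ with cokernel $b \in \cF$, invoke (Adr~4) to obtain $z_0 \rdef y_0$ with $z_0 \in \cE_1$, and set $z_1 := \ker(z_0 \rdef y_0 \rdef b)$; the short exact sequence $z_1 \rinf z_0 \rdef b$ with $z_0 \in \cE_1$ and $b \in \cF$ then forces $z_1 \in \cE_1$ by (Adr~3), and a snake-lemma chase shows the resulting map $[z_1 \rinf z_0] \rdef [y_1 \rinf y_0]$ is admissible. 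For (Res~3), the $3\times 3$ diagram associated to an admissible short exact sequence with middle in $\cF \ltimes \cE_1$ features the intermediate subobject $u := x_0 + y_1 \subseteq y_0$: one computes $y_0 / u \cong c \in \cF$, so (Adr~3) applied to $u \rinf y_0 \rdef c$ yields $u \in \cE_1$, and threading further applications of (Adr~3) through the diagram then produces $x_0, x_1 \in \cE_1$. Theorem~\ref{thm:resol thms}(1) delivers the desired derived equivalences.

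For (2), observe that $\Homo_0$ sends $tv$ to $v$ (Remark~\ref{rem:total qis}(2)) and is thus a relative exact functor. Essential surjectivity at the derived level follows by applying the construction of (Res~2) to produce, for each $a \in \cF$, a cube in $\cF \ltimes \cE_1$ with cokernel $a$. Fully faithfulness uses the identification, via the total functor, of a cube $[x_1 \rinf x_0]$ with the complex $[x_1 \to x_0]$ quasi-isomorphic to its cokernel, together with consistency of $v$ and Theorem~\ref{thm:tw=qis+lw}(3), so that Hom groups in the two derived categories correspond.

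For (3), take the source functor $q: [x_1 \rinf x_0] \mapsto x_1$ as the candidate right adjoint of $i$: the unit $\id_{\cE_1} \isoto qi$ is the identity, while the counit $\varepsilon: iq \to \id$ at a cube is the morphism $[x_1 \overset{\id}{\to} x_1] \to [x_1 \rinf x_0]$ induced by the boundary. The cokernel of $\varepsilon$ is the functor $[x_1 \rinf x_0] \mapsto [0 \rinf a]$ with $a$ the cube's cokernel in $\cF$; although $[0 \rinf a]$ need not lie in $\cF \ltimes \cE_1$ literally, at the level of $\calD_b(\cF \ltimes \cE_1)$ it is represented by a resolution $[z_1 \rinf z_0]$ as in (Res~2), giving the derived right adjoint of $\calD_b(\Homo_0)$ and the required exact triangle. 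Lemma-Definition~\ref{lemdf:quasi-split exact seq}(3) combined with Remark~\ref{rem:quasi-split} then completes the right quasi-splitting. The chief obstacle will be finishing (Res~3): the $3 \times 3$ diagram does not immediately place the natural candidate quotients (such as $z_0$ or $z_1$) in $\cE_1 \cup \cF$ for direct application of (Adr~3), and while the pushout subobject $u$ resolves one layer, completing the reduction requires further iteration patterned on the argument of \cite[Theorem~2.21]{Moc11}.
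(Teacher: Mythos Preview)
Your approach to $\mathrm{(1)}$ is essentially the paper's: the paper cites \cite[2.21]{Moc11} for the resolution conditions on $\cF\ltimes\cE_1\rinc\cF\ltimes\cE_2$, and you are reconstructing that argument. The reduction to $\cF^v$ via Remark~\ref{rem:adroit system} and the final appeal to Theorem~\ref{thm:resol thms}(1) match exactly.

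For $\mathrm{(2)}$ and $\mathrm{(3)}$, however, the paper takes a different and much cleaner route that you are missing. The key structural fact about an adroit system is that $\cF\subset\cE_2$. Consequently the naive section $s:\cF\to\cF\ltimes\cE_2$, $a\mapsto[0\to a]$, is a genuine exact functor (the vertex $a$ lies in $\cE_2$), and the canonical quotient map yields a relative natural equivalence $\id_{\cF\ltimes\cE_2}\to s\Homo_0$. So $\Homo_0:(\cF\ltimes\cE_2,tv)\to(\cF,v)$ is a \emph{categorical homotopy equivalence}, hence a derived equivalence by Corollary~\ref{cor:Ch_b is 2-functor}(2); composing with $\mathrm{(1)}$ gives $\mathrm{(2)}$ immediately. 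For $\mathrm{(3)}$, the same section $s$ makes the $\cE_2$-sequence a genuine right quasi-split exact sequence of relative exact categories (this is \cite[2.19]{Moc11}), hence derived right quasi-split by Remark~\ref{rem:quasi split is derived quasi-split}; the derived equivalences from $\mathrm{(1)}$ then transfer this to the $\cE_1$-sequence.

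Your direct attack on $\mathrm{(2)}$ has a genuine gap: the fully-faithfulness claim is not justified. You need to show that
\[
\calD_b(\cF\ltimes\cE_1)\big/\calD_b(\cF^v\ltimes\cE_1)\;\longrightarrow\;\calD_b(\cF)\big/\calD_b(\cF^v)
\]
is fully faithful, and your appeal to the total functor and Theorem~\ref{thm:tw=qis+lw}(3) does not apply here, since $\cF\ltimes\cE_1$ carries no bicomplicial structure and the cube $[x_1\rinf x_0]$ does not literally live in $\Ch_b(\cF)$. Similarly, in $\mathrm{(3)}$ your counit cokernel $[0\to a]$ is not an object of $\cF\ltimes\cE_1$, so you have no admissible exact sequence of endofunctors on that category; you are then forced to build triangle adjoints by hand at the derived level, which your sketch does not complete. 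Both difficulties evaporate once you pass through $\cE_2$.
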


\begin{proof}[\bf Proof]
$\mathrm{(1)}$ 
In \cite[2.21]{Moc11}, 
we prove that if $\cX$ is an adroit system, 
then the inclusion functor $\cF\ltimes \cE_1\rinc \cF\ltimes \cE_2$ 
satisfies the resolution conditions. 
Notice that if $\cX=(\cE_1,\cE_2,\cF)$ is an adroit 
system, 
then $(\cE_1,\cE_2,\cF^w)$ 
is also an adroit system by \ref{prop:strict exact categories} 
and \ref{rem:adroit system}. 
Therefore we learn that the inclusion functor 
$\cF^w\ltimes \cE_1\rinc \cF^w\ltimes \cE_2$ 
also satisfies the resolution conditions. 
The last assertion follows from 
the derived resolution theorem in \ref{thm:resol thms}. 

\sn
$\mathrm{(2)}$ 
First we prove that 
the exact functor 
$\Homo_0:(\cF\ltimes \cE_2,tv)\to (\cF,v)$ 
is a categorical homotopy equivalence. 
This functor has a section $s:\cF \to \cF\ltimes \cE_2$, $x \mapsto [0\to x]$. 
Moreover for any $x=[x_1\onto{d^x} x_0]$ in $\cF\ltimes \cE$, 
the canonical quotient morphism $x_1 \rdef \Homo_0(x)$ 
induces the relative natural equivalence $\id_{\cF\ltimes\cE_2} \to s\Homo_0$. 
Hence $\Homo_0:(\cF\ltimes \cE_2,tv) \to (\cF,v)$ 
is a categorical homotopy equivalence and it is a derived equivalence 
by \ref{cor:Ch_b is 2-functor} $\mathrm{(2)}$. 
It turns out that 
the composition $(\cF\ltimes\cE_1,tw)\to (\cF\ltimes \cE_2,tw) 
\onto{\Homo_0} (\cF,v)$ is also a derived equivalence by $\mathrm{(1)}$. 

\sn
$\mathrm{(3)}$ 
Consider the commutative diagram below:
$${\footnotesize{\xymatrix{
(\cE_1,i_{\cE_1}) \ar[r] \ar[d] &
(\cF\ltimes\cE_1,i_{\cF\ltimes\cE_1}) \ar[r] \ar[d] & 
(\cF,i_{\cF}) \ar@{=}[d]\\
(\cE_1,i_{\cE_2}) \ar[r]  &
(\cF\ltimes\cE_1,i_{\cF\ltimes\cE_2}) \ar[r] & 
(\cF,i_{\cF}). 
}}}$$
Here the vertical lines are induced from the inclusion functors 
and they are derived equivalences by $\mathrm{(1)}$. 
The bottom horizontal 
line is a right quasi-split exact sequence by \cite[2.19]{Moc11}. 
Hence we obtain the result. 
\end{proof}

\begin{lem}
\label{lem:solid denpan}
Let $\cF\rinc \cE$ be strict exact subcategories of $\cA$ and $v$ 
a class of morphisms in $\cF$. 
If $(\cF,v)$ is solid, then $(\cF\ltimes\cE,tv)$ is also solid.
\end{lem}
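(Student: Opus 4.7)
The plan is to transfer solidity from $(\cF,v)$ to $(\cF\ltimes\cE,tv)$ via the exact projection $\Homo_0:\cF\ltimes\cE\to \cF$ sending $[z_1\rinf z_0]$ to its cokernel, together with its exact section $\iota_0:\cF\to\cF\ltimes\cE$, $z\mapsto[0\rinf z]$. Since $\Homo_0\iota_0=\id_\cF$, the functor $\iota_0$ carries $\cF^v$ into $(\cF\ltimes\cE)^{tv}$, and being exact it preserves acyclic complexes and hence quasi-isomorphisms.

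Given $f:x\to y$ in $tv$, by definition $\bar f:=\Homo_0 f$ lies in $v$, so by solidity of $(\cF,v)$ there is a zig-zag of quasi-isomorphisms in $\Ch_b(\cF)$ connecting $\Cone\bar f$ to some $Z\in\Ch_b(\cF^v)$. Applying $\iota_0$ transports this to a zig-zag in $\Ch_b(\cF\ltimes\cE)$ from $\iota_0\Cone\bar f=\Cone(\iota_0\bar f)$ to $\iota_0 Z\in\Ch_b((\cF\ltimes\cE)^{tv})$. Independently, the natural admissible short exact sequence of endofunctors $\iota_1\pi_1\rinf\id_{\cF\ltimes\cE}\rdef\iota_0\Homo_0$, where $\iota_1(z):=[z\onto{\id}z]$ and $\pi_1[z_1\rinf z_0]:=z_1$, applied entrywise to $\Cone f$ yields a short exact sequence of complexes
\[
K\rinf \Cone f\rdef \iota_0\Cone\bar f
\]
in $\Ch_b(\cF\ltimes\cE)$, in which $K=\Cone(\iota_1 f_1)$ satisfies $\Homo_0 K=0$, so $K\in\Ch_b((\cF\ltimes\cE)^{tv})$.

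To combine the two pieces, I would propagate the zig-zag from $\iota_0\Cone\bar f$ to $\iota_0 Z$ through the above short exact sequence, at each step replacing the quotient by an iterated pushout (for a forward quasi-isomorphism) or pullback (for a backward one). This produces a sequence of extensions $K\rinf B_n\rdef C_n$ in $\Ch_b(\cF\ltimes\cE)$, and each transition map $B_n\leftrightarrow B_{n-1}$ has mapping cone sitting in a short exact sequence $\Cone(\id_K)\rinf\Cone(B_n\to B_{n-1})\rdef\Cone(C_n\to C_{n-1})$ with contractible kernel and acyclic quotient, hence itself acyclic. The final $B$ satisfies $K\rinf B\rdef\iota_0 Z$; applying $\Homo_0$ degree-wise gives $\Homo_0 B_k=(B_k)_0/(K_k)_0=Z_k\in\cF^v$ exactly, since the identity cube boundary inside each $K_k$ forces $(B_k)_1=(K_k)_0$ as subobjects of $(B_k)_0$. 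Thus $B\in\Ch_b((\cF\ltimes\cE)^{tv})$, and the zig-zag verifies condition $(2)$ of Lemma-Definition~\ref{lemdf:solid axiom}.

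The main technical obstacle I anticipate is the bookkeeping for the iterated pullback/pushout step: one must check both that these constructions remain in $\Ch_b(\cF\ltimes\cE)$---which follows because $\fF$ is an exact family and so $\cF\ltimes\cE$ is closed under extensions inside $\Cub^S\cA$---and that they send quasi-isomorphisms to quasi-isomorphisms, which is precisely the cone-within-SES argument above.
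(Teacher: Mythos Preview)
Your short exact sequence $\iota_1\pi_1\rinf\id\rdef\iota_0\Homo_0$ and the identification $K=\Cone(\iota_1 f_1)\in\Ch_b((\cF\ltimes\cE)^{tv})$ match the paper's setup exactly, and your check that any extension $K\rinf B\rdef\iota_0 Z$ has $\Homo_0 B_k\cong Z_k\in\cF^v$ is correct.

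The gap is the propagation step. Pullback along a backward quasi-isomorphism $C_{i+1}\to C_i$ works just as you say, but for a \emph{forward} quasi-isomorphism $g:C_i\to C_{i+1}$ there is no pushout that produces an extension $K\rinf B_{i+1}\rdef C_{i+1}$ together with a map $B_i\to B_{i+1}$ under $K$. Extensions by a fixed $K$ are contravariant in the quotient variable: one has $g^*:\Ext^1(C_{i+1},K)\to\Ext^1(C_i,K)$ but no map the other way, and since $\iota_1\pi_1(x)\rinf x\rdef\iota_0\Homo_0(x)$ is non-split in general there is no section $C_i\to B_i$ to exploit. Replacing the zig-zag by a single span $\Cone\bar f\leftarrow W\to Z$ does not help either, since after pulling back along $\iota_0 W\to C_0$ you still face a forward leg $\iota_0 W\to\iota_0 Z$ with $W\notin\Ch_b(\cF^v)$ in general.

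The paper sidesteps this obstruction by not attempting to remain inside $(\Acy_b^{ltv})_{\qis}$: it places both $K$ and $\iota_0\Cone\bar f$ in the thick null class $\Acy_b^{qtv}(\cF\ltimes\cE)$ (the latter via $\Ch_b(\iota_0)$, which carries $\Acy_b^{qv}(\cF)$ there) and then uses closure of $\Acy_b^{qtv}$ under extensions to conclude $\Cone f\in\Acy_b^{qtv}$ directly, with no zig-zag bookkeeping required.
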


\begin{proof}[\bf Proof] 
We consider an object $x$ in $\cF\ltimes \cE$ 
to be a complex $[x_1 \onto{d^x} x_0]$ in $\Ch_b(\cE)$. 
For any morphism $f:x \to y$ in $tv(\cF\ltimes \cE)$, 
$\Homo_0(f):\Homo_0(x) \to \Homo_0(y)$ is in $v$.  
Therefore $\Cone \Homo_0(f)$ is in $\Acy_{b}^{qv}(\cF)$ by assumption. 
Notice that there exists the admissible exact sequence 
$$\Cone\Cone(f_1,f_1) \rinf \Cone f \rdef \Ch_b(s)(\Cone\Homo_0(f))$$ 
in $\Ch_b(\cF\ltimes \cE)$ where $s:\cF \to \cF\ltimes \cE$ is 
the exact functor defined by sending an object $x$ in $\cF$ 
to an object $[x \onto{\id_x} x]$ in $\cF\ltimes \cE$. 
Namely we consider the following commutative diagram. 
$${\footnotesize{
\xymatrix{
& x_1 \ar[rr]^{\id_{x_1}} \ar[ld]_{\id_{x_1}} \ar@{-}[d] 
& & x_1 \ar@{-}[d] \ar[ld]_{d^x} \ar[rr] 
& & 0 \ar[ld] \ar[dd]\\
x_1 \ar[rr]^{\ \ \ \ \ \ \ \ d^x} \ar[dd]_{f_1} & 
\ar[d] & x_0 \ar[dd]^{\!\!\! f_0} \ar[rr] & 
\ar[d] & 
\Homo_0x \ar[dd]\\
 & y_1 \ar[ld]^{\!\!\!\! \id_{y_1}} 
\ar@{-}[r] & 
\ar[r] & 
y_1 \ar[ld]^{d^y} \ar@{-}[r] & \ar[r] & 0 \ar[ld]\\
y_1 \ar[rr]_{d^y} & & y_0 \ar[rr] & & \Homo_0y.
}}}$$
Then $\Cone\Cone(f_1,f_1)$ 
and $\Ch_b(s)(\Cone\Homo_0(f))$ are in $\Acy_{b}^{qtv}(\cF\ltimes\cE)$. 
Since $\Acy_{b}^{qtv}(\cF\ltimes\cE)$ is closed under extensions, 
$\Cone f$ is also in $\Acy_{b}^{qtv}(\cF\ltimes \cE)$. 
This means that $(\cF\ltimes \cE,tv)$ satisfies the solid axiom. 
\end{proof}

\begin{cor}
\label{cor:solid denpan}
Let $(\cE_1,\cE_2,\cF)$ 
is an adroit system of $\cA$ and $v$ a class of 
morphisms in $\cF$. 
If $(\cF,v)$ is solid, then $(\cF\ltimes\cE_1,tv)$ is also solid.
\end{cor}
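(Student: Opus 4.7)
The plan is to deduce this corollary by combining Lemma~\ref{lem:solid denpan} with the derived resolution machinery of Theorem~\ref{thm:adroit system} and Corollary~\ref{cor:res cond and consisitent}. The key observation to make first is that, because the class $tv$ on $\cF \ltimes \cE_i$ is defined by pulling back $v$ along the homology functor $\Homo_0$, we have the identification $(\cF \ltimes \cE_i)^{tv} = \cF^v \ltimes \cE_i$ for $i = 1, 2$. With that in hand, the strategy is: first promote solidity from $(\cF, v)$ to $(\cF \ltimes \cE_2, tv)$, and then descend along the resolution $\cF \ltimes \cE_1 \rinc \cF \ltimes \cE_2$ to $(\cF \ltimes \cE_1, tv)$.

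The first step is a direct application of Lemma~\ref{lem:solid denpan} to the inclusion $\cF \rinc \cE_2$: by hypothesis $(\cF,v)$ is solid, so $(\cF \ltimes \cE_2, tv)$ is solid. For the descent, I would apply Corollary~\ref{cor:res cond and consisitent} to the inclusion $(\cF \ltimes \cE_1, tv) \rinc (\cF \ltimes \cE_2, tv)$. That corollary requires two inputs: that $tv$ on the bigger category satisfies the solid axiom (which we just established), and that the induced inclusion on $tv$-trivial parts, namely $\cF^v \ltimes \cE_1 \rinc \cF^v \ltimes \cE_2$, satisfies the resolution conditions. The latter is exactly part of the content of Theorem~\ref{thm:adroit system}~$\mathrm{(1)}$, since the system $(\cE_1, \cE_2, \cF^v)$ is again adroit by Remark~\ref{rem:adroit system} (after observing, via Proposition~\ref{prop:strict exact categories} and Corollary~\ref{cor:denpan}, that $\cF^v$ is a strict exact subcategory of $\cF$ under the given hypotheses on $v$).

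Putting these ingredients together yields that $tv$ on $\cF \ltimes \cE_1$ satisfies the solid axiom, which is the required conclusion. I do not anticipate any serious obstacle: the only mildly delicate point is the bookkeeping identification $(\cF \ltimes \cE_i)^{tv} = \cF^v \ltimes \cE_i$, which must be checked directly from Definition~\ref{df:weak equivalences on ltimesfF} before invoking Theorem~\ref{thm:adroit system}~$\mathrm{(1)}$ for the $\cF^v$-based adroit system. Once that is done, the argument is essentially a two-line composition of previously established results.
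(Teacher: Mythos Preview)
Your proposal is correct and follows essentially the same route as the paper: apply Lemma~\ref{lem:solid denpan} to get $(\cF\ltimes\cE_2,tv)$ solid, use Theorem~\ref{thm:adroit system}~$\mathrm{(1)}$ for the resolution conditions on the $tv$-trivial parts, and conclude via Corollary~\ref{cor:res cond and consisitent}. One minor point: your appeal to Proposition~\ref{prop:strict exact categories} and Corollary~\ref{cor:denpan} to see that $\cF^v$ is a strict exact subcategory is unnecessary, since the solidity hypothesis already presupposes that $(\cF,v)$ is strict (see Lemma-Definition~\ref{lemdf:solid axiom}), and then Remark~\ref{rem:adroit system} applies directly.
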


\begin{proof}[\bf Proof]
By \ref{thm:adroit system} $\mathrm{(1)}$ and 
\ref{lem:solid denpan}, 
the inclusion functor 
${(\cF\ltimes \cE_1)}^{tv} \rinc {(\cF\ltimes \cE_2)}^{tv}$ 
satisfies the resolution condition and 
$(\cF\ltimes\cE_2,tv)$ is solid. 
Hence $(\cF\ltimes\cE_1,tv)$ is also solid 
by \ref{cor:res cond and consisitent}. 
\end{proof}

\sn
Next theorem is an abstraction of Corollary~5.14 in \cite{Moc11}.

\begin{thm}
\label{thm:flag} 
Let $2\leq n$ be a positive integer and 
we put $S=(n]$. 
Assume that 
$w$ is compatible with $\fF$.\\
$\mathrm{(1)}$ 
Assume that 
for any integer $0\leq k\leq n-1$, 
there exists a strict exact subcategory $x_k$ of $\Cub^{(n-k-1]}\cA$ 
such that 
a triple 
$(\ltimes\fF|_{(n-k-1]}^{[n-k+1,n]},x_k,\ltimes\fF|_{(n-k-1]}^{[n-k,n]})$ 
is an adroit {\rm (}resp. a strongly adroit{\rm )} 
system of $\Cub^{(n-k-1]}\cA$ 
{\rm (}resp. and $(\cF_S,w)$ is a Waldhausen exact category{\rm )}. 
Then 
the relative exact functor 
$\Homo_{(n-k]}^{[n-k+1,n],\{n-k\}}:
(\ltimes\fF|_{(n-k]}^{[n-k+1,n]},tw)\to 
(\ltimes\fF|_{(n-k-1]}^{[n-k,n]},tw)$ 
is a derived equivalence 
{\rm (}resp. $K^W$-equivalence{\rm )}. 
In particular 
the relative exact functor $\Homo_0^S:(\ltimes \fF,tw) \to (\cF_S,w)$ is 
a derived equivalence {\rm (}resp. $K^W$-equivalence{\rm )}.\\
$\mathrm{(2)}$ 
Assume that for any integer $1\leq k\leq n-1$, 
there exists strict exact subcategory $y_k$ of $\Cub^{(k]}\cA$ 
such that $(\ltimes\fF|_{(k]}^{\emptyset},y_k,\ltimes\fF|_{(k]}^{\{k+1\}})$ 
is an adroit system. 
Then\\
$\mathrm{(i)}$ 
There is a derived right flag of $\ltimes \fF$, 
$$0\to \ltimes \fF|_{(1]}^{\emptyset} \onto{\ext^{\emptyset}_{(1],\{2\}}} 
\ltimes \fF|_{(2]}^{\emptyset}\onto{\ext^{\emptyset}_{(2],\{3\}}} 
\cdots 
\onto{\ext^{\emptyset}_{(n-1],\{n\}}} 
\ltimes \fF|_{(n]}^{\emptyset}=\ltimes \fF.$$
$\mathrm{(ii)}$ 
Moreover if $(\cF_S,w)$ is solid, 
then $(\ltimes \fF,tw)$ is also solid.\\
$\mathrm{(3)}$ 
Let $\calR$ be a reasonable subcategory of $\RelEx_{\strict}$ 
such that for any disjoint pair of 
subsets $U$ and $V$ of $(n]$, 
$(\ltimes\fF|_U^V,tw)$ is in $\calR$ and  
$F$ a localizing theory on $\calR$ {\rm (}resp. $F=K^W${\rm )}. 
Assume that for any integer $1\leq k\leq n-1$, 
there exists strict exact subcategory $z_k$ of $\Cub^{(k]}\cA$ 
such that $(\ltimes\fF|_{(k]}^{\emptyset},z_k,\ltimes\fF|_{(k]}^{\{k+1\}})$ 
is an adroit system 
{\rm (}resp. and $(\cF_S,w)$ is a Waldhausen exact category{\rm )}. 
Then\\
$\mathrm{(i)}$ 
The exact functors 
$$\displaystyle{\lambda_n:\ltimes \fF \to \prod_{T\in\cP(S)}\cF_T}, 
\ \ \ x \mapsto (\Homo_0^T(x)_{\emptyset})_T \ \ \ \text{and}$$ 
$$\displaystyle{\lambda_n':\ltimes \fF^{tw} \to \cF_S^{w}\times 
\prod_{T\in\cP(S)\ssm\{S\}}\cF_T }, 
\ \ \ x \mapsto (\Homo_0^T(x)_{\emptyset})_T$$ 
are $F$-equivalences.\\
$\mathrm{(ii)}$ 
Moreover 
if $(\cF_S,w)$ satisfies the $F$-fibration axiom, 
then $(\ltimes\fF,tw)$ also satisfies the $F$-fibration axiom. 
\end{thm}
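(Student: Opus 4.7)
My plan is to handle the three parts in order, with each part building structurally on the semi-direct product decomposition $\ltimes\fF|_U^V = (\ltimes \fF|_{U\ssm\{u\}}^{V\coprod\{u\}}) \ltimes (\ltimes \fF|^V_{U\ssm\{u\}})$ from Remark~\ref{rem:indformulaformultsemiprod}, applied to a distinguished element of the index set. For part $\mathrm{(1)}$, I would use $u=n-k$ in the decomposition to rewrite $\ltimes\fF|_{(n-k]}^{[n-k+1,n]}$ as $\cF\ltimes \cE_1$, where $\cF=\ltimes\fF|_{(n-k-1]}^{[n-k,n]}$ and $\cE_1=\ltimes\fF|_{(n-k-1]}^{[n-k+1,n]}$. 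The functor $\Homo_{(n-k]}^{[n-k+1,n],\{n-k\}}$ then coincides, up to the canonical identification, with $\Homo_0:\cF\ltimes\cE_1\to \cF$ in the sense of Theorem~\ref{thm:adroit system}. For the derived-equivalence statement, I check that $tw$ is consistent on $\cF$ (which follows from Corollary~\ref{cor:denpan} and Corollary~\ref{cor:res cond and consisitent} together with the adroit hypothesis) and apply Theorem~\ref{thm:adroit system}$\mathrm{(2)}$. For the $K^W$-equivalence statement under the strongly adroit hypothesis, I instead invoke the strong resolution theorem~\ref{thm:resol thms}$\mathrm{(2)}$ via Theorem~\ref{thm:adroit system}$\mathrm{(1)}$. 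The "in particular" assertion then follows by composing these equivalences as $k$ runs from $0$ to $n-1$, since the composite $\Homo_{(n-1]}^{\{n\},\{n-1\}}\circ\cdots\circ\Homo_{(n]}^{\emptyset,\{n\}}$ is naturally isomorphic to $\Homo_0^S$.

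For part $\mathrm{(2)}\mathrm{(i)}$, I would use instead $u=k+1$ in the decomposition, yielding $\ltimes\fF|_{(k+1]}^\emptyset=(\ltimes\fF|_{(k]}^{\{k+1\}})\ltimes(\ltimes\fF|_{(k]}^\emptyset)$. Under the adroit-system hypothesis I apply Theorem~\ref{thm:adroit system}$\mathrm{(3)}$ to obtain, for each $k$, a right quasi-split exact sequence of relative exact categories $(\ltimes\fF|_{(k]}^\emptyset,i)\to (\ltimes\fF|_{(k+1]}^\emptyset,i)\to (\ltimes\fF|_{(k]}^{\{k+1\}},i)$, whose middle arrow is $\ext^\emptyset_{(k],\{k+1\}}$. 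By Remark~\ref{rem:quasi split is derived quasi-split} these pass to derived right quasi-split exact sequences, which together chain into a right flag of $\calD_b(\ltimes\fF)$ in the sense of Definition~\ref{df:flag}. Assertion $\mathrm{(ii)}$ is then an inductive application of Corollary~\ref{cor:solid denpan}: if $(\ltimes\fF|_{(k]}^{\{k+1\}},tw)$ is solid, then so is its semi-direct product with $\ltimes\fF|_{(k]}^\emptyset$. The base case reduces to the solidity of $(\cF_S,w)$ via the top of the flag (where $tw$ is pulled back through $\Homo_{(n-1]}^{\{n\},(n-1]}$).

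Part $\mathrm{(3)}$ combines additivity and the $F$-fibration axiom. Under the hypothesis that $F$ is a localizing theory on $\calR$ (or $F=K^W$), I first apply Proposition~\ref{prop:fund property of loc theory}$\mathrm{(3)}$ (respectively Example~\ref{ex:K^W is strictly localizing}) to conclude that $F$ is additive, so each right quasi-split exact sequence from part $\mathrm{(2)}$ yields an isomorphism $F(\ltimes\fF|_{(k+1]}^\emptyset)\isoto F(\ltimes\fF|_{(k]}^\emptyset)\oplus F(\ltimes\fF|_{(k]}^{\{k+1\}})$. Iterating from $k=1$ up to $k=n-1$, and inductively decomposing each $\ltimes\fF|_{(k]}^{\{k+1\}}$ the same way (with the outer index set shifted), the direct sum expands to $\bigoplus_{T\in\cP(S)}F(\cF_T)$, and the components of this isomorphism are precisely $F(\Homo_0^T(-)_\emptyset)$; this establishes the $\lambda_n$ half of $\mathrm{(i)}$. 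For the $\lambda_n'$ half, I replace the top adroit-system step by the $F$-fibration axiom for $(\cF_S,w)$ to swap $F(\cF_S,i_{\cF_S})$ for $F(\cF_S,w)$ modulo $F(\cF_S^w,i_{\cF_S^w})$, which is already accounted for in the decomposition. Assertion $\mathrm{(ii)}$ then falls out: comparing the $F$-fibration sequences for $(\ltimes\fF,tw)$ and for $(\cF_S,w)$ via the additive decomposition of $\lambda_n$ and $\lambda_n'$ shows that the former sequence is a fibration whenever the latter is.

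The main obstacle I anticipate is bookkeeping for the weak-equivalence structure on each intermediate $\ltimes\fF|_U^V$: the class $tw$ is defined by pulling back $w$ through a cascade of extension and homology functors, and in applying Theorem~\ref{thm:adroit system}$\mathrm{(2)}$, Corollary~\ref{cor:solid denpan}, and the additivity of $F$ one must repeatedly check that the pulled-back $tw$ on the inner factor $\cF=\ltimes\fF|_{U\ssm\{u\}}^{V\coprod\{u\}}$ agrees (under the natural identifications) with the $tw$ originally defined on that smaller semi-direct product, so that the hypotheses "consistent/solid/$F$-excellent" propagate correctly through the induction. This verification is routine from Remark~\ref{rem:total qis} and Lemma~\ref{lem:compat of three exact functors}, but it is what prevents the three parts from being mere invocations of Theorem~\ref{thm:adroit system}.
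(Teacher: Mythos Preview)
Your plan for parts $\mathrm{(1)}$, $\mathrm{(2)}$, the $\lambda_n$-half of $\mathrm{(3)}\mathrm{(i)}$, and $\mathrm{(3)}\mathrm{(ii)}$ matches the paper's argument essentially verbatim: in each case one peels off one index via Remark~\ref{rem:indformulaformultsemiprod}, invokes the relevant part of Theorem~\ref{thm:adroit system} (or Corollary~\ref{cor:solid denpan}), and inducts. The paper's proof of $\mathrm{(3)}\mathrm{(i)}$ for $\lambda_n$ routes the additivity step through the larger category $z_k$ (using the quasi-split sequence of \cite[2.19]{Moc11}) and then descends via the resolution equivalence of Theorem~\ref{thm:adroit system}$\mathrm{(1)}$, whereas you apply the quasi-split sequence of Theorem~\ref{thm:adroit system}$\mathrm{(3)}$ directly to $\ltimes\fF|_{(k]}^\emptyset$; both routes are valid and yield the same inductive decomposition.

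There is, however, a genuine gap in your treatment of $\lambda_n'$. You propose to ``replace the top adroit-system step by the $F$-fibration axiom for $(\cF_S,w)$'', but the $F$-fibration axiom is \emph{not} among the hypotheses of $\mathrm{(3)}\mathrm{(i)}$; it enters only in $\mathrm{(3)}\mathrm{(ii)}$. Worse, even granting that axiom for $(\cF_S,w)$, your sketch does not yield the required isomorphism $F({(\ltimes\fF)}^{tw})\isoto F(\cF_S^w)\oplus\bigoplus_{T\neq S}F(\cF_T)$ without also knowing that $F({(\ltimes\fF)}^{tw})\to F(\ltimes\fF)\to F(\ltimes\fF,tw)$ is a fibration sequence --- which is precisely the content of $\mathrm{(3)}\mathrm{(ii)}$, so the argument becomes circular. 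The paper avoids this by a different move: one observes directly that ${(\ltimes\fF)}^{tw}=\ltimes\fF'$ for the modified family $\fF':=\{\cF_T\}_{T\in\cP(S)\ssm\{S\}}\cup\{\cF_S^w\}$, checks via Corollary~\ref{cor:denpan} and Remark~\ref{rem:adroit system} that $\fF'$ inherits the adroit-system hypotheses from $\fF$, and then simply re-runs the $\lambda_n$ argument with $\fF'$ in place of $\fF$. This establishes $\lambda_n'$ with no appeal to any fibration axiom, after which $\mathrm{(3)}\mathrm{(ii)}$ follows from the two-row comparison diagram exactly as you describe.
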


\begin{proof}[\bf Proof]
$\mathrm{(1)}$ 
It is just a consequence of \ref{thm:adroit system} 
(resp. \cite[2.21]{Moc11}) $\mathrm{(2)}$. 

\sn
$\mathrm{(2)}$ 
We proceed by induction on $n$. 
$\mathrm{(i)}$ follows from \ref{thm:adroit system} $\mathrm{(3)}$ 
and $\mathrm{(ii)}$ is a consequence of \ref{cor:solid denpan}.

\sn
$\mathrm{(3)}$ 
$\mathrm{(i)}$ 
We proceed by induction on $n$. 
For any $1\leq k\leq n-1$, 
we define $$\underline{\lambda_{k}}:\ltimes\fF|_{(k]}^{\{k+1\}}\to 
\prod_{T\in \cP((k])}\cF_{T\coprod\{k+1\}}$$ 
to be an exact functor by sending 
an object $x$ in $\ltimes\fF|_{(k]}^{\{k+1\}}$ to $(\Homo_0^{T\coprod\{k+1\}}(x))_T$. 
We consider the following commutative diagram.
$${\footnotesize{\xymatrix{
F(\ltimes \fF|^{\emptyset}_{(k]}) 
\ar[r]^{\!\!\!\!\!\!\!\!\!\!\!\!\!\!\!\!\!\!\!\!\textbf{II}} \ar[d] & 
F(\ltimes \fF|^{\{k-1\}}_{(k-1]})\oplus F(\ltimes \fF|^{\emptyset}_{(k-1]}) 
\ar[r]_{\ \ \ \ \ \ \ \ \ \textbf{III}} \ar[d] & 
\bigoplus_{T\in\cP(S)}F(\cF_T)\\
F((\ltimes \fF|^{\{k\}}_{(k-1]})\ltimes z_k) 
\ar[r]_{\textbf{I}} & 
F(\ltimes \fF|^{\{k\}}_{(k-1]})\oplus F(z_k) & 
}}}$$
where the vertical lines are induced from the inclusion functors 
and the maps \textbf{I}, \textbf{II} and \textbf{III} are 
$\begin{pmatrix}F(\Homo_{(k]}^{\emptyset,\{k\}})\\ 
F(\res^{\emptyset,1}_{(k]})\end{pmatrix}$, 
$\begin{pmatrix}F(\Homo_{(k]}^{\emptyset,\{k\}})\\ 
F(\res^{\emptyset,1}_{(k]})\end{pmatrix}$
and 
$\begin{pmatrix}F(\underline{\lambda_{k-1}}) & 
F(\lambda_{k-1})\end{pmatrix}$
respectively. 
Notice that the composition the map \textbf{II} with the map \textbf{III} is just 
the map $F(\lambda_n)$. 
Then since the sequence 
$$(z_k,i_{z_k})\to ((\ltimes \fF|^{\{k\}}_{(k-1]})\ltimes z_k,i_{(\ltimes 
\fF|^{\{k\}}_{(k-1]})\ltimes z_k}) 
\onto{\Homo_0} 
(\ltimes \fF|^{\{k\}}_{(k-1]},i_{\ltimes \fF|^{\{k\}}_{(k-1]}})$$
is a right quasi-split exact sequence by \cite[2.19]{Moc11}, 
the map \textbf{I} is a homotopy equivalence of spectra 
by \ref{prop:fund property of loc theory} $\mathrm{(3)}$. 
Since the vertical lines are homotopy equivalences of spectra by 
\ref{thm:adroit system} $\mathrm{(1)}$, 
it turns out that the map \textbf{II} is also. 
Hence we obtain the result for $n=2$. 
By inductive hypothesis, the map \textbf{III} is also a homotopy equivalence of spectra 
and we obtain the result. 

\sn
For the second assertion, we just apply 
$\fF'=\{\cF_T\}_{T\in\cP(S)\ssm\{S\}}\coprod\{\cF_S^w\}$ 
to the first sentense. 
Notice that by virtue of \ref{cor:denpan} and \ref{rem:adroit system}, 
$\fF'$ satisfies the assumption of $\mathrm{(3)}$ $\mathrm{(i)}$.

\sn
$\mathrm{(ii)}$ 
By the commutative diagram below 
$${\footnotesize{
\xymatrix{
F({(\ltimes \fF)}^{tw}) \ar[r] \ar[d]_{\wr} & 
F(\ltimes \fF) \ar[r] \ar[d]_{\wr} & 
F(\ltimes \fF,tw) \ar[d]^{\wr}\\
\underset{T\in\cP(S)\ssm\{S\}}{\bigoplus} 
F(\cF_T) \oplus F(\cF_S^w) \ar[r] & 
\underset{T\in\cP(S)}{\bigoplus} F(\cF_T) \ar[r] &
F(\cF_S,tw) ,
}}}$$
it turns out that $(\ltimes \fF,tw)$ 
also satisfies the $F$-fibration axiom. 
\end{proof}

\begin{rem}
\label{rem:cubeisnotfibrational}
For an abelian category $\cA$, 
we put $\bA=(\cA\ltimes\cA,\qis)$. 
Then $\bA$ satisfies $K^W$-fibration axiom 
by \ref{thm:flag} $\mathrm{(3)}$. 
But $\bA$ does not satisfy the factorization axiom. 
For assume that for an non acyclic complex $x$ in $\cA\ltimes\cA$, 
if the morphism $x \to 0$ admits a factorization 
$x \overset{i}{\rinf} y \onto{u} 0$ with $u\in\qis$ and then 
we need to have 
$0\neq \Homo_0(x) \overset{\Homo_0(i)}{\rinf} \Homo_0(y)=0$. 
It is a contradiction. 
\end{rem}

\section{Koszul cubes}
\label{sec:Kos cube}

In this section, we prove that the category of 
Koszul cubes with 
the class of total quasi-isomorphisms 
is both $\bbK$- and $K^W$-excellent 
in \ref{cor:Kos is devices} and 
it is derived equivalent to 
the complicial Waldhausen category of perfect complexes in 
\ref{para:proof of WGP}. 
In this section, 
fix a non-empty finite set $S$ and 
we denote the category of 
finitely generated $A$-modules 
by $\cM_A$. 
We start by reviewing the notion $A$-sequences. 

\sn
Let $\{f_s\}_{s\in S}$ be a family of elements in $A$.
We say that the sequence $\{f_s\}_{s\in S}$ is an {\bf $A$-sequence} if 
$\{f_s\}_{s\in S}$ forms an $A$-regular sequences in any order. 
Fix an $A$-sequence $\ff_S=\{f_s\}_{s\in S}$. 
For any subset $T$, we denote the family 
$\{f_t\}_{t\in T}$ by $\ff_T$.

\begin{df}[\bf Koszul cube]
\label{df:Koszul cube df}
(\cf \cite[4.8]{Moc11})
A {\bf Koszul cube} $x$ associated with 
an $A$-sequence 
$\ff_S=\{f_s\}_{s\in S}$ 
is an $S$-cube in $\cP_A$ the category of 
finitely generated projective $A$-modules such that 
for each subset $T$ of $S$ and $k$ in $T$, 
$d^k_T$ is an injection and $f_k^{m_k}\coker d^k_T=0$ for some $m_k$. 
We denote the full subcategory of $\Cub^S\cP_A$ 
consisting of 
those Koszul cubes 
associated with $\ff_S$ by $\Kos_A^{\ff_S}$. 
\end{df}

\sn
Recall the notation of $\cM^I_A(q)$ from Conventions $\mathrm{(4)}$ $\mathrm{(iii)}$. 
The category of Koszul cubes is described in terms of 
multi-semi direct products of 
Quillen exact categories $\cM_A^{\ff_T}(\# T)$ as 
in \ref{Thm:Kosasmultisemidirect}.

\begin{thm}
\label{Thm:Kosasmultisemidirect} 
{\rm (\cf \cite[4.20]{Moc11}).}  
We have the equality 
$$\Kos_A^{\ff_S}=\underset{T\in \cP(S)}{\ltimes} \cM_{A}^{\ff_T}(\# T).$$
\end{thm}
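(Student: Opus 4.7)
The plan is to prove the equality by induction on $n = \#S$, reducing via the decomposition in Remark~\ref{rem:indformulaformultsemiprod}. For the base case $n = 1$, say $S = \{s\}$, both categories consist of injections $d\colon x_1 \rinf x_0$ between objects of $\cP_A$. The Koszul condition demands that some power of $f_s$ annihilate $\coker d$, while the semi-direct product condition demands $\coker d \in \cM_A^{f_s}(1)$. Since $\coker d$ is the cokernel of an injection of projectives, it automatically has projective dimension at most $1$; the remaining conditions reduce to $\Supp(\coker d) \subset V(f_s)$ versus $f_s^m \coker d = 0$ for some $m$, which are equivalent for finitely generated modules.

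For the inductive step with $n > 1$, fix $u \in S$ and apply Remark~\ref{rem:indformulaformultsemiprod}, giving
$$\ltimes\fF = \bigl(\ltimes \fF|_{S\setminus\{u\}}^{\{u\}}\bigr) \ltimes \bigl(\ltimes \fF|_{S\setminus\{u\}}^{\emptyset}\bigr).$$
Viewing an $S$-cube $x$ as a morphism $y_1 \to y_0$ of $(S\setminus\{u\})$-cubes (where $y_i$ is the $u$-face of $x$ at index $i$), this decomposition says $x \in \ltimes\fF$ iff $y_0, y_1 \in \ltimes\fF|_{S\setminus\{u\}}^{\emptyset}$, the morphism $y_1 \to y_0$ is componentwise admissible monic, and $H_0^{\{u\}}(x) \in \ltimes\fF|_{S\setminus\{u\}}^{\{u\}}$. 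The Koszul cube conditions on $x$ split analogously: those for $k \neq u$ amount to the Koszul conditions on the faces $y_0$ and $y_1$, while those for $k = u$ amount to componentwise injectivity of $y_1 \to y_0$ together with $f_u$-power annihilation of its cokernel.

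By the inductive hypothesis, $\ltimes \fF|_{S\setminus\{u\}}^{\emptyset} = \Kos_A^{\ff_{S\setminus\{u\}}}$. To match the remaining piece I would strengthen the inductive hypothesis to the more general statement: $\ltimes \fF|_U^V$ coincides with the full subcategory of admissible $U$-cubes whose vertices lie in $\cM_A^{\ff_V}(\#V)$ and which satisfy the Koszul conditions in the directions of $U$ relative to $\ff_U$. The theorem is the case $V = \emptyset$, and the same base case and decomposition arguments carry over verbatim. Specializing to $U = S\setminus\{u\}$ and $V = \{u\}$ identifies $\ltimes\fF|_{S\setminus\{u\}}^{\{u\}}$ with the admissible $(S\setminus\{u\})$-cubes in $\cM_A^{f_u}(1)$ satisfying the Koszul conditions relative to $\ff_{S\setminus\{u\}}$, which matches the cokernel condition for $H_0^{\{u\}}(x)$ with the $k = u$ part of the Koszul conditions.

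The main obstacle is verifying that the vertices of $H_0^T(x)$ for a Koszul cube $x$ indeed lie in $\cM_A^{\ff_T}(\#T)$, i.e.\ have projective dimension at most $\#T$. This is handled by the standard fact that for an admissible short exact sequence $0 \to M_1 \to M_0 \to N \to 0$ with $\mathrm{pd}(M_0), \mathrm{pd}(M_1) \leq j$, one has $\mathrm{pd}(N) \leq j + 1$; iterating this observation across the successive cokernels in the Koszul cube (starting from projective vertices) yields the desired bound. The corresponding support condition $\Supp(H_0^T(x)_\emptyset) \subset V(\ff_T)$ follows at once from the componentwise $f_k$-power annihilation, completing the inductive step.
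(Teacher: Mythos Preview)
The paper gives no proof of this statement; it is cited from \cite[4.20]{Moc11}. Your inductive approach via Remark~\ref{rem:indformulaformultsemiprod} is the natural one and matches the organization in \cite{Moc11}; the base case and the inclusion $\ltimes\fF\subset\Kos_A^{\ff_S}$ are correct.

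There is, however, a genuine gap in the forward inclusion $\Kos_A^{\ff_S}\subset\ltimes\fF$. Membership in $\ltimes\fF$ requires the cube to be \emph{admissible} in the sense of \ref{para:admcube}: beyond injectivity of each $d_T^k$, the boundary maps of every iterated homology cube $\Homo_0^W(x)$ must again be injective. The Koszul axioms supply only the first layer (monicity of $x$ itself). Your final paragraph handles the projective-dimension and support constraints on the vertices of $\Homo_0^T(x)$, but never the injectivity of the maps between them. Concretely, for $\#S=2$ one must show that the boundary map of $\Homo_0^{\{u\}}(x)$ is injective; by the snake lemma its kernel is an $f_u$-torsion submodule of a module in $\cM_A^{f_s}(1)$ (with $s\ne u$), and one needs such a submodule to vanish. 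This is exactly where the hypothesis that $\ff_S$ is a regular sequence \emph{in every order} is used (for instance via an Auslander--Buchsbaum argument on associated primes), and your argument never invokes it. As written, your strengthened inductive hypothesis specialized to $V=\emptyset$ identifies $\ltimes\fF$ only with the \emph{admissible} cubes satisfying the Koszul boundary conditions, which coincides with $\Kos_A^{\ff_S}$ only after this admissibility step is supplied.
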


\sn
The description of the category of 
Koszul cubes above gives a motivation to define the 
following categories. 
In the rest of this section, 
fix a disjoint decomposition $S=U\coprod V$. 

\begin{para}
\label{para:genkoscube}
For any non-negative integer $p$, we define the category 
$\cM_A(\ff_U;\ff_V)(p)$  
which is a full subcategory of $\Cub^V\cM_A$ by 
$$\cM_{A}(\ff_U;\ff_V)(p)=
\underset{T\in\cP(V)}{\ltimes}\cM_{A}^{\ff_{T\coprod U}}(p+\# T).$$
Then  
$\cM_A(\ff_U;\ff_V)(p)$ is closed under extensions in 
$\Cub^V\cM_A$. 
In particular it becomes 
a Quillen exact category in the natural way. 
We write $\tq(\cM_A(\ff_U;\ff_V)(p))$ or shortly $\tq$ for 
the class of total weak equivalences 
in $\cM_A(\ff_U;\ff_V)(p)$ associated with 
the class of all isomorphisms in $\cM_A^{\ff_S}(p+\# S)$. 
A morphism in $\tq$ is said to be a {\bf total quasi-isomorphism}. 
Note that we have the equalities 
$$\cM_A(\ff_{\emptyset};\ff_S)(0)=\Kos_A^{\ff_S}\ \ \ \text{and}$$
$$\cM_A(\ff_S;\ff_{\emptyset})(p)=\cM_A^{\ff_S}(p).$$
\end{para}

\sn
In the rest of this section, 
let $p\geq \# U$ be an integer and 
we put $\fF=\{\cM_A^{\ff_{U\coprod T}}(p+\# T) \}_{T\in\cP(V)}$. 
Recall the definition of the resolution conditions 
from \ref{df:resol cond}. 
The following proposition is essentially proven in 
\cite[5.9, 5.11]{Moc11} 

\begin{prop}
\label{prop:tworesol}
The inclusionf functors 
$$i:\cM_A(\ff_U;\ff_V)(p)\rinc  
\cM_A(\ff_U;\ff_V)(p+1)\ \ \ \text{and}$$
$$i':{\cM_A(\ff_U;\ff_V)(p)}^{\tq}\rinc  
{\cM_A(\ff_U;\ff_V)(p+1)}^{\tq}$$ satisfy the resolution conditions. 
\qed
\end{prop}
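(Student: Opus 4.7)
The plan is to verify the resolution axioms (Res~1), (Res~2), (Res~3) for both $i$ and $i'$ by induction on $\#V$, exploiting the decomposition
$$\cM_A(\ff_U;\ff_V)(p) = \cM_A(\ff_{U\cup\{v\}};\ff_{V\ssm\{v\}})(p+1) \ltimes \cM_A(\ff_U;\ff_{V\ssm\{v\}})(p)$$
supplied by Remark~\ref{rem:indformulaformultsemiprod} for any chosen $v\in V$. The base case $\#V=0$ reduces to the inclusion $\cM_A^{\ff_U}(p)\rinc \cM_A^{\ff_U}(p+1)$: here (Res~1) and (Res~3) are standard consequences of the facts that projective dimension is sub-additive across extensions and that $\pd(x)\leq\max\{\pd(y),\pd(z)-1\}$ in a short exact sequence $0\to x\to y\to z\to 0$, while (Res~2) asserts that any $y$ of $\pd\leq p+1$ with support in $V(\ff_U)$ admits an admissible epimorphism $x\rdef y$ with $x$ of $\pd\leq p$ in the same supported category; the hypothesis $p\geq\#U$ combined with Auslander-Buchsbaum guarantees such a resolution, as spelled out in \cite[5.9]{Moc11}.

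For the inductive step, (Res~1) and (Res~3) remain essentially formal: by Lemma~\ref{lem:exact family} an admissible short exact sequence $x\rinf y \rdef z$ in $\cM_A(\ff_U;\ff_V)(p+1)$ is degreewise short exact in $\Cub^V\cM_A$, so both axioms follow vertex-by-vertex from the base case. The real content lies in (Res~2): an object $y\in\cM_A(\ff_U;\ff_V)(p+1)$ corresponds under the decomposition to an admissible monomorphism $[y^{(1)}\rinf y^{(0)}]$ in $\cM_A(\ff_U;\ff_{V\ssm\{v\}})(p+1)$ together with its cokernel cube in $\cM_A(\ff_{U\cup\{v\}};\ff_{V\ssm\{v\}})(p+2)$. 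The inductive hypothesis for the two smaller inclusions supplies resolutions of $y^{(1)}$, of $y^{(0)}$, and of the cokernel cube, and the main obstacle is to coordinate these three resolutions so that they assemble into a single admissible epimorphism $x\rdef y$ inside $\cM_A(\ff_U;\ff_V)(p)$. This coherent lifting is the delicate point and is exactly what is carried out in \cite[5.11]{Moc11}.

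Finally, for $i'$, note that $\cM_A(\ff_U;\ff_V)(p)^{\tq}$ is the preimage under the exact functor $\Homo_0^V(-)_\emptyset$ of the zero subcategory of $\cM_A^{\ff_S}(p+\#S)$. Axioms (Res~1) and (Res~3) for $i'$ are therefore inherited from those for $i$, because in an admissible short exact sequence in $\cM_A(\ff_U;\ff_V)(p+1)$ the vanishing of $\Homo_0^V(-)_\emptyset$ on two of the three terms forces it on the third. For (Res~2), a resolution $x'\rdef y$ obtained from the previous paragraph may have $\Homo_0^V(x')_\emptyset\ne 0$; one remedies this by absorbing the residue into an auxiliary contractible summand built from a one-step projective resolution of $\Homo_0^V(x')_\emptyset$ inside $\cM_A^{\ff_S}(p+\#S)$, yielding a $\tq$-trivial resolution $x\rdef y$ of $y$. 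I expect this last bookkeeping step to be the subtlest piece of the argument, and it is the essence of the refinement encoded in \cite[5.11]{Moc11}.
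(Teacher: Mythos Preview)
The paper does not actually supply a proof: the proposition is stated with a \qed and the preceding sentence defers entirely to \cite[5.9, 5.11]{Moc11}. Your sketch is a reasonable reconstruction of what that argument contains, and you invoke the same two references at the right places; the induction on $\#V$ via the decomposition of Remark~\ref{rem:indformulaformultsemiprod} is indeed the mechanism used in \cite{Moc11}.

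One point where your outline is looser than it needs to be is the treatment of $i'$. Your ``absorb the residue into a contractible summand'' step is vague, and it is not clear what ``contractible'' means here since we are not in a complicial setting. A cleaner route, already built into the paper's framework, is to use Remark~\ref{rem:total qis}~(1): the subcategory ${\cM_A(\ff_U;\ff_V)(p)}^{\tq}$ is again a multi semi-direct product of the same shape, with the top category $\cM_A^{\ff_S}(p+\#V)$ replaced by its $w$-trivial part $\{0\}$. Hence the very same induction that proves the resolution conditions for $i$ proves them for $i'$, with no extra bookkeeping needed; this is why \cite[5.11]{Moc11} handles both inclusions simultaneously. Your verification of (Res~1) and (Res~3) should also be phrased via exactness of the functors $\Homo_0^T$ (Lemma-Definition~\ref{lemdf:three exact functors}) rather than literally ``vertex-by-vertex'', since membership in $\cM_A(\ff_U;\ff_V)(p)$ is a condition on the homology cubes $\Homo_0^T(x)$, not directly on the vertices of $x$.
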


\sn
Recall the definition of the (strongly) adroit systems 
from \ref{df:good triple}. 

\begin{thm}
\label{thm:Koszul resol thm}
{\rm (\cf \cite[5.13]{Moc11}).}
Assume that $V$ is a non-empty set. 
Then the  triple
$$(\cM_{A}(\ff_U;\ff_{V\ssm\{v\}})(p),
\cM_{A}(\ff_U;\ff_{V\ssm\{v\}})(p+1),
\cM_{A}(\ff_{U\coprod\{v\}};\ff_{V\ssm\{v\}})(p+1))$$
is a strongly adroit system in $\Cub^{V\ssm\{v\}}\cM_A$ for any $v\in V$. 
\end{thm}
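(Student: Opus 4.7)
The plan is to verify the axioms (Adr 1), (Adr 2), (Adr 3) and (Adr 5) of a strongly adroit system. Put $\cE_1 := \cM_{A}(\ff_U;\ff_{V\ssm\{v\}})(p)$, $\cE_2 := \cM_{A}(\ff_U;\ff_{V\ssm\{v\}})(p+1)$ and $\cF := \cM_{A}(\ff_{U\coprod\{v\}};\ff_{V\ssm\{v\}})(p+1)$. Since \cite[5.13]{Moc11} already establishes this triple as an adroit system, the first three axioms are in hand, and the only new content is to upgrade (Adr 4) to its diagram-wise strengthening (Adr 5). Along the way one can also see each of these three axioms directly: (Adr 1) comes from the multi semi-direct product description combined with \ref{lem:exact family} and \ref{rem:indformulaformultsemiprod}; (Adr 2) because the condition to lie in $\cE_1$ rather than $\cE_2$ is a projective-dimension inequality on each vertex and on each iterated $\Homo_0^T$-cube, and such a condition is preserved under extensions by standard homological algebra over a commutative noetherian ring; and (Adr 3) by the classical fact that in an admissible short exact sequence $x \rinf y \rdef z$ of finitely generated $A$-modules, projective dimension at most $p$ on $y$ together with projective dimension at most $p+1$ on $z$ forces projective dimension at most $p$ on $x$, applied vertex-wise and propagated to each $\Homo_0^T$-cube.

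The principal obstacle is (Adr 5). Given a composable sequence $z_\bullet = (z_0 \to z_1 \to \cdots \to z_m)$ in $\cE_2^{[m]}$, I need to produce a sequence $y_\bullet$ in $\cE_1^{[m]}$ together with a level-wise admissible epimorphism $y_\bullet \rdef z_\bullet$. My approach is to revisit the single-object resolution $\cE_2 \to \cE_1$ from the proof of \ref{prop:tworesol} (and of \cite[5.11]{Moc11}) and verify that it can be performed functorially on morphisms of $\cE_2$. Concretely, the pointwise construction proceeds by choosing, at each vertex of the cube, a surjection from a finitely generated projective $A$-module whose kernel realises the required drop in projective dimension along $\ff_S$, and then assembling these choices into a whole cube via the extension functors of \ref{lemdf:three exact functors}. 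By making the vertex-level projective-cover choice functorial --- for example through a canonical finitely generated free cover followed by a uniform truncation, or via a bar-type construction --- this promotes to a functor $R \colon \cE_2 \to \cE_1$ together with a natural admissible epimorphism $\epsilon \colon R \rdef \id_{\cE_2}$.

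Applying $R$ termwise to $z_\bullet$ then produces $y_\bullet := R(z_\bullet) \in \cE_1^{[m]}$, and the naturality of $\epsilon$ makes $\epsilon_{z_\bullet}$ a level-wise admissible epimorphism $y_\bullet \rdef z_\bullet$ in $\cE_2^{[m]}$. The verification that each $R(z_i)$ actually lies in $\cE_1$ amounts to the same vertex-by-vertex projective-dimension check already performed in \cite[5.11]{Moc11}, now simply applied uniformly across the diagram, while compatibility with composition in $[m]$ is automatic from functoriality of $R$. Together with (Adr 1)--(Adr 3), this yields (Adr 5) and proves that the triple is a strongly adroit system.
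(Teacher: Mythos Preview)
Your reduction to (Adr 5) is the right move, and your sketches of (Adr 1)--(Adr 3) are fine. The gap is in your proposed proof of (Adr 5).

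You assert that the single-object resolution can be promoted to a \emph{functor} $R\colon \cE_2 \to \cE_1$ equipped with a natural admissible epimorphism $R \rdef \id_{\cE_2}$, but neither of your suggested constructions achieves this. There is no canonical finitely generated free (or projective) cover of a finitely generated $A$-module over a general commutative noetherian ring: any such cover depends on a choice of generating set, and those choices cannot be made natural in morphisms. A bar-type construction is functorial, but it produces infinitely generated projectives, so its output does not lie in $\cM_A$, let alone in $\cE_1$. Without a genuine functorial $R$, ``applying $R$ termwise to $z_\bullet$'' has no meaning, and the passage from a single-object resolution to a resolution of an $[m]$-diagram collapses.

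The argument that actually works --- and the one behind \cite[5.13]{Moc11}, to which the paper defers --- is not functoriality but an inductive horseshoe construction along $[m]$. Given $z_\bullet = (z_0 \to \cdots \to z_m)$ in $\cE_2^{[m]}$, use (Adr 4) to choose an admissible epimorphism $p_0\colon y_0 \rdef z_0$ with $y_0 \in \cE_1$. Inductively, having built $y_{k-1} \rdef z_{k-1}$, choose $p'_k\colon y'_k \rdef z_k$ with $y'_k \in \cE_1$, set $y_k := y_{k-1} \oplus y'_k$, and map it to $z_k$ by the composite $y_{k-1} \rdef z_{k-1} \to z_k$ on the first summand and $p'_k$ on the second. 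This is again an admissible epimorphism in $\cE_2$ (its kernel is an extension of $y_{k-1}$ by $\ker p'_k$, both in $\cE_2$), the object $y_k$ lies in $\cE_1$ since $\cE_1$ is additive, and the inclusion $y_{k-1} \hookrightarrow y_k$ makes the square over $z_{k-1} \to z_k$ commute. Iterating yields $y_\bullet \in \cE_1^{[m]}$ with a levelwise admissible epimorphism to $z_\bullet$, which is (Adr 5). Replace your functoriality claim with this construction.
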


\sn
In the rest of this section, for simplicity 
we put $\calH=\cM_A(\ff_U;\ff_V)(p)$. 
Recall the definitions of the flag from \ref{df:flag} 
and of the excellent relative exact categories from \ref{lemdf:excellent}. 

\begin{cor}
\label{cor:Kos is devices}
$\mathrm{(1)}$ 
$(\calH,\tq)$ is a 
very strict solid Waldhausen exact category and satisfies 
the $K^W$-fibration axiom. 
In particular it is both $K^W$- and $\bbK$-excellent.\\
$\mathrm{(2)}$  
The exact functor 
$\Homo_0\Tot:(\calH,\tq) \to (\cM_A^{\ff_S}(p+\#V),i)$ 
is a derived equivalence. 
In particular, the exact functor 
$\Homo^S_0:(\Kos_A^{\ff_S},\tq) \to (\cM_A^{\ff_S}(\# S),i)$ is 
a derived equivalence.\\
$\mathrm{(3)}$ 
Let $V=\{v_1,v_2,\cdots,v_r\}$ and we put 
$V_k=\{v_i;1\leq i\leq k \}$ for any $1\leq k\leq r$ and $V_0=\emptyset$ 
and $e_k:=\ext_{}^{\emptyset,\{k+1\}}:
\cM_A(\ff_U;\ff_{V_k})(p)=\ltimes \fF|^{\emptyset}_{(k]}\to \cM_A(\ff_U;\ff_{V_{k+1}})(p)$. Then the sequence 
$$0 \to\cM_A(\ff_U;\ff_{V_0})(p) \onto{e_0}
\cM_A(\ff_U;\ff_{V_1})(p) \onto{e_1}\cdots
\onto{e_{r-1}}\calH
$$
is a derived right flag.\\
$\mathrm{(4)}$ 
The inclusion functor 
${\calH}^{tq}\rinc\calH$ and 
the identity functor of $\calH$ induce a derived 
right quasi-split exact sequence 
$$({\calH}^{\tq},i_{{\calH}^{\tq}}) \to 
(\calH,i_{\calH}) 
\to (\calH,\tq).$$
\end{cor}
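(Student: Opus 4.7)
The plan is to realise $\calH := \cM_A(\ff_U;\ff_V)(p)$ as the multi-semi-direct product $\ltimes\fF$ with $\fF := \{\cM_A^{\ff_{U\coprod T}}(p+\#T)\}_{T\in\cP(V)}$ (via Theorem~\ref{Thm:Kosasmultisemidirect} extended as in Paragraph~\ref{para:genkoscube}), and then feed this setup into Theorem~\ref{thm:flag}, using the strongly adroit systems furnished by Theorem~\ref{thm:Koszul resol thm}. The base relative exact category $(\cF_S,i_{\cF_S})$ with $\cF_S := \cM_A^{\ff_S}(p+\#V)$ is trivially a very strict solid Waldhausen exact category satisfying the $K^W$-fibration axiom (cf.\ Example~\ref{ex:device} (1)), since the subcategory of trivial objects is zero, and the class $w := i_{\cF_S}$ is compatible with $\fF$ by Corollary~\ref{cor:denpan}.

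I would then enumerate $V = \{v_1,\dots,v_r\}$ and set $V_k := \{v_1,\dots,v_k\}$. For each $0 \leq k \leq r-1$, Theorem~\ref{thm:Koszul resol thm} applied with $V_{k+1}$ in place of $V$ and $v := v_{k+1}$ delivers a strongly adroit system whose first and third terms are $\ltimes\fF|^\emptyset_{(k]} = \cM_A(\ff_U;\ff_{V_k})(p)$ and $\ltimes\fF|^{\{k+1\}}_{(k]} = \cM_A(\ff_{U\coprod\{v_{k+1}\}};\ff_{V_k})(p+1)$, with middle term $y_k := \cM_A(\ff_U;\ff_{V_k})(p+1)$; this is precisely the input required by Theorem~\ref{thm:flag} (1), (2)(i) and (3). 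Applying Theorem~\ref{thm:flag} (2)(i) yields the derived right flag of (3). Theorem~\ref{thm:flag} (1) then gives that $\Homo_0^V : (\calH,\tq)\to (\cF_S,i)$ is a derived equivalence, and via the natural isomorphism (\ref{equ:Totisom}) identifying $\Homo_0(\Tot x)$ with $\Homo_0^V(x)$, this coincides with $\Homo_0\Tot$, which proves (2). For (1), the Waldhausen exactness of $(\calH,\tq)$ follows from Corollary~\ref{cor:denpan}, solidity from Theorem~\ref{thm:flag} (2)(ii), and the $K^W$-fibration axiom from Theorem~\ref{thm:flag} (3)(ii) applied to the strictly localizing theory $F = K^W$ of Example~\ref{ex:K^W is strictly localizing}.

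To complete (1) and establish (4), I would deduce the very strictness of $(\calH,\tq)$ from (4) via Lemma-Definition~\ref{lemdf:quasi-split exact seq} (2)(i), which forces $\calD_b(\calH^\tq)\to\calD_b(\calH)$ to be fully faithful in any derived right quasi-split exact sequence; the $K^W$- and $\bbK$-excellence in the ``in particular'' clause of (1) is then Proposition~\ref{prop:excellent}. The substantive content is therefore the derived right quasi-splitting asserted in (4): combining the equivalence $\calD_b(\calH,\tq)\isoto\calD_b(\cF_S)$ from (2), one seeks a section $\cF_S \to \calH$ sending a module $M$ to a Koszul cube realising $M$ as its $\Homo_0^V$, which will serve as a right adjoint to the projection at the derived level. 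The main obstacle lies here, namely verifying the required adjunction morphisms and the triangle decomposition of Lemma-Definition~\ref{lemdf:quasi-split exact seq} (2); I expect this to proceed by induction on $r = \#V$, using at each step the right quasi-split exact sequence produced by Theorem~\ref{thm:adroit system} (3) from the strongly adroit system of Theorem~\ref{thm:Koszul resol thm}, together with the flag filtration from (3) already established.
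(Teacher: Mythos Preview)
Your treatment of parts $\mathrm{(1)}$--$\mathrm{(3)}$ is correct and essentially identical to the paper's: feed the family $\fF$ into Theorem~\ref{thm:flag}, with the adroit systems supplied by Theorem~\ref{thm:Koszul resol thm}, and read off the derived flag, the derived equivalence $\Homo_0^V$, solidity, and the $K^W$-fibration axiom. The reduction of very strictness to $\mathrm{(4)}$, and the appeal to Proposition~\ref{prop:excellent} for excellence, also match the paper exactly.

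For $\mathrm{(4)}$, however, the paper does \emph{not} proceed by induction on $\#V$ nor by constructing a section $\cF_S\to\calH$. Your proposed section cannot be built as an exact functor into $\calH$ itself: a module $M\in\cM_A^{\ff_S}(p+\#V)$ has projective dimension up to $p+\#V$, but any cube in $\calH=\cM_A(\ff_U;\ff_V)(p)$ must have its vertex at $\emptyset$ in $\cM_A^{\ff_U}(p)$, so there is no functorial lift in general. The paper sidesteps this by enlarging the target. It sets $\calH':=\cM_A(\ff_U;\ff_V)(p+\#V)$ and observes that the inclusions $I:\calH\rinc\calH'$ and $J:\calH^{\tq}\rinc(\calH')^{\tq}$ are derived equivalences by Proposition~\ref{prop:tworesol}. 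In $\calH'$ the section is trivial: define $s:\cF_S\to\calH'$ by placing $M$ at the vertex $\emptyset$ and $0$ elsewhere. The canonical quotient $x_\emptyset\rdef\Homo_0^V(x)$ yields a natural transformation $C:I\to s\Homo_0^V$, and one sets $r:=\ker C:\calH\to(\calH')^{\tq}$. The admissible exact sequence $i'r\rinf I\rdef s\Homo_0^V$ of functors into $\calH'$ then produces, after applying $\calD_b$ and transporting along the derived equivalences $I$ and $J$, the required right quasi-splitting of $\calD_b(\calH^{\tq})\to\calD_b(\calH)\to\calD_b(\cF_S)$. This is exactly the same enlargement manoeuvre that underlies the proof of Theorem~\ref{thm:adroit system}~$\mathrm{(3)}$, but carried out once globally rather than assembled inductively from the flag steps. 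Your inductive plan is plausible in spirit but would require additional work to splice the stepwise quasi-splittings into the single sequence of $\mathrm{(4)}$; the paper's direct enlargement is both shorter and avoids that bookkeeping.
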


\begin{proof}[\bf Proof]
First notice 
that the class of all isomorphisms in $\cM_A^{\ff_S}(p+\# S)$ 
is compatible with $\fF$ and $(\calH,\tq)$ 
is a Waldhausen exact category which satisfies the extensional axiom 
by \ref{lem:denpan} and \ref{cor:denpan}. 
Assertions $\mathrm{(2)}$ and $\mathrm{(3)}$ 
and the fact that $(\calH,\tq)$ 
satisfies the both the solid and the $K^W$-fibration axioms 
follow from \ref{thm:flag} and \ref{thm:Koszul resol thm}. 
The fact that $(\calH,\tq)$ is 
very strict is just a consequence of $\mathrm{(4)}$. 
The last assertion of $\mathrm{(1)}$ follows from \ref{prop:excellent}. 

\sn
Proof of assertion $\mathrm{(4)}$: 
By virtue of $\mathrm{(2)}$, 
we only need to check that 
the sequence 
$$(\calH^{\tq},i_{\calH^{\tq}})\to(\calH,i_{\calH})\onto{\Homo_0^V} 
(\cM_A^{\ff_S}(p+\# V),i_{\cM_A^{\ff_S}(p+\# V)})$$
is a derived right quasi-split exact sequence. 
For simplicily we put $\calH':=\cM_A(\ff_U;\ff_V)(p+\#V)$ and 
let $I:\calH \rinc \calH'$ and $J:\calH^{\tq} \rinc {\calH'}^{\tq}$ 
be the inclusion functors. 
We define $s:\cM_A^{\ff_S}(p+\# V) \to \calH'$ 
to be an exact functor 
by sending an object $x$ to $s(x)$ where 
$s(x)_T$ is $x$ if $T=\emptyset$ and $0$ if $T\neq \emptyset$. 
There is a natural transformation $C:I \to s\Homo_0^V$ such that 
for any object $x$ in $\calH$, 
$${(Cx)}_{\emptyset}:x_{\emptyset} \to 
s\Homo_0^V(x)=\coker \left (\bigoplus_{v\in V} x_{\{v\}} \to x_{\emptyset} \right )$$
is the natural quotient morphism. 
We define the exact functor $r:\calH \to {\calH'}^{\tq}$ 
by $r:=\ker(I \onto{C} s\Homo^V_0)$. 
We illustrate the situation with the commutative diagram below. 
$${\footnotesize{\xymatrix{
\calH^{\tq} \ar[r]^i \ar[d]_J & 
\calH \ar[r]^{\!\!\!\!\!\!\!\!\Homo_0^V} \ar[d]^{I} \ar[ld]_r & 
\cM_A^{\ff_S}(p+\# V) \ar[ld]^s\\
{\calH'}^{\tq} \ar[r]_{i'} &
\calH' & .
}}}$$
By definition, there exists an admissible exact sequence 
of exact functors from $\calH$ to ${\calH'}^{\tq}$ 
$$i'r \overset{A}{\rinf} I \overset{C}{\rdef} s\Homo_0.$$
Since $I$ and $J$ are derived equivalences by \ref{prop:tworesol}, 
the pair $(r,s)$ yields a right quasi-splitting of a sequence 
$$\calD_b(\calH^{\tq}) \onto{\calD_b(i)} \calD_b(\calH)\onto{\calD_b(\Homo_0)}
\calD_b(\cM_A^{\ff_S}(p+\# V)).$$
Hence we obtain the result.
\end{proof}

\sn
Recall the definition of $\Perf_{\Spec A}^{V(\ff_S)}$ from the introduction.

\begin{para}[\bf Proof of Theorem~\ref{thm:weak geom present}]
\label{para:proof of WGP} 
Let $j':\cM_A^{\ff_S}(\# S) \to \Perf_{\Spec A}^{V(\ff_S)}$ denote 
the composition of $j_{\cM_A^{\ff_S}(\# S)}:\cM_A^{\ff_S}(\# S)\to \Ch_b(\cM_A^{\ff_S}(\# S))$ 
and the inclusion functor $\Ch_b(\cM_A^{\ff_S}(\# S)) \rinc \Perf_{\Spec A}^{V(\ff_S)}$. 
There exists a canonical relative natural equivalence $\Tot \to j'\Homo_0\Tot$ 
induced by the quotient map 
$(\Tot x)_0 \rdef \Homo_0\Tot x$ for any $x$ in $\Kos_A^{\ff_S}$. 
Hence $\calD_b\Tot=\calD_bj'\calD_b\Homo_0\Tot$ 
by \ref{cor:Ch_b is 2-functor} $\mathrm{(2)}$ and 
the functor $\calD_b\Homo_0\Tot$ and $\calD_bj'$ are 
equivalences of triangulated categories by \ref{cor:Kos is devices} $\mathrm{(2)}$, 
and 
\ref{cor:comp of derived cat} $\mathrm{(2)}$ 
and 
\cite[3.3]{HM10} respectively. 
Therefore we obtain the result. 
\qed
\end{para}

\end{document}